\def\scaleint#1{\vcenter{\hbox{\scaleto[3ex]{\displaystyle\int}{#1}}}}
\newcommand\tenq[2][1]{%
\def\useanchorwidth{T}%
\ifnum#1>1%
\stackunder[0pt]{\tenq[\numexpr#1-1\relax]{#2}}{\!\scriptscriptstyle\thicksim}%
\else%
\stackunder[1pt]{#2}{\!\scriptstyle\thicksim}%
\fi%
}
\DeclareRobustCommand\widecheck[1]{{\mathpalette\@widecheck{#1}}}
\def\@widecheck#1#2{%
    \setbox\z@\hbox{\m@th$#1#2$}%
    \setbox\tw@\hbox{\m@th$#1%
       \widehat{%
          \vrule\@width\z@\@height\ht\z@
          \vrule\@height\z@\@width\wd\z@}$}%
    \dp\tw@-\ht\z@
    \@tempdima\ht\z@ \advance\@tempdima2\ht\tw@ \divide\@tempdima\thr@@
    \setbox\tw@\hbox{%
       \raise\@tempdima\hbox{\scalebox{1}[-1]{\lower\@tempdima\box
\tw@}}}%
    {\ooalign{\box\tw@ \cr \box\z@}}}
\def\given{\,|\,}
\def\biggiven{\,\big{|}\,}
\def\Biggiven{\,\Big{|}\,}
\def\tr{\mathop{\text{tr}}\kern.2ex}
\def\tZ{{\tilde Z}}
\def\tX{{\tilde X}}
\def\tY{{\tilde Y}}
\def\tW{{\tilde W}}
\def\P{{\mathrm P}}
\def\E{{\mathrm E}}
\def\R{{\mathbbm R}}
\def\d{{\mathrm d}}
\newcommand{\zahl}[1]{\llbracket #1\rrbracket}
\newcommand\yestag{\addtocounter{equation}{1}\tag{\theequation}}
\newcolumntype{L}[1]{>{\raggedright\let\newline\\\arraybackslash\hspace{0pt}}m{#1}}
\newcolumntype{C}[1]{>{  \centering\let\newline\\\arraybackslash\hspace{0pt}}m{#1}}
\newcolumntype{R}[1]{>{ \raggedleft\let\newline\\\arraybackslash\hspace{0pt}}m{#1}}
\newcolumntype{d}[1]{D{.}{.}{#1}}
\newcolumntype{H}{>{\setbox0=\hbox\bgroup}c<{\egroup}@{}}
\newcolumntype{Z}{>{\setbox0=\hbox\bgroup}c<{\egroup}@{\hspace*{-\tabcolsep}}}
\newcolumntype{b}{X}
\newcolumntype{s}{>{\hsize=.5\hsize}X}
\numberwithin{equation}{section}
\newtheorem{theorem}{Theorem}[section]
\newtheorem{lemma}{Lemma}[section]
\newtheorem{proposition}{Proposition}[section]
\newtheorem{assumption}{Assumption}[section]
\newtheorem{corollary}{Corollary}[section]
\providecommand{\customgenericname}{}
\newcommand{\newcustomtheorem}[2]{%
  \newenvironment{#1}[1]
  {%
   \renewcommand\customgenericname{#2}%
   \renewcommand\theinnercustomgeneric{##1}%
   \innercustomgeneric
  }
  {\endinnercustomgeneric}
}
\theoremstyle{definition}
\newtheorem{remark}{Remark}[section]
\begin{document}

\setlength{\abovedisplayskip}{5pt}
\setlength{\belowdisplayskip}{5pt}
\setlength{\abovedisplayshortskip}{5pt}
\setlength{\belowdisplayshortskip}{5pt}
\hypersetup{colorlinks,breaklinks,urlcolor=blue,linkcolor=blue}

\title{\LARGE On boosting the power of Chatterjee's rank correlation}

\author{Zhexiao Lin\thanks{Department of Statistics, University of Washington, Seattle, WA 98195, USA; e-mail: {\tt zxlin@uw.edu}}~~~and~
Fang Han\thanks{Department of Statistics, University of Washington, Seattle, WA 98195, USA; e-mail: {\tt fanghan@uw.edu}}
}

\date{}

\maketitle

\vspace{-1em}

\begin{abstract}
\cite{chatterjee2020new}'s ingenious approach to estimating a measure of dependence first proposed by \cite{MR3024030} based on simple rank statistics has quickly caught attention. This measure of dependence has the unusual property of being between 0 and 1, and being 0 or 1 if and only if the corresponding pair of random variables is independent or one is a measurable function of the other almost surely. However, more recent studies \citep{cao2020correlations,shi2020power} showed that independence tests based on Chatterjee's rank correlation are unfortunately rate-inefficient against various local alternatives and they call for variants. We answer this call by proposing revised Chatterjee's rank correlations that still consistently estimate the same dependence measure but provably achieve near-parametric efficiency in testing against Gaussian rotation alternatives. This is possible via incorporating many right nearest neighbors in constructing the correlation coefficients. We thus overcome the ``only one disadvantage'' of Chatterjee's rank correlation \citep[Section 7]{chatterjee2020new}. % and also settle a conjecture of \cite{deb2020kernel}.
\end{abstract}

{\bf Keywords:} dependence measure; independence test; rank correlation; right nearest neighbor; local power analysis.

\section{Introduction}\label{sec:intro}

Consider $X,Y$ to be two random scalars defined on the same probability space. In various scenario one is interested in quantifying the strength of association between $X$ and $Y$ as well as determining the validity of the following null hypothesis,
\begin{align}\label{eq:H0}
  H_0: X \text{ and }Y\text{ are independent},
\end{align}
both based on a finite sample of size $n$. These two tasks are usually convoluted and together play a pivotal role in many statistics and scientific practices \citep{pearl2009causality,josse2016measuring,MR3889064}.

%Testing independece has attracted much interest in statistic community with application in for example, causal discovery \citep{MR3889064}.

For handling the above two tasks, this paper is focused on such nonparametric rank correlations that measure associations between $X$ and $Y$ using only the ranks of the data. Rank correlations are particularly appealing for continuous $X,Y$ as then the corresponding tests under $H_0$ are fully distribution-free.
%For the purpose of testing independence, rank correlations are attractive test statistics as they are distribution free for continuous distribution under $H_0$.
Early such proposals include Spearman's $\rho$ \citep{Spearman1904}, Kendall's $\tau$ \citep{Kendall1938}, Gini's $\gamma$ \citep{gini1914ammontare}, and Blomqvist's $\beta$ \citep{MR39190}, which however cannot arrive at a consistent test of independence. For the sake of testing consistency, \cite{MR0029139}, \cite{MR0125690}, \cite{yanagimoto1970measures}, and \cite{MR3178526} have proposed variants that not only lead to consistent tests of independence but are also shown to be rate-efficient against many local alternatives; cf. \cite{MR3466185}, \cite{shi2020power}, and \cite{shi2020rate}.
%are proposed to achieve the statistical consistency of the independence test.

In a recent manuscript that received much attention, \cite{chatterjee2020new} introduced a new rank correlation coefficient that estimates a correlation measure first proposed by Dette, Siburg, and Stoimenov \citep{MR3024030}. Compared to the existing ones, this new pair of correlation measure and coefficient appears to have some unusual properties including, in particular, that
\begin{enumerate}[itemsep=-.5ex,label=(\arabic*)]
  \item the measure is between 0 and 1, is 0 if and only if $X$ and $Y$ are independent, and is 1 if and only if $Y$ is a measurable function of $X$ almost surely;
  \item the correlation coefficient has a very simple expression and is an (almost surely) consistent estimator of the measure as long as $Y$ is not almost surely a constant.
\end{enumerate}

Due to the above attractive properties, Chatterjee's rank correlation is an appealing choice for measuring bivariate association strength, especially in detecting perfect functional dependence \citep{cao2020correlations}. On the other hand, for testing $H_0$ in \eqref{eq:H0}, we have known that Chatterjee's proposal will suffer from a lack of power; cf. the claim made in \citet[Section 7]{chatterjee2020new}, the analysis conducted in \cite{cao2020correlations}  and \cite{shi2020power}, as well as the results in a very recent study \citep{shi2021ac}.  As a matter of fact, the critical detection boundary of the test based on Chatterjee's rank correlation was calculated to be at $n^{-1/4}$, which is much slower than the usual parametric $n^{-1/2}$ rate \citep{auddy2021exact}. These analyses thus motivate revising Chatterjee's original proposal to be able to not only detect perfect functional dependence but also attain (near) parametric efficiency in testing independence.

% and performed power analyses and discovered the unfortunately powerless of Chatterjee's rank correlation. \cite{auddy2021exact} further established the exact detection boundary $n^{-1/4}$ compared to the critical detection boundary $n^{-1/2}$.

In this paper, we make such a revision by encouraging incorporating a diverging $M=M_n$ many {\it right} nearest neighbors (NNs) into the construction of the correlation coefficient, which we show is still an almost surely consistent estimator of \cite{MR3024030}'s measure of dependence as long as $M/n\to 0$. One could then regard the revised statistic as an extension of Chatterjee's original one from $1$-NN-based to $M$-NN-based. Notably speaking, similar ideas were already pursued in \citet[Equations (3.3) and (8.5)]{deb2020kernel}; see also \cite{MR3992389} for a related proposal that approximates the mutual information \citep{MR3909934} using $M$-NN-based statistics. However, our approach to incorporating more NNs is distinct from theirs (cf. Remark~\ref{remark:deb} in Section~\ref{sec:prelim}). In addition, for guaranteeing a normal limiting null distribution, %asymptotic normality of the proposed statistics under the null,
\cite{deb2020kernel} required a very small $M$ of order Poly-$\log n$ (cf. \citet[Theorem 4.1]{deb2020kernel}). In contrast, the most interesting region in our study takes place when $M$ scales at nearly the same order as $n$; see Section~\ref{sec:alter} ahead.

One main ingredient of this paper pertains to a local power analysis of the proposed revised Chatterjee's rank correlation coefficients. For facilitating the presentation, our attention is restricted to the Gaussian rotation model that is benchmark in independence testing and has been widely adopted in literature; cf. \citet[Page 301]{MR79384},  \citet[Section 4]{MR3737306},  \citet[Section 4.2]{MR4185806}, \citet[Section 5]{shi2020rate}, and \citet[Section 3]{shi2020power}. Considering the Pearson correlation between $X$ and $Y$ in a local alternative sequence to be $\rho_n$, we show that the test based on the revised Chatterjee's rank correlation has power tending to one as long as
\[
 \frac{|\rho_n|}{[(n^{1/2}M^{-3/2}) \vee M^{-1/2}] \wedge [(nM)^{-1/4} \vee (n^{-1/2}M^{1/4})]} \to \infty;
\]
here $\vee$ and $\wedge$ represent the maximum and minimum of the two numbers besides it, respectively. In particular, as $M/n$ slowly converges to 0 , the denominator in the above fraction can be arbitrarily close to $n^{-1/2}$, the well known parametric detection boundary \citep{MR2135927}; on the other hand, as $M$ diverges more and more slowly to infinity, the boundary tends to $n^{-1/4}$, the critical detection boundary of Chatterjee's original statistic derived in \cite{auddy2021exact}.

Technically speaking, our analysis hinges on a careful (sharp up to some $\log n$ terms) calculation of the proposed correlation coefficients' means and variances
%The main contribution of this paper is to establish the detection boundary of our new correlation coefficient. The analysis heavily hinges on the mean and variance of
under both null and local alternatives. A particular focus is on such $M$ that can scale fast with and even at a rate close to $n$. Analogous results in \cite{chatterjee2020new},  \cite{deb2020kernel}, \cite{auddy2021exact}, and \cite{shi2021ac} are not quite helpful in this regime since they are focused on small $M$ that is either fixed or scales to infinity at a sub-polynomial rate. More specifically,
\begin{itemize}
\item[(i)] we obtain explicit forms of the statistics' means and variances under the null (cf. Theorem~\ref{thm:null,mean,var} ahead) via a brute-force combinatorial analysis that is in contrast to existing ones; the latter is only applicable to small $M$;
\item[(ii)] we establish sharp bounds (up to some $\log n$ terms) for the statistics' means and variances under local alternatives (cf. Theorem~\ref{thm:alter,mean,var} ahead). Notably speaking, the interplay between different units in our formulation of the correlation coefficients is remarkably more sophisticated as $M$ is large, when those units with not enough right nearest neighbors need to be handled carefully, while negligible if $M$ is small. %Our analysis is also based on some revised techniques about $K$-nearest neighbors.
\end{itemize}
To complete the story, a central limit theorem of the proposed statistics under the null is also established whenever $M$ scales at a slower rate than $n^{1/4}$, which however is too slow to be helpful in making the corresponding test attain near-parametrical efficiency.

%The main technical challenges are from the fact that we incorporate an increasing number of right nearest neighbors, and then the analogous results in \cite{chatterjee2020new} and \cite{auddy2021exact} cannot be applied. .

\paragraph*{Paper organization.}
The rest of the paper is organized as followed. Section~\ref{sec:prelim} reviews the correlation measure proposed by \cite{MR3024030} and introduces our revised correlation coefficients of \cite{chatterjee2020new}. Section \ref{sec:test} presents the according tests of independence and establishes their size validity and consistency. Section~\ref{sec:alter} presents a local power analysis of the proposed tests with (sufficient) detection boundaries under Gaussian rotation models provided. Section \ref{sec:sim} illustrates the empirical performance of the proposed statistics via  finite-sample studies. Section \ref{sec:main-proof} provides the proof of the main results in this manuscript, with the rest proofs and auxiliary results relegated to an appendix.

\paragraph*{Notation.}
For any integer $n\ge 1$, let $\zahl{n}:= \{1,2,\ldots,n\}$ and $n!$ be the factorial of $n$.
A set consisting of distinct elements $x_1,\dots,x_n$ is written as either $\{x_1,\dots,x_n\}$ or $\{x_i\}_{i=1}^{n}$.
The corresponding sequence is denoted $[x_1,\dots,x_n]$ or $[x_i]_{i=1}^{n}$.
The notation $\ind(\cdot)$ is saved for the indicator function.
For a sequence of real numbers $a_1,\ldots,a_n$, we use $(a_1,a_2,\ldots,a_n)$ as a shorthand of $(a_1,a_2,\ldots,a_n)^\top$.
For any $a,b \in \R$, write $a \vee b = \max\{a,b\}$ and $a \wedge b = \min\{a,b\}$.
For any two real sequences $\{a_n\}$and $\{b_n\}$, write $a_n \lesssim b_n$ (or equivalently, $b_n \gtrsim a_n$) if there exists a universal constant $C>0$ such that $a_n/b_n \le C$ for all sufficiently large $n$, and write $a_n \prec b_n$ (or equivalently, $b_n \succ a_n$) if $a_n/b_n \to 0$ as $n$ goes to infinity. Write $a_n = O(b_n)$ if $\lvert a_n \rvert \lesssim b_n$ and $a_n = o(b_n)$ if $\lvert a_n \rvert \prec b_n$.
%\fbox{Notice that I removed "positive" from the requirements on $a_n, b_n$'s. }
For any random variable $Z$, $\P_Z$ represents its law.

\section{Correlation measures and coefficients}\label{sec:prelim}

In the sequel, when considering correlation, we use the term ``correlation measure'' to represent population quantities and ``correlation coefficient'' to represent sample quantities. Denote the joint bivariate distribution function of $(X,Y)$ by $F_{X,Y}$ and the marginal distribution functions of $
X$ and $Y$ by $F_X$ and $F_Y$, respectively. Let $(X_1,Y_1), \ldots, (X_n,Y_n)$ be $n$ independent copies of $(X,Y)$. %The goal is to test the null hypothesis based on $(X_i,Y_i)$'s.
Throughout the rest of this manuscript, we assume $(X,Y)$ to be continuous, i.e., $F_{X,Y}$ is a continuous function. This requirement ensures that with probability one there is no tie in the observed data. Denote $\mathcal{P}_c$ to be the family of bivariate probability measures of $(X,Y)$ such that it is continuous. % and $Y$ is not almost surely a constant.

\subsection{Chatterjee's rank correlation}

This section introduces Chatterjee's approximation strategy to the following measure of dependence between $X$ and $Y$, introduced in \cite{MR3024030}: %, using simple rank-based statistics:
\begin{align}\label{eq:xi}
\xi=\xi(X,Y):=\;&  \frac{\scaleint{4.5ex}\,\Var\big\{\E\big[\ind\big(Y\geq y\big) \given X \big] \big\} \d F_{Y}(y)}{\scaleint{4.5ex}\,\Var\big\{\ind\big(Y\geq y\big)\big\}\d F_{Y}(y)}.
\end{align}

Compared to many other popular ones \citep{MR0029139, MR0125690, yanagimoto1970measures, MR3178526, shi2020power}, the correlation measure $\xi$ is to us appealing due to its consistency against dependence \citep{MR3842884} as well as capability of detecting perfect functional dependence \citep{cao2020correlations}, which we summarize below.

%enjoys several appealing properties including, in particular, an extremal property (to \cite{cao2020correlations}) that is apparently not satisfied by m

\begin{proposition}[Theorem 2 in \cite{MR3024030}, Theorem 1.1 in \cite{chatterjee2020new}]\label{prop:corr}
%Supposing that $Y$ is not almost surely a constant, $\xi$ is then well-defined and
For arbitrary $\P_{(X,Y)}$ such that $Y$ is not almost surely a constant, $\xi$ belongs to the interval $[0,1]$ and
\begin{itemize}
\item[(1)] consistency of the measure: $\xi = 0$ if and only if $X$ and $Y$ are independent;
\item[(2)] detectability of perfect functional dependence: $\xi = 1$ if and only if $Y$ is equal to a measurable function of $X$ almost surely.
\end{itemize}
\end{proposition}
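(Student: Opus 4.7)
The plan is to derive all three claims from the law of total variance applied pointwise in $y$, which decomposes the denominator of $\xi$ as the sum of two non-negative pieces, one of which is exactly the numerator. This immediately yields the bounds $\xi\in[0,1]$ and reduces each equality case to the vanishing of one of these two pieces.

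Concretely, writing $N$ and $D$ for the numerator and denominator of $\xi$ in \eqref{eq:xi}, I would first observe that $D=\int F_Y(y)(1-F_Y(y))\,dF_Y(y)$ is strictly positive whenever $Y$ is not almost surely a constant, so $\xi$ is well defined. Then by the law of total variance applied at each $y$,
\[\Var\{\ind(Y\geq y)\}=\E\{\Var(\ind(Y\geq y)\mid X)\}+\Var\{\E[\ind(Y\geq y)\mid X]\},\]
and integrating against $dF_Y(y)$ yields $D=\int \E\{\Var(\ind(Y\geq y)\mid X)\}\,dF_Y(y)+N$, so $0\le N\le D$ and hence $\xi\in[0,1]$.

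For part (1), $\xi=0$ is equivalent to $N=0$, i.e., $\E[\ind(Y\geq y)\mid X]=F_Y(y)$ almost surely for $F_Y$-a.e.\ $y$. Standard measure-theoretic bookkeeping (choose a countable dense set of such $y$'s and invoke monotonicity together with a $\pi$-$\lambda$ argument) promotes this to the statement that the conditional law of $Y$ given $X$ coincides with its marginal law, i.e., $X$ and $Y$ are independent. The converse direction is immediate upon substitution.

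For part (2), $\xi=1$ is equivalent to $\E\{\Var(\ind(Y\geq y)\mid X)\}=0$ for $F_Y$-a.e.\ $y$, so $\ind(Y\geq y)$ equals (almost surely) some measurable $h_y(X)$ for $F_Y$-a.e.\ $y$. The converse direction (that $Y=g(X)$ a.s.\ forces every such indicator to be a function of $X$) is trivial. The main obstacle I anticipate is the forward direction: one must synthesize the countably many ``$\ind(Y\geq y)=h_y(X)$ a.s.''\ statements into a single almost-sure identity $Y=g(X)$. My plan is to fix a countable dense $Q\subset\R$ inside the good set of $y$'s, discard the countable union of associated null sets, and on the resulting full-measure event define $g(x):=\sup\{y\in Q:h_y(x)=1\}$ (with some convention when this set is empty); measurability of $g$ and the identity $Y=g(X)$ a.s.\ then follow from the monotonicity of $y\mapsto\ind(Y\geq y)$ and the density of $Q$.
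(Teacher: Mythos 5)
The paper does not prove this proposition: it is stated as a verbatim citation of \citet[Theorem~2]{MR3024030} and \citet[Theorem~1.1]{chatterjee2020new}, so there is no in-paper argument to compare against. Your law-of-total-variance decomposition, $\Var\{\ind(Y\geq y)\}=\E\{\Var(\ind(Y\geq y)\given X)\}+\Var\{\E[\ind(Y\geq y)\given X]\}$ integrated against $\d F_Y$, is nevertheless the correct and standard route, and it does reduce the proposition to the two degeneracy conditions you isolate. One slip: $\Var\{\E[\ind(Y\geq y)\given X]\}=0$ forces $\E[\ind(Y\geq y)\given X]$ to equal its unconditional mean $\P(Y\geq y)=1-F_Y(y^-)$, not $F_Y(y)$.

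The substantive caveat is that the ``standard measure-theoretic bookkeeping'' is where most of the content of the theorem actually lives, and a countable-dense-set-plus-$\pi$-$\lambda$ sweep as sketched does not quite close it. The exceptional set $B$ of bad $y$'s is only $F_Y$-null, so it need not be Lebesgue-null, and the good set $\R\setminus B$ need not be dense in $\R$ (only in the support of $Y$); moreover, the monotone, left-continuous functions $y\mapsto\P(Y\geq y\given X=x)$ and $y\mapsto\P(Y\geq y)$ are not automatically pinned down by their values on a set that is merely $F_Y$-full. The ingredient you leave implicit is that, for $\P_X$-a.e.\ $x$, the conditional law $\P(Y\in\cdot\given X=x)$ is absolutely continuous with respect to $\P_Y$ and hence also assigns measure zero to $B$; combined with left-continuity of both survival functions and with the fact that no $y\in B$ can be an atom of either measure, this lets you upgrade the pointwise identity from $F_Y$-a.e.\ $y$ to every $y$. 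Once that is in hand, part~(1) follows by uniqueness of measures agreeing on all half-lines $[y,\infty)$, and for part~(2) the conditional law is a.s.\ a Dirac mass $\delta_{g(x)}$; your $g(x)=\sup\{y\in Q:h_y(x)=1\}$ then recovers $Y$ almost surely provided $Q$ is taken to include every $F_Y$-atom together with a countable set dense in the support of $Y$.
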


To estimate $\xi$, Chatterjee \citep{chatterjee2020new} pioneered an ingenious rank-based approach that, to the authors' knowledge, has not been explored before in literature. To present his idea in a formal way, let's first introduce some necessary notation. Define
\begin{equation}\label{eq:Ri}
R_{i}:=\sum_{j=1}^{n}\ind\Big(Y_{j}\le Y_{i}\Big)
\end{equation}
to be the rank of $Y_i$ among $\{Y_1,\ldots,Y_n\}$. For any $i \in \zahl{n}$ and $m \in \zahl{n}$, define
\[
j_m(i) :=  \begin{cases}
               \text{the index of the $m$-th right NN of $X_i$}, &\text{if the rank of $X_i$ is smaller than $n-m+1$;} \\
                i,  & \text{if not.}
          \end{cases}
\]
In other words, $j_m(i)$ is the index $j \in \zahl{n}$ such that
\[
  \sum_{k=1}^n \ind(X_i < X_k \le X_j) = m
\]
if there exists such a $j \in \zahl{n}$; otherwise, let $j_m(i) = i$.

With these notation introduced, results on Chatterjee's rank correlation coefficient can then be summarized as follows.

\begin{proposition}[Theorems 1.1 and 2.1 in \cite{chatterjee2020new}]\label{prop:chatterjee} Chatterjee's rank correlation coefficient can be formulated as
\begin{align}\label{eq:original}
\xi_n := 1 - \frac{3 \sum_{i=1}^{n} \big\lvert R_{j_1(i)} - R_i \big\rvert }{n^2-1}.
\end{align}
In addition, for any $\P_{(X,Y)}\in\mathcal{P}_c$,
  \begin{enumerate}[itemsep=-.5ex,label=(\roman*)]
    \item \label{thm:chatterjee-1} $\xi_n$ converges almost surely to $\xi$;
\item[(ii)]\label{thm:chatterjee-2} further assuming that $Y$ is independent of $X$, we have $\sqrt{n}\xi_n$ converges in distribution to $N(0,2/5)$, the Gaussian distribution with mean 0 and variance $2/5$.
\end{enumerate}
\end{proposition}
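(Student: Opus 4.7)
The plan is to reorder the sample so that $X_1 < X_2 < \cdots < X_n$ (this relabeling preserves $\xi_n$). Under this ordering, $j_1(i) = i+1$ for $i \le n-1$ and $j_1(n) = n$, so the boundary summand vanishes and
\[
\xi_n = 1 - \frac{3}{n^2-1}\sum_{i=1}^{n-1}\lvert R_{i+1} - R_i\rvert.
\]
Both parts are analyzed in this representation.

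For part (i), I would apply the identity $|a-b| = a + b - 2\min(a,b)$. The two linear pieces $\sum R_i$ and $\sum R_{i+1}$ differ only by boundary terms and each equals $n(n+1)/2 + O(n)$, so after division by $n^2-1$ they contribute a deterministic constant. The remaining piece $\frac{1}{n^2}\sum_i \min(R_i,R_{i+1}) = \frac{1}{n}\sum_i F_{Y,n}(Y_i \wedge Y_{i+1})$, where $F_{Y,n}$ is the empirical c.d.f.\ of $Y$, is approximated by $\frac{1}{n}\sum_i F_Y(Y_i \wedge Y_{i+1})$ via Glivenko--Cantelli. The heart of the argument is then to show
\[
\frac{1}{n}\sum_{i=1}^{n-1} F_Y(Y_i \wedge Y_{i+1}) \xrightarrow{\mathrm{a.s.}} \E\bigl[F_Y(Y \wedge Y') \bigm| X = X'\bigr],
\]
where on the right $(X,Y), (X',Y')$ are two iid copies of $(X,Y)$ collapsed onto the diagonal $X = X'$; this arises because the one-dimensional NN concentration $|X_{i+1}-X_i| \to 0$ holds at all continuity points of $F_X$. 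Combining this with the identity $\E|U_1-U_2| = 2\int H(u)(1-H(u))\,du$ for iid $U_1,U_2\sim H$ (applied conditionally on $X$) identifies the limit of $\xi_n$ with the right-hand side of \eqref{eq:xi}. The delicate point is the displayed SLLN: the summands are not iid since the pairing is induced by the random $X$-ordering, so an exchangeability argument on the sorted ranks, or equivalently a U-statistic projection onto the NN-pair kernel, is required.

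For part (ii), under $H_0$ the vector $(R_1,\ldots,R_n)$ on the $X$-sorted indices is a uniformly random permutation $\pi$ of $\zahl{n}$, independent of the $X$-values, so the analysis becomes purely combinatorial. The plan is: (a) verify $\E|\pi(i+1)-\pi(i)| = (n+1)/3$ by direct enumeration, which immediately yields $\E[\xi_n] = 0$; (b) compute $\Var\!\bigl(\sum_i |\pi(i+1)-\pi(i)|\bigr)$ by expanding into diagonal and cross terms, where only diagonal and adjacent (lag-$1$) cross terms contribute nontrivially, and verify via inclusion--exclusion that the resulting scaling gives $n\Var(\xi_n) \to 2/5$; (c) establish asymptotic normality via a CLT for smooth functionals of a uniform random permutation with short-range (``$2$-dependent'') structure, for instance through Stein's method on exchangeable pairs or a martingale CLT after sequentially revealing $\pi(1), \pi(2), \ldots, \pi(i)$. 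I expect the principal obstacle to be the variance step: the adjacent gaps $|\pi(i+1)-\pi(i)|$ and $|\pi(i+2)-\pi(i+1)|$ share the random value $\pi(i+1)$, so their covariance must be computed exactly, and matching the total contribution with the target constant $2/5$ is what makes the calculation nontrivial.
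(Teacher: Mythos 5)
This proposition is cited from \cite{chatterjee2020new} rather than proved in the paper, so there is no internal proof to match verbatim; the closest analogues are the paper's proofs of Theorem~\ref{thm:asconverge} (strong consistency of $\xi_{n,M}$) and Theorem~\ref{thm:null,mean,var}, which I use as the benchmark.

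Your Part~(i) argument is conceptually in line with what the paper does for general $M$: rewrite via $|a-b|=a+b-2\min(a,b)$, replace the empirical c.d.f.\ by $F_Y$ through Glivenko--Cantelli, and reduce to a law of large numbers for $\frac1n\sum_i F_Y(Y_i\wedge Y_{j_1(i)})$ driven by nearest-neighbor concentration $X_{j_1(i)}-X_i\to 0$. The subtlety you flag (the summands are not i.i.d.\ because the pairing is induced by the $X$-order) is exactly where the paper invokes the Lusin-type approximation and a bounded-difference concentration inequality to upgrade convergence of expectations to a.s.\ convergence; that is indeed the ``delicate'' step, and you are right to single it out. Note also the linear pieces are not purely deterministic --- they contribute $-2 + \tfrac{3(R_1+R_n)}{n^2-1}$, with the second term random but $O(1/n)$, which is harmless both for the a.s.\ limit and at $\sqrt n$ scale.

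Part~(ii) has a genuine error in step~(b). You claim that in the variance of $\sum_{i=1}^{n-1}|\pi(i+1)-\pi(i)|$ ``only diagonal and adjacent (lag-$1$) cross terms contribute nontrivially.'' That is false: a uniform random permutation is sampling without replacement, so even two \emph{disjoint} gaps $|\pi(i+1)-\pi(i)|$ and $|\pi(j+1)-\pi(j)|$ (with $\{i,i+1\}\cap\{j,j+1\}=\emptyset$) are negatively correlated, with
\[
\Cov\big[|R_1-R_2|,\,|R_3-R_4|\big]=-\frac{n+1}{45},
\]
which follows from the paper's Lemma~\ref{lemma:perm} after expanding $|a-b|=a+b-2\min(a,b)$. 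Each such covariance is only $O(n)$, but there are $\Theta(n^2)$ disjoint pairs, so their total contribution is $\Theta(n^3)$ --- the same order as the diagonal block. Concretely,
\[
\Var\Big[\sum_{i=1}^{n-1}|R_{i+1}-R_i|\Big]=(n-1)\frac{(n+1)(n-2)}{18}+2(n-2)\frac{(n+1)(n-8)}{180}-(n-2)(n-3)\frac{n+1}{45}=\frac{(n+1)(n-2)(4n-7)}{90},
\]
which yields $n\Var[\xi_n]\to 2/5$. If you discard the disjoint block, the leading-order total becomes $n^3/18+n^3/90=n^3/15$ and you obtain $n\Var[\xi_n]\to 3/5$, not $2/5$. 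The paper's own combinatorial count for $\xi_{n,M}$ (Step~II-3 of the proof of Theorem~\ref{thm:null,mean,var}, case~(c)) carries precisely this disjoint-pair covariance, and it is what produces the cancellation down to the correct constant. Your step~(c) framing (``$2$-dependent'') has the same blind spot: the local-dependence structure of the summands is short-range, but the permutation constraint injects a weak $O(1/n)$ global negative dependence that the variance must absorb; Stein's method for exchangeable pairs or the local-dependence normal approximation of \cite{MR2435859} (as used in the paper's Theorem~\ref{thm:CLT}) handles this correctly, but a naive $m$-dependent CLT would not.
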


%There is no tie among $\{X_i\}_{i=1}^n$ with probability 1 since $(X,Y)$ is continuous. Then there can be at most one $j \in \zahl{n}$ satisfy this condition for any $i \in \zahl{n}$ and $m \in \zahl{n}$ almost surely.

\subsection{The revised Chatterjee's rank correlations}

The formulation of Chatterjee's rank correlation in \eqref{eq:original} suggests it is an 1-NN-based estimator of Dette–Siburg–Stoimenov’s correlation measure $\xi$, and thus intuitively will suffer from similar efficiency loss as other 1-NN-based estimators in various applications \citep{MR2083,MR532236,MR947577,MR1212489,MR1701112,MR3961499}.

Indeed, recent results have exhibited that a test of independence based on Proposition \ref{prop:chatterjee}(ii) is inefficient in common classes of smooth alternatives; cf. \cite{cao2020correlations} and \cite{shi2020power}. In a more recent manuscript, \cite{auddy2021exact} established that the critical detection boundary of $\xi_n$ lies at $n^{-1/4}$, which is substantially slower than the parametric $n^{-1/2}$ one. These results call for variants of $\xi_n$ that are able to boost the power of independence tests% while maintaining the nice properties of $\xi_n$
; cf. a clear message delivered in \citet[Remark 4.3]{deb2020kernel}.

In this paper we answer this call by introducing the following {\it revised Chatterjee's rank correlations} that allow one to take each element's $M$ right nearest neighbors into account:
\begin{align}\label{eq:xin}
\xi_{n,M} =\xi_{n,M}\Big(\big[(X_i,Y_i)\big]_{i=1}^{n}\Big):= -2 + \frac{6 \sum_{i=1}^{n} \sum_{m=1}^M \min\big\{R_i, R_{j_m(i)}\big\} }{(n+1)[nM+M(M+1)/4]}.
\end{align}
%with $M = M_n \in \zahl{n}$.
%Of note, although $M$ can be regarded as a tuning parameter, for reasons to be disclosed soon, we recommend an $$

Several remarks are in order.

\begin{remark}[Formulation of $\xi_{n,M}$] In \eqref{eq:xin}, $M=M_n\in\zahl{n}$ represents the number of right nearest neighbors the proposed correlation coefficient $\xi_{n,M}$ will exploit and is allowed to increase to infinity with $n$. The denominator, $(n+1)[nM+M(M+1)/4]$, is added to ensure $\E(\xi_{n,M})=0$ under $H_0$ (checking Theorem \ref{thm:null,mean,var} ahead). For reasons to be detailed later (cf. Theorem \ref{thm:local power}), we recommend a sufficiently large $M$ for improving testing efficiency against dependence.
\end{remark}

\begin{remark}[Distribution-freeness of $\xi_{n,M}$ under $H_0$] Examining its formulation, it is immediate that for any $M\in\zahl{n}$ the value of $\xi_{n,M}$ only depends on the coordinate-wise ranks of $[(X_i,Y_i)]_{i=1}^n$. The statistic $\xi_{n,M}$ is thus a rank correlation coefficient and accordingly enjoys all the nice properties shared by rank correlations, including in particular the distribution-freeness (i.e., of a distribution that is not dependent on $\P_{(X,Y)}$) under $H_0$ \citep{MR1680991,MR4185806}.
\end{remark}

\begin{remark}[Relation between $\xi_{n,M}$ and $\xi_n$]\label{remark:bias} In contrast to the coefficient $\xi_n$ introduced in \eqref{eq:original}, in constructing $\xi_{n,M}$ we take the minimum instead of absolute difference. This is an idea pursued in \cite{azadkia2019simple} and \cite{deb2020kernel} as well; cf. the construction of the correlation coefficient $T_n$ in \cite{azadkia2019simple} and Section 8.3.1 in \cite{deb2020kernel}. However, it is worth pointing that, as $M=1$,
  \[
    \xi_{n,1} = -2 + \frac{6 \sum_{i=1}^{n} \min\big\{R_i,R_{j_1(i)}\big\} }{(n+1)(n+1/2)}
  \]
reduces to $\xi_n$ with an asymptotically ignorable small order term; note that $|x|+|y|-|x-y|=2\min\{x,y\}$ for any $x,y\geq 0$. More specifically, one has
  \[
    \lvert \xi_{n,1} - \xi_n \rvert = O(1/n), ~~~\text{(with probability 1)}
  \]
so that the difference is of order $n^{-1}$ and thus won't affect the corresponding asymptotic behavior.
\end{remark}

\begin{remark}[Relation to \cite{deb2020kernel}'s proposal]\label{remark:deb} Our idea to scale up $M$ for boosting the power of rank-based tests is of course not new, and is particularly related to an earlier proposal made in \cite{deb2020kernel}; see, e.g.,  Equations (3.3) and (8.5) therein. It is hence helpful to point out our new discoveries. First of all, it was observed that using the {\it right} nearest neighbors, in contrast to using nearest neighbors in both directions, is important. In particular, it plays a central role in our analysis to show that a test based on $\xi_{n,M}$ can reach near-parametric efficiency as $M$ is close to $n$; cf. Theorem \ref{thm:local power} as well as the finite-sample studies in Section \ref{sec:sim}. Secondly, in \eqref{eq:xin} the normalizing constant $(n+1)[nM+M(M+1)/4]$ was carefully chosen so that the expectation of $\xi_{n,M}$ is exactly zero under $H_0$, which holds for arbitrary $M\in\zahl{n}$. This type of normalization is important if $M$ is large since in our formulation there exists an un-ignorable fraction of $i$'s such that $j_m(i)=i$ for some $m\in\zahl{M}$.%Sticking to using right NNs is crucial in Theorem \ref{thm:local power} for showing that a test based on $\xi_{n,M}$ can reach a near parametric efficiency as $M$ is close to $n$.
%\fbox{please fill in something; I will revise further}
%Compared with \citet[Equ. (1.1)]{deb2020kernel}, \eqref{eq:xin} uses ranks and adopts the minimum instead of absolute difference. While absolute difference can be decomposed into minimum and ranks, the sum of ranks is stochastic due to the marginal units with no enough $M$ right nearest neighbors. The stochasticity is not negligible for large $M$.

\end{remark}

\begin{remark}[Extremal properties of $\xi_{n,M}$] If $Y=f(X)$ for some strictly increasing function $f(\cdot)$, one has $\min\big\{R_i,R_{j_m(i)}\big\} = R_i$ and thus
  \[
    \xi_{n,M} = 1-\frac{3(M+1)/4}{n+(M+1)/4}.
  \]
On the other hand, if $f(\cdot)$ is  a strictly decreasing function, one has $\min\{R_i,R_{j_m(i)}\} = R_{j_m(i)} = R_i-m$ if $R_i \ge m+1$ and $\min\{R_i,R_{j_m(i)}\} = R_i$ if $R_i \le m$. Some simple calculations then yield
  \[
    \xi_{n,M} = 1-\frac{3(M+1)[(5n+1)/4-(2M+1)/3]}{(n+1)[n+(M+1)/4]}.
  \]
In the above two cases, $\xi_{n,M}$ are both equal to 1 up to a bias of order $M/n$. %Same results can be generalized to strictly increasing and decreasing functions.
\end{remark}

\begin{remark}[Finite-sample range of $\xi_{n,M}$] An equivalent form of \eqref{eq:xin} is
\[
  \xi_{n,M} = -2 + \frac{3 \sum_{i=1}^{n} \sum_{m=1}^M (R_i + R_{j_m(i)} - \lvert R_{j_m(i)} - R_i \rvert) }{(n+1)[nM+M(M+1)/4]}.
\]
The largest possible value of the correlation coefficient is then
\[
  1-\frac{3(M+1)/4}{n+(M+1)/4},
\]
which is attained when $Y=X$ almost surely. On the other hand, since each $R_i$ can appear in the summation of \eqref{eq:xin} for at most $2M$ times, a straightforward lower bound of $\xi_{n,M}$ is
\[
  -\frac{1}{2} + \frac{3[n-(n+1)(M+1)/4]}{2(n+1)[n+(M+1)/4]}.
\]
%\fbox{which can be attained when?} This lower bound may not be attainable. However, we observe that t
The finite-sample range of $\xi_{n,M}$ is hence $[-1/2,1]$ up to a bias of order $M/n$, which is analogous to that of Chatterjee's correlation coefficient (cf. Remark 9 under \citet[Theorem 1.1]{chatterjee2020new}).

\end{remark}

\begin{remark}[Computation complexity]\label{remark:time}
To compute \eqref{eq:xin}, one needs to first sort the samples of both $\{X_i\}_{i=1}^n$ and $\{Y_i\}_{i=1}^n$, and then performs the summation over $nM$ terms. The according computation complexity is $O(n\log n + nM)$. If $M$ is $O(\text{Poly-}\log n)$, it is nearly linear. On the other hand, the computation of $\xi_{n,M}$ will tend to be quadratic as $M$ is closer to $n$, a cost seemingly inevitable.
\end{remark}

We close this section by establishing strong consistency for the proposed rank correlation coefficient $\xi_{n,M}$ that is in parallel to Proposition \ref{prop:chatterjee}\ref{thm:chatterjee-1}.

\begin{theorem}[Strong consistency of $\xi_{n,M}$]\label{thm:asconverge}
For any $\P_{(X,Y)}\in \mathcal{P}_c$, $\xi_{n,M}$ converges almost surely to $\xi$ as long as $M/n\to 0$.
\end{theorem}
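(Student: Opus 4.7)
The plan is to follow \cite{chatterjee2020new}'s roadmap for proving $\xi_n \to \xi$ almost surely, generalized to accommodate a diverging $M$ with $M/n\to 0$. Since $\xi_{n,M}$ depends only on coordinate-wise ranks and $\xi$ is invariant under strictly increasing marginal transformations, one may assume WLOG that $X, Y \sim U[0,1]$ and, after sorting so that $X_1 < \cdots < X_n$, that $j_m(i) = i + m$ for $i \le n-m$ and $j_m(i) = i$ otherwise. The boundary contribution (where $j_m(i)=i$) totals at most $nM(M+1)/2$, which divided by the normalizer $(n+1)[nM + M(M+1)/4] = n^2 M(1+o(1))$ is $O(M/n) = o(1)$. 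By Glivenko-Cantelli and the $1$-Lipschitz property of $\min$, one may further replace $\min\{R_i, R_{i+m}\}/n$ by $h(Y_i, Y_{i+m}) := \min\{F_Y(Y_i), F_Y(Y_{i+m})\}$ at a uniform a.s.\ cost of $o(1)$. The problem reduces to establishing
\[
g_n := \frac{1}{nM}\sum_{m=1}^M \sum_{i=1}^{n-m} h(Y_i, Y_{i+m}) \;\longrightarrow\; \E\bigl[\min\{U, U^\prime\}\bigr] \quad \text{a.s.,}
\]
with $U, U^\prime$ conditionally i.i.d.\ given $X$; combined with the identity $\xi = 6\,\E[\min\{U,U^\prime\}] - 2$ (obtained by substituting $u = F_Y(y)$ into \eqref{eq:xi}) this then yields $\xi_{n,M}\to \xi$.

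The convergence of $g_n$ is split into variance and bias. Conditional on the sorted $X$'s (uniform order statistics), the $Y_i$'s are independent; each $Y_i$ enters at most $2M$ bounded summands of $g_n$, so modifying one $Y_i$ alters $g_n$ by at most $2/n$. McDiarmid's bounded-differences inequality then gives $\Pr(\lvert g_n - \E[g_n\given X]\rvert > \varepsilon \given X) \le 2\exp(-\varepsilon^2 n/2)$, summable in $n$, so $g_n - \E[g_n\given X] \to 0$ a.s.\ by Borel-Cantelli. For the conditional mean, exploit the product form
\[
h(y,y^\prime) = \int_0^1 \ind(u \le F_Y(y))\,\ind(u \le F_Y(y^\prime))\,du,
\]
so that with $\bar q(u, x) := \Pr(F_Y(Y) \ge u \given X=x)$ one has $\E[g_n \given X] = (nM)^{-1}\sum_m \sum_i \int_0^1 \bar q(u, X_i)\bar q(u, X_{i+m})\,du$. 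Since $\E\lvert X_{i+m}-X_i\rvert = m/(n+1)$, the averaged $m$-gap is $O(M/n)\to 0$. Approximating $\bar q$ in $L^1(du \otimes dF_X)$ by a uniformly Lipschitz $\tilde q$ taking values in $[0,1]$, the gap bound controls the continuous piece via Lipschitzness of $\int \tilde q(u,x)\tilde q(u,x^\prime)du$, while the residual admits the pointwise bound $\lvert \bar q(u,x)\bar q(u,x^\prime) - \tilde q(u,x)\tilde q(u,x^\prime)\rvert \le \lvert (\bar q-\tilde q)(u,x)\rvert + \lvert (\bar q-\tilde q)(u,x^\prime)\rvert$, thereby turning the two-variable approximation error into a one-variable SLLN on $\{X_i\}$.

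The hard part will be this last step: because the paper assumes only continuity of $F_{X,Y}$, $\bar q(u,x)$ is merely measurable in $x$, so modulus-of-continuity arguments in $x$ are unavailable; the product-form decomposition of $h$ is essential to reduce the bivariate approximation of $H(x,x^\prime) := \int \bar q(u,x)\bar q(u,x^\prime)du$ to a univariate approximation of $\bar q$, circumventing any joint regularity hypothesis. A secondary technicality is upgrading the in-probability convergence provided by the mean-gap bound to almost-sure convergence, which can be handled via the subsequence $n_k = 2^k$ together with an interpolation on the unnormalized sums in the spirit of Chatterjee's $1$-NN treatment, with the key additional care being that every bound remain uniform in $M$ throughout the range $M/n\to 0$.
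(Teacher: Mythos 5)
Your proposal is correct in substance and takes a genuinely different route from the paper. The paper applies a bounded-differences inequality to $Q_{n,M}$ viewed as a function of all $n$ pairs $(X_i,Y_i)$ jointly (Lemma~9.11 of Chatterjee), so that only the deterministic convergence $\E[Q_{n,M}]\to Q$ is needed; that deterministic convergence is obtained by showing $X_{j_U(1)}\stackrel{\sf p}{\longrightarrow}X_1$ via a binomial tail bound, then applying Lusin's theorem to extend $f(X_{j_U(1)})\stackrel{\sf p}{\longrightarrow}f(X_1)$ to merely measurable $f$, and finishing with dominated convergence. You instead condition on $\mX$, apply McDiarmid to the $Y$'s alone, and then must separately show that the \emph{conditional} mean $\E[g_n\given\mX]$ converges almost surely; you do this via the product representation $h(y,y')=\int_0^1\ind(u\le F_Y(y))\ind(u\le F_Y(y'))\,du$, the rank transformation to $U[0,1]$, a gap bound, and an $L^1(\d u\otimes \d F_X)$ Lipschitz approximation of $\bar q$ that converts the bivariate approximation into a univariate SLLN. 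Both arguments are valid. The paper's unconditional McDiarmid is slicker but requires one to check carefully that modifying one pair $(X_\ell,Y_\ell)$ changes the statistic by $O(1/n)$ \emph{uniformly in $M$}: naively up to $M^2$ of the $(i,m)$ summands change, but a telescoping observation reduces the per-$i$ change to $O(1)$, hence $O(M)$ changed summands and an $O(1/n)$ bounded difference after normalization by $nM$; your conditional version sidesteps this by a cleaner count (each $Y_j$ enters at most $2M$ summands). Your approach is more quantitative in the gap control, and avoids invoking Lusin's theorem, at the cost of needing the extra SLLN/approximation argument. One minor over-engineering: the ``secondary technicality'' of upgrading in-probability control of the averaged $m$-gap to a.s.\ control via a dyadic subsequence is unnecessary. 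After the rank transformation to $U[0,1]$ and sorting, one has the deterministic telescoping bound $\frac{1}{nM}\sum_{m=1}^{M}\sum_{i\le n-m}(X_{i+m}-X_i)\le \frac{1}{nM}\sum_{m=1}^M m\,(X_{(n)}-X_{(1)})\le \frac{M+1}{2n}$, so the Lipschitz piece of the conditional mean decays to zero surely, and the residual piece is handled by the SLLN; no subsequence interpolation is required.
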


\section{Tests of independence}\label{sec:test}

\subsection{Elementary properties under the null}

We start with some elementary properties of $\xi_{n,M}$ when $Y$ is independent of $X$. %, including almost sure consistency and some asymptotical behaviors under the null. %) and $\sT_{\alpha,B}^{\xi_{n,M}^{\pm}}$ (size validity and consistency). 
To this end, we first establish the corresponding mean and variance of $\xi_{n,M}$.

\begin{theorem}[Mean and variance of $\xi_{n,M}$ under the null]\label{thm:null,mean,var} Assuming $\P_{(X,Y)}\in\mathcal{P}_c$ and $H_0$ holds, then $\E [\xi_{n,M}] = 0$. If further assuming $M \to \infty$ and $M/n\to0$ as $n \to \infty$, we have
  \[
    \Var [\xi_{n,M}] = \Big[ \frac{2}{5}\Big( \frac{1}{nM} \Big) + \frac{8}{15}\Big( \frac{M}{n^2} \Big) \Big] (1+o(1)).
  \]
\end{theorem}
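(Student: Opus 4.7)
By continuity of $(X,Y)$ and exchangeability of the sample, I would first condition on (equivalently, assume without loss of generality) the event $X_1<X_2<\cdots<X_n$. Under $H_0$ the $Y$-ranks $(R_1,\dots,R_n)$ remain a uniform random permutation of $\zahl{n}$, and the definition of $j_m(i)$ reduces to $j_m(i)=i+m$ when $i+m\le n$ and $j_m(i)=i$ otherwise. Call a pair $(i,m)$ \emph{degenerate} if $i+m>n$; the number of such pairs is exactly $M(M+1)/2$. For a non-degenerate summand, $(R_i,R_{i+m})$ is uniform over ordered pairs of distinct elements of $\zahl{n}$ and a one-line computation gives $\E\min\{R_i,R_{i+m}\}=(n+1)/3$, whereas a degenerate summand equals $R_i$ with mean $(n+1)/2$. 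Summing the two contributions produces $\E\bigl[\sum_{i,m}\min\{R_i,R_{j_m(i)}\}\bigr]=(n+1)[nM+M(M+1)/4]/3$, which after multiplying by $6$ and dividing by the normalization exactly cancels the $-2$ offset, yielding $\E[\xi_{n,M}]=0$ for every $M\in\zahl{n}$.

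For the variance, set $T_{i,m}:=\min\{R_i,R_{j_m(i)}\}$ and expand
\[
  \Var\Bigl[\sum_{i,m} T_{i,m}\Bigr]=\sum_{(i,m),(i',m')}\cov(T_{i,m},T_{i',m'}).
\]
I would classify each quadruple $(i,m,i',m')$ by (a) the degeneracy status of each summand and (b) the overlap structure of the two index sets $\{i,j_m(i)\}$ and $\{i',j_{m'}(i')\}$. The relevant overlap patterns are identical pairs (diagonal), sharing exactly one index---with subcases $i=i'$, $i+m=i'+m'$, $i+m=i'$, or $i=i'+m'$ depending on which endpoints coincide---and disjoint pairs. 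In each pattern the covariance depends only on the number of distinct indices and admits an exact closed form for the uniform permutation via the elementary identities $\Var(R_i)=(n^2-1)/12$ and $\cov(R_i,R_j)=-(n+1)/12$ for $i\ne j$, together with explicit finite-sum evaluations of $\E[\min\{R_a,R_b\}\min\{R_a,R_c\}]$ and $\E[\min\{R_a,R_b\}\min\{R_c,R_d\}]$ obtained by conditioning on the shared rank.

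The remaining step is combinatorial bookkeeping: count the number of index quadruples of each type as a function of $n, M$, and sum the exact covariances. The diagonal block has $nM$ summands of variance of order $n^2$; the one-overlap block has on the order of $nM^2$ summands of covariance of order $n^2$; the disjoint block has on the order of $n^2M^2$ summands of covariance of order $n$. Each individual block therefore produces contributions as large as $n^3M^2$, much larger than the eventual answer. Extensive cancellations---largely forced by the deterministic identity $\sum_k R_k=n(n+1)/2$---eliminate all terms of order strictly greater than $n^3M+n^2M^3$, leaving $\Var[\sum_{i,m}T_{i,m}]=\tfrac{1}{90}n^3M+\tfrac{2}{135}n^2M^3+o(n^3M+n^2M^3)$ under $M\to\infty$ and $M/n\to 0$; boundary corrections from the degenerate summands (of order $n^2M^2$) are negligible against the second surviving term whenever $M\to\infty$. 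Dividing by the squared normalization $(n+1)^2[nM+M(M+1)/4]^2\sim n^4M^2$ and multiplying by $36$ then delivers the claimed asymptotics. The main obstacle is precisely this bookkeeping: exact (not merely asymptotic) covariance formulas must be retained throughout, since many leading-order terms from distinct configurations cancel one another, and any coarser bound on an individual block would misidentify the constants $2/5$ and $8/15$ of the two surviving terms.
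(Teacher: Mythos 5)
Your proposal is correct and follows essentially the same route as the paper: condition on the $X$-ordering so the $Y$-ranks form a uniform permutation, split the index pairs $(i,m)$ into degenerate ($j_m(i)=i$) and non-degenerate classes to compute the mean exactly, and then expand the variance as a double sum of covariances classified by the overlap structure of $\{i,j_m(i)\}$ and $\{i',j_{m'}(i')\}$, using exact rank-covariance identities (the paper's Lemma~\ref{lemma:perm}) so that the large cancellations between blocks of order $n^3M^2$ can be tracked, leaving $\tfrac{1}{90}n^3M+\tfrac{2}{135}n^2M^3$. The paper packages the first step as a law-of-total-variance decomposition whose conditional-mean term vanishes under $H_0$, and your bookkeeping is summarized rather than written out, but the decomposition, the key combinatorial classification, and the resulting constants are the same.
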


\begin{remark}
Proposition \ref{prop:chatterjee} shows that the asymptotic null variance of $\sqrt{n}\xi_n$ is $2/5$ as $M=1$. In contrast, Theorem \ref{thm:null,mean,var} revealed that $\Var[\xi_{n,M}]$ scales at a rate of order $(nM)^{-1}\vee (M/n^2)$, which is always faster than $n^{-1}$ under the theorem conditions, and two constants $2/5$ and $8/15$ each governs one rate.
%\fbox{can we add a result on arbitrary $M$ that is not necessarily going to infinity?}
%\fbox{there is currently a gap between $2/5$ (as $M=1$) and $2$ (as $M\to\infty$ slowly).}
%\fbox{not essential; only add it if no substantial effort needed}
%Recall that in Proposition~\ref{prop:chatterjee}, the asymptotic variance of $\xi_n$ under the null is $2/5$. This does not contradict Theorem~\ref{thm:alter,mean,var} since we let $M \to \infty$. One can establish the explicit coefficient for each rate and check that the variance of $\xi_{n,M}$ is also $2/5$ when $M=1$.
%\end{remark}
%\begin{remark}
%  We can see that when $M$ increases to infinity with $n$, the variance is not of order $n^{-1}$ anymore. Suppose $M=n^\gamma$ for some $\gamma \in (0,1)$.
Indeed, the variance will achieve its lowest order $n^{-3/2}$ as $M$ is of order $n^{1/2}$, and  is of order close to $n^{-1}$ if $M$ is of order $n^{\gamma}$ with $\gamma$ close to either 0 or 1.
\end{remark}

We then establish a central limit theorem (CLT) for $\xi_{n,M}$ under the null. It is in parallel to Proposition \ref{prop:chatterjee}\ref{thm:chatterjee-2}; notice that, compared to Theorems \ref{thm:asconverge} and \ref{thm:null,mean,var}, a strong scaling requirement, $M\prec n^{1/4}$, is enforced for guaranteeing its validity. 

\begin{theorem}[Central limit theorem for $\xi_{n,M}$ under the null]\label{thm:CLT}
Assume $\P_{(X,Y)}\in\mathcal{P}_c$ and $H_0$ holds. If further assuming $M \to \infty$ and $M \prec n^{1/4}$ as $n \to \infty$, we have
  \[
    \sqrt{nM}\cdot\xi_{n,M} \text{ converges in distribution to } N(0,2/5).
  \]
\end{theorem}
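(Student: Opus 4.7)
My approach would be to establish the CLT via a martingale central limit theorem applied to the natural filtration in which the $Y$-values are revealed in $X$-sorted order. By the distribution-freeness of $\xi_{n,M}$ under $H_0$, I may assume without loss of generality that the $X_i$'s are in increasing order, so $j_m(i) = i + m$ whenever $i + m \le n$ and $j_m(i) = i$ otherwise; then $U_i := F_Y(Y_i)$ are iid $\mathrm{U}[0,1]$ and $R_i$ is the rank of $U_i$ in $(U_1, \ldots, U_n)$. I would split $S = T + B$, with interior piece $T := \sum_{m=1}^M \sum_{i=1}^{n-m} \min\{R_i, R_{i+m}\}$ and boundary piece $B$, the latter a weighted sum of at most $M(M+1)/2$ ranks with integer weights in $[1, M]$. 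Using $\Var[R_i] = O(n^2)$ and $|\cov(R_i, R_j)| = O(n)$, one obtains $\Var[B] = O(M^3 n^2)$, contributing $O(M/n^2)$ to $\Var[\xi_{n,M}]$; this is $o(1/(nM))$ under $M \prec n^{1/4}$, hence asymptotically negligible.

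The core step exploits the identity
\[
\min\{R_i, R_{i+m}\} = \sum_{k=1}^{n} \ind(U_k \le U_i \wedge U_{i+m})
\]
to recast $T$ as a sum of indicators over triples $(i, m, k)$. With $\mathcal{F}_l := \sigma(U_1, \ldots, U_l)$ and $D_l := \E[T \given \mathcal{F}_l] - \E[T \given \mathcal{F}_{l-1}]$, only triples with $\max\{i, i+m, k\} = l$ contribute to $D_l$. Splitting by whether $k = l$ (and $i + m < l$) or $i + m = l$ (with $k < l$, $k \ne l - m$), and noting that the two overlap cases $k \in \{i, i + m\}$ produce Type-B and Type-C contributions that cancel in pairs (for each $m$: $\ind(U_{l-m} \le U_l) - (1 - U_{l-m}) + \ind(U_l \le U_{l-m}) - U_{l-m} = 0$), I would arrive at the explicit expression
\[
D_l = \sum_{\substack{1 \le m \le M \\ i + m \le l - 1}} \big[\ind(U_l \le U_i \wedge U_{i+m}) - U_i \wedge U_{i+m}\big] + \sum_{m=1}^{M \wedge (l-1)} \sum_{\substack{k \in \zahl{l-1} \\ k \ne l-m}} \ind(U_k \le U_{l-m})\big[\ind(U_k \le U_l) - (1 - U_k)\big],
\]
which is a mean-zero function of $U_l$ given $\mathcal{F}_{l-1}$ built from $O(lM)$ bounded summands.

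I would close via a standard martingale CLT, verifying (i) the conditional-variance convergence $V_n := \sum_l \E[D_l^2 \given \mathcal{F}_{l-1}] \to \sigma_n^2 := \Var[T]$ in probability (with $\sigma_n^2 \asymp n^3 M / 90$ by Theorem~\ref{thm:null,mean,var}) and (ii) a Lyapunov fourth-moment condition. For (ii), the bound $|D_l| = O(lM)$ together with a typical-order estimate $\E[D_l^2] = O(l^2 M)$ gives $\sum_l \E[D_l^4] = O(n^5 M^3) = o(\sigma_n^4) = o(n^6 M^2)$ as long as $M/n \to 0$. The main obstacle is (i): while $\E[V_n] = \sigma_n^2$ is automatic, one needs $\Var[V_n] = o(\sigma_n^4)$. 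Since $\E[D_l^2 \given \mathcal{F}_{l-1}]$ is a sum of products of up to four $\mathcal{F}_{l-1}$-measurable indicators, $\Var[V_n]$ expands into a sum over index quadruples of expectations of up to eight indicator factors, with the dominant contributions arising from quadruples in which the four martingale times cluster within windows of width $\asymp M$. Careful counting of these close-cluster contributions, combined with the cancellations already present in $D_l$, gives a bound that is $o(\sigma_n^4)$ precisely under the regime $M \prec n^{1/4}$; this combinatorial bookkeeping of eight-point correlations is the chief technical hurdle.
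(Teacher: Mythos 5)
Your martingale route is a genuinely different approach from the paper's, which replaces the empirical CDF in the ranks by the true CDF to obtain a H\'ajek representation $\widehat\xi_{n,M}$, shows $\sqrt{nM}(\xi_{n,M}-\widehat\xi_{n,M})\to 0$ in $L^2$, and then applies \citeauthor{MR2435859}'s normal approximation for functions of i.i.d.\ sequences with a local ``interaction rule'' (Theorem 2.5 in \citealp{MR2435859}), with the $M$-NN graph furnishing the interaction rule. Nonetheless, as written your proposal has two genuine gaps.

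First, your explicit expression for the martingale increment $D_l$ is wrong. You assert that only triples with $\max\{i,i+m,k\}=l$ contribute, but a summand $\ind(U_k\le U_i\wedge U_{i+m})$ contributes to $D_l$ whenever $l\in\{i,\,i+m,\,k\}$ --- that is, also when $l$ is a {\it non-maximal} index. Concretely, for the triple $(i,m,k)=(1,2,1)$ the summand $\ind(U_1\le U_3)$ has $D_1 = (1-U_1)-\tfrac12 \neq 0$ even though $\max\{1,3,1\}=3$. More systematically, the cases $l=i$ with $i+m>l$ (contributing $(1-U_k)[\ind(U_k\le U_l)-(1-U_k)]$ for each $k<l$, repeated for $M$ values of $m$) and $l=i+m$ with $k>l$ are absent from your two-sum formula, and they are of the same order as the terms you retain, so they cannot be waved away. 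Your ``Type-B/Type-C cancellation'' handles the overlap $k\in\{l-m,l\}$ within the $i+m=l$ block, but does not touch these missing blocks.

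Second, even granting a corrected formula for $D_l$, you explicitly defer the step that actually drives the theorem: showing $\Var[V_n]=o(\sigma_n^4)$ for the conditional quadratic variation $V_n=\sum_l \E[D_l^2\given\mathcal F_{l-1}]$. You correctly identify that this is where the constraint $M\prec n^{1/4}$ must enter and that it amounts to bounding eight-point correlations over clustered index quadruples, but you give no argument that the requisite cancellations occur. Since this is precisely the part of the proof that fails for larger $M$ and distinguishes Theorem~\ref{thm:CLT} from the trivial variance calculation of Theorem~\ref{thm:null,mean,var}, stating it as a conjecture rather than proving it leaves the proposal incomplete at the critical juncture. The paper sidesteps exactly this difficulty by offloading the eight-point bookkeeping to \citeauthor{MR2435859}'s ready-made bound, which requires only crude control of the graph degree $\delta\le 2(M+4)$ and of the increments $|\Delta_k f|\lesssim 1/n$, and the restriction $M\prec n^{1/4}$ then emerges directly from the resulting Kantorovich--Wasserstein bound.
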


\begin{remark}[Technical ingredients of Theorem \ref{thm:CLT}]
%\fbox{fill in}
To establish the CLT of $\xi_{n,M}$ under the null, the following H\'ajek representation of $\xi_{n,M}$ is the key:
\[
  \widehat{\xi}_{n,M} = \frac{6n \sum_{i=1}^{n} \sum_{m=1}^M \min\big\{F_Y(Y_i), F_Y(Y_{j_m(i)})\big\}}{(n+1)[nM+M(M+1)/4]}  - \frac{6 \sum_{i \neq j} \min\big\{F_Y(Y_i), F_Y(Y_j) \big\}}{(n-1)(n+1)}.
\]
Noticing that the variance of $\xi_{n,M}$ under $H_0$ is of order $(nM)^{-1}$ as $M \prec n^{1/4}$, we show that $\sqrt{nM} (\xi_{n,M} - \widehat{\xi}_{n,M})$ converges in probability to zero. It then suffices to establish that $\sqrt{nM} \widehat{\xi}_{n,M}$ converges in distribution to $N(0,2/5)$, which is derived via invoking the normal approximation techniques devised in \citet{MR2435859}; assuming $M\prec n^{1/4}$ is crucial here.
\end{remark}

\subsection{Simulation-based tests of independence}\label{sec:sim-based-test}

This section is focused on testing $H_0$ in \eqref{eq:H0} based on the revised Chatterjee's correlation coefficient $\xi_{n,M}$ that was introduced in Section \ref{sec:prelim}.

For a given sample $\big[(X_i,Y_i)\big]_{i=1}^{n}$, let $\xi_{n,M}$ and $\xi_{n,M}^-$ be the correlation coefficients in \eqref{eq:xin} that are calculated based on $\big[(X_i,Y_i)\big]_{i=1}^{n}$ and $\big[(X_i,-Y_i)\big]_{i=1}^{n}$, respectively. We are interested in the following test statistic,
\begin{align}\label{eq:test-statistic}
\xi_{n,M}^{\pm} := \max\big\{\xi_{n,M},\xi_{n,M}^{-}\big\}.
\end{align}

For approximating the above test statistic's distribution under $H_0$, notice that, as $X$ and $Y$ are independent, the joint distribution of $(\xi_{n,M}, \xi_{n,M}^{-})$ is distribution-free and a simulation-based test can then be directly implemented. In detail, choosing the number of simulations to be $B$, in each round $b \in \zahl{B}$, one draws a sample $\big[R_i^{(b)}\big]_{i=1}^n$ from the uniform distribution over all possibly permutations on $\zahl{n}$. We then calculate the value of $\xi_{n,M}^{(b)}$ from \eqref{eq:xin} as follows:
\begin{align*}
&\xi_{n,M}^{(b)}= -2 + \frac{6 \sum_{i=1}^{n} \sum_{m=1}^M \min\big\{R_i^{(b)},R_{g_m(i)}^{(b)}\big\} }{(n+1)[nM+M(M+1)/4]},\\
\text{with }~~ &g_m(i):=\begin{cases}
              i+m, &\text{if } i+m\leq n, \\
                i,  & \text{if } i+m>n.
          \end{cases}
\end{align*}
One could similarly calculate the value of $\xi_{n,M}^{-(b)}$ based on  $\big[R_i^{-(b)}]_{i=1}^n$ with $R_i^{-(b)} := n+1-R_i^{(b)}$ for each $i \in \zahl{n}$.

Notice that under $H_0$, $(\xi_{n,M}^{(b)}, \xi_{n,M}^{-(b)})$ will have the same distribution as $(\xi_{n,M},\xi_{n,M}^-)$ due to independence between $X$ and $Y$ and the distribution-freeness of (relative) ranks; see similar discussions in Section A.2 and Lemma C3 in \cite{MR3737306-supp}. We then consider
 \[
 \xi_{n,M}^{\pm(b)} := \max\big\{\xi_{n,M}^{(b)},\xi_{n,M}^{-(b)}\big\}.
 \]
For a given significance level $\alpha \in (0,1)$, %letting $t_{\alpha,B}$ be the $1-\alpha$ quantile of $\big\{\xi_{n,M}^{\pm(b)}\big\}_{b=1}^B$,
the proposed simulation-based test is then
\begin{align}\label{eq:test}
  \sT_{\alpha,B}^{\xi_{n,M}^{\pm}} = \ind\Big[ (1+B)^{-1}\Big\{1+\sum_{b=1}^B\ind\Big(\xi_{n,M}^{\pm(b)} \geq  \xi_{n,M}^{\pm}\Big)\Big\} \leq \alpha\Big],
\end{align}
whose size validity and power consistency are guaranteed by the following theorem.

\begin{theorem}[Size validity and consistency]\label{thm:test}
  \begin{enumerate}[itemsep=-.5ex,label=(\roman*)]
    \item\label{thm:test1} The test $\sT_{\alpha,B}^{\xi_{n,M}^{\pm}}$ is size valid in the sense that for any fixed $\P =\P_{(X,Y)}\in \mathcal{P}_c$ satisfying $H_0$, denoting $\P_{H_0}:=\P^{\otimes n}$ as the corresponding product measure, we have
    \[
       \P_{H_0}\big(\sT_{\alpha,B}^{\xi_{n,M}^{\pm}}=1\big) \leq \alpha
    \]
    holds for any $n\geq 1$, $B\geq 1$, and $M\in\zahl{n}$.

    \item\label{thm:test2} The test $\sT_{\alpha,B}^{\xi_{n,M}^{\pm}}$ is consistent in the sense that for any fixed $\P \in \mathcal{P}_c$ violating $H_0$, denoting $\P_{H_1}$ as the corresponding product measure, we have
    \[
    \lim_{n\to\infty} \P_{H_1}\big(\sT_{\alpha,B}^{\xi_{n,M}^{\pm}}=1\big)=1
    \]
    as long as $B=B_n\to\infty$ and $M/n\to 0$ as $n\to\infty$.
  \end{enumerate}
\end{theorem}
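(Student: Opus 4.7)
}

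\textbf{Part (i) — size validity.} The plan is to reduce everything to a standard exchangeability argument for Monte-Carlo permutation tests. The first step is to identify the joint distribution of $(\xi_{n,M},\xi_{n,M}^{-})$ under $H_0$ with that of each simulated pair $(\xi_{n,M}^{(b)},\xi_{n,M}^{-(b)})$. To see this, let $\pi$ be the (a.s.\ unique) permutation with $X_{\pi(1)}<\cdots<X_{\pi(n)}$ and set $R'_{k}:=R_{\pi(k)}$. By the definition of $j_m(\cdot)$, a change of summation index gives
\[
\xi_{n,M}=-2+\frac{6\sum_{k=1}^{n}\sum_{m=1}^{M}\min\{R'_k,R'_{g_m(k)}\}}{(n+1)[nM+M(M+1)/4]},
\]
with the same $g_m(\cdot)$ used in the simulation step. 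Since $(X,Y)$ is continuous and $X\indep Y$ under $H_0$, the vector $[R'_k]_{k=1}^n$ is uniform on permutations of $\zahl{n}$, and $(R'_k,n+1-R'_k)_{k=1}^{n}$ has the same joint law as $(R_k^{(b)},R_k^{-(b)})_{k=1}^{n}$. Hence $(\xi_{n,M}^{\pm},\xi_{n,M}^{\pm(1)},\ldots,\xi_{n,M}^{\pm(B)})$ are i.i.d.\ (in particular exchangeable) under $H_0$. A standard computation then shows that the Monte-Carlo p-value
\[
\hat p := (1+B)^{-1}\Bigl\{1+\sum_{b=1}^{B}\ind\bigl(\xi_{n,M}^{\pm(b)}\ge \xi_{n,M}^{\pm}\bigr)\Bigr\}
\]
is stochastically larger than the uniform distribution on $\{1/(1+B),2/(1+B),\ldots,1\}$, which immediately yields $\P_{H_0}(\hat p\le\alpha)\le\alpha$ for every $n$, $B$ and $M\in\zahl{n}$.

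\textbf{Part (ii) — consistency.} Here the plan is to argue that, under any fixed $\P\in\mathcal{P}_c$ violating $H_0$, the realized statistic $\xi_{n,M}^{\pm}$ converges to a strictly positive constant while the simulated replicates concentrate at zero, so that the empirical p-value tends to $0$ in probability. By Theorem~\ref{thm:asconverge} applied to both $(X,Y)$ and $(X,-Y)$ (noting $M/n\to 0$), one has
\[
\xi_{n,M}^{\pm}\xrightarrow{\text{a.s.}}\max\bigl\{\xi(X,Y),\,\xi(X,-Y)\bigr\}=:\xi^{\ast},
\]
and by Proposition~\ref{prop:corr}(1) both $\xi(X,Y)$ and $\xi(X,-Y)$ are strictly positive since dependence of $(X,Y)$ is equivalent to dependence of $(X,-Y)$. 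On the other hand, each simulated $\xi_{n,M}^{\pm(b)}$ is, by the distributional equivalence established in part (i), distributed exactly as $\xi_{n,M}^{\pm}$ would be under the null, and so Theorem~\ref{thm:asconverge} applied under $H_0$ (or directly Theorem~\ref{thm:null,mean,var} with Chebyshev) yields $\xi_{n,M}^{\pm(b)}\xrightarrow{\mathrm{P}}0$.

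Combining these two ingredients, condition on the data and set $\hat q_n := \Pr\bigl(\xi_{n,M}^{\pm(b)}\ge \xi_{n,M}^{\pm}\bigm|\text{data}\bigr)$; the above gives $\hat q_n\xrightarrow{\mathrm{P}}0$. Since the simulations are independent of the data, the numerator $\sum_{b=1}^{B}\ind(\xi_{n,M}^{\pm(b)}\ge\xi_{n,M}^{\pm})$ is $\mathrm{Bin}(B,\hat q_n)$ conditionally; a conditional Chebyshev (or Markov) bound together with $B=B_n\to\infty$ then shows $\hat p\xrightarrow{\mathrm{P}}0$, which gives $\P_{H_1}(\hat p\le\alpha)\to 1$ as claimed.

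\textbf{Main obstacle.} The only subtle step is the distributional identification in part~(i): the realized statistic is built from the $X$-based nearest-neighbor map $j_m(\cdot)$, whereas the Monte-Carlo statistic uses the deterministic ``position'' map $g_m(\cdot)$. Making explicit the reparametrization by the $X$-order $\pi$, and invoking continuity of $(X,Y)$ together with $X\indep Y$ to conclude that $[R'_k]_{k=1}^n$ is a uniformly random permutation, is where the argument has to be written carefully; everything else is either an immediate consequence of earlier results or a routine exchangeability-cum-concentration step.
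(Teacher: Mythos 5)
Your proof is correct and follows the paper's overall strategy, but part (ii) uses a genuinely lighter set of tools. For part (i), both you and the paper establish that $(\xi_{n,M}^{\pm}, \xi_{n,M}^{\pm(1)},\ldots,\xi_{n,M}^{\pm(B)})$ are i.i.d.\ under $H_0$ and then apply the standard Monte-Carlo argument that the (tie-adjusted) empirical p-value is stochastically dominated from below by the discrete uniform on $\{1/(1+B),\ldots,1\}$; the paper spells out the tie-breaking step via a randomized comparison $\ind^\circ(\cdot,\cdot,U_b)$ with auxiliary Bernoulli variables, whereas you supply the $\pi$-reparametrization $R'_k := R_{\pi(k)}$ showing that the data-based statistic and the simulated one are literally the same functional of a uniform permutation — a useful piece of detail the paper asserts by citation. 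For part (ii), the paper shows $\Xi_n^{(b)}:=\ind(\xi_{n,M}^{\pm(b)}\ge\xi_{n,M}^{\pm})\stackrel{\text{a.s.}}{\to}0$, then passes from pointwise to averaged convergence $B_n^{-1}\sum_b\Xi_n^{(b)}\to 0$ via exchangeability and a reverse-martingale theorem (Lemma 1.1 of Eagleson and Lemma 2(c) of the cited reference). You instead exploit that, conditional on the data, the $\Xi_n^{(b)}$ are i.i.d.\ Bernoulli with parameter $\hat q_n := \P_{H_0}(\xi_{n,M}^{\pm(b)}\ge t)\big|_{t=\xi_{n,M}^\pm}\xrightarrow{\P}0$; then $\E[\hat p] = (1+B)^{-1} + \tfrac{B}{1+B}\E[\hat q_n]\to 0$ by bounded convergence and $B_n\to\infty$, and Markov finishes. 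Your route avoids the triangular-array almost-sure machinery (and the implicit common-probability-space coupling the paper needs for $\Xi_n^{(1)}\stackrel{\text{a.s.}}{\to}0$), delivering exactly the in-probability statement that the theorem requires; the paper's argument is heavier but would yield almost-sure averaged convergence. One small caution: your parenthetical alternative "or directly Theorem~\ref{thm:null,mean,var} with Chebyshev" requires $M\to\infty$, which Theorem~\ref{thm:test}(ii) does not assume, so the Theorem~\ref{thm:asconverge}-under-$H_0$ route should be the primary one there.
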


%\fbox{the proof of theorem 2.3 needs revision.}

\section{Local power analysis}\label{sec:alter}

This section investigates the local power of the simulation-based test $\sT_{\alpha,B}^{\xi_{n,M}^{\pm}}$ employing the revised Chatterjee's rank correlation $\xi_{n,M}$ introduced in \eqref{eq:xin}. For facilitating presentation, we restrict the attention to the following Gaussian rotation model that is benchmark in independence testing (cf. \cite{MR79384}).

%Let's begin with the following set of local alternatives
%\begin{align}\label{eq:alter}
%\Big\{f_\rho(x,y);\rho \in \R\},
%\end{align}
%where for each $\rho \in \R$, $f_\rho(x,y)$ is a bivariate density with respect to the Lebesgue measure.

%We consider a special case of rotation alternatives (Konijn alternatives; \cite{MR79384}).

\begin{assumption}\label{asp:local-alter}
  %Let $\rho \in (-1,1)$ and $f_\rho(x,y)$ be the joint density of bivariate normal distribution with mean $\mu$ and covariance $\Sigma$, where
 The bivariate random vector $(X,Y)$ belongs to the family of Gaussian distributed ones with mean $\mu$ and covariance matrix $\Sigma$ such that
  \[
    \mu =
    \begin{pmatrix*}
      0\\
      0
    \end{pmatrix*}
    ,~~~~ \Sigma =
    \begin{pmatrix*}
      1 & \rho\\
      \rho & 1
    \end{pmatrix*},~~~{\rm with}~\rho\in (
    -1,1).
  \]
\end{assumption}

For the local power analysis in this specified alternative set, we examine the asymptotic power along a sequence of
alternatives obtained as
\begin{equation}\label{eq:local-alter}
  H_{1,n}:\rho = \rho_n,
\end{equation}
with $\rho_n \to 0$ as $n \to \infty$.

%From \eqref{eq:test}, the test statistic is symmetric, and then it suffices to consider the case when $\rho_n>0$.

Due to the construction of the test statistic \eqref{eq:test-statistic}, it suffices to consider positive sequences of $\rho_n$'s. We first establish the mean and variance of $\xi_{n,M}$ under \eqref{eq:local-alter}.

\begin{theorem}[Mean and variance of $\xi_{n,M}$ under \eqref{eq:local-alter}]\label{thm:alter,mean,var}
  Suppose that the considered set of local alternatives satisfies Assumption~\ref{asp:local-alter}. Then concerning with any sequence of alternatives given in \eqref{eq:local-alter}, for any positive sequence $\rho_n \to 0$, as long as $M/\log n \to \infty$ and $M (\log n)^{3/2}/n \to 0$, 
  \begin{enumerate}[itemsep=-.5ex,label=(\roman*)]
    \item \label{thm:alter,mean} the mean of $\xi_{n,M}$ under $H_{1,n}$ satisfies
    \[
    \Big\lvert \E_{H_{1,n}} [\xi_{n,M}] \Big\rvert \lesssim \frac{M}{n}\sqrt{\log n}\rho_n + \rho_n^2 + \frac{M}{n^2};
    \]
    in particular, if $\rho_n\succ n^{-1}$, we have
    \[
      \frac{M}{n}\rho_n + \rho_n^2 \lesssim \E_{H_{1,n}} [\xi_{n,M}] \lesssim \frac{M}{n}\sqrt{\log n}\rho_n + \rho_n^2;
    \]
    \item \label{thm:alter,var} the variance of $\xi_{n,M}$ under $H_{1,n}$ satisfies
    \[
      \Var_{H_{1,n}} [\xi_{n,M}] \lesssim \frac{1}{nM} + \frac{M}{n^2} + \frac{M}{n^2}\sqrt{\log n}\rho_n + \frac{1}{n}\rho_n^2.
    \]
  \end{enumerate}
\end{theorem}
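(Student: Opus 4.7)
The backbone of the argument is to combine the H\'ajek-type projection $\widehat{\xi}_{n,M}$ (introduced after Theorem~\ref{thm:CLT}) with a Taylor expansion in $\rho_n$. Reindexing the sum in $\xi_{n,M}$ by the random permutation $S$ that orders the $X_i$'s, one writes the numerator as
\[
\sum_{m=1}^{M}\Bigl[\sum_{k=1}^{n-m}\min\{R_{S_k},R_{S_{k+m}}\}+\sum_{k=n-m+1}^{n} R_{S_k}\Bigr].
\]
Under Assumption~\ref{asp:local-alter}, the concomitants decompose as $Y_{S_k}=\rho_n X_{(k)}+\sqrt{1-\rho_n^2}\,\epsilon_k^\ast$ where $\{\epsilon_k^\ast\}_{k}$ is i.i.d.\ $N(0,1)$ and independent of the order statistics $\{X_{(k)}\}_k$; this decoupling is what makes explicit computation possible. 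Replacing $R_{S_k}/(n+1)$ by $U_k^\ast:=\Phi(Y_{S_k})$ via the H\'ajek projection (justified exactly as in the CLT remark) introduces an error that I would show contributes only $O(M/n^2)$ to the mean and is negligible for the variance, which accounts for the $M/n^2$ summand in part~(i).

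For the mean in part~(i), expand $U_k^\ast=U_k^0+\rho_n\phi(\epsilon_k^\ast)X_{(k)}+\rho_n^2 B_k+O(\rho_n^3)$ with $U_k^0=\Phi(\epsilon_k^\ast)$. A direct computation using independence of $\{\epsilon_k^\ast\}$ from $\{X_{(k)}\}$ shows that the first-order contribution of the interior sum $\sum_{k\le n-m}\min\{U_k^\ast,U_{k+m}^\ast\}$ is proportional to $\rho_n\bigl(\sum_{k\le n-m}\E[X_{(k)}]+\sum_{k\ge m+1}\E[X_{(k)}]\bigr)$, which vanishes since $\sum_{k=1}^n\E[X_{(k)}]=0$ and Gaussian symmetry gives $\E[X_{(k)}]=-\E[X_{(n-k+1)}]$. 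The leading $\rho_n$ contribution therefore comes entirely from the boundary sum $\sum_m\sum_{k>n-m}U_k^\ast$, whose expectation is a constant multiple of $\rho_n\sum_m\sum_{k>n-m}\E[X_{(k)}]$. The classical estimate $\sum_{k>n-m}\E[X_{(k)}]\asymp m\sqrt{\log(n/m)}$ yields, after summation over $m\in\zahl{M}$ and normalization by the denominator of order $n^2M$, leading contributions squeezed between $M\rho_n/n$ (the lower bound, valid once $\rho_n\succ1/n$ so that the signal dominates the projection error $M/n^2$) and $M\sqrt{\log n}\rho_n/n$ (the upper bound). The $\rho_n^2$ piece is extracted from the quadratic Taylor remainder; here the identity $\Phi''(0)=0$ kills the naive second-order mean shift of $U_k^\ast$ itself, leaving only the genuinely quadratic interaction terms, which are handled by routine second-moment bounds.

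For part~(ii), decompose $\Var_{H_{1,n}}[\xi_{n,M}]$ as the null variance from Theorem~\ref{thm:null,mean,var} (supplying the $1/(nM)+M/n^2$ piece) plus corrections linear and quadratic in $\rho_n$. The $\rho_n$-linear correction is a sum of covariances of the form $\cov\bigl(\min\{U_k^0,U_{k+m}^0\},\,\phi(\epsilon_l^\ast)X_{(l)}\ind(U_l^0<U_{l+m'}^0)\bigr)$, which vanish unless the index sets $\{k,k+m\}$ and $\{l,l+m'\}$ overlap; an enumeration of the overlap patterns, combined with the same top-order-statistic estimates as above, produces the $M\sqrt{\log n}\rho_n/n^2$ term. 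The $\rho_n^2$ correction is controlled by writing each summand as a Lipschitz function of $(\epsilon_k^\ast,X_{(k)})$ and using $\Var[f(Y_{S_k})]=\Var[f(\epsilon_k^\ast)]+O(\rho_n^2)$ uniformly in $k$. The principal obstacle throughout is the boundary regime $k>n-m$: when $M$ scales with $n$, a non-negligible fraction of units $i$ satisfy $j_m(i)=i$, so the contribution of these ``defective'' units must be isolated and controlled via the same Hermite-type expansion, with sharp top-$m$ Gaussian order-statistic estimates supplying the $\sqrt{\log n}$ slack between the upper and lower bounds on the mean.
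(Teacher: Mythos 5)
Your proposal takes a genuinely different route from the paper's. The paper works with the conditional distributions $F_{Y\mid X}$, decomposes $\E[\ind(Y\le\min\{Y_1,Y_{j_U(1)}\})\mid\mX,U]$ into seven pieces $T_1,\dots,T_7$, and controls the critical one, $T_4$, which carries the right-NN gap $X_{j_U(1)}-X_1$, from above and below with two dedicated $k$-NN distance lemmas (Lemmas~\ref{lemma:dist} and \ref{lemma:dist,lower}); the $\sqrt{\log n}$ slack enters through the truncation radius $D_n=\sqrt{2\log(n^3/M)}$. Your concomitant decoupling $Y_{S_k}=\rho_n X_{(k)}+\sqrt{1-\rho_n^2}\,\epsilon_k^*$ with a Hermite-type expansion in $\rho_n$ is a different mechanism, and it exposes a clean structural fact the paper never states: by Gaussian order-statistic antisymmetry $\E[X_{(k)}]=-\E[X_{(n-k+1)}]$, one has $\sum_{k\le n-m}\bigl(\E[X_{(k)}]+\E[X_{(k+m)}]\bigr)=0$, so the first-order interior contribution cancels identically and the entire $\rho_n$-linear bias comes from the defective boundary units. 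This reproduces the paper's control of $\E[T_4]$ via the identity $\E[X_{j_U(1)}-X_1]=\tfrac{2}{nM}\sum_m\sum_{k>n-m}\E[X_{(k)}]$ and extreme Gaussian order-statistic asymptotics, with no explicit NN-distance lemma, and it also explains why the paper's upper and lower bounds on the mean differ by a $\sqrt{\log n}$ factor.

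That said, there are two genuine gaps. First, you replace $R_i/(n+1)$ by $\Phi(Y_i)$ and assert the resulting projection error contributes only $O(M/n^2)$ to the mean. The paper sidesteps this entirely: the identity in \eqref{eq:local,eq2} expresses $\E[\xi_{n,M}]$ exactly in terms of $\E[\ind(Y\le\min\{Y_1,Y_{j_U(1)}\})]$ with $Y$ a fresh copy, so no rank-to-quantile replacement is needed and no projection error arises. In your framework, that replacement error has to be controlled separately before the expansion even begins, and the $O(M/n^2)$ claim is exactly the sort of Hoeffding-decomposition bound you were hoping to avoid. Second, and more seriously, the variance sketch is incomplete. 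The paper splits $\Var_{H_{1,n}}[\xi_{n,M}]=\E\bigl[\Var[\xi_{n,M}\mid\mX]\bigr]+\Var\bigl[\E[\xi_{n,M}\mid\mX]\bigr]$. Your overlap-enumeration argument addresses only the first summand (the analogue of Lemma~\ref{lemma:alter,var,decompose}). The second summand is identically zero under $H_0$, but under $H_{1,n}$ it supplies the $\rho_n^2/n$ term in the bound, and the paper controls it by Efron--Stein (Lemma~\ref{lemma:alter,var,decompose2}). In your notation it is the variance, over the random order statistics $\{X_{(k)}\}$, of the conditional mean given $\mX$; the pointwise estimate ``$\Var[f(Y_{S_k})]=\Var[f(\epsilon_k^*)]+O(\rho_n^2)$ uniformly in $k$'' is a single-coordinate statement and does not capture the cross-$k$ dependence among $\{X_{(k)}\}$ that makes this piece nonzero. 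Without either an Efron--Stein step or a careful order-statistics covariance computation, the variance bound does not follow.
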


% \fbox{please double-check the correctness of the revised Theorem 3.1(i)}

\begin{remark}\label{remark:sharpness}
  It is notable that in the above theorem we only establish an upper bound on $ \Var_{H_{1,n}} [\xi_{n,M}]$. However, by calculating the coefficient of each term in the proof of Theorem~\ref{thm:alter,mean,var}\ref{thm:alter,var}, one is also able to show the following lower bound,
  \[
    \Var_{H_{1,n}} [\xi_{n,M}] \gtrsim \frac{1}{nM} + \frac{M}{n^2} + \frac{1}{n}\rho_n^2.
  \]
The upper bound in Theorem~\ref{thm:alter,mean,var}\ref{thm:alter,var} is therefore rate optimal (up to some $\log n$ terms), although the upper bound of $\Var_{H_{1,n}} [\xi_{n,M}]$ in Theorem~\ref{thm:alter,mean,var}\ref{thm:alter,var} has been sufficient for our purpose.
\end{remark}

%\begin{remark}

%\end{remark}

%\begin{remark}
%  For Theorem~\ref{thm:alter,mean,var}, if $\rho_n \lesssim n^{-1}$ and other settings are the same, by following the proof of Theorem~\ref{thm:alter,mean,var}\ref{thm:alter,mean},
%  \[
%    \Big\lvert \E_{H_{1,n}} [\xi_{n,M}] \Big\rvert \lesssim \frac{M}{n}\sqrt{\log n}\rho_n + \frac{M}{n^2}.
%  \]

%  By following the proof of Theorem~\ref{thm:alter,mean,var}\ref{thm:alter,var}, we still have
%  \[
%    \frac{1}{nM} + \frac{M}{n^2} + \frac{1}{n}\rho_n^2 \lesssim \Var_{H_{1,n}} [\xi_{n,M}] \lesssim \frac{1}{nM} + \frac{M}{n^2} + \frac{M}{n^2}\sqrt{\log n}\rho_n + \frac{1}{n}\rho_n^2.
%  \]
%\end{remark}

%Let
%\[
%  \zeta_{n,M} = .
%\]
As a direct consequence of Theorem~\ref{thm:alter,mean,var}, the following corollary establishes a boundary of $\rho_n$ beyond which the mean of $\xi_{n,M}$ under $H_{1,n}$ shall dominate the standard deviation.

\begin{corollary}\label{crl:detection}
  Suppose that the considered set of local alternatives satisfies Assumption~\ref{asp:local-alter}. Then concerning with any sequence of alternatives given in \eqref{eq:local-alter}, for any positive sequence $\rho_n \to 0$, as long as $M/\log n \to \infty$, $M (\log n)^{3/2}/n \to 0$, and
  \[
    \rho_n \succ \zeta_{n,M}:=[(n^{1/2}M^{-3/2}) \vee M^{-1/2}] \wedge [(nM)^{-1/4} \vee (n^{-1/2}M^{1/4})],
  \]
 we have
  \[
    \lim_{n \to \infty} \frac{\E_{H_{1,n}} [\xi_{n,M}]}{\sqrt{\Var_{H_{1,n}} [\xi_{n,M}]}} = +\infty.
  \]
\end{corollary}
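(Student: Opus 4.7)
The plan is to deduce the corollary directly from Theorem~\ref{thm:alter,mean,var} by a case analysis that resolves the nested min--max structure of $\zeta_{n,M}$. Write $a_1:=n^{1/2}M^{-3/2}$, $a_2:=M^{-1/2}$, $b_1:=(nM)^{-1/4}$, $b_2:=n^{-1/2}M^{1/4}$, so that $\zeta_{n,M}=(a_1\vee a_2)\wedge(b_1\vee b_2)$. First I would verify that under the hypothesized bounds on $M$ one has $\zeta_{n,M}\succ n^{-1}$, since $a_1\vee a_2\geq a_2=M^{-1/2}\succ n^{-1}$ (using $M\prec n$) and $b_1\vee b_2\geq b_2=n^{-1/2}M^{1/4}\succ n^{-1}$ (using $M\succ 1$). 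Consequently, the lower bound in Theorem~\ref{thm:alter,mean,var}\ref{thm:alter,mean} applies to give $\E_{H_{1,n}}[\xi_{n,M}]\gtrsim (M/n)\rho_n+\rho_n^2$, while Theorem~\ref{thm:alter,mean,var}\ref{thm:alter,var} gives $\Var_{H_{1,n}}[\xi_{n,M}]\lesssim 1/(nM)+M/n^2+(M/n^2)\sqrt{\log n}\,\rho_n+\rho_n^2/n$. The conclusion thus reduces to showing $\E_{H_{1,n}}[\xi_{n,M}]^2/\Var_{H_{1,n}}[\xi_{n,M}]\to\infty$ under $\rho_n\succ\zeta_{n,M}$.

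A direct comparison shows that $b_1\vee b_2\leq a_1\vee a_2$ iff $M\preceq n^{2/3}$, which naturally splits the argument in two. In Case 1 ($M\preceq n^{2/3}$, so $\zeta_{n,M}=b_1\vee b_2$ and $\rho_n$ exceeds both $b_1$ and $b_2$ in the sense of $\succ$), I would use $\E_{H_{1,n}}[\xi_{n,M}]\gtrsim\rho_n^2$ and check that $\rho_n^4$ dominates each of the four variance terms: $\rho_n\succ b_1$ handles $1/(nM)$; $\rho_n\succ b_2$ handles $M/n^2$; and $\rho_n\succ b_2\succ n^{-1/2}$ yields $\rho_n^4\succ\rho_n^2/n$. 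The cross term requires $\rho_n^3\succ(M/n^2)\sqrt{\log n}$; using $\rho_n\succ b_2$ this reduces to $n^{1/2}M^{-1/4}\succ\sqrt{\log n}$, i.e.\ $n^2\succ M(\log n)^2$, which is implied by the hypothesis $M(\log n)^{3/2}/n\to 0$. In Case 2 ($M\succ n^{2/3}$, hence $M\succ n^{1/2}$, and $\rho_n$ exceeds both $a_1$ and $a_2$), I would instead use $\E_{H_{1,n}}[\xi_{n,M}]\gtrsim (M/n)\rho_n$ and verify that $(M\rho_n/n)^2$ dominates each variance term: $\rho_n\succ a_1$ handles $1/(nM)$; $\rho_n\succ a_2$ handles $M/n^2$; the regime condition $M\succ n^{1/2}$ gives $(M/n)^2\succ 1/n\geq\rho_n^2/n$; and the cross term is absorbed since $M\rho_n\succ M^{1/2}\succ\sqrt{\log n}$, using $\rho_n\succ a_2=M^{-1/2}$ and $M\succ\log n$.

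The computation itself amounts to a collection of mechanical order comparisons; the main obstacle is the bookkeeping forced by the four constituent rates $a_1,a_2,b_1,b_2$ and the two regimes of $M$, where one must correctly pair the dominating contribution of the mean lower bound ($\rho_n^2$ versus $(M/n)\rho_n$) with the matching side of the minimum in $\zeta_{n,M}$. A secondary delicate point is that the mixed cross term $(M/n^2)\sqrt{\log n}\,\rho_n$ appears in the variance upper bound but not in the matching lower bound (cf.\ Remark~\ref{remark:sharpness}); the hypothesis $M(\log n)^{3/2}/n\to 0$ is tuned precisely to neutralise this cross term in both regimes.
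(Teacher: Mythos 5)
Your proof is correct and is essentially the argument the authors have in mind when they call the corollary a ``direct consequence of Theorem~\ref{thm:alter,mean,var}'': one observes $\zeta_{n,M}\succ n^{-1}$ to invoke the lower bound on $\E_{H_{1,n}}[\xi_{n,M}]$, and then resolves the min--max in $\zeta_{n,M}$ by splitting at $M\asymp n^{2/3}$, pairing $\rho_n^2$ with $b_1\vee b_2$ and $(M/n)\rho_n$ with $a_1\vee a_2$, exactly as you do. The case bookkeeping checks out, including the role of $M(\log n)^{3/2}/n\to0$ in suppressing the $(M/n^2)\sqrt{\log n}\,\rho_n$ variance term in both regimes.
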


\begin{remark}\label{remark:sharpness2}
In view of Theorem \ref{thm:alter,mean,var}\ref{thm:alter,mean} and Remark \ref{remark:sharpness}, as long as $\rho_n \prec \zeta_{n,M}$,  one also has
  \[
    \lim_{n \to \infty} \frac{\E_{H_{1,n}} [\xi_{n,M}]}{\sqrt{\Var_{H_{1,n}} [\xi_{n,M}]}} = 0.
  \]
  Combined with Corollary \ref{crl:detection}, the above equation thus shows that $\zeta_{n,M}$ is the critical boundary determining whether the mean of $\xi_{n,M}$ under $H_{1,n}$ will be dominating or will be dominated by its standard deviation.
\end{remark}

Combining Theorem \ref{thm:alter,mean,var} with Theorem \ref{thm:null,mean,var}, we are now ready to establish a sufficient detection boundary of $\rho_n$ beyond which the proposed test is of a power tending to 1. Invoking Theorem \ref{thm:CLT} further, this detection boundary is sharp as long as $M\prec n^{1/4}$.

\begin{theorem}[Local power analysis]\label{thm:local power}
Suppose that the considered set of local alternatives satisfies Assumption~\ref{asp:local-alter}. Then concerning with any sequence of alternatives given in \eqref{eq:local-alter}, for any sequence $\rho_n \to 0$, as long as $M/\log n \to \infty$, $M (\log n)^{3/2}/n \to 0$, $B\to \infty$,
  \begin{enumerate}[itemsep=-.5ex,label=(\roman*)]
    \item \label{thm:lpa-1} as long as $|\rho_n| \succ \zeta_{n,M}$, we have
  \[
    \lim_{n\to\infty} \P_{H_{1,n}}\Big(\sT_{\alpha,B}^{\xi_{n,M}^{\pm}}=1\Big)=1;
  \]
\item \label{thm:lpa-2} as long as $|\rho_n| \prec \zeta_{n,M}$ and further assuming $M \prec n^{1/4}$, for any sufficiently small $\alpha$, 
%\fbox{if we can show the variances under null and alternative are the same, it is 0.1; but it is not required}
it holds that
\[
   \limsup_{n\to\infty} \P_{H_{1,n}}\Big(\sT_{\alpha,B}^{\xi_{n,M}^{\pm}}=1\Big)\leq \beta_{\alpha},
\]
for some $\beta_{\alpha}<1$ that only depends on $\alpha$.
\end{enumerate}
%  as long as the number of simulations $B$ is tending to infinity as $n\to\infty$.
\end{theorem}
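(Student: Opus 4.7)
For Part (i), the plan is to combine concentration of $\xi_{n,M}$ around its mean under $H_{1,n}$ with a Chebyshev-type upper bound on the null critical value. Without loss of generality, assume $\rho_n > 0$ (otherwise swap $Y \to -Y$ and argue symmetrically via $\xi_{n,M}^-$). Theorem~\ref{thm:alter,mean,var} together with Corollary~\ref{crl:detection} gives $\E_{H_{1,n}}[\xi_{n,M}]/\sqrt{\Var_{H_{1,n}}[\xi_{n,M}]} \to \infty$; Chebyshev's inequality then yields $\xi_{n,M} \geq \mu_n/2$ with probability tending to 1, writing $\mu_n := \E_{H_{1,n}}[\xi_{n,M}]$. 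On the null side, Theorem~\ref{thm:null,mean,var} combined with Chebyshev and a union bound shows that the population $(1-\alpha)$-quantile of $\xi_{n,M}^\pm$ under $H_0$ satisfies $q^*_{1-\alpha} \lesssim \sqrt{\Var_{H_0}[\xi_{n,M}]/\alpha}$. A direct case analysis pairing the two summands of the mean lower bound $\mu_n \gtrsim \tfrac{M\rho_n}{n} + \rho_n^2$ against the two summands of $\sqrt{\Var_{H_0}[\xi_{n,M}]} \asymp (nM)^{-1/2} + M^{1/2}/n$ shows $\rho_n \succ \zeta_{n,M}$ is precisely the condition ensuring $\mu_n/2 \succ q^*_{1-\alpha}$. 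Since $B \to \infty$, the simulated quantile $q^B_{1-\alpha}$ concentrates around $q^*_{1-\alpha}$, so $\xi_{n,M}^\pm \geq \xi_{n,M} \geq \mu_n/2 > q^B_{1-\alpha}$ with probability tending to 1, forcing rejection.

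For Part (ii), the first step is to note that $M \prec n^{1/4}$ implies $\zeta_{n,M} = (nM)^{-1/4}$, so $\rho_n \prec (nM)^{-1/4}$. Inserting this into Theorem~\ref{thm:alter,mean,var}, every term in the mean and variance bounds simplifies to give $\sqrt{nM}\,|\mu_n| \to 0$ and $nM \cdot \Var_{H_{1,n}}[\xi_{n,M}] = O(1)$. On the null, Theorem~\ref{thm:CLT} provides $\sqrt{nM}\,\xi_{n,M} \Rightarrow N(0, 2/5)$ (and the same for $\xi_{n,M}^-$); a Bonferroni argument then sandwiches $\sqrt{nM}\,q^*_{1-\alpha}$ between $\sqrt{2/5}\,z_{1-\alpha}$ and $\sqrt{2/5}\,z_{1-\alpha/2}$ asymptotically, and in particular $\sqrt{nM}\,q^*_{1-\alpha} \geq c_\alpha$ for a constant $c_\alpha$ with $c_\alpha \to \infty$ as $\alpha \to 0$. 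Since $B \to \infty$ ensures $q^B_{1-\alpha}$ concentrates around $q^*_{1-\alpha}$, a final application of Chebyshev under $H_{1,n}$ with a union bound yields
\[
  \P_{H_{1,n}}(\xi_{n,M}^\pm > q^B_{1-\alpha}) \;\leq\; \frac{\Var_{H_{1,n}}[\xi_{n,M}]}{(q^B_{1-\alpha} - \mu_n)^2} + \frac{\Var_{H_{1,n}}[\xi_{n,M}^-]}{(q^B_{1-\alpha} - \mu_n^-)^2} \;\lesssim\; \frac{1}{c_\alpha^2},
\]
which can be made strictly less than $1$ by taking $\alpha$ sufficiently small. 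Setting $\beta_\alpha := C/c_\alpha^2$ for the implicit constant $C$ gives the claim.

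The main anticipated obstacle is the simulation-based quantile step: one must argue that as $B \to \infty$, $q^B_{1-\alpha}$ concentrates around $q^*_{1-\alpha}$ in a way that is uniform in $n$. This can be handled via a DKW-type inequality for the empirical CDF of the i.i.d.\ simulated values $\{\xi_{n,M}^{\pm(b)}\}$, combined with the continuity of the limiting null CDF obtained from Theorem~\ref{thm:CLT}. A secondary, and perhaps more tedious, technical step is the case analysis identifying $\rho_n \succ \zeta_{n,M}$ with $\mu_n \succ \sqrt{\Var_{H_0}[\xi_{n,M}]}$: the four-way structure of $\zeta_{n,M}$ arises from the four pairings of the two summands of $\mu_n$ against the two summands of $\sqrt{\Var_{H_0}[\xi_{n,M}]}$, so one must verify that $\mu_n$ dominates the null standard deviation if and only if at least one of these pairings succeeds, matching the outer $\wedge$ between the two inner $\vee$'s in the definition of $\zeta_{n,M}$.
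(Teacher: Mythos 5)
Your proposal follows essentially the same route as the paper's proof: Chebyshev/Markov inequalities fed by the mean--variance bounds of Theorem~\ref{thm:alter,mean,var} and Theorem~\ref{thm:null,mean,var} for both the under-alternative concentration and the control of the null critical value, Hoeffding/DKW concentration of the simulated quantile, and (in Part~(ii)) the CLT of Theorem~\ref{thm:CLT} to pin down $\sqrt{nM}\,q^*_{1-\alpha}$ at $\Phi_{2/5}^{-1}(1-\alpha)$. The main organizational difference is that the paper works directly with the null CDF $G_{\xi^\pm}$ and the simulated CDF $G_B$ (Hoeffding on the CDF difference, Markov on $\P_{H_0}(\xi_{n,M}^\pm \geq t_\alpha)$), whereas you phrase everything in terms of quantiles; in Part~(ii) the paper passes through an order-statistic argument (Serfling's theorem) rather than a quantile sandwich. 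These are cosmetic variations on the same strategy.

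One point in your obstacle paragraph needs care. You propose to handle the simulated-quantile step via DKW ``combined with the continuity of the limiting null CDF obtained from Theorem~\ref{thm:CLT}.'' But Theorem~\ref{thm:CLT} requires $M \prec n^{1/4}$, which is \emph{not} assumed in Part~(i) --- and indeed Part~(i) is most interesting precisely when $M$ is close to $n$, where no CLT is available. So in Part~(i) you cannot invoke continuity of a limiting CDF. The fix is what the paper actually does: you only need a one-sided bound. Hoeffding gives $G_B(x) \geq G_{\xi^\pm}(x) - \epsilon$ uniformly with probability $\geq 1 - e^{-2B\epsilon^2}$, and Markov gives $G_{\xi^\pm}(t_\alpha) \geq 1 - \alpha/2$ for a deterministic $t_\alpha \asymp \sqrt{\Var_{H_0}[\xi_{n,M}]/\alpha}$. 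Together these force rejection whenever $\xi_{n,M}^\pm \geq t_\alpha$, with no continuity assumption; your detection-boundary case analysis then completes Part~(i). For Part~(ii), $M \prec n^{1/4}$ is assumed, so continuity is available and your sandwich argument is fine. Once you replace the continuity-based quantile concentration in Part~(i) with this one-sided CDF argument, the proposal is correct.
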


\begin{remark}[Relation between $M$ and $\zeta_{n,M}$]\label{remark:db}
Picking $M=n^\gamma$ for some $\gamma \in (0,1)$, the established boundary in Theorem \ref{thm:local power} is $\zeta_{n,M}=n^{-\beta}$ with
  \[
    \beta = \beta(\gamma) := \Big[\Big(\frac{3}{2}\gamma-\frac{1}{2}\Big) \wedge \frac{\gamma}{2}\Big] \vee \Big[\Big(\frac{1}{4}+\frac{\gamma}{4}\Big) \wedge \Big(-\frac{\gamma}{4}+\frac{1}{2}\Big)\Big].
  \]
Easy to check that $\beta$ is piecewise linear with respect to $\gamma$. In particular,
\begin{itemize}
\item[(i)] if $\gamma \to 0$, then $\beta \to 1/4$, corresponding to the detection boundary of $\xi_n$ established in \cite{auddy2021exact};
\item[(ii)] if $\gamma \to 1$, then $\beta \to 1/2$, corresponding to the well known parametric detection boundary \citep{MR2135927}.
\end{itemize}
Figure~\ref{fig:db} plots the relation between $\beta$ and $\gamma$. Note that here $\beta$ is not strictly increasing with $\gamma$; pattern changes at $\gamma=1/2$ (with $\beta(1/2)=3/8$) and $\gamma=2/3$ (with $\beta(2/3)=1/3$), indicating an intriguing bias-variance tradeoff of the test with regard to the choice of $M$. %. The decrease appears when $\gamma \in [1/2,2/3]$. The reason is that variance increases when $\gamma \ge 1/2$ although mean is monotone increasing. Bias-variance tradeoff explains the temporary decrease in detection boundary.
\end{remark}

\begin{figure}[t!p]
  \centering
  \includegraphics[width=0.7\textwidth]{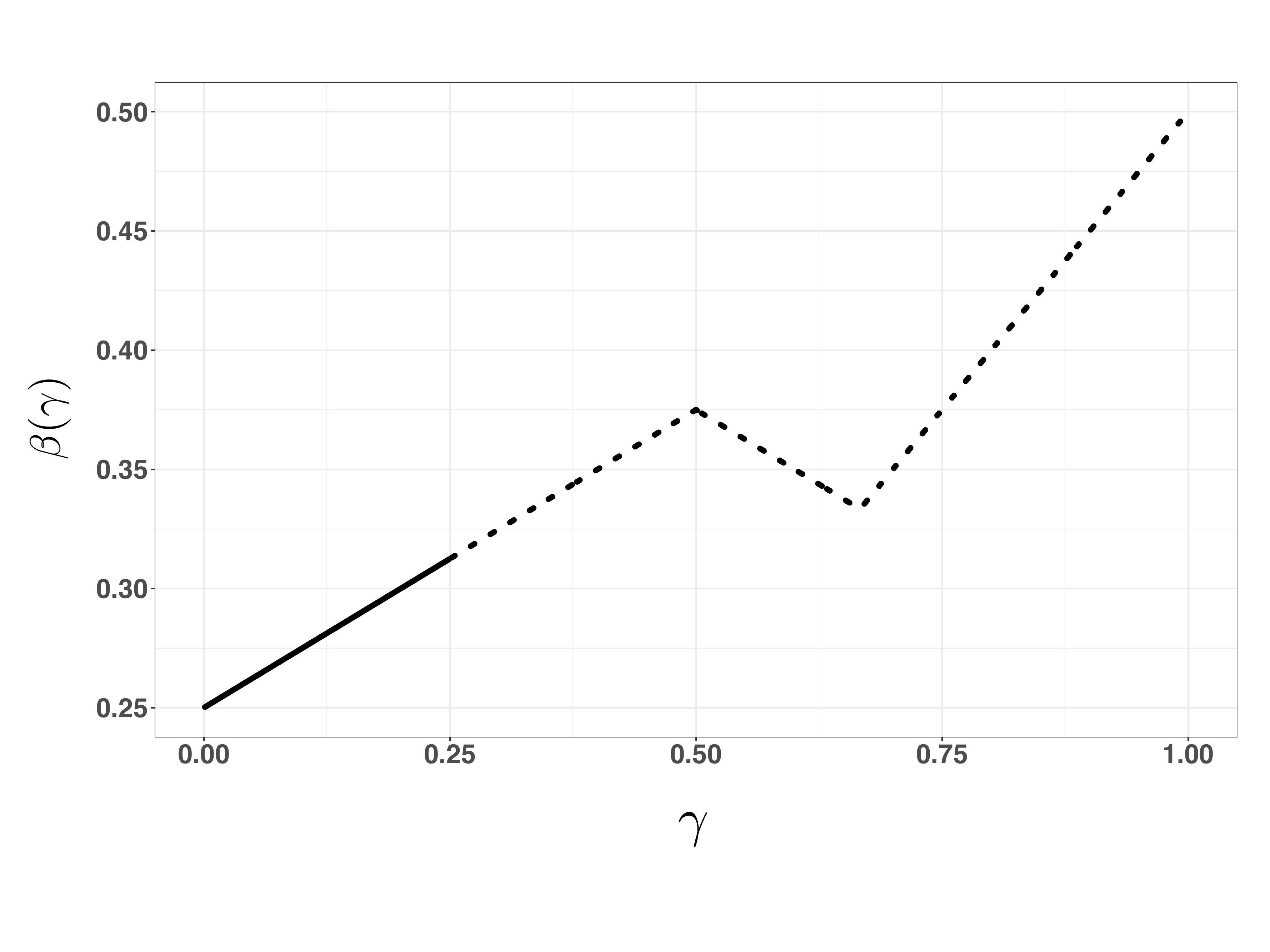} \vspace{-1cm}
  \caption{Relation between $M$ and the corresponding (sufficient) detection boundary. Here $M$ is picked to be $n^{\gamma}$ and the corresponding (sufficient) detection boundary is $\zeta_{n,M}=n^{-\beta(\gamma)}$. The regime of the solid line is the critical detection boundary of $\sT_{\alpha,B}^{\xi_{n,M}^{\pm}}$ (referred to Theorem \ref{thm:local power}\ref{thm:lpa-2}), while the regime of the dotted line is a sufficient detection boundary of $\sT_{\alpha,B}^{\xi_{n,M}^{\pm}}$ (referred to Theorem \ref{thm:local power}\ref{thm:lpa-1}). The change point from critical detection boundary to sufficient detection boundary is $\gamma = 1/4$ (with $\beta(1/4)=5/16$).} 
  \label{fig:db}
\end{figure}

\begin{remark}
\citet[Remark 4.3]{deb2020kernel} conjectured that ``allowing for growing [number of nearest neighbors] could potentially lead to information theoretically efficient estimators''. Theorem \ref{thm:local power} settles this conjecture and shows that a simulation-based test built on the proposed revised Chatterjee's rank correlations can indeed provably achieve near-parametric efficiency as pushing $M$ to be closer and closer to $n$; see also Remarks \ref{remark:deb} and \ref{remark:db} for related discussions.
\end{remark}

\begin{remark}
We conjecture that $\zeta_{n,M}$ is always --- regardless of how fast $M$ grows to infinity with $n$ --- the critical detection boundary of the proposed test in the sense that there exists a constant $C_\alpha<1$ only depending on $\alpha$ such that
\[
 \lim_{n\to\infty} \P_{H_{1,n}}\Big(\sT_{\alpha,B}^{\xi_{n,M}^{\pm}}=1\Big) < C_\alpha
\]
for any sequence $|\rho_n|\prec\zeta_{n,M}$. This conjecture is partially supported by Corollary \ref{crl:detection} and Remark \ref{remark:sharpness2}; see also \citet[Remark 3 and Proposition 1]{MR3961499}. To fully resolve it, however, one needs to obtain more information on the null distribution of $\xi_{n,M}$ beyond its mean and variance calculated in Theorem \ref{thm:null,mean,var} for those $M$'s that are large. This will be an interesting topic for future research. %Such an analysis has been beyond the scope of this paper, and will be investigator in the future.
\end{remark}

\section{Simulation study} \label{sec:sim}

This section develops additional simulation results to illustrate the finite-sample performance of the developed correlation coefficients and the according tests of independence.

First, we examine the sizes and powers of the proposed tests. To this end, the following four sets of tests of independence are considered:
\begin{itemize}
\item[(T1)] the proposed test $\sT_{\alpha,B}^{\xi_{n,M}^{\pm}}$ with $M=1, 20, 100,$ and 200;
\item[(T2)] the test based on Hoeffding's $D$ \citep{MR0029139};
\item[(T3)] the classic parametric test based on Pearson's correlation coefficient;
\item[(T4)] the test proposed in \citet[Section 8.3.1]{deb2020kernel} with $M$ NNs --- both left and right directions are incorporated --- considered for each index and $M=1,20, 100,$ and $200$.
\end{itemize}

Notice that in implementing T4, following \cite{deb2020kernel} the NNs are calculated based on the ranks of $\big[X_i\big]_{i=1}^{n}$ but not the original data, and we select the right one instead of the left if ties exist. Accordingly, as $M=1$, we select the right nearest neighbor for all points except for the largest. To implement T1 and T4, we use simulation-based tests illustrated in Section \ref{sec:sim-based-test} with $B=10,000$. On the other hand, asymptotic tests are used to implement T2 and T3. Nominal levels are set to be $\alpha=0.05$ for all tests.

We perform simulation studies based on the Gaussian rotation model satisfying Assumption \ref{asp:local-alter} with $n \in \{1000,2000,5000\}$, $\rho_n = \rho_0/\sqrt{n}$, and $\rho_0 \in \{0,1,2,5\}$. The case $\rho_0=0$ corresponds to the case when $H_0$ holds, while the rest three give rise to powers in accordance with different dependence strengths that all shrink to zero. Table~\ref{tab:power} illustrates the rejection frequencies for considered tests over $1,000$ replicates. Three observations are in line. (i) All the tests considered have empirical sizes close to 0.05, indicating that they are all size valid. (ii) The power of T1 increases as $M$ increases, and is close to that of T2 and T3 when $M$ is large. However, the power of T1 decreases for every considered $M$, while that of T2 and T3 remain stable.  Both observations are in line with the theoretical observations made earlier in Theorem \ref{thm:local power}. (iii) For each $M$ set, the power of T1 dominates that of T4, which echos Remark~\ref{remark:deb}.

%We take $B=10000$ to perform simulation-based test. The nominal significance level is set to $\alpha=0.05$. To examine the local power, we conduct $1000$ replicates and calculate the fraction of rejection using $\sT_{\alpha,B}^{\xi_{n,M}^{\pm}}$.
%Table~\ref{tab:power} shows the empirical powers. The results corresponds to our theoretical analysis for the boosting in power by increasing $M$.

Secondly, we compare the computation times for $\xi_{n,M}^{\pm}$ with different $n$ and $M$ chosen as before. To this end, we consider a simple bivariate standard Gaussian model and calculate the averaged computation time for each pair of $(n,M)$ over $1,000$ replicates. All experiments are implemented on a laptop with an Apple M1 processor and a 16GB memory. Table~\ref{tab:time} illustrates the computation times for the consider tests, which are compared to these of Hoeffding's $D$ and Pearson sample correlation coefficient. We observe that the computation time indeed increases, and is approximately linear, with regard to $M$.
%The computation time increases as $M$ increases. The results corresponds to Remark~\ref{remark:time} that the increase in computation time is the cost for the boosting in power.

Lastly, we examine the trajectory of $\xi_{n,M}$ as the data are generated from a bivariate Gaussian distribution with marginal mean 0, variance 1, and correlation $\rho=0, 0.2, 0.4, 0.6, 0.8,$ and $1$. To this end, Figure~\ref{fig:as}  illustrates the boxplots of $\xi_{n,M}$ as $n$ changes from $1000, 2000,$ to 5000, and $M$ changes from $1, 20, 100,$ to 200 over 1,000 replicates. For comparison purpose, Figure~\ref{fig:as} also plots the curve of the population correlation measure $\xi$ as a function of $\rho\in[0,1]$. Three observations are in line. (i) For any $M$ considered, the averaged $\xi_{n,M}$ gets closer and closer to the correlation measure as $n$ increases, which supports Theorem~\ref{thm:asconverge}. (ii) As $\rho=0$, the empirical variance of $\xi_{n,M}$ first decreases and then increases as $M$ increases, which supports Theorem~\ref{thm:alter,mean,var}\ref{thm:alter,var} and Remark~\ref{remark:sharpness}. On the other hand, as $\rho$ becomes large, the empirical variance of $\xi_{n,M}$ turns to be stable and unchanged with $M$. (iii) A bias term exists as $M$ is relatively large compared to $n$, but will shrink towards 0 for each fixed $M$ as $n$ increases. This is as expected (cf. Theorem \ref{thm:asconverge}) and is a common occurrence in nonparametric statistics problems.

{
\renewcommand{\tabcolsep}{1.5pt}
\renewcommand{\arraystretch}{1.1}
\begin{table}[t!p]
\aboverulesep=0ex
\belowrulesep=0ex
\centering
\caption{Rejection frequencies of the tests over 1,000 replicates.}{
\begin{tabular}{C{.5in}C{.75in}C{.75in}C{.75in}C{.75in}C{.75in}C{1in}C{.75in}}
\toprule
\multirow{2}{*}{$\rho_0$} & \multirow{2}{*}{$n$}
   & \multicolumn{4}{c}{$\xi_{n,M}^{\pm}$} & \multirow{2}{*}{Hoeffding}&  \multirow{2}{*}{Pearson} \\
   \cline{3-6}
    &   & $M=1$  & $M=20$  &  $M=100$  & $M=200$ &  &  \\
\midrule
 0 & 1000 & 0.056 & 0.057 & 0.045 & 0.048 & 0.045 & 0.049\\
 & 2000 & 0.056 & 0.057 & 0.040 & 0.054 & 0.057 & 0.044\\
 & 5000 & 0.049 & 0.049 & 0.055 & 0.049 & 0.059 & 0.045\\
\midrule
 1 & 1000 & 0.040 & 0.069 & 0.119 & 0.161 & 0.151 & 0.152\\
 & 2000 & 0.062 & 0.061 & 0.099 & 0.158 & 0.156 & 0.186\\
 & 5000 & 0.046 & 0.054 & 0.081 & 0.091 & 0.137 & 0.152\\
\midrule
 2 & 1000 & 0.075 & 0.154 & 0.365 & 0.427 & 0.422 & 0.528\\
 & 2000 & 0.058 & 0.140 & 0.262 & 0.336 & 0.423 & 0.518\\
 & 5000 & 0.059 & 0.084 & 0.170 & 0.229 & 0.431 & 0.511\\
\midrule
 5 & 1000 & 0.176 & 0.851 & 0.982 & 0.997 & 0.993 & 0.999\\
 & 2000 & 0.131 & 0.706 & 0.964 & 0.980 & 0.998 & 1.000\\
 & 5000 & 0.089 & 0.413 & 0.847 & 0.897 & 0.996 & 1.000\\
%\midrule
% 10 & 1000 & 0.879 & 1.000 & 1.000 & 1.000 & 1.000 & 1.000\\
% & 2000 & 0.619 & 1.000 & 1.000 & 1.000 & 1.000 & 1.000\\
% & 5000 & 0.358 & 0.999 & 1.000 & 1.000 & 1.000 & 1.000\\
\midrule
&  & \multicolumn{4}{c}{\cite{deb2020kernel}} & & \\
   \cline{3-6}
    &   & $M=1$  & $M=20$  &  $M=100$  & $M=200$ &  &  \\
\cline{1-6}
0 & 1000 & 0.054 & 0.047 & 0.047 & 0.050 & & \\
& 2000 & 0.056 & 0.056 & 0.050 & 0.054 & & \\
& 5000 & 0.050 & 0.047 & 0.052 & 0.055 & & \\
\cline{1-6}
1 & 1000 & 0.044 & 0.054 & 0.071 & 0.044 & & \\
& 2000 & 0.062 & 0.057 & 0.059 & 0.060 & & \\
& 5000 & 0.046 & 0.050 & 0.059 & 0.076 & & \\
\cline{1-6}
2 & 1000 & 0.077 & 0.109 & 0.138 & 0.106 & & \\
& 2000 & 0.059 & 0.089 & 0.110 & 0.117 & & \\
& 5000 & 0.059 & 0.067 & 0.091 & 0.093 & & \\
\cline{1-6}
5 & 1000 & 0.188 & 0.626 & 0.779 & 0.633 & & \\
& 2000 & 0.128 & 0.413 & 0.710 & 0.635 & & \\
& 5000 & 0.096 & 0.267 & 0.528 & 0.582 & & \\
%\cline{1-6}
%10 & 1000 & 0.895 & 1.000 & 1.000 & 1.000 & & \\
%& 2000 & 0.630 & 0.999 & 1.000 & 1.000 & & \\
%& 5000 & 0.362 & 0.966 & 1.000 & 1.000 & & \\
\bottomrule
\end{tabular}}
\label{tab:power}
\end{table}
}

{
\renewcommand{\tabcolsep}{1.5pt}
\renewcommand{\arraystretch}{1.0}
\begin{table}[!htbp]
\aboverulesep=0ex
\belowrulesep=0ex
\centering
\caption{Computation times of $\xi_{n,M}^{\pm}$, Hoeffding's $D$, and Pearson's correlation coefficient. The computation times here are in $10^{-2}$ seconds and averaged over 1,000 replicates.}{
\begin{tabular}{C{.75in}C{.75in}C{.75in}C{.75in}C{.75in}C{1in}C{.75in}}
\toprule
   & \multicolumn{4}{c}{$\xi_{n,M}^{\pm}$} & \multirow{2}{*}{Hoeffding}&  \multirow{2}{*}{Pearson} \\
   \cline{2-5}
    &    $M=1$  & $M=20$  &  $M=100$  & $M=200$ &  &  \\
\midrule
$n=1000$ & 0.03 & 0.27 & 1.32 & 2.60 & 0.02 & 0.07\\
$n=2000$ & 0.06 & 0.54 & 2.44 & 4.76 & 0.03 & 0.13\\
$n=5000$ & 0.12 & 1.20 & 5.82 & 11.57 & 0.06 & 0.32\\
\bottomrule
\end{tabular}}
\label{tab:time}
\end{table}
}

\begin{figure}[!htbp]
  \centering
  \includegraphics[width=0.85\textwidth]{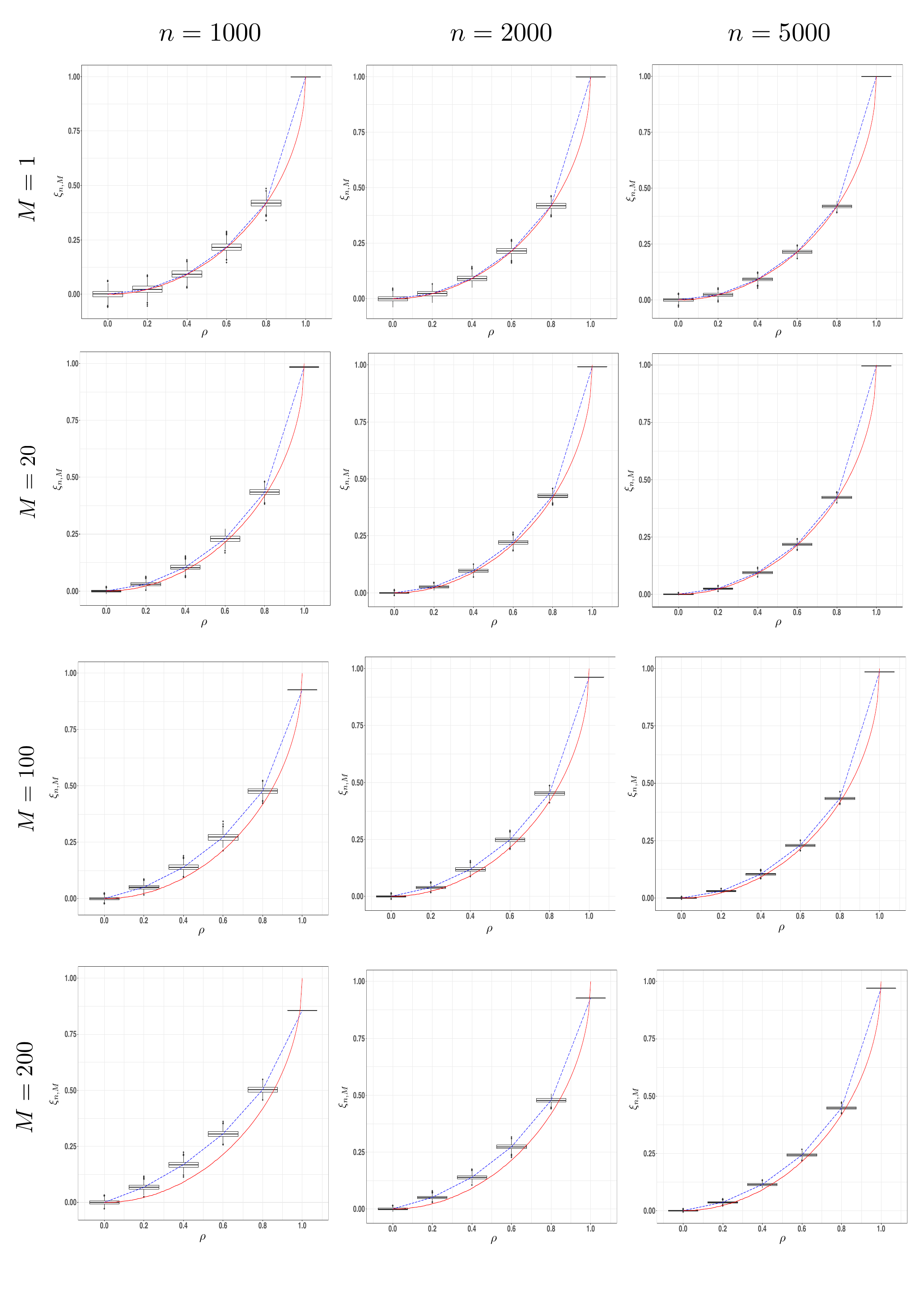}
  \vspace{-1cm}
  \caption{Statistics of the calculated $\xi_{n,M}$'s corresponding to $n=1000, 2000, 5000$ (from left to right) and $M=1,20,100, 200$ (
  from top to bottom) over 1,000 replicates. In each figure the red solid line stands for the theoretical $\xi(\rho)$, the blue dash line stands for the averaged $\xi_{n,M}$ at $\rho=0, 0,2, \ldots, 1$, and in each boxplot the two hinges represent the 25\% and 75\% quantiles.}
  \label{fig:as}
\end{figure}

%\newpage

\section{Proof of the main results}\label{sec:main-proof}

This section provides the proof of Theorems \ref{thm:null,mean,var} and \ref{thm:alter,mean,var}. %The rest proofs are relegated to an appendix. 
In the following, we use $\mX$ to represent $(X_1,X_2,\ldots,X_n)$. For any function $f$, we use $\lVert f \rVert_{\infty}$ to represent its supremum norm.

\subsection{Proof of Theorem~\ref{thm:null,mean,var}}

\begin{proof}[Proof of Theorem~\ref{thm:null,mean,var}]
%The proof is divided into two steps. The first step is the mean of $\xi_{n,M}$ under the null and the second step is the variance of $\xi_{n,M}$ under the null.

%Before that, we establish a lemma about the mean, variance and covariance of some functions of the uniform random permutation.

The proof of Theorem \ref{thm:null,mean,var} is based on the following lemma, which provides some necessary information on ranks.

\begin{lemma}\label{lemma:perm}
Recall the definition of $R_i$'s in \eqref{eq:Ri}. Assuming $Y$ is continuous, $[R_1, R_2,\ldots, R_n]$ then follows a random permutation satisfying
 \[
\P(R_1=i_1,R_2=i_2,\ldots,R_n=i_n)=1/n!,~~~\text{for each } i_1\ne i_2\ne\ldots\ne i_n\in\zahl{n}.
 \]
Furthermore,
  \begin{align*}
    &\E [R_1] = \frac{n+1}{2},~ \E [\min\{R_1,R_2\}] = \frac{n+1}{3},~  \Var [R_1] = \frac{(n+1)(n-1)}{12},~ \Cov[R_1,R_2] = -\frac{n+1}{12},\\
    & \Cov[R_1,\min\{R_2,R_3\}] = -\frac{n+1}{12},~ \Cov[R_1,\min\{R_1,R_2\}] = \frac{(n-2)(n+1)}{24},\\
    & \Cov[\min\{R_1,R_2\},\min\{R_3,R_4\}] = -\frac{4(n+1)}{45},~ \Cov[\min\{R_1,R_2\},\min\{R_1,R_3\}] = \frac{(n+1)(4n-17)}{180},\\
    & \Var[\min\{R_1,R_2\}] = \frac{(n-2)(n+1)}{18}.
  \end{align*}
\end{lemma}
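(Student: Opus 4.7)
The plan is to first establish the uniform-permutation fact, from which all the moment computations flow, and then to work through the identities in roughly increasing combinatorial complexity, leveraging two deterministic identities -- $\sum_i R_i$ and $\sum_{i<j}\min\{R_i,R_j\}$ are both non-random -- to sidestep direct computation wherever possible.

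First I would observe that continuity of $Y$ rules out ties almost surely, so $R_1,\ldots,R_n$ is almost surely a permutation of $\zahl{n}$. Permutation invariance of the joint law of $(Y_1,\ldots,Y_n)$, which are i.i.d., then forces the rank vector to be uniform over the symmetric group. From this, $R_1$ is marginally uniform on $\zahl{n}$, yielding $\E[R_1] = (n+1)/2$ and $\Var[R_1] = (n^2-1)/12$ by standard power-sum identities. The constraint $\sum_{i=1}^n R_i = n(n+1)/2$ has variance zero, and combined with exchangeability this yields $\Cov[R_1,R_2] = -\Var[R_1]/(n-1) = -(n+1)/12$.

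For the quantities involving $\min\{R_1,R_2\}$, I would first derive the marginal $\P(\min\{R_1,R_2\}=k) = 2(n-k)/[n(n-1)]$ and compute $\sum_k k\, \P$ and $\sum_k k^2\, \P$ to obtain $\E[\min\{R_1,R_2\}] = (n+1)/3$ and $\Var[\min\{R_1,R_2\}] = (n-2)(n+1)/18$. For $\Cov[R_1, \min\{R_1,R_2\}]$, conditioning on $R_1=k$ and using that $R_2\mid R_1=k$ is uniform on $\zahl{n}\setminus\{k\}$ reduces the task to a closed-form one-dimensional sum. To extract $\Cov[R_1,\min\{R_2,R_3\}]$ without further direct work, I would then invoke the second deterministic identity $\sum_{i<j}\min\{R_i,R_j\}=n(n-1)(n+1)/6$: its covariance with $R_1$ is zero, and splitting the sum into pairs containing index $1$ versus pairs avoiding it together with exchangeability gives
\[
0 = (n-1)\Cov[R_1,\min\{R_1,R_2\}] + \tbinom{n-1}{2}\Cov[R_1,\min\{R_2,R_3\}],
\]
from which $\Cov[R_1,\min\{R_2,R_3\}]$ drops out instantly.

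Finally, for the two cross-covariances among the mins, the same deterministic identity -- now via $\Var\bigl[\sum_{i<j}\min\{R_i,R_j\}\bigr]=0$ -- produces a single linear relation
\[
\tbinom{n}{2}\Var[\min\{R_1,R_2\}] + 6\tbinom{n}{3}\Cov[\min\{R_1,R_2\},\min\{R_1,R_3\}] + 6\tbinom{n}{4}\Cov[\min\{R_1,R_2\},\min\{R_3,R_4\}] = 0,
\]
so I need to compute only one of the two unknowns directly. The natural target is $\Cov[\min\{R_1,R_2\},\min\{R_3,R_4\}]$, since its four indices are disjoint and $(R_1,R_2,R_3,R_4)$ is therefore simply a uniformly chosen ordered 4-tuple of distinct values in $\zahl{n}$. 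Conditioning on the unordered subset $\{a<b<c<d\}$ and averaging over the three possible pair-partitions yields a polynomial in $(a,b,c,d)$; summing that polynomial over all such 4-subsets by standard power-sum formulas produces $-4(n+1)/45$, after which $\Cov[\min\{R_1,R_2\},\min\{R_1,R_3\}]$ is forced by the linear relation. The main obstacle is the bookkeeping in this last step: enumerating the three pair-partitions of a 4-subset, evaluating the resulting four-index sum, and matching coefficients in the $\Var\bigl[\sum_{i<j}\min\{R_i,R_j\}\bigr]=0$ relation (the factor $6\binom{n}{3}$ for one-index-overlap pairs versus $6\binom{n}{4}$ for disjoint pairs). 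Every remaining piece reduces to closed-form evaluation of $\sum_{k=1}^n k^p$ for $p\leq 3$.
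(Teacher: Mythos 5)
Your proposal is correct, and it departs from the paper's approach in a substantive way. The paper computes each of the nine moments by direct enumerative combinatorics: it fixes the marginal law of the relevant $k$-tuple of ranks (uniform over ordered $k$-tuples of distinct values), partitions the index configurations by which coordinate realizes each minimum, and evaluates the resulting sums of the form $\sum k^p$, $\sum kl$, $\sum kl(k+l)$, $\sum k^2l^2$ over ordered pairs. You instead exploit the two deterministic constraints $\sum_i R_i = n(n+1)/2$ and $\sum_{i<j}\min\{R_i,R_j\} = (n-1)n(n+1)/6$. Together with exchangeability these give you three free linear relations: $\Cov[R_1,R_2]$ from $\Var[\sum_i R_i]=0$; $\Cov[R_1,\min\{R_2,R_3\}]$ from $\Cov[R_1,\sum_{i<j}\min\{R_i,R_j\}]=0$; and $\Cov[\min\{R_1,R_2\},\min\{R_1,R_3\}]$ from $\Var[\sum_{i<j}\min\{R_i,R_j\}]=0$. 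I checked the coefficients in your second and third relations ($(n-1)$, $\binom{n-1}{2}$ and $\binom{n}{2}$, $6\binom{n}{3}$, $6\binom{n}{4}$), and they are correct; plugging the paper's stated values into the third relation indeed yields identically zero. The trade-off is that the paper's method is entirely self-contained and uniform across all nine quantities, which makes it easy to audit line by line, while yours cuts the number of genuinely hard direct computations roughly in half (you still need $\Var[R_1]$, $\E[\min]$ and its variance, $\Cov[R_1,\min\{R_1,R_2\}]$, and the disjoint four-index covariance, but nothing else), at the cost of having the remaining answers determined indirectly and hence harder to spot-check in isolation. Both routes land on the same closed forms.
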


\vspace{0.5cm}

{\bf Step I.} This step calculates $\E_{H_0}[\xi_{n,M}]$ (in the sequel shorthanded as $\E[\xi_{n,M}]$). We have
\begin{align*}
  \E [\xi_{n,M}] &= \E \Big[-2 + \frac{6 \sum_{i=1}^{n} \sum_{m=1}^M \min\{R_{j_m(i)},R_i\} }{(n+1)[nM+M(M+1)/4]} \Big]\\
  &= -2 + \frac{6}{(n+1)[nM+M(M+1)/4]} \E \Big[ \sum_{i=1}^{n} \sum_{m=1}^M \min\{R_{j_m(i)},R_i\} \Big]\\
  &= -2 + \frac{6n}{(n+1)[nM+M(M+1)/4]} \sum_{m=1}^M \E \Big[ \min\{R_{j_m(1)},R_1\} \Big].
  \yestag\label{eq:null,mean}
\end{align*}
For any $m \in \zahl{M}$,
\begin{align*}
  & \E \Big[ \min\{R_{j_m(1)},R_1\} \Big]\\
  = & \E \Big[ \min\{R_{j_m(1)},R_1\} \ind(j_m(1) \neq 1) \Big] + \E \Big[ \min\{R_{j_m(1)},R_1\} \ind(j_m(1) = 1) \Big]\\
  = & \E \Big[ \min\{R_{j_m(1)},R_1\} \ind(j_m(1) \neq 1) \Big] + \E \Big[ R_1 \ind(j_m(1) = 1) \Big]\\
  = & \P \Big(j_m(1) \neq 1 \Big) \E \Big[ \min\{R_{j_m(1)},R_1\} \Biggiven j_m(1) \neq 1 \Big] + \P \Big(j_m(1) = 1 \Big) \E \Big[ R_1 \Biggiven j_m(1) = 1 \Big]\\
  = & \P \Big(j_m(1) \neq 1 \Big) \E \Big[ \E \Big[ \min\{R_{j_m(1)},R_1\} \Biggiven j_m(1) \neq 1 , \mX \Big] \Biggiven j_m(1) \neq 1 \Big]\\
  & + \P \Big(j_m(1) = 1 \Big) \E \Big[ \E \Big[ R_1 \Biggiven j_m(1) = 1 , \mX \Big] \Biggiven j_m(1) = 1 \Big].
\end{align*}
Conditional on $\{j_m(1) \neq 1\}$ and $\mX$, $j_m(1)$ is an index different from $1$. Under independence between $X$ and $Y$, $[R_i]_{i=1}^n$ is independent of $\mX$. Then from Lemma~\ref{lemma:perm},
\[
  \E \Big[ \min\{R_{j_m(1)},R_1\} \Biggiven j_m(1) \neq 1, \mX \Big] = \frac{n+1}{3}
~~~{\rm and}~~~
  \E \Big[ R_1 \Biggiven j_m(1) = 1 , \mX \Big] = \frac{n+1}{2}.
\]

On the other hand, for any $\mX$, the cardinality of $\{i: j_m(i) = i\}$ is $m$. Then $\P (j_m(1) = 1 ) = m/n$ and $\P (j_m(1) \neq 1 ) = 1 - m/n$ since $[X_i]_{i=1}^n$ are i.i.d. Then
\[
  \E \Big[ \min\{R_{j_m(1)},R_1\} \Big] = \frac{n+1}{3} \Big(1-\frac{m}{n}\Big) + \frac{n+1}{2} \frac{m}{n} = \frac{n+1}{3} + \frac{n+1}{6} \frac{m}{n}.
\]
Substituting it into \eqref{eq:null,mean} yields
\[
  \E [\xi_{n,M}] = -2 + \frac{6n}{(n+1)[nM+M(M+1)/4]} \sum_{m=1}^M \Big[ \frac{n+1}{3} + \frac{n+1}{6} \frac{m}{n} \Big] = 0.
\]

{\bf Step II.} This step calculates the variance of $\xi_{n,M}$ under $H_0$ (shorthanded as $\Var[\xi_{n,M}]$). For this, we have
\begin{align*}
  \Var [\xi_{n,M}] =& \Var \Big[-2 + \frac{6 \sum_{i=1}^{n} \sum_{m=1}^M \min\{R_{j_m(i)},R_i\} }{(n+1)[nM+M(M+1)/4]} \Big]\\
  =& \frac{36}{(n+1)^2[nM+M(M+1)/4]^2} \Var \Big[ \sum_{i=1}^{n} \sum_{m=1}^M \min\{R_i,R_{j_m(i)}\} \Big]\\
  =& \frac{36}{(n+1)^2[nM+M(M+1)/4]^2} \Big\{ \E \Big[ \Var \Big[ \sum_{i=1}^{n} \sum_{m=1}^M \min\{R_i,R_{j_m(i)}\} \Biggiven \mX \Big] \Big] +  \\
  &\Var \Big[ \E \Big[ \sum_{i=1}^{n} \sum_{m=1}^M \min\{R_i,R_{j_m(i)}\} \Biggiven \mX \Big] \Big]\Big\}.
  \yestag\label{eq:null,var}
\end{align*}

Under the independence of $X$ and $Y$, for the second term in \eqref{eq:null,var}, we have
\[
  \Var \Big[ \E \Big[ \sum_{i=1}^{n} \sum_{m=1}^M \min\{R_i,R_{j_m(i)}\} \Biggiven \mX \Big] \Big] = 0.
\]

We then decompose the first term in \eqref{eq:null,var} as
\begin{align*}
  & \Var \Big[ \sum_{i=1}^{n} \sum_{m=1}^M \min\{R_i,R_{j_m(i)}\} \Biggiven \mX \Big]\\
  =& \sum_{i=1}^{n} \sum_{m=1}^M \Var \Big[ \min\{R_i,R_{j_m(i)}\} \Biggiven \mX \Big] + \sum_{i=1}^{n} \sum^M_{\substack{m,m'=1\\m \neq m'}} \Cov \Big[ \min\{R_i,R_{j_m(i)}\}, \min\{R_i,R_{j_{m'}(i)}\} \Biggiven \mX \Big]\\
  & + \sum^n_{\substack{i,l=1\\i \neq l}} \sum^M_{m,m'=1} \Cov \Big[ \min\{R_i,R_{j_m(i)}\}, \min\{R_l,R_{j_{m'}(l)}\} \Biggiven \mX \Big].
  \yestag\label{eq:null,var,decompose}
\end{align*}

We consider each term in \eqref{eq:null,var,decompose} seperately and proceed in three substeps. From the independece of $[R_i]_{i=1}^n$ and $\mX$, we assume $[X_i]_{i=1}^n$ is increasing without loss of generality.

{\bf Step II-1.} For any $\mX$ and $m \in \zahl{M}$, the number of $i \in \zahl{n}$ such that $j_m(i)=i$ is $m$. Then the number of pairs $(i,m)$ among $\{(i,m): i \in \zahl{n},m \in \zahl{M}\}$ such that $j_m(i) = i$ is $M(M+1)/2$. Then
\begin{align*}
  & \sum_{i=1}^{n} \sum_{m=1}^M \Var \Big[ \min\{R_i,R_{j_m(i)}\} \Biggiven \mX \Big] 
  =  \frac{M(M+1)}{2} \Var [R_1] + \Big[ nM - \frac{M(M+1)}{2} \Big] \Var [\min\{R_1,R_2\}].
\end{align*}

{\bf Step II-2.} For any $m\ne m' \in \zahl{M}$, the number of $i \in \zahl{n}$ such that $j_m(i)=i,j_{m'}(i)=i$ is $\min\{m,m'\}$, and the number of $i$ such that $j_m(i)=i,j_{m'}(i)\neq i$ or $j_m(i) \neq i,j_{m'}(i)= i$ is $\lvert m-m' \rvert$. Then
\begin{align*}
  & \sum_{i=1}^{n} \sum^M_{\substack{m,m'=1\\m \neq m'}} \Cov \Big[ \min\{R_i,R_{j_m(i)}\}, \min\{R_i,R_{j_{m'}(i)}\} \Biggiven \mX \Big] \\
  = & \frac{(M-1)M(M+1)}{3} \Var [R_1] + \frac{(M-1)M(M+1)}{3} \Cov[R_1,\min\{R_1,R_2\}]\\
  & + \Big[ nM(M-1) - \frac{2}{3} (M-1)M(M+1)\Big] \Cov[\min\{R_1,R_2\},\min\{R_1,R_3\}].
\end{align*}

{\bf Step II-3.} This substep is relatively sophisticated. Assume $i,l \in \zahl{n}$ and $i<l$. Then for any $m,m' \in \zahl{M}$, the possible cases for $(i,l,m,m')$, the number of such $(i,l,m,m')$ and the corresponding covariance value are as followed:
\begin{enumerate}[itemsep=-.5ex,label=(\alph*)]
  \item $j_m(i) \neq i,j_{m'}(l) \neq l,j_m(i) = j_{m'}(l)$. The value is $\Cov[\min\{R_1,R_2\},\min\{R_1,R_3\}]$.

  Let $j_m(i) = j_{m'}(l) = k$. For any $k \ge M+1$, there can be $\binom{M}{2}$ pairs $(i,l)$ and the corresponding $(m,m')=(k-i,k-l)$. For any $3 \le k \le M$, there can be $\binom{k-1}{2}$ pairs $(i,l)$. Then the total number is
  \[
    (n-M)\binom{M}{2} + \sum_{k=3}^M \binom{k-1}{2} = \frac{(n-M)(M-1)M}{2} + \frac{(M-2)(M-1)M}{6}.
  \]

  \item $j_m(i) \neq i,j_{m'}(l) \neq l,j_m(i) = l$. The value is $\Cov[\min\{R_1,R_2\},\min\{R_1,R_3\}]$.

  Let $j_m(i) = l = k$. For any $M+1 \le k \le n-M$, $m,m'$ are arbitrary. Then there can be $M^2$ pairs $(m,m')$ and the corresponding $(i,l)=(k-m,k)$. For any $k \le M$, $m'$ is arbitrary, but $m \le k-1$. Then there can be $(k-1)M$ pairs $(m,m')$. For any $k \ge n-M+1$, $m$ is arbitrary, but $m' \le n-k$. Then there can be $(n-k)M$ pairs $(m,m')$. Then the total number is
  \[
    (n-2M)M^2 + \sum_{k=1}^M (k-1)M + \sum_{k=n-M+1}^{n-1} (n-k)M = (n-2M)M^2 + (M-1)M^2.
  \]

  \item $j_m(i) \neq i,j_{m'}(l) \neq l,j_m(i) \neq l, j_m(i) \neq j_{m'}(l)$. The value is $\Cov[\min\{R_1,R_2\},\min\{R_3,R_4\}]$.

  The total number is $n(n-1)M^2/2$ minus the sum of other cases.

  \item $j_m(i) = i,j_{m'}(l) \neq l$. The value is $\Cov[R_1,\min\{R_2,R_3\}]$.

  For any pairs $(i,l)$ and $i \ge n-M+1$, the number of $m$ such that $j_m(i) = i$ is $i-(n-M)$, and the number of $m'$ such that $j_{m'}(l) \neq l$ is $n-l$. Then there can be $[i-(n-M)](n-l)$ pairs $(m,m')$. Then the total number is
  \[
    \sum^n_{\substack{i,l=n-M+1\\i < l}} [i-(n-M)](n-l) = \frac{(M-2)(M-1)M(M+1)}{24}.
  \]

  \item $j_m(i) \neq i,j_{m'}(l) = l,j_m(i) = l$. The value is $\Cov[R_1,\min\{R_1,R_2\}]$.

  For any $l \ge n-M+1$, the number of $m'$ such that $j_{m'}(l) = l$ is $l-(n-M)$. The number of $i$ such that $j_m(i) = l$ is $M$ and the corresponding $m=l-i$. Then there can be $[l-(n-M)]M$ pairs $(m,m')$. Then the total number is
  \[
    \sum_{l=n-M+1}^n [l-(n-M)]M = \frac{M^2(M+1)}{2}.
  \]

  \item $j_m(i) \neq i,j_{m'}(l) = l,j_m(i) \neq l$. The value is $\Cov[R_1,\min\{R_2,R_3\}]$.

  We first consider the number of $(i,l,m,m')$ such that $j_m(i) \neq i,j_{m'}(l) = l$.

  For any $l \ge n-M+1$, the number of $m'$ such that $j_{m'}(l) = l$ is $l-(n-M)$. For any $i \le n-M$, the number of $m$ such that $j_m(i) \neq i$ is $M$. For any $n-M+1 \le i < l$, the number of $m$ such that $j_m(i) \neq i$ is $n-i$. Then for any $l \ge n-M+1$, the number of pairs $(i,m)$ is
  \[
    (n-M)M + \sum_{i=n-M+1}^{l-1} (n-i) = (l-1)M - \frac{[l-(n-M)][l-(n-M)-1]}{2}.
  \]

  Then the total number of $j_m(i) \neq i,j_{m'}(l) = l$ is
  \begin{align*}
    & \sum_{l=n-M+1}^n \Big[ (l-1)M - \frac{[l-(n-M)][l-(n-M)-1]}{2} \Big] \Big[l-(n-M) \Big]\\
    = & \frac{nM^2(M+1)}{2} - \frac{M(M+1)(7M^2+7M-2)}{24}.
  \end{align*}

  Combined with (e), the total number of this case is
  \[
    \frac{nM^2(M+1)}{2} - \frac{M(M+1)(7M^2+19M-2)}{24}.
  \]

  \item $j_m(i) = i,j_{m'}(l) = l$. The value is $\Cov[R_1,R_2]$.

  For any pairs $(i,l)$ and $i \ge n-M+1$, the number of pairs $(m,m')$ such that $j_m(i) = i,j_{m'}(l) = l$ is $[i-(n-M)][l-(n-M)]$. Then the total number is
  \[
    \sum^n_{\substack{i,l=n-M+1\\i < l}} [i-(n-M)][l-(n-M)] = \frac{(M-1)M(M+1)(3M+2)}{24}.
  \]

\end{enumerate}

Notice that we assume $i<l$, and the number for each case is the same for $i>l$.

Together with \eqref{eq:null,var,decompose}, Lemma~\ref{lemma:perm} and the number of each case, we obtain
\begin{align*}
  & \Var \Big[ \sum_{i=1}^{n} \sum_{m=1}^M \min\{R_i,R_{j_m(i)}\} \Biggiven \mX \Big]\\
  = & nM \Big(\frac{1}{18}n^2\Big) + \frac{1}{3}M^3 \Big(\frac{1}{12}n^2\Big) + \frac{1}{3}M^3 \Big(\frac{1}{24}n^2\Big) + \Big( nM^2 - nM - \frac{2}{3}M^3 \Big) \Big(\frac{1}{45}n^2\Big)\\
  & + \Big( nM^2 - nM - \frac{2}{3}M^3 \Big) \Big(\frac{1}{45}n^2\Big) + \Big( 2nM^2 - 2M^3 \Big) \Big(\frac{1}{45}n^2\Big) + \Big( n^2 M^2 - nM^3 \Big) \Big(-\frac{4}{45}n\Big)\\
  & + M^3 \Big(\frac{1}{24}n^2\Big) + nM^3 \Big(-\frac{1}{12}n\Big) + o(n^3M) + o(n^2M^3)\\
  = & \Big[ \frac{1}{90}n^3M + \frac{2}{135}n^2M^3 \Big] (1+o(1)).
\end{align*}

Substituting them to \eqref{eq:null,var}, we have obtained
\[
  \Var [\xi_{n,M}] = \Big[ \frac{2}{5}\Big( \frac{1}{nM} \Big) + \frac{8}{15}\Big( \frac{M}{n^2} \Big) \Big] (1+o(1)),
\]
and thus finished the proof.
\end{proof}

\subsection{Proof of Theorem~\ref{thm:alter,mean,var}}

\begin{proof}[Proof of Theorem~\ref{thm:alter,mean,var}\ref{thm:alter,mean}] Resembling the proof of Theorem \ref{thm:null,mean,var}, in the following we shorthand $\E_{H_1}[\xi_{n,M}]$ and $\Var_{H_1}[\xi_{n,M}]$ by $\E[\xi_{n,M}]$ and $\Var[\xi_{n,M}]$.

From \eqref{eq:xin},
\begin{align*}
  \E [\xi_{n,M}] &= \E \Big[-2 + \frac{6 \sum_{i=1}^{n} \sum_{m=1}^M \min\{R_{j_m(i)},R_i\} }{(n+1)[nM+M(M+1)/4]} \Big]\\
  &= -2 + \frac{6}{(n+1)[nM+M(M+1)/4]} \E \Big[ \sum_{i=1}^{n} \sum_{m=1}^M \min\{R_{j_m(i)},R_i\} \Big].
  \yestag\label{eq:local,eq1}
\end{align*}
Notice that, for any $i \in \zahl{n}$ and $m \in \zahl{M}$, if $j_m(i) \neq i$, then
\[
  \min\{R_i,R_{j_m(i)}\} = \sum_{k=1}^n \ind(Y_k \le \min\{Y_i,Y_{j_m(i)}\}) = 1 + \sum_{k \neq i, k \neq j_m(i)} \ind(Y_k \le \min\{Y_i,Y_{j_m(i)}\});
\]
if $j_m(i) = i$, then
\[
  \min\{R_i,R_{j_m(i)}\} = R_i = \sum_{k=1}^n \ind(Y_k \le Y_i) = 1 + \sum_{k \neq i} \ind(Y_k \le Y_i).
\]

We accordingly have
\begin{align*}
  & \frac{1}{nM} \E \Big[ \sum_{i=1}^{n} \sum_{m=1}^M \min\{R_i,R_{j_m(i)}\} \Big] \\
  = & \frac{1}{nM} \E \Big[ \sum_{i=1}^{n} \sum_{m=1}^M \Big(\min\{R_i,R_{j_m(i)}\} \ind(j_m(i) \neq i) + R_i \ind(j_m(i) = i) \Big) \Big]\\
  = & \frac{1}{M} \E \Big[  \sum_{m=1}^M \Big( \min\{R_1,R_{j_m(1)}\} \ind(j_m(1) \neq 1) + R_1 \ind(j_m(1) = 1) \Big) \Big] \\
  = & \E \Big[ \min\{R_1,R_{j_U(1)}\} \ind(j_U(1) \neq 1) + R_1 \ind(j_U(1) = 1) \Big]\\
  = & \E \Big[ \Big[ 1 + \sum_{k \neq 1, k \neq j_U(1)} \ind(Y_k \le \min\{Y_1,Y_{j_U(1)}\}) \Big] \ind(j_U(1) \neq 1) \Big] + \E \Big[ \Big[ 1 + \sum_{k \neq 1} \ind(Y_k \le Y_1) \Big] \ind(j_U(1) = 1) \Big]\\
  = & 1 + \E \Big[ \sum_{k \neq 1, k \neq j_U(1)} \ind(Y_k \le \min\{Y_1,Y_{j_U(1)}\}) \ind(j_U(1) \neq 1) \Big] + \E \Big[ \sum_{k \neq 1} \ind(Y_k \le Y_1) \ind(j_U(1) = 1) \Big]\\
  = & 1 + (n-2) \E \Big[\ind(Y \le \min\{Y_1,Y_{j_U(1)}\}) \ind(j_U(1) \neq 1) \Big] + (n-1) \E \Big[\ind(Y \le Y_1) \ind(j_U(1) = 1) \Big]\\
  = & 1 + \E \Big[\ind(Y \le Y_1) \ind(j_U(1) = 1) \Big] + (n-2) \E \Big[\ind(Y \le \min\{Y_1,Y_{j_U(1)}\}) \Big],
  \yestag\label{eq:local,eq2}
\end{align*}
where $U$ follows a uniform distribution over $\zahl{m}$ (cf. \eqref{eq:as,U}) and $Y \sim F_Y$ is independent of $\big[(X_i,Y_i)\big]_{i=1}^n$. Then it suffices to establish the rate of
\[
  \E \Big[\ind(Y \le \min\{Y_1,Y_{j_U(1)}\}) \Big].
\]

We consider the expectation conditional on $\mX$ and $U$. Denote the conditional distribution of $Y$ given $X$ by $F_{Y|X}$ and the density function of $Y$ by $f_Y$. Then conditional on $\mX$ and $U$,
\begin{align*}
  & \E \Big[ \ind(Y \le \min\{Y_1,Y_{j_U(1)}\}) \Biggiven \mX,U \Big]\\
  = & \E \Big[ \ind(Y \le \min\{Y_1,Y_{j_U(1)}\}) \ind(j_U(1) \neq 1) \Biggiven \mX,U \Big] + \E \Big[ \ind(Y \le \min\{Y_1,Y_{j_U(1)}\}) \ind(j_U(1) = 1) \Biggiven \mX,U \Big]\\
  = & \int \Big[1-F_{Y|X=X_1}(y)\Big] \Big[1-F_{Y|X=X_{j_U(1)}}(y)\Big] f_Y(y) \d y \ind(j_U(1) \neq 1) \\
  & + \int \Big[1-F_{Y|X=X_1}(y)\Big] f_Y(y) \d y \ind(j_U(1) = 1)\\
  = & \int \Big[1-F_{Y|X=X_1}(y)\Big] \Big[1-F_{Y|X=X_{j_U(1)}}(y)\Big] f_Y(y) \d y \\
  & + \int \Big[1-F_{Y|X=X_1}(y)\Big] F_{Y|X=X_1}(y) f_Y(y) \d y \ind(j_U(1) = 1)\\
  = & \int \Big[1 - F_Y(y) \Big]^2 f_Y(y) \d y + 2 \int \Big[1 - F_Y(y)\Big] \Big[F_Y(y) -F_{Y|X=X_1}(y)\Big] f_Y(y) \d y\\
  & + \int \Big[F_Y(y) -F_{Y|X=X_1}(y)\Big]^2 f_Y(y) \d y + \int \Big[1 - F_Y(y)\Big] \Big[F_{Y|X=X_1}(y) - F_{Y|X=X_{j_U(1)}}(y) \Big] f_Y(y) \d y \\
  & + \int \Big[F_Y(y) -F_{Y|X=X_1}(y)\Big] \Big[F_{Y|X=X_1}(y) - F_{Y|X=X_{j_U(1)}}(y) \Big] f_Y(y) \d y\\
  & + \int \Big[1-F_Y(y)\Big] F_Y(y) f_Y(y) \d y \ind(j_U(1) = 1)\\
  & + \int \Big[F_Y(y) - F_{Y|X=X_1}(y)\Big] \Big[F_Y(y) + F_{Y|X=X_1}(y) - 1\Big] f_Y(y) \d y \ind(j_U(1) = 1)\\
  =: & T_1 + 2T_2 + T_3 + T_4 + T_5 + T_6 + T_7.
  \yestag\label{eq:local,taylor}
\end{align*}
%where $T_1,T_2,T_3,T_4,T_5,T_6,T_7$ are defined respectively.

For $T_1$, we have
\[
  T_1 = \int \Big[1 - F_Y(y) \Big]^2 f_Y(y) \d y = \P(Y_1 \le \min\{Y_2,Y_3\}) = \frac{1}{3},
  \yestag\label{eq:local,T1}
\]
where $Y_1,Y_2,Y_3$ are three independent copies of $Y$ from $F_Y$.

For $T_2$, from Fubini's theorem,
\begin{align*}
  \E [T_2] &= \E \Big[ \int \Big[1 - F_Y(y)\Big] \Big[F_Y(y) -F_{Y|X=X_1}(y)\Big] f_Y(y) \d y \Big]\\
  &= \int \Big[1 - F_Y(y)\Big] \E \Big[F_Y(y) -F_{Y|X=X_1}(y)\Big] f_Y(y) \d y =0,
  \yestag\label{eq:local,T2}
\end{align*}
since $X_1$ is from $F_X$ and is independent of $U$.

% \fbox{better to relocate the following part to the proof of corresponding results}

% \fbox{====================}

% \fbox{================}

For $T_3$, we have the following lemma.
\begin{lemma}\label{lemma:local,T3} We have
  \[
    \E [T_3] = \Big[ \int f_Y^3(y) \d y \Big] \rho_n^2 + o(\rho_n^2).
  \]
\end{lemma}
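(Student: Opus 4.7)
The plan is to exploit the joint Gaussianity of $(X_1,Y)$ under Assumption~\ref{asp:local-alter} and Taylor-expand the conditional CDF $F_{Y|X=X_1}(y)$ around $\rho=0$. Under this Gaussian rotation model, $(X_1,Y)$ is bivariate standard normal with correlation $\rho_n$, so the conditional law of $Y$ given $X_1$ is $N(\rho_n X_1,\,1-\rho_n^2)$; writing $\Phi$ and $\varphi$ for the standard normal CDF and density, we then have $F_Y=\Phi$, $f_Y=\varphi$, and
\[
F_{Y|X=X_1}(y)\;=\;\Phi\!\left(\frac{y-\rho_n X_1}{\sqrt{1-\rho_n^2}}\right).
\]
First I would compute the first $\rho$-derivative at $\rho=0$. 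A direct calculation with the quotient rule gives $\partial_\rho\!\left[(y-\rho x)/\sqrt{1-\rho^2}\right]\big|_{\rho=0}=-x$, so $\partial_\rho F_{Y|X=X_1}(y)|_{\rho=0}=-X_1\varphi(y)$. Applying Taylor's theorem with Lagrange remainder then yields
\[
F_Y(y)-F_{Y|X=X_1}(y)\;=\;\rho_n\,X_1\,\varphi(y)\;+\;\rho_n^{2}\,R(X_1,y,\rho_n),
\]
where, by bookkeeping on the explicit second derivative, $|R(X_1,y,\rho_n)|\le C(1+X_1^{2})(1+y^{2})$ uniformly for $|\rho_n|\le 1/2$.

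Next I would square both sides, take expectation over $X_1\sim N(0,1)$, and collect orders in $\rho_n$. Since $\E[X_1^{2}]=1$, the leading term is $\rho_n^{2}\varphi(y)^{2}$; the cross term $2\rho_n^{3}\varphi(y)\E[X_1 R]$ and the squared-remainder $\rho_n^{4}\E[R^{2}]$ are both bounded by polynomials in $|y|$ by the polynomial bound on $R$ together with Gaussian moment finiteness. After multiplying by $f_Y(y)=\varphi(y)$ and applying Fubini's theorem, one obtains
\[
\E[T_3]\;=\;\rho_n^{2}\int \varphi(y)^{2} f_Y(y)\,\d y \;+\; O(\rho_n^{3})\;=\;\rho_n^{2}\int f_Y^{3}(y)\,\d y \;+\; o(\rho_n^{2}),
\]
since $f_Y=\varphi$ under Assumption~\ref{asp:local-alter}. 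This is exactly the claim.

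The principal technical obstacle is producing the uniform polynomial bound on $R$ and justifying Fubini's theorem for the remainder. Both reduce to checking that $\partial_\rho^{2}\Phi((y-\rho x)/\sqrt{1-\rho^{2}})$ is a product of the bounded factor $\varphi(\cdot)$ and a rational function of $x,y,\rho$ whose denominator $(1-\rho^{2})^{k/2}$ is uniformly bounded on compact sub-intervals of $(-1,1)$; the resulting polynomial growth in $|x|$ and $|y|$ is integrable against the Gaussian density, so all subsequent integrals are finite and every $O(\rho_n^{3})$ remainder is indeed $o(\rho_n^{2})$. An alternative route would be to invoke the Mehler expansion of the bivariate Gaussian density in Hermite polynomials, which would immediately identify the first-order coefficient of $\rho_n$ in $F_Y-F_{Y|X=X_1}$ as proportional to $\varphi(y)$ times the first Hermite polynomial in $X_1$; but the direct Taylor argument above is self-contained and already sufficient.
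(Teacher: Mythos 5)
Your proof is correct and reaches the claimed conclusion, but it parametrizes the Taylor expansion differently from the paper. The paper first Taylor-expands $\Phi$ around the argument $y$, writing $F_{Y|X=X_1}(y)-F_Y(y)=f_Y(y)\Delta+\tfrac12 f_Y'(y_x)\Delta^2$ with $\Delta=(y-\rho_n X_1)/\sigma-y$, and then separately expands $\Delta$ and $\Delta^2$ in $\rho_n$; the $1/\sigma^2=1/(1-\rho_n^2)$ factor has to be carried through and shown to contribute only a $(1+O(\rho_n^2))$ correction. You instead treat $\rho\mapsto\Phi\big((y-\rho X_1)/\sqrt{1-\rho^2}\big)$ as a single smooth function and expand directly in $\rho$ at $\rho=0$, so the first-order coefficient $-X_1\varphi(y)$ already absorbs the $\sigma$ dependence and the leading term $\rho_n^2 X_1^2\varphi(y)^2$ comes out without the intermediate $\Delta$-bookkeeping. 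Both routes give the same coefficient $\int f_Y^3$ and the same $o(\rho_n^2)$ error, and both hinge on the same two ingredients — finite Gaussian moments and a uniform polynomial bound on the remainder to justify Fubini — so there is no gap; your version is arguably cleaner, while the paper's $\Delta$-based decomposition has the advantage of being directly reusable in the proofs of the neighboring Lemmas~\ref{lemma:local,T4} and~\ref{lemma:local,T5}, where the same $\Delta$ appears paired with $X_{j_U(1)}-X_1$ terms rather than being integrated out.
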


For $T_4$ and $T_5$, we have the following two lemmas.
\begin{lemma}\label{lemma:local,T4} We have $T_4\geq0$ and
  \[
    \frac{M}{n}\rho_n + o(\rho_n^2) \lesssim E[T_4] \lesssim \frac{M}{n} \sqrt{\log n} \rho_n + \frac{M}{n^2} + o(\rho_n^2).
  \]
\end{lemma}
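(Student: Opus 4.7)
The plan is to attack both the sign claim and the two-sided rate bounds on $\mathbb{E}[T_4]$ by exploiting the bivariate normal structure. Under Assumption~\ref{asp:local-alter}, the conditional law of $Y$ given $X=x$ is $N(\rho_n x,\,1-\rho_n^2)$, so $F_{Y|X=x}(y)=\Phi\!\bigl((y-\rho_n x)/\sqrt{1-\rho_n^2}\bigr)$, which is a smooth function of both $x$ and $\rho_n$. Because $X_{j_U(1)}\ge X_1$ by construction (with equality exactly when $j_U(1)=1$, in which case the integrand of $T_4$ vanishes) and $\rho_n>0$ for the sequences considered, we have $F_{Y|X=X_1}(y)\ge F_{Y|X=X_{j_U(1)}}(y)$ pointwise by monotonicity in $x$; since $[1-F_Y(y)]f_Y(y)\ge 0$, this establishes $T_4\ge 0$ directly.

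For the quantitative bounds, I would Taylor-expand in $x$ via the mean value theorem:
\[
F_{Y|X=X_1}(y)-F_{Y|X=X_{j_U(1)}}(y)=\frac{\rho_n}{\sqrt{1-\rho_n^2}}\,\phi\!\left(\frac{y-\rho_n X^\ast}{\sqrt{1-\rho_n^2}}\right)\bigl(X_{j_U(1)}-X_1\bigr),
\]
for some $X^\ast$ between $X_1$ and $X_{j_U(1)}$. Substituting into the definition of $T_4$ and expanding the inner Gaussian density in $\rho_n$ around $\rho_n=0$ yields
\[
T_4=\rho_n\,(X_{j_U(1)}-X_1)\,\kappa+\text{remainder},\qquad \kappa:=\int[1-\Phi(y)]\phi(y)^2\,\d y>0,
\]
where the remainder is $O(\rho_n^2\,|X_{j_U(1)}-X_1|\,(1+|X^\ast|))$. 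Taking expectations and controlling the remainder by $\rho_n^2$ (since $|X^\ast|=O(\sqrt{\log n})$ with high probability and $\mathbb{E}[X_{j_U(1)}-X_1]=o(1)$), the whole problem reduces to estimating $\mathbb{E}[X_{j_U(1)}-X_1]$ from both sides.

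For this, by exchangeability of the $X_i$'s,
\[
\mathbb{E}\bigl[X_{j_U(1)}-X_1\bigr]=\frac{1}{nM}\sum_{m=1}^M\sum_{k=1}^{n-m}\mathbb{E}\bigl[X_{(k+m)}-X_{(k)}\bigr],
\]
and telescoping gives $\sum_{k=1}^{n-m}(X_{(k+m)}-X_{(k)})=\sum_{k=n-m+1}^n X_{(k)}-\sum_{k=1}^m X_{(k)}$. Using symmetry of the standard normal and the classical bound $\mathbb{E}[X_{(n-l)}]\lesssim\sqrt{\log(n/(l+1))}$ for top order statistics, the sum is $O(m\sqrt{\log(n/m)})$, yielding the upper bound $\mathbb{E}[X_{j_U(1)}-X_1]\lesssim (M/n)\sqrt{\log n}$ and hence the leading $\rho_n M n^{-1}\sqrt{\log n}$ term in the claim. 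For the matching lower bound, I restrict attention to a bulk index set, say $k\in[n/4,3n/4]$, on which $\phi(\Phi^{-1}(k/n))$ is bounded below by a positive constant and hence $\mathbb{E}[X_{(k+m)}-X_{(k)}]\gtrsim m/n$; summing over this bulk gives $\mathbb{E}[X_{j_U(1)}-X_1]\gtrsim M/n$, which produces the $\rho_n M/n$ lower bound after combining with $\kappa>0$. The residual $M/n^2$ term in the upper bound absorbs the boundary correction from the event $\{j_U(1)=1\}$ (which has probability $(M+1)/(2n)$ but contributes zero to the leading term) together with a negligible Jacobian error from the $1/\sqrt{1-\rho_n^2}$ factors.

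The principal obstacle is the upper bound on $\mathbb{E}[X_{j_U(1)}-X_1]$: one must control tails of the normal order statistics carefully to make the $\sqrt{\log n}$ factor sharp and to keep the quadratic remainder in the Taylor expansion genuinely $o(\rho_n^2)$ after multiplying by $|X^\ast|$. I would handle this by splitting the expectation according to whether $\max(|X_1|,|X_{j_U(1)}|)\le C\sqrt{\log n}$ or not, and using a Borell-type tail bound on the maximum of $n$ Gaussians to kill the exceptional event; the rest is bookkeeping consistent with the intermediate computations already appearing in the proofs of Lemmas~\ref{lemma:local,T3} and Theorem~\ref{thm:alter,mean,var}.
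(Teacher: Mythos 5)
Your proof is correct in substance and takes a genuinely different route from the paper's. The paper's upper bound relies on Lemma~\ref{lemma:dist}, which bounds $\E[X_{j_m(1)}-X_1]$ by $2(m/n)D_n$ for any distribution supported on $[-D_n,D_n]$, and its lower bound invokes the cited nearest-neighbor distance lower bound (Lemma~\ref{lemma:dist,lower}, used with $k=\lfloor M/2\rfloor$, which is where the requirement $M/\log n\to\infty$ enters). You instead bypass both abstract NN lemmas: the exact identity $\E[X_{j_U(1)}-X_1]=\tfrac{1}{nM}\sum_{m=1}^{M}\sum_{k=1}^{n-m}\E[X_{(k+m)}-X_{(k)}]$, telescoped to top-and-bottom order statistics and paired with classical Gaussian order-statistics bounds, gives the upper rate $(M/n)\sqrt{\log n}$ directly, and the lower rate $M/n$ follows from bulk spacings. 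This is more explicit and more elementary for the Gaussian model, at the price of leaning harder on normality (the paper's Lemma~\ref{lemma:dist} argument is distribution-free given boundedness). Your sign argument for $T_4\ge 0$ --- monotonicity of $F_{Y|X=x}(y)$ in $x$ when $\rho_n>0$ together with $X_{j_U(1)}\ge X_1$ --- is clean and in fact the paper never spells out why $T_4\ge 0$, so this is a welcome addition. Two small inaccuracies worth flagging: (i) the lower bound $\E[X_{(k+m)}-X_{(k)}]\gtrsim m/n$ in the bulk follows from the density being bounded \emph{above} (small density forces large spacings), not bounded below as you wrote --- an easy fix is the Lipschitz inequality $X_{(k+m)}-X_{(k)}\ge\sqrt{2\pi}\,[\Phi(X_{(k+m)})-\Phi(X_{(k)})]$ and $\E[\Phi(X_{(j)})]=j/(n+1)$, which even removes the need to restrict to a bulk; (ii) the residual $M/n^2$ term does not come from the event $\{j_U(1)=1\}$, which contributes \emph{exactly} zero to $T_4$ (not merely to its leading term), but purely from the Gaussian tail event $\{\max_i|X_i|>D_n\}$ in the truncation --- your last paragraph does identify the right mechanism, so this is a misattribution rather than a gap.
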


\begin{lemma}\label{lemma:local,T5}
  \[
    \E [\lvert T_5 \rvert] \lesssim \frac{M}{n^2} + o(\rho_n^2).
  \]
\end{lemma}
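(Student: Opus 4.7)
The plan is to bound $\E[|T_5|]$ via a two-stage Cauchy--Schwarz argument, reducing the estimate to quantities that are either already controlled (from Lemma~\ref{lemma:local,T3}) or accessible via the explicit Gaussian form of the conditional CDF. Applying Cauchy--Schwarz to the $y$-integral in the definition of $T_5$ first yields
\[
|T_5| \le \sqrt{T_3}\cdot\sqrt{B}, \qquad B:=\int\bigl[F_{Y|X=X_1}(y) - F_{Y|X=X_{j_U(1)}}(y)\bigr]^2 f_Y(y)\,\d y,
\]
and a second Cauchy--Schwarz in expectation gives $\E[|T_5|] \le \sqrt{\E[T_3]\,\E[B]}$. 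Since $\E[T_3] \lesssim \rho_n^2$ by Lemma~\ref{lemma:local,T3}, the task reduces to estimating $\E[B]$.

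To estimate $\E[B]$, I would exploit the explicit form $F_{Y|X=x}(y)=\Phi\bigl((y-\rho_n x)/\sqrt{1-\rho_n^2}\bigr)$ afforded by Assumption~\ref{asp:local-alter}. Differentiating in $x$ and bounding $\lVert\phi\rVert_\infty\le 1/\sqrt{2\pi}$ produces the uniform Lipschitz estimate
\[
|F_{Y|X=x}(y) - F_{Y|X=x'}(y)| \lesssim \rho_n |x-x'|,
\]
valid for every $y$, so that $B \lesssim \rho_n^2 (X_1 - X_{j_U(1)})^2$ and hence $\E[B] \lesssim \rho_n^2\,\E[(X_1 - X_{j_U(1)})^2]$.

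The core technical step is then to control the expected squared spacing $\E[(X_1 - X_{j_U(1)})^2]$, where $U$ is uniform on $\zahl{M}$. Letting $R$ denote the rank of $X_1$ (uniform on $\zahl{n}$ by exchangeability), the gap equals $X_{(R+U)}-X_{(R)}$ when $R+U\le n$ and vanishes otherwise. I would split the average over $R$ into a bulk case ($n-R \gg U$), where $X_{(R)}$ is near $0$ and standard one-step Gaussian spacings of order $1/(n f_X(X_{(R)}))$ give a squared-gap contribution of order $U^2/(n-R)^2$, and a boundary case ($n-R \asymp U$), where the Mills-ratio expansion $F_X^{-1}(1-k/n)\asymp\sqrt{2\log(n/k)}$ forces the top $O(U)$ order statistics into an interval of size only $O(\log U/\sqrt{\log n})$. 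Summing the bulk contribution $\sum_{k>U} U^2/k^2 \asymp U$ and the boundary contribution against its $O(U/n)$ occurrence probability, then averaging over $U$, yields $\E[(X_1-X_{j_U(1)})^2] \lesssim M\log n/n$. Substituting back gives $\E[|T_5|]\lesssim \rho_n^2 \sqrt{M\log n/n}$, which is $o(\rho_n^2)$ under $M(\log n)^{3/2}/n\to 0$; the additive $M/n^2$ term arises from the $O(M)$ boundary indices with $R+U>n$, where $T_5$ itself vanishes but a residual from the preceding Cauchy--Schwarz/Lipschitz slackness contributes at this scale.

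The main obstacle is precisely the boundary analysis: a direct use of uniform-spacing theory to bound $(X_{(R+U)}-X_{(R)})^2$ fails because $\E[1/f_X^2(X)]=\infty$ under standard Gaussian $X$, making any bound based on integrating $(V_{(R+U)}-V_{(R)})^2/f_X^2$ through the quantile transform diverge. The Mills-ratio expansion must therefore be used to show the top order statistics are tightly clustered (rather than spread over $O(1)$), which is what keeps the loss poly-logarithmic and ultimately compatible with the assumption $M(\log n)^{3/2}/n\to 0$.
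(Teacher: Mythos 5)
Your two-stage Cauchy--Schwarz reduction is correct and is a genuinely different (and arguably cleaner) route than the paper's. The paper instead Taylor-expands $F_{Y|X=X_1}(y) - F_{Y|X=X_{j_U(1)}}(y)$, truncates to $\{\max_i|X_i|\le D_n\}$, and controls the remaining pieces via Lemma~\ref{lemma:dist}, which bounds the \emph{first} moment $\E[X_{j_m(1)}-X_1 \mid \text{truncation}]\lesssim (m/n)D_n$; the $M/n^2$ additive term then comes from the Gaussian tail on the truncation complement. Your route instead reduces everything to controlling $\E[(X_1-X_{j_U(1)})^2]$, and if that quantity is $\lesssim M\log n/n$, the conclusion $\E[|T_5|]\lesssim\rho_n^2\sqrt{M\log n/n}=o(\rho_n^2)$ follows immediately under $M(\log n)^{3/2}/n\to 0$, which does subsume the lemma's bound.

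The gap is precisely in establishing $\E[(X_1-X_{j_U(1)})^2]\lesssim M\log n/n$. Your Mills-ratio/order-statistics sketch correctly identifies the obstacle ($\E[1/f_X^2]=\infty$, so naive quantile-transform spacing bounds diverge) and proposes the right scaling, but as written the one-step-spacing heuristics, the "bulk contribution $\sum_{k>U}U^2/k^2\asymp U$" tally, and the claim that the top $O(U)$ order statistics sit in an interval of size $O(\log U/\sqrt{\log n})$ are order-of-magnitude assertions, not proofs; making them rigorous would require uniform concentration for extreme-value spacings that you do not supply. Note that you can bypass the order-statistics analysis entirely by combining the paper's own truncation with Lemma~\ref{lemma:dist}: on $\{\max_i|X_i|\le D_n\}$ one has $|X_1-X_{j_U(1)}|\le 2D_n$ pointwise, so $\E[(X_1-X_{j_U(1)})^2\ind(\text{trunc})]\le 2D_n\,\E[|X_1-X_{j_U(1)}|\mid\text{trunc}]\lesssim D_n^2 M/n$, while the complement contributes $O(\sqrt{M}/n)\ll M\log n/n$ by Cauchy--Schwarz, the moment inequality \eqref{eq:as,exp,ineq}, and the Gaussian tail; taking $D_n\asymp\sqrt{\log n}$ yields $\lesssim M\log n/n$. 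Separately, your explanation of the $M/n^2$ term is not right: when $R+U>n$, $j_U(1)=1$, the integrand of $T_5$ is identically zero, and your Cauchy--Schwarz chain produces zero rather than a residual — there is no "slackness" contribution. This does not invalidate the argument, since your bound is already $o(\rho_n^2)$, but it signals a misreading of where the paper's $M/n^2$ term actually comes from (the truncation complement, not the boundary ranks).
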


% \fbox{better to relocate the following part to the proof of corresponding results}

% \fbox{====================}

% \fbox{====================}

For $T_6$,
\begin{align*}
  \E [T_6] &= \int \Big[1-F_Y(y)\Big] F_Y(y) f_Y(y) \d y \E [\ind(j_U(1) = 1)] \\
  &= \P(Y_1 \le Y_2 \le Y_3) \Big[ \frac{1}{M} \sum_{m=1}^M  \E [\ind(j_m(1) = 1)] \Big] \\
  & = \frac{1}{6} \Big[ \frac{1}{M} \sum_{m=1}^M \frac{m}{n} \Big]= \frac{M+1}{12n},
  \yestag\label{eq:local,T6}
\end{align*}
where $Y_1,Y_2,Y_3$ are three independent copies of $Y$ from $F_Y$.

For $T_7$, we have the following lemma.

\begin{lemma}\label{lemma:local,T7}
  $
    \lvert E[T_7] \rvert = o(\rho_n^2).
  $
\end{lemma}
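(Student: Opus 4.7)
The plan is to exploit a second-order Taylor expansion in $\rho_n$ together with a parity-based cancellation that kills the seemingly-leading $O(\rho_n)$ contribution. Set $\delta(x,y) := F_{Y|X=x}(y)-F_Y(y)$ and expand
\[
  g(x) := \int (F_Y - F_{Y|X=x})(F_Y + F_{Y|X=x} - 1)f_Y\,\d y = -\int (2F_Y-1)\delta f_Y\,\d y - \int \delta^2 f_Y\,\d y,
\]
so that $T_7 = g(X_1)\ind(j_U(1)=1)$. Under Assumption~\ref{asp:local-alter}, a direct Taylor expansion of $\Phi((y-\rho_n x)/\sqrt{1-\rho_n^2})$ at $\rho_n=0$ yields
\[
 \delta(x,y) = -\rho_n x\phi(y) + \tfrac{\rho_n^2}{2}y(1-x^2)\phi(y) + r(x,y,\rho_n),
\]
with $|r(x,y,\rho_n)|$ bounded by $\rho_n^3$ times a polynomial in $(|x|,|y|)$ with Gaussian decay in $y$, uniformly on $\{|\rho_n x|\le 1/2\}$.

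The critical observation is that $\int (2\Phi(y)-1)\phi^2(y)\,\d y = 0$ by odd-times-even parity, which kills the $O(\rho_n)$ contribution to $-\int(2F_Y-1)\delta f_Y\,\d y$. Substituting the expansion therefore gives $g(x) = \rho_n^2(a_0 + a_2 x^2) + R(x,\rho_n)$ for computable constants $a_0,a_2$ (expressible through $\int\phi^3$ and $\int y(2\Phi-1)\phi^2$) and a remainder satisfying $|R(x,\rho_n)| \lesssim \rho_n^3(1+|x|)^3$ on $\{|\rho_n x|\le 1/2\}$. Consequently
\[
  \E[T_7] = \rho_n^2\,\E\bigl[(a_0 + a_2 X_1^2)\ind(j_U(1)=1)\bigr] + \E\bigl[R(X_1,\rho_n)\ind(j_U(1)=1)\bigr],
\]
and it suffices to show both summands are $o(\rho_n^2)$.

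For the main term I would use that $\{j_U(1)=1\}$ is exactly the event $\{X_1\in\text{top }U\}$ with $U\sim\mathrm{Unif}(\zahl{M})$ independent, so by exchangeability $\E[X_1^k\ind(X_1\in\text{top }m)] = \tfrac{1}{n}\sum_{j=1}^{m}\E[X_{(n-j+1)}^k]$. Standard upper-tail moment estimates for standard Gaussian order statistics give $\E[X_{(n-j+1)}^2]\lesssim \log(n/j)$ and $\E[|X_{(n-j+1)}|^3]\lesssim (\log(n/j))^{3/2}$; summing over $j$ and averaging over $m=1,\dots,M$ yields
\[
  \E[(1+X_1^2)\ind(j_U(1)=1)]\lesssim \tfrac{M\log(n/M)}{n}, \quad \E[(1+|X_1|^3)\ind(j_U(1)=1)]\lesssim \tfrac{M(\log(n/M))^{3/2}}{n}.
\]
Hence the main term is $O(\rho_n^2 M\log n/n)=o(\rho_n^2)$ under the hypothesis $M(\log n)^{3/2}/n\to 0$.

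For the remainder I would split on $\{|\rho_n X_1|\le 1/2\}$ and its complement. On the first set, the Taylor bound combined with the third-moment estimate above gives a contribution $\lesssim \rho_n^3 M(\log n)^{3/2}/n = o(\rho_n^2)$ by the same hypothesis. On the complement, the global bound $|g|\le 1$ and the Gaussian tail $\P(|X_1|>1/(2\rho_n))\lesssim \exp(-1/(8\rho_n^2))$ give a contribution of order $\exp(-1/(8\rho_n^2))=o(\rho_n^2)$ for any $\rho_n\to 0$. The main obstacle is identifying the parity cancellation: without it the linear-in-$\rho_n$ term would contribute $O(\rho_n\sqrt{\log n}\cdot M/n)$ through the bias of $X_1$ toward the upper tail on $\{j_U(1)=1\}$, which is \emph{not} $o(\rho_n^2)$ in general. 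The whole argument hinges on spotting this cancellation and then controlling the $x$-polynomial Taylor remainder against the conditional upper-tail moments of $X_1$ restricted to the top-$U$ region.
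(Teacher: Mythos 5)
Your argument is correct and captures the essential idea --- the parity cancellation $\int(2\Phi(y)-1)\phi^2(y)\,\d y = 0$ that kills the $O(\rho_n)$ term --- which is precisely the observation the paper's proof relies on (where it appears as $\int[2F_Y(y)-1]f_Y^2(y)\,\d y=0$ after expanding $(y-\rho_n X_1)/\sigma - y$). Your decomposition $g = -\delta(2F_Y-1)-\delta^2$ with $\delta=F_{Y|X}-F_Y$ is exactly the paper's split into the $(F_Y-F_{Y|X})(2F_Y-1)$ and $-(F_Y-F_{Y|X})^2$ pieces. Where you diverge is in the closing estimate: you bound $\E[X_1^2\,\ind(j_U(1)=1)]$ via Gaussian order-statistic upper-tail moments to get $M\log(n/M)/n$ and truncate on $\{|\rho_n X_1|\le 1/2\}$ for the Taylor remainder, whereas the paper uses the cruder one-line bound
\[
\E\big[X_1^2\,\ind(j_U(1)=1)\big]\;\le\;\sqrt{\E[X_1^4]\,\P(j_U(1)=1)}\;\lesssim\;\sqrt{M/n}\;=\;o(1),
\]
by Cauchy--Schwarz and $\P(j_U(1)=1)=(M+1)/(2n)$. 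That weaker $o(1)$ rate already suffices since the lemma only demands $o(\rho_n^2)$, so the order-statistic analysis and the $\{|\rho_n X_1|\le 1/2\}$ truncation, while correct, deliver more than is needed; the paper's route is more economical and avoids both.
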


Summing up \eqref{eq:local,T1}, \eqref{eq:local,T2}, Lemma~\ref{lemma:local,T3}, Lemma~\ref{lemma:local,T4}, Lemma~\ref{lemma:local,T5}, \eqref{eq:local,T6}, Lemma~\ref{lemma:local,T7}, we establish the rate of $\E [\ind(Y \le \min\{Y_1,Y_{j_U(1)}\}) ]$:
\[
  \frac{M}{n}\rho_n + \rho_n^2 \lesssim \E \Big[\ind(Y \le \min\{Y_1,Y_{j_U(1)}\}) \Big] - \frac{1}{3} -\frac{M+1}{12n}  \lesssim \frac{M}{n}\sqrt{\log n}\rho_n + \rho_n^2 + \frac{M}{n^2}.
  \yestag\label{eq:local,rate}
\]

Combining \eqref{eq:local,eq1} and \eqref{eq:local,eq2} yields
\begin{align*}
  \E [\xi_{n,M}] =& -2 + \frac{6nM}{(n+1)[nM+M(M+1)/4]} \\
  & \Big\{ 1 + \E \Big[\ind(Y \le Y_1) \ind(j_U(1) = 1) \Big] + (n-2) \E \Big[\ind(Y \le \min\{Y_1,Y_{j_U(1)}\}) \Big] \Big\}\\
  =& -2 + \frac{6n(n-2)M}{(n+1)[nM+M(M+1)/4]} \Big[ \frac{1}{3}
  + \frac{M+1}{12n}\Big] \\
  & + \frac{6n(n-2)M}{(n+1)[nM+M(M+1)/4]} \Big\{ \E \Big[\ind(Y \le \min\{Y_1,Y_{j_U(1)}\}) \Big] - \frac{1}{3} -\frac{M+1}{12n} \Big\} \\
  & + \frac{6nM}{(n+1)[nM+M(M+1)/4]} \Big\{ 1 + \E \Big[\ind(Y \le Y_1) \ind(j_U(1) = 1) \Big] \Big\}\\
  = & -2 + \frac{2(n-2)}{n+1} + 6 \Big\{ \E \Big[\ind(Y \le \min\{Y_1,Y_{j_U(1)}\}) \Big] - \frac{1}{3} -\frac{M+1}{12n} \Big\} (1+o(1))\\
  & + \frac{6nM}{(n+1)[nM+M(M+1)/4]} \Big\{ 1 + \E \Big[\ind(Y \le Y_1) \ind(j_U(1) = 1) \Big] \Big\}\\
  = & 6 \Big\{ \E \Big[\ind(Y \le \min\{Y_1,Y_{j_U(1)}\}) \Big] - \frac{1}{3} -\frac{M+1}{12n} \Big\} (1+o(1)) \\
  & - \frac{3(M+1)/2}{(n+1)[n+(M+1)/4]} + \frac{6nM}{(n+1)[nM+M(M+1)/4]} \E \Big[\ind(Y \le Y_1) \ind(j_U(1) = 1) \Big]\\
  = & 6 \Big\{ \E \Big[\ind(Y \le \min\{Y_1,Y_{j_U(1)}\}) \Big] - \frac{1}{3} -\frac{M+1}{12n} \Big\} (1+o(1)) + O\Big(\frac{M}{n^2}\Big),
\end{align*}
where the last step is due to
\[
  0 \le \E \Big[\ind(Y \le Y_1) \ind(j_U(1) = 1) \Big] \le \P(j_U(1) = 1) = \frac{M+1}{2n}.
\]

We thus proved the first claim by \eqref{eq:local,rate}. Regarding the second claim, notice that if $\rho_n\succ n^{-1}$, we have $\rho_n^2\succ M/n^2$ and accordingly
\[
  \frac{M}{n}\rho_n + \rho_n^2 \lesssim \E [\xi_{n,M}] \lesssim \frac{M}{n}\sqrt{\log n}\rho_n + \rho_n^2,
\]
and thus finish the whole proof.
\end{proof}

\begin{proof}[Proof of Theorem~\ref{thm:alter,mean,var}\ref{thm:alter,var}]
%From \eqref{eq:xin},
%\begin{align*}
%  \Var [\xi_{n,M}] &= \Var \Big[-2 + \frac{6 \sum_{i=1}^{n} \sum_{m=1}^M \min\{R_{j_m(i)},R_i\} }{(n+1)[nM+M(M+1)/4]} \Big]\\
%  &= \frac{36}{(n+1)^2[nM+M(M+1)/4]^2} \Var \Big[ \sum_{i=1}^{n} \sum_{m=1}^M \min\{R_i,R_{j_m(i)}\} \Big]
%\end{align*}
Invoking the law of total variance yields
\[
  \Var [\xi_{n,M}] = \E \Big[\Var\Big[\xi_{n,M} \Biggiven \mX\Big] \Big] + \Var \Big[\E\Big[\xi_{n,M} \Biggiven \mX\Big] \Big].
  \yestag\label{eq:var,decompose}
\]

% \fbox{the following part is irrelevant to the main proof; please relocate it}

% \fbox{====================}

% \fbox{====================}

For the first term in \eqref{eq:var,decompose}, we have
\begin{align*}
  &\E \Big[\Var\Big[\xi_{n,M} \Biggiven \mX\Big] \Big] \\
  = &\frac{36}{(n+1)^2[nM+M(M+1)/4]^2} \Big\{ \sum_{i=1}^{n} \sum_{m=1}^M \E \Big[ \Var \Big[ \min\{R_i,R_{j_m(i)}\} \Biggiven \mX \Big] \Big]\\
  & + \sum_{i=1}^{n} \sum^M_{\substack{m,m'=1\\m \neq m'}} \E \Big[ \Cov \Big[ \min\{R_i,R_{j_m(i)}\}, \min\{R_i,R_{j_{m'}(i)}\} \Biggiven \mX \Big] \Big]\\
  & + \sum^n_{\substack{i,l=1\\i \neq l}} \sum^M_{m,m'=1} \E \Big[ \Cov \Big[ \min\{R_i,R_{j_m(i)}\}, \min\{R_l,R_{j_{m'}(l)}\} \Biggiven \mX \Big] \ind\Big(j_m(i)=l \Big) \Big] \\
  & + \sum^n_{\substack{i,l=1\\i \neq l}} \sum^M_{m,m'=1} \E \Big[ \Cov \Big[ \min\{R_i,R_{j_m(i)}\}, \min\{R_l,R_{j_{m'}(l)}\} \Biggiven \mX \Big] \ind\Big(j_{m'}(l)=i \Big) \Big] \\
  & + \sum^n_{\substack{i,l=1\\i \neq l}} \sum^M_{m,m'=1} \E \Big[ \Cov \Big[ \min\{R_i,R_{j_m(i)}\}, \min\{R_l,R_{j_{m'}(l)}\} \Biggiven \mX \Big] \ind\Big(j_m(i)=j_{m'}(l) \Big) \Big] \\
  & + \sum^n_{\substack{i,l=1\\i \neq l}} \sum^M_{m,m'=1} \E \Big[ \Cov \Big[ \min\{R_i,R_{j_m(i)}\}, \min\{R_l,R_{j_{m'}(l)}\} \Biggiven \mX \Big] \\
  &~~~ \ind\Big(j_m(i) \neq l, j_{m'}(l) \neq i, j_m(i) \neq j_{m'}(l) \Big) \Big] \Big\}.
  \yestag\label{eq:alter,var,decompose}
\end{align*}

We then establish the following lemma.

\begin{lemma}\label{lemma:alter,var,decompose}
  Let $r_n = \frac{M}{n} + \frac{M}{n}\sqrt{\log n}\rho_n + \rho_n^2$. Then for any $i,l \in \zahl{n}$ and $m,m' \in \zahl{M}$,
  \begin{align*}
    & \Big\lvert \frac{1}{n^2} \E \Big[ \Var \Big[ \min\{R_i,R_{j_m(i)}\} \Biggiven \mX \Big] \Big] - \frac{1}{18} \Big\rvert \lesssim r_n,\\
    & \Big\lvert \frac{1}{n^2} \E \Big[ \Cov \Big[ \min\{R_i,R_{j_m(i)}\}, \min\{R_i,R_{j_{m'}(i)}\} \Biggiven \mX \Big] \Big] - \frac{1}{45} \Big\rvert \lesssim r_n,~~m \neq m',\\
    & \Big\lvert \frac{1}{n} \E \Big[ \Cov \Big[ \min\{R_i,R_{j_m(i)}\}, \min\{R_l,R_{j_{m'}(l)}\} \Biggiven \mX \Big] \ind\Big(j_m(i)=l \Big) \Big] - \frac{1}{45} \Big\rvert \lesssim r_n,~~i \neq l,\\
    & \Big\lvert \frac{1}{n} \E \Big[ \Cov \Big[ \min\{R_i,R_{j_m(i)}\}, \min\{R_l,R_{j_{m'}(l)}\} \Biggiven \mX \Big] \ind\Big(j_{m'}(l)=i \Big) \Big] - \frac{1}{45} \Big\rvert \lesssim r_n,~~i \neq l,\\
    & \Big\lvert \frac{1}{n} \E \Big[ \Cov \Big[ \min\{R_i,R_{j_m(i)}\}, \min\{R_l,R_{j_{m'}(l)}\} \Biggiven \mX \Big] \ind\Big(j_m(i)=j_{m'}(l) \Big) \Big] - \frac{1}{45} \Big\rvert \lesssim r_n,~~i \neq l,\\
    & \Big\lvert \frac{1}{n} \E \Big[ \Cov \Big[ \min\{R_i,R_{j_m(i)}\}, \min\{R_l,R_{j_{m'}(l)}\} \Biggiven \mX \Big] \\
    &~~~ \ind\Big(j_m(i) \neq l, j_{m'}(l) \neq i, j_m(i) \neq j_{m'}(l) \Big) \Big] + \frac{4}{45} \Big\rvert \lesssim r_n,~~i \neq l.
  \end{align*}
%  where the constants on the right hand sides do not depend on $i,l,m,m'$.
\end{lemma}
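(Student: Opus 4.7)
My plan is to reduce each of the six statements in Lemma~\ref{lemma:alter,var,decompose} to a common template: that the expectation of a certain integral involving the differences $\Delta_i(y):=F_{Y|X=X_i}(y)-F_Y(y)$, possibly restricted by one of the nearest-neighbor indicators, is $O(r_n)$ away from its null-hypothesis value. I would first condition on $\mX$, under which $Y_1,\ldots,Y_n$ are mutually independent but no longer identically distributed, with $Y_i$ having c.d.f. $G_i(y):=F_{Y|X=X_i}(y)$. Using $\min\{R_i,R_{j_m(i)}\}=\sum_{k}\ind(Y_k\le \min\{Y_i,Y_{j_m(i)}\})$ (with the obvious modification when $j_m(i)=i$, as in the proof of Theorem~\ref{thm:null,mean,var}), the conditional variances and covariances in the six cases become explicit sums of two- and three-fold integrals of products of $G_a,G_b,G_c$ against $f_Y$, where the index set is dictated by the corresponding collision pattern among $\{i,l,j_m(i),j_{m'}(l)\}$ and by the resulting shared-$Y$ terms in the underlying triple sums.

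I would then substitute $G_a=F_Y+\Delta_a$ and expand by the number of $\Delta$-factors. The zeroth-order terms, in which every $G$ is replaced by $F_Y$, reproduce exactly the null computation from Step~II of the proof of Theorem~\ref{thm:null,mean,var} and hence yield the leading constants $1/18$, $1/45$, and $-4/45$. Linear terms $\int \Delta_a(y)\,\psi(y)\,dy$ for which the index $a$ does not coincide with any index constrained by the NN indicator vanish in expectation via $\E_{X_a}[\Delta_a(y)]=0$; linear terms whose index is entangled with the neighbor structure are controlled by first integrating out the free $Y$-indicators and then using the Lipschitz-in-$x$ bound $|\Delta_x(y)-\Delta_{x'}(y)|\lesssim \rho_n\,|x-x'|\,\phi(y)$ together with the fact that, in the Gaussian rotation model, $|X_{j_m(i)}-X_i|$ is uniformly of order $(M/n)\sqrt{\log n}$ with high probability. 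Quadratic and higher-order terms are absorbed into $\rho_n^2$ via Cauchy--Schwarz and the standard bound $\int \Delta_x(y)^2 f_Y(y)\,dy\lesssim \rho_n^2\,\E_X[X^2]$, which in the bivariate normal model is $O(\rho_n^2)$.

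The main obstacle is the last statement, with coefficient $-4/45$, because the indicator $\ind(j_m(i)\neq l,\,j_{m'}(l)\neq i,\,j_m(i)\neq j_{m'}(l))$ excludes an $O(M/n)$ fraction of $(i,l)$-pairs, and this excluded portion is precisely what generates the $M/n$ contribution in $r_n$ already under the null (cf. Step~II-3 of the proof of Theorem~\ref{thm:null,mean,var}). I would split this indicator as $1-\ind(\text{some collision})$ and treat the collision part by the same seven-case enumeration used in Step~II-3, now with the $\Delta$-expansion carried through. The key quantitative inputs across all six cases are (i) the uniform NN-gap bound, which is also behind Lemma~\ref{lemma:local,T4}, (ii) the Lipschitz-in-$x$ estimate on $F_{Y|X=x}$ in the Gaussian rotation model, and (iii) the second-moment bound on $\Delta_x(y)$; these three inputs combine to deliver the uniform rate $r_n=M/n+(M/n)\sqrt{\log n}\,\rho_n+\rho_n^2$ across the six cases.
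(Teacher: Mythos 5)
Your overall strategy is the right one and matches the paper's: expand the conditional variance/covariance as triple sums of indicators, substitute $G_a = F_Y + \Delta_a$, observe that the zeroth-order piece reproduces the null constants $1/18$, $1/45$, $-4/45$, and control the linear and quadratic terms with the NN-gap bound, a Lipschitz-in-$x$ estimate, and the second-moment bound on $\Delta_x$. These are precisely the three inputs the paper packages as Lemmas~\ref{lemma:dist}/\ref{lemma:dist,lower}, the Taylor expansion in $x$ underlying Lemmas~\ref{lemma:local,T3}--\ref{lemma:local,T5}, and Lemma~\ref{lemma:local,T,quad}.

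However, the explanation you give for the sixth statement (the $-4/45$ one) misidentifies the source of the $M/n$ term in $r_n$. For \emph{fixed} $m,m'\in\zahl{M}$, the collision event $\{j_m(i)=l\}\cup\{j_{m'}(l)=i\}\cup\{j_m(i)=j_{m'}(l)\}$ has probability $O(1/n)$, not $O(M/n)$: each $j_m(i)$ is a uniformly chosen single index, and $m$ is held fixed. Indeed the paper's proof of that sixth bound explicitly drops the collision indicator at a cost of only $O(1/n)$, which is absorbed into $r_n$ because $M/\log n\to\infty$ forces $1/n \prec M/n$. The genuine origin of the $M/n$ piece is a different boundary effect: the ``not enough right nearest neighbors'' event $\{j_m(i)=i\}$, which has probability $m/n\le M/n$ and produces the $T_6$, $T_7$ contributions in the Taylor decomposition \eqref{eq:local,taylor} (cf.\ $\E[T_6]=(M+1)/(12n)$ and $\E[T_{m,6}^2]\lesssim M/n$ in Lemma~\ref{lemma:local,T,quad}). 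You do acknowledge ``the obvious modification when $j_m(i)=i$'' in your first paragraph, so the expansion, carried out carefully, would still surface these terms; but your stated account of where the $M/n$ bottleneck arises is wrong, and you would need to correct it before the estimate in the sixth case (and in fact in all six) closes at the rate $r_n$.

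A secondary imprecision: you assert that $|X_{j_m(i)}-X_i|$ is ``uniformly of order $(M/n)\sqrt{\log n}$ with high probability.'' The paper does not establish a uniform high-probability bound; it truncates the Gaussian sample to $[-D_n,D_n]$ with $D_n\sim\sqrt{\log n}$ and then bounds the \emph{expected} gap conditional on that truncation by $2(m/n)D_n$ (Lemma~\ref{lemma:dist}), controlling the truncation error separately via Gaussian tails. A naive uniform bound would in fact be false near the boundary of the sample's range, so you should replace your uniform-gap claim by the expectation bound with truncation that the paper uses.
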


Notice that in \eqref{eq:alter,var,decompose}, the number in the first sum is $nM$, the second is $nM(M-1)$ and the remaining four are $n(n-1)M^2$. Combining \eqref{eq:alter,var,decompose} and Lemma~\ref{lemma:alter,var,decompose}, we obtain
\begin{align*}
  & \Big\lvert \E \Big[\Var\Big[\xi_{n,M} \Biggiven \mX\Big] \Big] \Big\rvert\\
  \lesssim & \frac{1}{n^4M^2} \Big[ \Big( n^3M + n^3M(M-1) + 4n^2(n-1)M^2\Big) r_n \\
  & + \Big\lvert\frac{1}{18}n^3M + \frac{1}{45}n^3M(M-1) + \frac{3}{45}n^2(n-1)M^2 - \frac{4}{45}n^2(n-1)M^2\Big\rvert\Big]\\
  \lesssim & \frac{1}{n^4M^2} \Big[ n^3M^2 r_n + n^3 M\Big]\\
  = & \frac{1}{nM} + \frac{M}{n^2} + \frac{M}{n^2}\sqrt{\log n}\rho_n + \frac{1}{n}\rho_n^2.
  \yestag\label{eq:alter,var,decompose1}
\end{align*}

For the second term in \eqref{eq:var,decompose}, we establish the following lemma. % that  Efron-Stein inequality.

\begin{lemma}\label{lemma:alter,var,decompose2}
 Recalling
  $
    r_n = \frac{M}{n} + \frac{M}{n}\sqrt{\log n}\rho_n + \rho_n^2,
  $
  we have
  \[
    \Var \Big[\E\Big[\xi_{n,M} \Biggiven \mX\Big] \Big] \lesssim \frac{1}{n} r_n.
  \]
\end{lemma}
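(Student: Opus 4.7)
The starting point is the identity
\[
\E[\xi_{n,M}\mid \mX] \;=\; c_{n,M} \;+\; \frac{6}{(n+1)[nM+M(M+1)/4]}\, V(\mX),
\]
where $V(\mX):=\sum_{i=1}^n\sum_{m=1}^M S_{i,m}(\mX)$ with $S_{i,m}(\mX):=\E[\min\{R_i,R_{j_m(i)}\}\mid\mX]$ and $c_{n,M}$ a deterministic constant (explicit from Theorem~\ref{thm:alter,mean,var}\ref{thm:alter,mean}). Since the prefactor is of order $(n^2M)^{-1}$, the target $\Var[\E[\xi_{n,M}\mid\mX]]\lesssim r_n/n$ is equivalent to $\Var[V(\mX)]\lesssim n^3 M\cdot r_n$.

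\textbf{Step 1 (Integral decomposition of $S_{i,m}$).} Mirroring the derivation of \eqref{eq:local,eq2}--\eqref{eq:local,taylor}, write
\[
S_{i,m}(\mX) \;=\; \ind(j_m(i)\neq i)\Big[1+\sum_{k\neq i,j_m(i)} g(X_i,X_{j_m(i)},X_k)\Big] \;+\; \ind(j_m(i)=i)\Big[1+\sum_{k\neq i} h(X_i,X_k)\Big],
\]
with $g(x,x',x''):=\int[1-F_{Y|X=x}(y)][1-F_{Y|X=x'}(y)]f_{Y|X=x''}(y)\,\d y$ and $h(x,x'):=\int F_{Y|X=x'}(y)f_{Y|X=x}(y)\,\d y$. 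Split $V(\mX)=\mathrm{const}+V_1(\mX)+V_2(\mX)$ according to whether $j_m(i)\neq i$ or $j_m(i)=i$.

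\textbf{Step 2 (Taylor expansion in $\rho_n$).} By Gaussianity, first-order expansion of $F_{Y|X}$ and $f_{Y|X}$ around $\rho_n=0$ gives
\[
g(x,x',x'')-\tfrac{1}{3}\;=\;\rho_n\big[\alpha(x)+\alpha(x')+\beta(x'')\big]+O(\rho_n^2),\qquad h(x,x')-\tfrac{1}{2}\;=\;\rho_n\big[\gamma(x)+\delta(x')\big]+O(\rho_n^2),
\]
for explicit $F_X$-mean-zero functions $\alpha,\beta,\gamma,\delta$, obtained exactly as in the identification of $T_2$ and $T_4$ in \eqref{eq:local,taylor}; the $O(\rho_n^2)$ remainders grow polynomially in the arguments.

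\textbf{Step 3 (Bulk-plus-boundary splitting of the linear part of $V_1$).} Substituting into $V_1$, the $\rho_n$-linear part collects into three additive pieces: \emph{(a)} a bulk $\beta$-contribution $\rho_n(nM-M(M+1)/2)\sum_k\beta(X_k)$; \emph{(b)} a bulk $\alpha$-contribution $2(n-2)M\rho_n\sum_k\alpha(X_k)$, obtained by combining $\sum_{(i,m)}\alpha(X_i)$ with its ``swap'' $\sum_{(i,m)}\alpha(X_{j_m(i)})$ via the same counting identity used in the proof of part (i); and \emph{(c)} a boundary residual $(n-2)\rho_n\sum_k\omega_k\alpha(X_k)$, where the telescoping mismatch between the two bulk counts localises $\omega_k$ on the $2M$ indices of ranks in $\{1,\dots,M\}\cup\{n-M+1,\dots,n\}$ with $|\omega_k|\leq M$. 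Pieces (a) and (b), being sums of iid terms, each have variance $\asymp n^3 M^2\rho_n^2$, which after renormalisation by $(n^2M)^{-2}$ contributes $\rho_n^2/n$, reproducing the last term in $r_n/n$.

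\textbf{Step 4 (Boundary variance and remainders).} For (c), since $\alpha$ is linear, the residual reduces to a weighted sum of extreme Gaussian order statistics $X_{(r)}$. Sharp moment control using $\Var[X_{(n-k+1)}]\lesssim 1/k$ and $\Cov[X_{(n-k+1)},X_{(n-l+1)}]\lesssim 1/(l\sqrt{\log n})$ for $k<l$ (with analogous bounds at the lower tail) yields $\Var[(\text{c})]\lesssim n^2 M^3\rho_n^2/\sqrt{\log n}$, which after renormalisation becomes $M\rho_n^2/(n^2\sqrt{\log n})\leq M\sqrt{\log n}\rho_n/n^2$ (using $\rho_n\leq\log n$)---the middle term of $r_n/n$. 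The $O(\rho_n^2)$ Taylor remainders yield $O(\rho_n^4)$ after renormalisation, negligible. The analogous treatment of $V_2$ (involving only $M(M+1)/2$ pairs) and the residual $\ind(j_m(i)=i)$ terms are bounded crudely and produce the remaining $M/n^2$ slack in $r_n/n$.

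\textbf{Main obstacle.} The delicate step is the boundary variance bound in Step 4(c): the telescoping identity localises the signal on the $2M$ indices at the extremes of $\rank(\mX)$, so the bound hinges on simultaneous sharp variance and covariance control for weighted sums of extreme Gaussian order statistics. This is precisely the mechanism that generates the $\sqrt{\log n}$ factor appearing in $r_n$, paralleling the extreme-value machinery invoked in the proof of Lemma~\ref{lemma:local,T4}.
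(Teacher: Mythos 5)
Your route is genuinely different from the paper's. The paper applies the Efron--Stein inequality: it sets $T=\E\big[\sum_{i,m}\min\{R_i,R_{j_m(i)}\}\,\big|\,\mX\big]$, observes that replacing a single coordinate $X_\ell$ perturbs the right-NN structure of at most $O(M)$ indices (precisely those $i$ with $i=\ell$ or $i\in A_M(\ell)\cup\tilde A_M(\ell)$), so that $T-T^\ell$ is a sum of $O(nM)$ conditional-expectation terms, each of which after centering by $1/3$ has second moment controlled by Lemma~\ref{lemma:local,T,quad} (i.e., $\lesssim r_n$). Cauchy--Schwarz then gives $\E[(T-T^\ell)^2]\lesssim n^2M^2 r_n$, and summing over $\ell$ and normalizing yields the claim. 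The crucial feature is that the $\sqrt{\log n}$ factor enters already-established machinery (Lemma~\ref{lemma:local,T4}'s truncation bound on $\E[T_{m,4}]$ at $D_n=\sqrt{2\log(n^3/M)}$, propagated through $\E[T_{m,4}^2]\le\E[T_{m,4}]$ in Lemma~\ref{lemma:local,T,quad}); no extreme-value theory is invoked at all. Your proposal instead Taylor-expands the conditional mean in $\rho_n$, isolates a bulk iid signal and a rank-weighted boundary residual, and controls the latter with sharp variance/covariance bounds on extreme Gaussian order statistics. The two approaches buy different things: yours gives an explicit decomposition of $\E[\xi_{n,M}\mid\mX]$ and in principle a tighter constant, whereas the Efron--Stein route is agnostic to the exact form of the signal and reuses the moment bound already needed for the first term in \eqref{eq:var,decompose}, so it requires essentially no new estimates.

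That said, as written the proposal has gaps that keep it from being a proof. The order-statistics inputs $\Var[X_{(n-k+1)}]\lesssim 1/k$ and $\Cov[X_{(n-k+1)},X_{(n-l+1)}]\lesssim 1/(l\sqrt{\log n})$ are cited as if standard; they are neither proven nor sharp (the covariance for Gaussian extremes is of order $1/(l\log n)$, not $1/(l\sqrt{\log n})$, so the stated bound is not the ``sharp'' control you describe). More telling, your ``Main obstacle'' diagnosis is off: if you carry through your own Step~4 estimate, the boundary term (c) contributes $\Var[(c)]\lesssim n^2\rho_n^2 M^3/\log n$, hence $M\rho_n^2/(n^2\log n)$ after renormalization, which is already $\lesssim\rho_n^2/n$ whenever $M/(n\log n)\lesssim 1$ --- it does not ``reproduce the middle term of $r_n/n$'' and never needs the $\rho_n\le\log n$ slack. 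So the $\sqrt{\log n}$ factor in the lemma is not generated by extreme order statistics at all; in the paper it comes from the truncation device behind Lemma~\ref{lemma:local,T4}. Finally, your sketch treats (a), (b), (c), the $O(\rho_n^2)$ remainders, and $V_2$ as if their variances simply add: cross-covariances between the rank-weighted boundary piece and the symmetric bulk sums (which are not independent) need to be bounded, and the handling of $V_2$ and the $k\neq i,j_m(i)$ exclusions is left to ``crude'' bounds without saying what they are. Each of these is fixable, but until they are filled in the argument is not closed, and the effort required would exceed that of the Efron--Stein route.
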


Combining \eqref{eq:alter,var,decompose1} and Lemma~\ref{lemma:alter,var,decompose2}, we obtain
\[
  \Var_{H_{1,n}} [\xi_{n,M}] \lesssim \frac{1}{nM} + \frac{M}{n^2} + \frac{M}{n^2}\sqrt{\log n}\rho_n + \frac{1}{n}\rho_n^2,
\]
and thus complete the proof.
\end{proof}

\appendix

\section{Proof of the rest results}\label{sec:proof}

\paragraph*{Additional notation.}
We use $\stackrel{\sf d}{\longrightarrow}$, $\stackrel{\sf p}{\longrightarrow}$, and $\stackrel{\sf a.s.}{\longrightarrow}$ to denote convergence in distribution, convergence in probability, and almost sure convergence, respectively. For a sequence of random variables $[X_n]_n$ and a real sequences $[a_n]_n$, we write $X_n=O_{\P}(a_n)$ if for any $\epsilon>0$ there exists $C>0$ such that $\P(|X_n|\ge C|a_n|)<\epsilon$ for all $n$ large enough, and $X_n=o_{\P}(a_n)$ if for any $c>0$, $\lim_{n\to\infty}\P(|X_n|\ge c|a_n|)=0$.

\subsection{Proofs of results in Section~\ref{sec:prelim}}

\subsubsection{Proof of Remark~\ref{remark:bias}}

\begin{proof}[Proof of Remark~\ref{remark:bias}]

From the definition of Chatterjee's correlation coefficient,
\begin{align*}
  \xi_n &= 1 - \frac{3 \sum_{i=1}^{n} \lvert R_{j_1(i)} - R_i \rvert }{n^2-1}\\
  &=  1 - \frac{3}{n^2-1} \Big[ \sum_{i=1}^n \Big(R_i + R_{j_1(i)} - 2\min\{R_i,R_{j_1(i)}\} \Big) \Big]\\
  &= \frac{6}{n^2-1} \sum_{i=1}^n \min\{R_i,R_{j_1(i)}\} - \frac{3}{n^2-1} \sum_{i=1}^n \Big( R_i + R_{j_1(i)}\Big) + 1\\
  &= \frac{n+1/2}{n-1} \xi_{n,1} - \frac{3}{n^2-1} \sum_{i=1}^n \Big( R_i + R_{j_1(i)}\Big) + \frac{3n}{n-1}.
\end{align*}
%Let's consider the index sequence $[j_1(i)]_{i=1}^n$. After we rearrange the data according to $[X_i]_{i=1}^n$, the left most index will appear in $[j_1(i)]_{i=1}^n$ for zero time and the right most index will appear in $[j_1(i)]_{i=1}^n$ twice, and other indices will appear just once. 
Noticing $[R_i]_{i=1}^n$ is a permutation of $\zahl{n}$,
\[
  \Big\lvert \sum_{i=1}^n \Big( R_i + R_{j_1(i)}\Big) - n(n+1) \Big\rvert \le n-1,
\]
since for any $i,j \in \zahl{n}$, the difference of $R_i$ and $R_j$ can be at most $n-1$. Then
\begin{align*}
  \Big\lvert \frac{n+1/2}{n-1} \xi_{n,1} - \xi_n \Big\rvert &= \Big\lvert \frac{3}{n^2-1} \sum_{i=1}^n \Big( R_i + R_{j_1(i)}\Big) - \frac{3n}{n-1} \Big\rvert\\
  &\le \frac{3}{n^2-1} (n-1) + \Big\lvert \frac{3n(n+1)}{n^2-1}  - \frac{3n}{n-1} \Big\rvert\\
  &= \frac{3}{n+1}.
\end{align*}
Since $\xi_{n,1}$ is bounded, the proof is complete.
\end{proof}

\subsubsection{Proof of Theorem~\ref{thm:asconverge}}

\begin{proof}[Proof of Theorem~\ref{thm:asconverge}]
Let $G(y):= \E [\ind(Y\geq y)]$ and $G_X(y): = \E [\ind(Y\geq y) \given X ]$. We shorthand $F_Y$ by $F$. Let $Q := \int \Var (G_X(y)) \d F(y)$ and $S := \int G(y) (1-G(y)) \d F(y)$. Then from \eqref{eq:xi}, $\xi = Q/S$. Denote
\[
  Q_{n,M} := \frac{1}{nM} \sum_{i=1}^{n} \sum_{m=1}^M \min\{F_n(Y_i), F_n(Y_{j_m(i)})\} - \frac{1}{n} \sum_{i=1}^n G_n(Y_i)^2,
\]
where $F_n(y) := \frac{1}{n} \sum_{i=1}^n \ind(Y_i \le y)$ and $G_n(y) := \frac{1}{n} \sum_{i=1}^n \ind(Y_i \ge y)$ are the empirical counterparts of $F$ and $G$, respectively. 

{\bf Step I.} This step establishes the almost surely convergence of $Q_{n,M}$ to $Q$.

{\bf Step I-1.} Define the population counterpart of $Q_{n,M}$ by
\[
  Q'_{n,M} = \frac{1}{nM} \sum_{i=1}^{n} \sum_{m=1}^M \min\{F(Y_i), F(Y_{j_m(i)})\} - \frac{1}{n} \sum_{i=1}^n G(Y_i)^2,
\]
Since $G$ and $G_n$ are bounded by 1, we have
\[
  \lvert Q_{n,M} - Q'_{n,M} \rvert \le 3\Delta_n,
\]
where $\Delta_n := \sup_{y \in \R} \lvert F_n(y) - F(y) \rvert + \sup_{y \in \R} \lvert G_n(y) - G(y) \rvert$.

From Glivenko-Cantelli Theorem, $\Delta_n \to 0$ almost surely. Since $\lvert \Delta_n \rvert \le 2$, then $\lim_{n\to\infty} \E[\Delta_n] = 0$ and hence
\[
  \lim_{n\to\infty} \E\lvert Q_{n,M} - Q'_{n,M} \rvert = 0.
  \yestag\label{eq:as,diff,q}
\]

{\bf Step I-2.} For any $i \in \zahl{n}$ and $m \in \zahl{M}$,
\[
  \min\Big\{F(Y_i), F(Y_{j_m(i)})\Big\} = \int \ind(Y_i \ge y) \ind(Y_{j_m(i)} \ge y) \d F(y).
\]
Conditional on $\mX$, $[Y_i]_{i=1}^n$ are independent. Then for any $y \in \R$,
\[
  \E [\ind(Y_i \ge y) \ind(Y_{j_m(i)} \ge y)] = G_{X_i}(y) G_{X_{j_m(i)}}(y).
\]
From Fubini's theorem and $\big[(X_i,Y_i)\big]_{i=1}^n$ are i.i.d,
\begin{align*}
  \E [Q'_{n,M}] =& \int \Big[ \frac{1}{M} \sum_{m=1}^M \E[G_{X_1}(y) G_{X_{j_m(1)}}(y)] - G(y)^2 \Big] \d F(y)\\
  =& \int \Big[ \E[G_{X_1}(y)^2] - G(y)^2 \Big] \d F(y) \\
  &+ \int \Big[ \frac{1}{M} \sum_{m=1}^M \E[G_{X_1}(y) (G_{X_{j_m(1)}}(y) - G_{X_1}(y))] \Big] \d F(y).
  \yestag\label{eq:as,exp,q}
\end{align*}

The first term is exactly $Q$ from the definition. 

{\bf Step I-3.} This substep proves that the second term converges to zero.

Consider $U$ to be the uniform distribution over $\zahl{M}$, i.e., for any $m \in \zahl{M}$,
\[
  \P(U = m) = 1/M.
  \yestag\label{eq:as,U}
\]
Denote the probability measure of $X$ by $\mu$. % and a random variable from Binomial distribution by ${\rm Bin}(n,p)$. Then f
For any $x \in \R$ and $\epsilon>0$, since $X_{j_U(1)} = X_1$ when $j_U(1) = 1$, we have
\begin{align*}
  & \limsup_{n\to\infty} \P\Big( \Big\lvert X_{j_U(1)} - X_1 \Big\rvert \ge \epsilon \Biggiven X_1 = x\Big) \\
  = & \limsup_{n\to\infty} \P\Big( \Big\lvert X_{j_U(1)} - X_1 \Big\rvert \ge \epsilon , j_U(1) \neq 1 \Biggiven X_1 = x\Big) \\
  = & \limsup_{n\to\infty} \P\Big( \Big\lvert X_{j_U(1)} - x \Big\rvert \ge \epsilon , j_U(1) \neq 1 \Biggiven X_1 = x\Big) \\
  = & \limsup_{n\to\infty} \P\Big({\rm Bin}(n-1, \mu([x,x+\epsilon])) \le U - 1, j_U(1) \neq 1 \Big)\\
  \le & \limsup_{n\to\infty} \P\Big({\rm Bin}(n-1, \mu([x,x+\epsilon])) \le U - 1\Big)\\
  \le & \limsup_{n\to\infty} \P\Big({\rm Bin}(n-1, \mu([x,x+\epsilon])) \le M - 1\Big) = 0,
\end{align*}
since $U\leq M$ and $M/n \to 0$.

Then from dominated convergence theorem, $\lim_{n\to\infty} \P( \lvert X_{j_U(1)} - X_1 \rvert \ge \epsilon ) = 0$ and then
\[
  X_{j_U(1)} \stackrel{\sf p}{\longrightarrow} X_1.
\]

For any $m \in \zahl{M}$ and $j \in \zahl{n}$, there can be at most one $i$ such that $X_{j_m(i)} = X_j$ and $X_i \neq X_j$ since there is no tie with probability one. Then from the proof of Lemma 9.4 in \cite{chatterjee2020new}, for any nonnegative measurable function $f$,
\[
  \E [f(X_{j_m(1)})] \le 2 \E [f(X_1)],
\]
and thus
\[
  \E [f(X_{j_U(1)})] = \frac{1}{M} \sum_{m=1}^M \E [f(X_{j_m(1)})] \le 2 \E [f(X_1)].
  \yestag\label{eq:as,exp,ineq}
\]

For any measurable function $f$ and probability measure $\nu$, from Lemma 9.5 in \cite{chatterjee2020new}, essentially Lusin's theorem, for any $\epsilon>0$, there exists a compactly supported continuous function $g$ such that $\nu(\{x:f(x) \neq g(x)\}) < \epsilon$. We take $\nu$ to be the probability measure of $X_1$. Then for any $\delta>0$,
\begin{align*}
  & \P( \lvert f(X_{j_U(1)} - f(X_1) \rvert \ge \delta ) \\
  \le & \P( \lvert g(X_{j_U(1)} ) - g(X_1) \rvert \ge \delta ) + \P(f(X_1) \neq g(X_1)) + \P(f(X_{j_U(1)}) \neq g(X_{j_U(1)})).
\end{align*}
From continuous mapping theorem and $X_{j_U(1)} \stackrel{\sf p}{\longrightarrow} X_1$,
\[
  \lim_{n\to\infty} \P( \lvert g(X_{j_U(1)}) - g(X_1) \rvert \ge \delta ) = 0.
\]
From the definition of $g$,
\[
  \P(f(X_1) \neq g(X_1)) < \epsilon.
\]
From \eqref{eq:as,exp,ineq},
\[
  \P(f(X_{j_U(1)}) \neq g(X_{j_U(1)})) \le 2 \P(f(X_1) \neq g(X_1)) < 2\epsilon.
\]
Combining the above derivations together yields
\[
  f\big(X_{j_U(1)} \big)\stackrel{\sf p}{\longrightarrow} f\big(X_1\big).
\]

For any $y \in \R$, $G_x(y)$ is a measurable function with respect to $x$. We take $f(x)$ to be $G_x(y)$. Since $G_x(y)$ is bounded by 1, then
% \begin{align*}
%   & \lim_{n\to\infty} \E \Big[\lvert G_{X_{j_U(1)}}(y) - G_{X_1}(y) \rvert \Big] \\
%   = & \lim_{n\to\infty} \E \Big[\lvert G_{X_{j_U(1)}}(y) - G_{X_1}(y) \rvert \ind(j_U(1) \neq \emptyset) \Big] + \lim_{n\to\infty} \E \Big[\lvert G_{X_{j_U(1)}}(y) - G_{X_1}(y) \rvert \ind(j_U(1) = \emptyset) \Big]\\
%   \le & \lim_{n\to\infty} \E \Big[\lvert G_{X_{j_U(1)}}(y) - G_{X_1}(y) \rvert \ind(j_U(1) \neq \emptyset) \Big] + \lim_{n\to\infty} \P \Big( j_U(1) = \emptyset \Big)\\
%   = & \lim_{n\to\infty} \E \Big[\lvert G_{X_{j_U(1)}}(y) - G_{X_1}(y) \rvert \ind(j_U(1) \neq \emptyset) \Big]\\
%   = & \lim_{n\to\infty} \E \Big[\lvert G_{X_{j_U(1)}\ind(j_U(1) \neq \emptyset) }(y) - G_{X_1}(y) \rvert \ind(j_U(1) \neq \emptyset) \Big]\\
%   \le & \lim_{n\to\infty} \E \Big[\lvert G_{X_{j_U(1)}\ind(j_U(1) \neq \emptyset) }(y) - G_{X_1}(y) \rvert ] = 0.
% \end{align*}
\[
  \lim_{n\to\infty} \E \Big[\lvert G_{X_{j_U(1)}}(y) - G_{X_1}(y) \rvert \Big] = 0
\]
and thus
\begin{align*}
  & \limsup_{n\to\infty} \Big\lvert \frac{1}{M} \sum_{m=1}^M \E\Big[G_{X_1}(y) (G_{X_{j_m(1)}}(y) - G_{X_1}(y))\Big] \Big\rvert \\
  \le & \limsup_{n\to\infty} \frac{1}{M} \sum_{m=1}^M \E\Big\lvert G_{X_{j_m(1)}}(y) - G_{X_1}(y) \Big\rvert\\
  = & \limsup_{n\to\infty} \E \Big\lvert G_{X_{j_U(1)}}(y) - G_{X_1}(y) \Big\vert  =0.
\end{align*}
Invoking dominated convergence theorem then implies
\[
  \lim_{n\to\infty} \int \Big[ \frac{1}{M} \sum_{m=1}^M \E[G_{X_1}(y) (G_{X_{j_m(1)}}(y) - G_{X_1}(y))] \Big] \d F(y) = 0.
\]

{\bf Step I-4.} Combining \eqref{eq:as,diff,q} and \eqref{eq:as,exp,q} shows
\[
  \lim_{n\to\infty} \E[Q_{n,M}] = Q.
\]

From Lemma 9.11 in \cite{chatterjee2020new}, essentially bounded difference inequality, there exists a constant $C>0$ such that for any $n$ and $t>0$,
\[
  \P(\lvert Q_{n,M} - \E[Q_{n,M}] \rvert \ge t) \le 2\exp(-Cnt^2).
\]
Then using Borel–Cantelli Lemma, $Q_{n,M} - \E[Q_{n,M}] \stackrel{\sf a.s.}{\longrightarrow} 0$ and then
\[
  Q_{n,M} \stackrel{\sf a.s.}{\longrightarrow} Q.
\]

{\bf Step II.} Let
\[
  S_n = \frac{1}{n} \sum_{i=1}^n G_n(Y_i)(1-G_n(Y_i)).
\]
From the proof of Theorem 1.1 in \cite{chatterjee2020new}, $S_n \stackrel{\sf a.s.}{\longrightarrow} S$ and then
\[
  Q_{n,M} / S_n \stackrel{\sf a.s.}{\longrightarrow} \xi.
  \yestag\label{eq:as,converge}
\]
From \eqref{eq:Ri}, for any $i \in \zahl{n}$ and $m \in \zahl{M}$,
\[
  \min\{F_n(Y_i), F_n(Y_{j_m(i)})\} = \frac{1}{n} \min\{R_i, R_{j_m(i)}\}.
\]
Some simple calculation gives
\begin{align*}
  Q_{n,M} & = \frac{1}{n^2M} \sum_{i=1}^{n} \sum_{m=1}^M \min\{R_i, R_{j_m(i)}\} - \frac{(n+1)(2n+1)}{6n^2}~~~{\rm and}~~~ S_n = \frac{1}{6}\Big(1-\frac{1}{n^2}\Big),
\end{align*}
and accordingly
\begin{align*}
  \frac{Q_{n,M}}{S_n} & = \frac{6}{M(n+1)(n-1)} \sum_{i=1}^{n} \sum_{m=1}^M \min\{R_i, R_{j_m(i)}\} - \frac{2n+1}{n-1}\\
  & = \frac{n+(M+1)/4}{n-1} \xi_{n,M} + \frac{M-1}{2(n-1)}.
\end{align*}
Then due to \eqref{eq:as,converge} and $M/n \to 0$ as $n \to \infty$, we obtain
\[
  \xi_{n,M} \stackrel{\sf a.s.}{\longrightarrow} \xi.
\]
This completes the proof.
\end{proof}

\subsection{Proofs of results in Section \ref{sec:test}}

\subsubsection{Proof of Theorem~\ref{thm:CLT}}

\begin{proof}[Proof of Theorem~\ref{thm:CLT}]

The proof is divided into three steps.

{\bf Step I.} This step establishes the H\'ajek representation of $\xi_{n,M}$.

Let $F_Y^{(n)}$ be the empirical cumulative distribution function of the sample $[Y_i]_{i=1}^n$. Since
\[
  \sum_{i \neq j} \min\big\{R_i, R_j \big\} = \frac{(n-1)n(n+1)}{3},
\]
we have by \eqref{eq:xin}
\begin{align*}
  \xi_{n,M} =& -2 + \frac{6 \sum_{i=1}^{n} \sum_{m=1}^M \min\big\{R_i, R_{j_m(i)}\big\} }{(n+1)[nM+M(M+1)/4]}\\
  =& \frac{6}{(n+1)[nM+M(M+1)/4]} \sum_{i=1}^{n} \sum_{m=1}^M \min\big\{R_i, R_{j_m(i)}\big\} - \frac{6}{(n-1)n(n+1)} \sum_{i \neq j} \min\big\{R_i, R_j \big\}\\
  =& \frac{6n}{(n+1)[nM+M(M+1)/4]} \sum_{i=1}^{n} \sum_{m=1}^M \min\big\{F_Y^{(n)}(Y_i), F_Y^{(n)}(Y_{j_m(i)})\big\} \\
  & - \frac{6}{(n-1)(n+1)} \sum_{i \neq j} \min\big\{F_Y^{(n)}(Y_i), F_Y^{(n)}(Y_j) \big\},
\end{align*}
Introduce
\begin{align*}
  \widehat{\xi}_{n,M} =& \frac{6n}{(n+1)[nM+M(M+1)/4]} \sum_{i=1}^{n} \sum_{m=1}^M \min\big\{F_Y(Y_i), F_Y(Y_{j_m(i)})\big\} \\
  & - \frac{6}{(n-1)(n+1)} \sum_{i \neq j} \min\big\{F_Y(Y_i), F_Y(Y_j) \big\}\\
{\rm and}~~~  \sigma_{n,M}^2 =& \frac{2}{5}\Big( \frac{1}{nM} \Big) + \frac{8}{15}\Big( \frac{M}{n^2} \Big).
\end{align*}
The goal of this step is to show
\[
  \lim_{n \to \infty} \sigma_{n,M}^{-2} \E\Big(\xi_{n,M} - \widehat{\xi}_{n,M}\Big)^2 = 0.
\]

{\bf Step I-1.} This substep calculates $\E[(\widehat{\xi}_{n,M})^2]$. Notice that under independence of $X$ and $Y$ and $Y$ is continuous, $[F_Y(Y_i)]_{i=1}^n$ are i.i.d from uniform distribution $U(0,1)$ over $[0,1]$. Then
\begin{align*}
  \E[\widehat{\xi}_{n,M}] =& \frac{6n}{(n+1)[nM+M(M+1)/4]} \Big[ \frac{1}{2} \frac{M(M+1)}{2} + \frac{1}{3} \Big(nM - \frac{M(M+1)}{2}\Big) \Big] \\
  & - \frac{6}{(n-1)(n+1)} \Big(\frac{1}{3} n(n-1)\Big) = 0,
\end{align*}
since $\E[U_1] = 1/2$ and $\E[\min\{U_1,U_2\}] = 1/3$ for two independent copies $U_1$ and $U_2$ from $U(0,1)$.

For four independent copies $U_1,U_2,U_3,U_4$ from $U(0,1)$, simple calculations show
\begin{align*}
  & \Var [U_1] = \frac{1}{12},~ \Cov[U_1,U_2] = 0,~ \Cov[U_1,\min\{U_2,U_3\}] = 0,~ \Cov[U_1,\min\{U_1,U_2\}] = \frac{1}{24},\\
  & \Cov[\min\{U_1,U_2\},\min\{U_3,U_4\}] = 0, \Cov[\min\{U_1,U_2\},\min\{U_1,U_3\}] = \frac{1}{45}, \Var[\min\{U_1,U_2\}] = \frac{1}{18}.
\end{align*}
Then analogous to the proof of Theorem~\ref{thm:null,mean,var},
\begin{align*}
  & \Var \Big[\sum_{i=1}^{n} \sum_{m=1}^M \min\big\{F_Y(Y_i), F_Y(Y_{j_m(i)})\big\} \Big]\\
  = & nM \Big(\frac{1}{18}\Big) + \frac{1}{3}M^3 \Big(\frac{1}{12}\Big) + \frac{1}{3}M^3 \Big(\frac{1}{24}\Big) + \Big( nM^2 - nM - \frac{2}{3}M^3 \Big) \Big(\frac{1}{45}\Big)\\
  & + \Big( nM^2 - nM - \frac{2}{3}M^3 \Big) \Big(\frac{1}{45}\Big) + \Big( 2nM^2 - 2M^3 \Big) \Big(\frac{1}{45}\Big) + M^3 \Big(\frac{1}{24}\Big) + o(nM) + o(M^3)\\
  = & \frac{4}{45} nM^2 + \Big[\frac{1}{90}nM + \frac{1}{108}M^3 \Big] (1+o(1)).
\end{align*}
We also have
\begin{align*}
  & \Var \Big[\sum_{i \neq j} \min\big\{F_Y(Y_i), F_Y(Y_j)\big\} \Big]\\
  = & n(n-1) \Var[\min\{U_1,U_2\}] + n(n-1)(4n-7) \Cov[\min\{U_1,U_2\},\min\{U_1,U_3\}] \\
  & + n(n-1)(n-2)(n-3) \Cov[\min\{U_1,U_2\},\min\{U_3,U_4\}] \\
  = & \frac{4}{45}n^3(1+O(n^{-1}))
\end{align*}
and 
%\fbox{should the part in red be $F_Y(Y_i)$?}
\begin{align*}
  & \Cov\Big[\sum_{i=1}^{n} \sum_{m=1}^M \min\big\{F_Y(Y_i), F_Y(Y_{j_m(i)})\big\}, \sum_{i \neq j} \min\big\{F_Y(Y_i), F_Y(Y_j)\big\} \Big]\\
  = & \frac{M(M+1)}{2} \Cov\Big[U_1, \sum_{i \neq j} \min\{U_i,U_j\} \Big] + \Big[ nM - \frac{M(M+1)}{2} \Big] \Cov\Big[\min\{U_1,U_2\}, \sum_{i \neq j} \min\{U_i,U_j\} \Big]\\
  = & \frac{M(M+1)}{2} \Big[ 2(n-1) \Cov[U_1,\min\{U_1,U_2\}] + (n-1)(n-2) \Cov[U_1,\min\{U_2,U_3\}] \Big]\\
  & + \Big[ nM - \frac{M(M+1)}{2} \Big] \Big[ 2 \Var[\min\{U_1,U_2\}] + 4(n-2) \Cov[\min\{U_1,U_2\},\min\{U_1,U_3\}] \\
  & + (n-2)(n-3) \Cov[\min\{U_1,U_2\},\min\{U_3,U_4\}] \Big]\\
  = & \frac{4}{45}n^2M - \frac{1}{360}nM^2 (1+o(1)).
\end{align*}
Then 
\begin{align*}
  & \E[(\widehat{\xi}_{n,M})^2] = \Var[\widehat{\xi}_{n,M}] \\
  = & \frac{36n^2}{(n+1)^2[nM+M(M+1)/4]^2} \Var \Big[\sum_{i=1}^{n} \sum_{m=1}^M \min\big\{F_Y(Y_i), F_Y(Y_{j_m(i)})\big\} \Big] \\
  & + \frac{36}{(n-1)^2(n+1)^2} \Var \Big[\sum_{i \neq j} \min\big\{F_Y(Y_i), F_Y(Y_j)\big\} \Big] - \frac{72n}{(n-1)(n+1)^2[nM+M(M+1)/4]} \\
  & \Cov\Big[\sum_{i=1}^{n} \sum_{m=1}^M \min\big\{F_Y(Y_i), F_Y(Y_{j_m(i)})\big\}, \sum_{i \neq j} \min\big\{F_Y(Y_i), F_Y(Y_j)\big\} \Big]\\
  = & \sigma_{n,M}^2 (1+o(1)).
  \yestag\label{eq:hajek1}
\end{align*}

{\bf Step I-2.} This substep calculates $\E[\xi_{n,M}\widehat{\xi}_{n,M}]$. The following lemma provides some necessary information to this end. 

\begin{lemma}\label{lemma:perm1}
Let $[U_i]_{i=1}^n$ be a sequence of i.i.d random variables from $U(0,1)$. Let $[R_i^U]_{i=1}^n$ be the corresponding ranks. Then
  \begin{align*}
    &\Cov [R_1^U,U_1] = \frac{n-1}{12},~ \Cov[R_1^U,U_2] = -\frac{1}{12},~ \Cov[R_1^U,\min\{U_2,U_3\}] = -\frac{1}{12},\\
    & \Cov[R_1^U,\min\{U_1,U_2\}] = \frac{n-2}{24},~ \Cov[\min\{R_1^U,R_2^U\},\min\{U_3,U_4\}] = -\frac{4}{45},\\
    & \Cov[\min\{R_1^U,R_2^U\},\min\{U_1,U_3\}] = \frac{4n-17}{180},~ \Cov[\min\{R_1^U,R_2^U\},\min\{U_1,U_2\}] = \frac{n-2}{18}.
  \end{align*}
\end{lemma}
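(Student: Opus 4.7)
The plan is to exploit two simple ``counting'' representations:
\[
R_1^U \;=\; 1 + \sum_{j\neq 1}\ind(U_j\le U_1),
\qquad
\min\{R_1^U,R_2^U\} \;=\; 1 + \sum_{j\ge 3}\ind\!\big(U_j\le \min\{U_1,U_2\}\big),
\]
where the second identity holds with probability one because $U_i$'s are continuous, so exactly one of $U_1,U_2$ equals $\min\{U_1,U_2\}$. Once these are in hand, the constant $1$ drops out of every covariance and bilinearity reduces each of the seven target quantities to a sum, indexed by the ``dummy'' index $j$, of covariances involving a single indicator and a bounded function of a fixed, finite set of $U_i$'s.

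The second step is to prune this sum using independence. In each case, the second argument of the covariance only depends on a fixed $O(1)$-sized subset $S$ of the indices. For any $j\notin S\cup\{1\}$ (resp.\ $\{1,2\}$), the variable $U_j$ is independent of both $U_1$ (resp.\ $\min\{U_1,U_2\}$) and of the second argument, so the covariance is either zero or of a single ``generic'' form that is the same for all such $j$. This leaves us with at most two or three distinct contributing patterns, multiplied by elementary counts in $n$, which immediately produces the linear-in-$n$ answers claimed.

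The third step is the direct evaluation of the handful of remaining elementary integrals on $[0,1]$. The needed building blocks are $\E[U]=\tfrac12$, $\E[\min\{U_1,U_2\}]=\tfrac13$, $\E[U^2]=\tfrac13$, $\E[\min\{U_1,U_2\}^2]=\tfrac16$, $\E[U_1\min\{U_1,U_2\}]=\tfrac5{24}$, and a few similar cubic-in-$u$ integrals such as $\int_0^1 u(1-u)^2\,\mathrm{d}u=\tfrac1{12}$ and $\int_0^1(1-u)^2(u-u^2/2)\,\mathrm{d}u=\tfrac1{15}$; these are obtained by expanding and integrating term by term. Plugging them into the two-term/three-term decompositions of step two (for instance, $\Cov[\min\{R_1^U,R_2^U\},\min\{U_1,U_3\}]$ splits into the $j=3$ contribution $-\tfrac1{36}$ and an $(n-3)$-fold copy of $\tfrac1{45}$, giving $(4n-17)/180$) yields each stated formula.

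The only real obstacle is keeping the index bookkeeping straight: for each target one must correctly identify (i)~whether the second argument depends on $U_1$, $U_2$ or neither, and (ii)~which values of $j$ in the sum are ``generic'' and which are ``special.'' This is purely organizational, and no technical tool beyond bilinearity, independence, and single-variable integration on $[0,1]$ is required.
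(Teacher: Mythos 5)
Your proposal is correct and takes essentially the same route as the paper's proof: both use the counting representations $R_1^U = 1 + \sum_{j\neq 1}\ind(U_j\le U_1)$ and $\min\{R_1^U,R_2^U\} = 1 + \sum_{j\ge 3}\ind(U_j\le \min\{U_1,U_2\})$, expand by bilinearity, collapse by independence to a few generic and special covariance patterns, and finish with elementary integrals on $[0,1]$ (your sample decomposition for $\Cov[\min\{R_1^U,R_2^U\},\min\{U_1,U_3\}]$ into $-\tfrac1{36}+(n-3)\cdot\tfrac1{45}$ matches the paper's exactly).
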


Notice that the ranks $[R_i]_{i=1}^n$ of $[Y_i]_{i=1}^n$ are the ranks of $[F_Y(Y_i)]_{i=1}^n$, and $[F_Y(Y_i)]_{i=1}^n$ are i.i.d from $U(0,1)$. Then by Lemma~\ref{lemma:perm1} and analogous to the proof of Theorem~\ref{thm:null,mean,var},
\begin{align*}
  & \Cov \Big[\sum_{i=1}^{n} \sum_{m=1}^M \min\big\{R_i, R_{j_m(i)}\big\}, \sum_{i=1}^{n} \sum_{m=1}^M \min\big\{F_Y(Y_i), F_Y(Y_{j_m(i)})\big\} \Big]\\
  = & nM \Big(\frac{1}{18}n\Big) + \frac{1}{3}M^3 \Big(\frac{1}{12}n\Big) + \frac{1}{3}M^3 \Big(\frac{1}{24}n\Big) + \Big( nM^2 - nM - \frac{2}{3}M^3 \Big) \Big(\frac{1}{45}n\Big)\\
  & + \Big( nM^2 - nM - \frac{2}{3}M^3 \Big) \Big(\frac{1}{45}n\Big) + \Big( 2nM^2 - 2M^3 \Big) \Big(\frac{1}{45}n\Big) + \Big( n^2 M^2 - nM^3 \Big) \Big(-\frac{4}{45}\Big)\\
  & + M^3 \Big(\frac{1}{24}n\Big) + nM^3 \Big(-\frac{1}{12}\Big) + o(n^2M) + o(nM^3)\\
  = & \Big[ \frac{1}{90}n^2M + \frac{2}{135}nM^3 \Big] (1+o(1)).
\end{align*}

%\fbox{I did not get the following part; did you mistaken some $F_Y$ by $F_Y^{(n)}$?}

%\fbox{====================}

Notice that the values of 
%\fbox{there is a notation conflict between $R_i$ and $\sum_{i\ne j}XXX$.} 
%\fbox{please write sth like ${\rm Cov}(R_i, \sum_{k\ne \ell}XXX)$ instead}
\[
  \Cov \Big[ R_i , \sum_{k \neq \ell} \min\big\{F_Y(Y_k), F_Y(Y_\ell) \big\}\Big] \text{ and } \Cov \Big[ \min\big\{R_i, R_{j}\big\}, \sum_{k \neq \ell} \min\big\{F_Y(Y_k), F_Y(Y_\ell) \big\}\Big]
\]
are homogenous for $i \neq j \in \zahl{n}$, and $\sum_{i=1}^n R_i$, $\sum_{i \neq j} \min\{R_i, R_{j}\}$ are constants. Then for any $i \neq j \in \zahl{n}$,
\begin{align*}
  &\Cov \Big[ R_i , \sum_{k \neq \ell} \min\big\{F_Y(Y_k), F_Y(Y_\ell) \big\}\Big] \\
  =& \frac{1}{n} \Cov \Big[ \sum_{i=1}^n R_i , \sum_{k \neq \ell} \min\big\{F_Y(Y_k), F_Y(Y_\ell) \big\}\Big] = 0,\\
  &\Cov \Big[ \min\big\{R_i, R_{j}\big\}, \sum_{k \neq \ell} \min\big\{F_Y(Y_k), F_Y(Y_\ell) \big\}\Big] \\
  =& \frac{1}{n(n-1)} \Cov \Big[ \sum_{i \neq j} \min\big\{R_i, R_{j}\big\}, \sum_{k \neq \ell} \min\big\{F_Y(Y_k), F_Y(Y_\ell) \big\}\Big] = 0,
\end{align*}
and thus
\begin{align*}
  \Cov \Big[ \sum_{i=1}^{n} \sum_{m=1}^M \min\big\{R_i, R_{j_m(i)}\big\}, \sum_{i \neq j} \min\big\{F_Y(Y_i), F_Y(Y_j) \big\} \Big] = 0.
\end{align*}

Since $\sum_{i \neq j} \min\big\{R_i, R_j \big\}$ is a constant, then
\begin{align*}
  & \E[\xi_{n,M}\widehat{\xi}_{n,M}] = \Cov[\xi_{n,M},\widehat{\xi}_{n,M}] \\
  = & \frac{36n}{(n+1)^2[nM+M(M+1)/4]^2} \Cov \Big[\sum_{i=1}^{n} \sum_{m=1}^M \min\big\{R_i, R_{j_m(i)}\big\}, \sum_{i=1}^{n} \sum_{m=1}^M \min\big\{F_Y(Y_i), F_Y(Y_{j_m(i)})\big\} \Big]\\
  = & \sigma_{n,M}^2 (1+o(1)).
  \yestag\label{eq:hajek2}
\end{align*}

%\fbox{===================}

{\bf Step I-3.}
From Theorem~\ref{thm:null,mean,var}, $\E[(\xi_{n,M})^2] = \sigma_{n,M}^2 (1+o(1))$. Combining \eqref{eq:hajek1} and \eqref{eq:hajek2}, we obtain
\[
  \lim_{n \to \infty} \sigma_{n,M}^{-2} \E\Big(\xi_{n,M} - \widehat{\xi}_{n,M}\Big)^2 = \lim_{n \to \infty} \sigma_{n,M}^{-2} \Big( \E[(\xi_{n,M})^2] + \E[(\widehat{\xi}_{n,M})^2] - 2 \E[\xi_{n,M}\widehat{\xi}_{n,M}] \Big) = 0.
  \yestag\label{eq:hajekvar1}
\]

{\bf Step II.} This step transforms $\widehat{\xi}_{n,M}$ to a function of sequence with local dependence. To this end, define
\begin{align*}
  \widecheck{\xi}_{n,M} =& \frac{6n}{(n+1)[nM+M(M+1)/4]} \sum_{i=1}^{n} \sum_{m=1}^M \min\big\{F_Y(Y_i), F_Y(Y_{j_m(i)})\big\} \\
  & - \frac{12}{n+1} \sum_{i=1}^n \Big( -\frac{1}{2} F_Y(Y_i)^2 + F_Y(Y_i) - \frac{1}{3} \Big) - \frac{2n}{n+1}.
  \yestag\label{eq:xicheck}
\end{align*}
We then have
\begin{align*}
  & \widecheck{\xi}_{n,M} - \widehat{\xi}_{n,M} \\
  =& \frac{6n}{n+1} \Big[ \frac{1}{n(n-1)} \sum_{i \neq j} \min\big\{F_Y(Y_i), F_Y(Y_j) \big\} - \frac{1}{3} - \frac{2}{n} \sum_{i=1}^n \Big( -\frac{1}{2} F_Y(Y_i)^2 + F_Y(Y_i) - \frac{1}{3} \Big) \Big].
\end{align*}

Notice that for two independent copies $U_1$ and $U_2$ from $U(0,1)$, $\E[\min\{U_1,U_2\}] = 1/3$ and $\E[\min\{U_1,U_2\} \given U_1] = -U_1^2/2 + U_1$. Then by H\'ajek projection of U-statistics \citep[Theorem 12.3]{MR1652247}, 
\[
  \E [(\widecheck{\xi}_{n,M} - \widehat{\xi}_{n,M})^2] = O(n^{-2}),
\]
so that 
\[
  \widecheck{\xi}_{n,M} - \widehat{\xi}_{n,M} = O_{\P} (n^{-1}).
\]
Since $\sigma_{n,M}^2 \succ n^{-2}$, we then have
\[
  \lim_{n \to \infty} \sigma_{n,M}^{-2} \E\Big(\widecheck{\xi}_{n,M} - \widehat{\xi}_{n,M}\Big)^2 = 0 ~~\text{ so that }~~ \sigma_{n,M}^{-1} (\widecheck{\xi}_{n,M} - \widehat{\xi}_{n,M}) = o_{\P}(1).
  \yestag\label{eq:hajekvar2}
\]

{\bf Step III.} This step establishes the central limit theorem of $\widecheck{\xi}_{n,M}$. We first cite a result on the central limit theorem for a sequence with local dependence. For related definitions of interaction rule and Kantorovich-Wasserstein distance, please refer to \citet[Section 2]{MR2435859}.

\begin{lemma}\label{lemma:CLT,local}\citep[Theorem 2.5]{MR2435859}
  Let $\cW$ be a measurable space and $f:\cW^n \to \R$ be a measurable map admitting a symmetric interaction rule $G$. Let $[W_i]_{i=1}^\infty$ be a sequence of i.i.d $\cW$-valued random variables and $\mW = (W_1, \ldots, W_n)$. Let $T = f(\mW)$ and $\sigma^2 = \Var(T)$. Let $[\tW_i]_{i=1}^n$ be independent copies of $[W_i]_{i=1}^n$. Let $\mW^j = (W_1, \ldots, W_{j-1}, \tW_j, W_{j+1}, \ldots, W_n)$ for each $j \in [n]$ and define $\Delta_j f = f(\mW) - f(\mW^j)$. Let $D = \max_{j \in \zahl{n}} \lvert \Delta_j f \rvert$. Let $G'$ be an arbitrary symmetric extension of $G$ on $\cW^{n+4}$ and take
  \[
    \delta = 1 + {\rm ~the~degree~of~the~vertex~1~in~} G'(W_1,\ldots,W_{n+4}).
  \]
  Then
  \[
    \delta_T \le \frac{Cn^{1/2}}{\sigma^2} \Big(\E[D^8]\Big)^{1/4} \Big(\E[\delta^4]\Big)^{1/4} + \frac{1}{2\sigma^3} \sum_{j=1}^n \E[\lvert \Delta_j f \rvert^3],
  \]
  where $C$ is a universal constant and $\delta_T$ is the Kantorovich-Wasserstein distance between the law of $(T-\E[T])/\sqrt{\Var[T]}$ and the standard Gaussian law. 
\end{lemma}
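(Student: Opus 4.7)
The plan is to prove this Wasserstein-distance normal approximation bound via Stein's method in the form adapted to functions of independent random variables with local interaction structure (a Chatterjee-type ``second-order Poincar\'e'' approach). Setting $W := (T - \E T)/\sigma$, duality reduces $\delta_T$ to controlling $|\E h(W) - \E h(N)|$ uniformly over $1$-Lipschitz $h$. For each such $h$, the standard Stein equation $g'(w) - w g(w) = h(w) - \E h(N)$ admits a solution $g$ with $\|g'\|_\infty, \|g''\|_\infty$ bounded by universal constants, so the entire problem reduces to bounding $|\E[g'(W) - W g(W)]|$.

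The first step is to produce a workable expression for $\E[Wg(W)]$ via the resampling mechanism. Using the independence of $\tW_j$ from $\mW$, $T - \E T$ decomposes as a sum of martingale-type differences built from the $\Delta_j f$; a second-order Taylor expansion of $g(W) - g(W^{(j)})$, where $W^{(j)}$ corresponds to $\mW^j$, then yields the identity
\[
\E[W g(W)] \;=\; \E\bigl[\widehat F \cdot g'(W)\bigr] \;+\; \text{Taylor remainder},
\]
where $\widehat F$ is a quadratic form built from the resampling differences $\Delta_j f$ together with an independent conjugate copy $\Delta_j' f := f(\mW) - f(\mW^{j,\prime})$, normalized so that $\E \widehat F = 1$. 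The remainder is controlled by $\|g''\|_\infty \cdot \sigma^{-3}\sum_j \E|\Delta_j f|^3$, which produces the second term $\tfrac{1}{2\sigma^3}\sum_j \E|\Delta_j f|^3$ of the stated bound.

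The heart of the argument is comparing $\E[\widehat F g'(W)]$ to $\E[g'(W)]$. Since $\E\widehat F = 1$, Cauchy--Schwarz reduces this to bounding $\Var(\widehat F)$. Here the interaction rule becomes decisive: expanding $\Var(\widehat F)$ as a sum over quadruples of indices $(j,k,j',k')$, the covariance of the summands vanishes whenever the pair $\{j,j'\}$ is disconnected in the interaction graph from the pair $\{k,k'\}$, since then the corresponding resampling differences decouple into independent factors. Thus $\Var(\widehat F)$ reduces to a sum over ``interacting quadruples,'' whose combinatorial size is controlled by the vertex degrees in a suitable symmetric extension $G'$. Applying H\"older with exponents matched to the stated powers (namely $L^4$--$L^4$ for the two pairs of resampling differences, and $L^4$ for the degree), and then using exchangeability to reduce every interacting quadruple to one rooted at vertex~$1$, yields a bound of the schematic form
\[
\Var(\widehat F) \;\lesssim\; \frac{n}{\sigma^4}\,\bigl(\E[D^8]\bigr)^{1/2}\bigl(\E[\delta^4]\bigr)^{1/2},
\]
and taking the square root produces the first term $\tfrac{Cn^{1/2}}{\sigma^2}(\E[D^8])^{1/4}(\E[\delta^4])^{1/4}$.

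The main obstacle lies in Step~3: one must pick the extension of $G$ from $\cW^n$ to $G'$ on $\cW^{n+4}$ (the four extra ``ghost'' vertices exactly accommodating the four distinct indices $j,k,j',k'$ of a quadruple) so that the quadruples contributing nonzero covariance to $\Var(\widehat F)$ are precisely those witnessed by an edge in $G'$. Converting the resulting sum into $n \cdot \E[\delta^{\text{power}}]$ via symmetry requires careful combinatorial bookkeeping, and the matching of H\"older exponents is exactly what pins down the powers $8$ and $4$ (giving $1/4$-th roots) in the final bound.
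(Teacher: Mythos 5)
The paper does not prove this lemma at all: it is imported verbatim (up to notation) from Chatterjee (2008), Theorem 2.5, cited as \citep[Theorem 2.5]{MR2435859}, and used as a black box inside the proof of Theorem~\ref{thm:CLT}. So there is no internal proof to compare yours against; the only meaningful comparison is with Chatterjee's original argument, whose broad architecture your sketch does follow (Stein's method with a quadratic functional of the discrete derivatives $\Delta_j f$, the Taylor remainder producing the term $\tfrac{1}{2\sigma^3}\sum_j\E|\Delta_j f|^3$, and a variance bound exploiting the interaction rule plus H\"older to produce the term $Cn^{1/2}\sigma^{-2}(\E[D^8])^{1/4}(\E[\delta^4])^{1/4}$).

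That said, as a proof your write-up has a genuine gap at its central step. The functional $\widehat F$ cannot be ``a quadratic form built from $\Delta_j f$ together with an independent conjugate copy $\Delta_j' f$'': the exact identity $\E[Wg(W)]=\E[\widehat F\,g'(W)]+\text{remainder}$, and the normalization $\E\widehat F=1$ consistent with $\sigma^2$, require Chatterjee's interpolation over resampled \emph{subsets} of coordinates,
\[
\widehat F \;=\; \frac{1}{2\sigma^2}\sum_{j=1}^n\ \sum_{A\subseteq\zahl{n}\setminus\{j\}}\frac{\Delta_j f(\mW)\,\Delta_j f(\mW^A)}{n\binom{n-1}{|A|}},
\]
where $\mW^A$ replaces the coordinates in $A$ by independent copies; the identity comes from a telescoping over subsets and simply does not hold for the single-coordinate conjugate construction you describe, so the heart of the theorem is asserted rather than established. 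Two further imprecisions: the covariances in $\Var(\widehat F)$ do not literally vanish for ``disconnected'' index pairs --- what the interaction rule gives is that $\Delta_j f$ is unchanged when one resamples a coordinate $k$ not linked to $j$ in the four graphs $G(\mW)$, $G(\mW^j)$, $G(\mW^k)$, $G(\mW^{jk})$, and the reduction to $\E[\delta^4]$ and $\E[D^8]$ goes through this property plus H\"older; and the extension $G'$ on $\cW^{n+4}$ is needed because up to four resampled coordinates appear simultaneously in those graph comparisons, not because of ``the four indices $j,k,j',k'$ of a quadruple.'' None of this affects the paper, since the lemma is a quoted external result, but your sketch as written would not compile into a proof without restoring the subset-resampling identity.
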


Let $Z_i = (X_i,Y_i) \in \R^2$ and $\mZ = (Z_1,\ldots,Z_n) \in (\R^2)^n$. To apply Lemma~\ref{lemma:CLT,local}, we take $\cW$ to be $\R^2$, $f$ to be $\widecheck{\xi}_{n,M}(\cdot)$, and $T = \widecheck{\xi}_{n,M}(\mZ)$.

{\bf Step III-1.} This substep identifies an interaction rule. For each $i \in \zahl{n}$ and $m \in \zahl{M}$, let $j_m^{\mZ}(i)$ be defined by samples $\mZ$ and $\cJ_M^{\mZ}(i) = \{j_m^{\mZ}(i),m \in \zahl{M}\}$ be the set of indices containing all right $m$-NN of $i$ with $m \in \zahl{M}$. Let $\cE(G)$ be the edge set of graph $G$. Given any $\mZ \in (\R^2)^n$, let $G(\mZ)$ be the (undirected) graph on $\zahl{n}$ such that $i$ and $j$ are connected, i.e., $(i,j) \in \cE(G(\mZ))$, if and only if $j \in \cJ_M^{\mZ}(i)$ or $i \in \cJ_M^{\mZ}(j)$. It is easy to check $G$ is symmetric.

To prove $G$ is a interaction rule, we consider any $\mZ \in (\R^2)^n$ and $\tilde{\mZ} = (\tZ_1,\ldots,\tZ_n) \in (\R^2)^n$, where $\tZ_i = (\tX_i,\tY_i)$ for each $i \in \zahl{n}$. 
%\fbox{to be an independent copy of $\mZ$.}
For any $i \in \zahl{n}$, let $\mZ^i$ be the vector obtained by replacing $Z_i$ with $\tZ_i$ in $\mZ$. For any $i,j \in \zahl{n}$ and $i \neq j$, let $\mZ^{ij}$ be the vector obtained by replacing $Z_i$ with $\tZ_i$ and $Z_j$ with $\tZ_j$ in $\mZ$. For any $k,\ell \in \zahl{n}$ and $k \neq \ell$, assume $k$ and $\ell$ are not connected in $G(\mZ),G(\mZ^k),G(\mZ^\ell),G(\mZ^{k\ell})$. It suffices to verify $f(\mZ) - f(\mZ^k) = f(\mZ^\ell) - f(\mZ^{k\ell})$.

Define $A_M^{\mZ}(i) := \{j:j_m^{\mZ}(j) = i{~\rm for~some~}m \in \zahl{M}\}$. We shorthand $\cJ^{\mZ}_M(i),A^{\mZ}_M(i)$ by $\cJ_M(i),A_M(i)$; $\cJ^{\mZ^k}_M(i),A^{\mZ^k}_M(i)$ by $\cJ^k_M(i),A^k_M(i)$; $\cJ^{\mZ^\ell}_M(i),A^{\mZ^\ell}_M(i)$ by $\cJ^\ell_M(i),A^\ell_M(i)$; $\cJ^{\mZ^{k\ell}}_M(i),A^{\mZ^{k\ell}}_M(i)$ by $\cJ^{k\ell}_M(i),A^{k\ell}_M(i)$.

Define
\[
  f_i(\mZ) = \sum_{m=1}^M \min\big\{F_Y(Y_i), F_Y(Y_{j_m(i)})\big\}
\]
for each $i \in \zahl{n}$, which is a function of $Y_i$ and $Y_j,j \in \cJ^{\mZ}_M(i)$. We then establish $f_i(\mZ) - f_i(\mZ^k) = f_i(\mZ^\ell) - f_i(\mZ^{k\ell})$ for each $i \in \zahl{n}$.

If $i=k$, then $\ell \notin \cJ_M(i) \cap \cJ_M^\ell(i)$ since $(k,\ell) \notin \cE(G(\mZ)) \cup \cE(G(\mZ^\ell))$. Then $f_i(\mZ) = f_i(\mZ^\ell)$. We also have $f_i(\mZ^k) = f_i(\mZ^{k\ell})$ since $(k,\ell) \notin \cE(G(\mZ^k)) \cup \cE(G(\mZ^{k\ell}))$. Then $f_i(\mZ) - f_i(\mZ^k) = f_i(\mZ^\ell) - f_i(\mZ^{k\ell})$.

If $i \in A_M(k)$ and $i \neq k,\ell$, we have $k \in \cJ_M(i)$. Then $k \in \cJ^\ell_{M+1}(i)$ since we only replace $Z_\ell$ by $\tZ_\ell$ for $\ell \neq i,k$. Then $\ell \notin \cJ^\ell_{M+1}(i)$ since $(k,\ell) \notin \cE(G(\mZ^\ell))$ and $i \neq k,\ell$. Notice that $k \in \cJ_M(i)$ implies $\ell \notin \cJ_{M+1}(i)$ since $(k,\ell) \notin \cE(G(\mZ))$. Then $f_i(\mZ) = f_i(\mZ^\ell)$ by combining $\ell \notin \cJ_{M+1}(i)$ and $\ell \notin \cJ^\ell_{M+1}(i)$. On the other hand, $\ell \notin \cJ_{M+1}(i)$ implies $\ell \notin \cJ^k_{M}(i)$ and $\ell \notin \cJ^\ell_{M+1}(i)$ implies $\ell \notin \cJ^{k\ell}_{M}(i)$ since we only replace $Z_k$ by $\tZ_k$ for $k \neq i,\ell$. Then $f_i(\mZ^k) = f_i(\mZ^{k\ell})$ by combining $\ell \notin \cJ^k_{M}(i)$ and $\ell \notin \cJ^{k\ell}_{M}(i)$. Then $f_i(\mZ) - f_i(\mZ^k) = f_i(\mZ^\ell) - f_i(\mZ^{k\ell})$.

If $i=\ell$ or $i \in A_M(\ell),i \neq k,\ell$, we can establish $f_i(\mZ) = f_i(\mZ^k),f_i(\mZ^\ell) = f_i(\mZ^{k\ell})$ in the same way and then $f_i(\mZ) - f_i(\mZ^k) = f_i(\mZ^\ell) - f_i(\mZ^{k\ell})$. The symmetry of the problem implies that the result still holds for $i \in A_M^k(k) \cup A_M^k(\ell) \cup A_M^\ell(k) \cup A_M^\ell(\ell) \cup A_M^{k\ell}(k) \cup A_M^{k\ell}(\ell)$. If $i \notin A_M(k) \cup A_M(\ell) \cup A_M^k(k) \cup A_M^k(\ell) \cup A_M^\ell(k) \cup A_M^\ell(\ell) \cup A_M^{k\ell}(k) \cup A_M^{k\ell}(\ell)$ and $i \neq k,\ell$, then $k,\ell \notin \cJ_M(i) \cup \cJ^k_M(i) \cup \cJ^\ell_M(i) \cup \cJ^{k\ell}_M(i)$ and then $f_i(\mZ) = f_i(\mZ^k) = f_i(\mZ^\ell) = f_i(\mZ^{k\ell})$.

Then it is easy to verify $f(\mZ) - f(\mZ^k) = f(\mZ^\ell) - f(\mZ^{k\ell})$ from the definition of $f$ and the fact that $f_i(\mZ) - f_i(\mZ^k) = f_i(\mZ^\ell) - f_i(\mZ^{k\ell})$ for any $i \in \zahl{n}$.

{\bf Step III-2.} This substep applies Lemma~\ref{lemma:CLT,local}.

For the symmetric extension of $G$ on $(\R^2)^{n+4}$, given any $\mZ' \in (\R^2)^{n+4}$, let $G'(\mZ')$ be the graph on $\zahl{n+4}$ such that $i$ and $j$ are connected if and only if $j \in \cJ^{\mZ'}_{M+4}(i)$ or $i \in \cJ^{\mZ'}_{M+4}(j)$. If $j \in \cJ^{\mZ}_{M}(i)$ for some $\mZ \in (\R^2)^n$, then $j \in \cJ^{\mZ'}_{M+4}(i)$, where $\mZ'$ is obtained by adding four elements to $\mZ$. This is equivalent to $(i,j) \in \cE(G'(\mZ'))$ if $\mZ$ is embedded in $\mZ'$. Then $(i,j) \in \cE(G(\mZ))$ implies $(i,j) \in \cE(G'(\mZ'))$. It is easy to check $G'$ is symmetric. And the degree of vertex 1 can be at most $2(M+4)$ for $G'$ since the cardinalities of $\cJ^{\mZ'}_{M+4}(1)$ and $A^{\mZ'}_{M+4}(1)$ are both bounded by $M+4$ for any $\mZ' \in (\R^2)^{n+4}$.

For any $i \in \zahl{n}$, we consider the value $f_i(\mZ) - f_i(\mZ^k)$.

If $i \in A_M(k) \setminus A_M^k(k)$ and $i \neq k$, then 
\[
  f_i(\mZ) - f_i(\mZ^k) = \min\big\{F_Y(Y_i), F_Y(Y_k)\big\} - \min\big\{F_Y(Y_i), F_Y(Y_{j^k_M(i)})\big\}.
\]

If $i \in A_M^k(k) \setminus A_M(k)$ and $i \neq k$, then 
\[
  f_i(\mZ) - f_i(\mZ^k) = \min\big\{F_Y(Y_i), F_Y(Y_{j_M(i)})\big\} - \min\big\{F_Y(Y_i), F_Y(\tY_k)\big\}.
\]

If $i \in A_M^k(k) \cap A_M(k)$ and $i \neq k$, then 
\[
  f_i(\mZ) - f_i(\mZ^k) = \min\big\{F_Y(Y_i), F_Y(Y_k)\big\} - \min\big\{F_Y(Y_i), F_Y(\tY_k)\big\}.
\]

If $i \notin A_M^k(k) \cup A_M(k)$ and $i \neq k$, then 
\[
  f_i(\mZ) - f_i(\mZ^k) = 0.
\]

Since $0 \le F_Y \le 1$, then
\begin{align*}
  & \Big\lvert \sum_{i=1}^n f_i(\mZ) - \sum_{i=1}^n f_i(\mZ^k) \Big\rvert \\
  \le & \vert f_k(\mZ) - f_k(\mZ^k) \rvert + \lvert A_M(k) \setminus A_M^k(k) \rvert + \lvert A_M^k(k) \setminus A_M(k) \rvert + \lvert A_M^k(k) \cap A_M(k) \rvert\\
  \le & 3M,
\end{align*}
where $\lvert A \rvert$ is the cardinality of $A$ when $A$ is a set, and the last step is from $\lvert A_M^k(k) \cup A_M(k) \rvert \le 2M$.

From \eqref{eq:xicheck},
\begin{align*}
   \lvert \Delta_k f \rvert =  \lvert f(\mZ) - f(\mZ^k) \rvert  \le&  \frac{6n}{(n+1)[nM+M(M+1)/4]} \Big\lvert \sum_{i=1}^n f_i(\mZ) - \sum_{i=1}^n f_i(\mZ^k) \Big\rvert \\
  & + \frac{12}{n+1} \Big\lvert \frac{1}{2} \Big(F_Y(Y_k)^2 - F_Y(\tY_k)^2\Big) - \Big(F_Y(Y_k) -F_Y(\tY_k)\Big) \Big\rvert  \lesssim  \frac{1}{n}.
\end{align*}

From Theorem~\ref{thm:null,mean,var}, \eqref{eq:hajekvar1}, and \eqref{eq:hajekvar2},
\[
  \Var[\widecheck{\xi}_{n,M}] = \sigma_{n,M}^2 (1+o(1)).
\]

Then from Lemma~\ref{lemma:CLT,local},
\begin{align*}
  \delta_{T} \lesssim & \frac{n^{1/2}}{\sigma_{n,M}^2} n^{-2} M + \frac{1}{\sigma_{n,M}^3} n^{-2}= \frac{n^{-3/2}M}{n^{-1}M^{-1} + n^{-2}M} + \frac{n^{-2}}{n^{-3/2}M^{-3/2} + n^{-3}M^{3/2}}.
\end{align*}
If $M \prec n^{1/4}$, then $\delta_{T} \to 0$ and then 
\[
  (\widecheck{\xi}_{n,M}-\E[\widecheck{\xi}_{n,M}])/\sqrt{\Var[\widecheck{\xi}_{n,M}]} \stackrel{\sf d}{\longrightarrow} N(0,1).
\]
Since $\Var[\widecheck{\xi}_{n,M}] = \sigma_{n,M}^2 (1+o(1))$ and it is easy to check $\E[\widecheck{\xi}_{n,M}] =0$, then
\[
  \sigma_{n,M}^{-1} \widecheck{\xi}_{n,M} \stackrel{\sf d}{\longrightarrow} N(0,1).
\]
Due to \eqref{eq:hajekvar1} and \eqref{eq:hajekvar2},
\[
  \sigma_{n,M}^{-1} \xi_{n,M} \stackrel{\sf d}{\longrightarrow} N(0,1).
\]
Notice that when $M \prec n^{1/4}$, $\sigma^2_{n,M} = (2/5)(nM)^{-1}(1+o(1))$. We then have
\[
  \sqrt{nM} \xi_{n,M} \stackrel{\sf d}{\longrightarrow} N(0,2/5),
\]
and thus complete the proof.
\end{proof}

\subsubsection{Proof of Theorem~\ref{thm:test}}

\begin{proof}[Proof of Theorem~\ref{thm:test}\ref{thm:test1}] %\fbox{this proof is over-killing... we only need that they have the same distribution; see  Supplementary material to “Panning for gold: ‘model- X’ knockoffs for high dimensional controlled variable selection” (2018b, Lemma F.1)}

Notice that
\[
(1+B)^{-1}\Big\{1+\sum_{b=1}^B\ind\Big(\xi_{n,M}^{\pm(b)} \geq  \xi_{n,M}^{\pm}\Big)\Big\} \geq (1+B)^{-1}\Big\{1+\sum_{b=1}^B\ind^{\circ}\Big(\xi_{n,M}^{\pm(b)},  \xi_{n,M}^{\pm}, U_b\Big)\Big\},
\]
where $U_1,\ldots,U_B$ are i.i.d Bernoulli random variables of equal probabilities to be 0 or 1, and
\[
\ind^{\circ}(x,y,u):=
\begin{cases}
\ind(x\geq y), & \text{if }x\ne y,\\
u, & \text{if }x=y.
\end{cases}
\]
Under $H_0$, we have $\xi_{n,M}^{\pm (1)}, \ldots, \xi_{n,M}^{\pm (B)}, \xi_{n,M}^{\pm}$ are i.i.d. and accordingly
\[
(1+B)^{-1}\Big\{1+\sum_{b=1}^B\ind^{\circ}\Big(\xi_{n,M}^{\pm(b)},  \xi_{n,M}^{\pm}, U_b\Big)\Big\}
\]
is discretely uniformly distributed over
\[
\Big\{\frac{1}{1+B}, \frac{2}{1+B},\ldots, \frac{1+B}{1+B} \Big\}.
\]
As a consequence, under $H_0$
\begin{align*}
\P\Big[(1+B)^{-1}\Big\{1+\sum_{b=1}^B\ind\Big(\xi_{n,M}^{\pm(b)} \geq  \xi_{n,M}^{\pm}\Big) \Big\}\leq \alpha\Big]&\leq \P\Big[(1+B)^{-1}\Big\{1+\sum_{b=1}^B\ind^{\circ}\Big(\xi_{n,M}^{\pm(b)},  \xi_{n,M}^{\pm}, U_b\Big)\Big\}\leq \alpha\Big]\\
&=\frac{\lfloor (1+B)\alpha\rfloor}{1+B}\leq \alpha.
\end{align*}
This completes the proof.
\end{proof}

\begin{proof}[Proof of Theorem~\ref{thm:test}\ref{thm:test2}]

Let
\[
  \Xi_n^{(b)}:=\ind\Big(\xi_{n,M}^{\pm(b)} \geq  \xi_{n,M}^{\pm}\Big).
\]

Then
\[
  (1+B)^{-1}\Big\{1+\sum_{b=1}^B\ind \Big(\xi_{n,M}^{\pm(b)} \geq  \xi_{n,M}^{\pm}\Big) \Big\} = (1+B)^{-1} + (1+B)^{-1} \sum_{b=1}^B \Xi_n^{(b)}.
\]

Since $M/n \to 0$, from Theorem~\ref{thm:asconverge}, we have
\[
  \xi_{n,M}^{(b)} \stackrel{\sf a.s.}{\longrightarrow} 0,~~ \xi_{n,M}^{-(b)} \stackrel{\sf a.s.}{\longrightarrow} 0,~~ \xi_{n,M} \stackrel{\sf a.s.}{\longrightarrow} \xi_{H_1},
\]
where $\xi_{H_1} > 0$ is the value of $\xi$ under the fix alternative $H_1$. One then has
\begin{align*}
  \Xi_n^{(b)} &= \ind\Big(\xi_{n,M}^{\pm(b)} \geq  \xi_{n,M}^{\pm}\Big) \\
  &= \ind\Big(\max\big\{\xi_{n,M}^{(b)},\xi_{n,M}^{-(b)}\big\} \geq  \max\big\{\xi_{n,M},\xi_{n,M}^{-}\big\} \Big)\\
  & \le \ind\Big(\max\big\{\xi_{n,M}^{(b)},\xi_{n,M}^{-(b)}\big\} \geq  \xi_{n,M} \Big)\\
  & \le \ind\Big(\xi_{n,M}^{(b)} \geq \xi_{n,M} \Big) + \ind\Big(\xi_{n,M}^{-(b)} \geq \xi_{n,M} \Big) \stackrel{\sf a.s.}{\longrightarrow} 0.
\end{align*}

Notice that $B = B_n \to \infty$ and $[\Xi_n^{(b)}]_{b=1}^{B_n}$ are exchangeable. Applying Lemma 1.1 in \cite{MR790492}, we obtain
\[
  B_n^{-1} \sum_{b=1}^{B_n} \Xi_n^{(b)} = \E \Big( \Xi_n^{(1)} \Biggiven \cG_n\Big),
\]
where $\cG_n$ is the $\sigma$-field generated by
\[
  \cG_n = \sigma \Big( \sum_{b=1}^{B_n} \Xi_n^{(b)}, \sum_{b=1}^{B_{n+1}} \Xi_{n+1}^{(b)} , \ldots \Big).
\]
Since $[\cG_n]_{n=1}^\infty$ is decreasing and $\cG_n \to \cG_{\infty}$, where $\cG_{\infty} = \bigcap_{n=1}^\infty \cG_n$, $0 \le \Xi_n^{(1)} \le 1, \Xi_n^{(1)} \stackrel{\sf a.s.}{\longrightarrow} 0$, applying Lemma 2(c) in \cite{MR548906} yields
\[
\E\Big(\Xi_n^{(1)}\Biggiven \cG_n\Big)
\stackrel{\sf a.s.}{\longrightarrow}
\E\Big(0\Biggiven \cG_{\infty}\Big)=0,
\]
which implies
\[
  B_n^{-1}\sum_{b=1}^{B_n}\Xi_n^{(b)} \stackrel{\sf a.s.}{\longrightarrow} 0.
\]
We accordingly have
\[
  \lim_{n\to\infty} \P_{H_1}\big(\sT_{\alpha,B}^{\xi_{n,M}^{\pm}}=1\big)=\P\Big[(1+B)^{-1}\Big\{1+\sum_{b=1}^B\ind\Big(\xi_{n,M}^{\pm(b)} \geq  \xi_{n,M}^{\pm}\Big) \Big\}\leq \alpha\Big] = 1,
\]
and complete the proof.
\end{proof}

\subsection{Proofs of results in Section~\ref{sec:alter}}

\subsubsection{Proof of Theorem~\ref{thm:local power}}

\begin{proof}[Proof of Theorem~\ref{thm:local power}\ref{thm:lpa-1}]

By symmetry, without loss of generality we may consider the sequence to be positive. For any integer $B\geq 1$ and $x\in\R$, with a little bit abuse of notation, define
\[
G_{\xi^\pm}(x) := \P_{H_0}(\xi^{\pm}_{n,M}<x)~~~{\rm and}~~~G_B := \frac{1}{B}\sum_{b=1}^B\ind(\xi_{n,M}^{\pm (b)}<x).
\]
%be the cumulative distribution function of $\xi_{n,M}^\pm$ under the null and $F_B$ be the empirical cumulative distribution function of $\{\xi_{n,M}^{\pm(b)}\}_{b=1}^B$.
Considering any given sample $\big[(X_i,Y_i)\big]_{i=1}^{n}$,
\begin{align*}
  & \P \big(\sT_{\alpha,B}^{\xi_{n,M}^{\pm}}=1 \biggiven \big[(X_i,Y_i)\big]_{i=1}^{n}\big)\\
  =& \P\Big[(1+B)^{-1}\Big\{1+\sum_{b=1}^B\ind\Big(\xi_{n,M}^{\pm(b)} \geq  \xi_{n,M}^{\pm}\Big) \Big\}\leq \alpha \Biggiven \big[(X_i,Y_i)\big]_{i=1}^{n} \Big]\\
  =& \P\Big[(1+B)^{-1}\Big\{1+\sum_{b=1}^B\ind\Big(\xi_{n,M}^{\pm(b)} < \xi_{n,M}^{\pm}\Big) \Big\} \ge 1-\alpha \Biggiven \big[(X_i,Y_i)\big]_{i=1}^{n} \Big]\\
  =& \P\Big[ G_B(\xi_{n,M}^{\pm}) \ge 1-\Big(1+\frac{1}{B}\Big)\alpha \Biggiven \big[(X_i,Y_i)\big]_{i=1}^{n} \Big]\\
  \ge & \P\Big[ G_B(\xi_{n,M}^{\pm}) \ge 1-\alpha \Biggiven \big[(X_i,Y_i)\big]_{i=1}^{n} \Big]  .
\end{align*}

Using Hoeffding's inequality, for any $\epsilon>0$ and $x>0$,
\[
  \P \Big(G_{\xi^{\pm}}(x)-G_B(x) > \epsilon \Big) \le \exp(-2B\epsilon^2).
\]

Then
\[
  \P \big(\sT_{\alpha,B}^{\xi_{n,M}^{\pm}}=1 \biggiven \big[(X_i,Y_i)\big]_{i=1}^{n}\big) \ge \P\Big( G_{\xi^\pm}(\xi_{n,M}^{\pm}) \ge 1-\alpha+\epsilon \Biggiven \big[(X_i,Y_i)\big]_{i=1}^{n} \Big) - \exp(-2B\epsilon^2).
\]

Picking $\epsilon = \alpha/2$ and then taking expectation over both sides yields
\[
  \P_{H_{1,n}}\Big(\sT_{\alpha,B}^{\xi_{n,M}^{\pm}}=1\Big) \ge \P_{H_{1,n}}\Big(G_{\xi^\pm}(\xi_{n,M}^{\pm}) \ge 1-\alpha/2 \Big) - \exp(-B\alpha^2/2).
\]

Introduce $t_\alpha := \sqrt{2\E_{H_0}[(\xi_{n,M}^\pm)^2]/\alpha}$. Then from Markov's inequality, we have
\[
  \P_{H_0} \Big(\xi_{n,M}^\pm \ge t_\alpha\Big) \le \P_{H_0} \Big((\xi_{n,M}^\pm)^2 \ge t_\alpha^2\Big) \le \frac{\E_{H_0}[(\xi_{n,M}^\pm)^2]}{t_\alpha^2} = \frac{\alpha}{2}.
\]

Then
\[
  \P_{H_{1,n}}\Big(\sT_{\alpha,B}^{\xi_{n,M}^{\pm}}=1\Big) \ge \P_{H_{1,n}}\Big(\xi_{n,M}^{\pm} \ge t_\alpha \Big) - \exp(-B\alpha^2/2).
\]

Notice that $\E_{H_0}[\xi_{n,M}] = \E_{H_0}[\xi_{n,M}^-] = 0$ from Theorem~\ref{thm:null,mean,var}, then
\begin{align*}
  \E_{H_0}[(\xi_{n,M}^\pm)^2] &= \E_{H_0} [\max\{\xi_{n,M},\xi_{n,M}^-\}]^2\\
  & \le \E_{H_0} [\lvert \xi_{n,M} \rvert + \lvert \xi_{n,M}^- \rvert]^2\\
  & \le 2 \Big( \E_{H_0} [(\xi_{n,M})^2] + \E_{H_0} [(\xi_{n,M}^-)^2] \Big)\\
  & = 2 \Big( \Var_{H_0} [\xi_{n,M}] + \Var_{H_0} [\xi_{n,M}^-] \Big)\\
  & = 4\Var_{H_0} [\xi_{n,M}].
\end{align*}

From the assumptions and Corollary~\ref{crl:detection},
\[
  \lim_{n \to \infty} \frac{\E_{H_{1,n}} [\xi_{n,M}]}{\sqrt{\Var_{H_{1,n}} [\xi_{n,M}]}} = +\infty.
\]

From Theorem~\ref{thm:null,mean,var}, we also have
%since $\Var_{H_0} [\xi_{n,M}] \lesssim \Var_{H_{1,n}} [\xi_{n,M}]$, then
\[
  \lim_{n \to \infty} \frac{\E_{H_{1,n}} [\xi_{n,M}]}{\sqrt{\Var_{H_0} [\xi_{n,M}]}} = +\infty.
\]

Then for any $K>0$, there exist $N_K>0$ such that for any $n>N_K$,
\[
  \E_{H_{1,n}} [\xi_{n,M}] \ge K \sqrt{\Var_{H_{1,n}} [\xi_{n,M}]},~ \E_{H_{1,n}} [\xi_{n,M}] \ge K \sqrt{\E_{H_0}[(\xi_{n,M}^\pm)^2]}.
\]

From Markov's inequality, for any $t>0$,
\[
  \P_{H_{1,n}}\Big( \xi_{n,M} \ge \E_{H_{1,n}} [\xi_{n,M}] - t \Big) \ge 1 - \frac{\Var_{H_{1,n}} [\xi_{n,M}]}{t^2}.
\]

Take $t = (1 - \sqrt{2/\alpha}/K)E_{H_{1,n}} [\xi_{n,M}]$, where we assume $K$ is large enough so that $t$ is positive. Then for any $n>N_K$,
\begin{align*}
  \P_{H_{1,n}}\Big(\sT_{\alpha,B}^{\xi_{n,M}^{\pm}}=1\Big) &\ge \P_{H_{1,n}}\Big(\xi_{n,M}^{\pm} > \sqrt{\frac{2}{\alpha}} \sqrt{\E_{H_0}[(\xi_{n,M}^\pm)^2]}\Big) - \exp(-B\alpha^2/2)\\
  &\ge \P_{H_{1,n}}\Big(\xi_{n,M} > \sqrt{\frac{2}{\alpha}} \sqrt{\E_{H_0}[(\xi_{n,M}^\pm)^2]}\Big) - \exp(-B\alpha^2/2)\\
  &\ge \P_{H_{1,n}}\Big(\xi_{n,M} > \sqrt{\frac{2}{\alpha}} \frac{1}{K} \E_{H_{1,n}} [\xi_{n,M}]\Big) - \exp(-B\alpha^2/2)\\
  &\ge 1 - \frac{\Var_{H_{1,n}} [\xi_{n,M}]}{t^2} - \exp(-B\alpha^2/2)\\
  &= 1 - \frac{\Var_{H_{1,n}} [\xi_{n,M}]}{(1 - \sqrt{2/\alpha}/K)^2(E_{H_{1,n}} [\xi_{n,M}])^2}- \exp(-B\alpha^2/2)\\
  &\ge 1 - \frac{1/K^2}{(1 - \sqrt{2/\alpha}/K)^2} - \exp(-B\alpha^2/2) \\
  &= 1 - \frac{1}{(K-\sqrt{2/\alpha})^2} - \exp(-B\alpha^2/2).
\end{align*}

For any $\epsilon>0$, we have $(K-\sqrt{2/\alpha})^{-2} < \epsilon$ for $K$ sufficiently large, then together with $B \to \infty$ as $n \to \infty$,
\[
  \liminf_{n \to \infty} \P_{H_{1,n}}\Big(\sT_{\alpha,B}^{\xi_{n,M}^{\pm}}=1\Big) \ge 1-\epsilon.
\]

Since $\epsilon$ is arbitrary, we then have
\[
  \lim_{n\to\infty} \P_{H_{1,n}}\Big(\sT_{\alpha,B}^{\xi_{n,M}^{\pm}}=1\Big) = 1,
\]
which completes the proof.
\end{proof}

\begin{proof}[Proof of Theorem~\ref{thm:local power}\ref{thm:lpa-2}]
%\fbox{fill in}
Since $B \to \infty$ as $n \to \infty$, we can assume $B>\alpha^{-1}-1$. Let $\xi_{n,M}^{[1]}, \ldots, \xi_{n,M}^{[B]}$ be a rearrangement of $\xi_{n,M}^{(1)}, \ldots, \xi_{n,M}^{(B)}$ such that 
\[
  \xi_{n,M}^{[1]} \le \xi_{n,M}^{[2]} \le \cdots \le \xi_{n,M}^{[B]}.
\]
From \eqref{eq:test},
\begin{align*}
  \sT_{\alpha,B}^{\xi_{n,M}^{\pm}} & = \ind\Big[ (1+B)^{-1}\Big\{1+\sum_{b=1}^B\ind\Big(\xi_{n,M}^{\pm(b)} \geq  \xi_{n,M}^{\pm}\Big)\Big\} \leq \alpha\Big]\\
  & \le \ind\Big[ (1+B)^{-1}\Big\{1+\sum_{b=1}^B\ind\Big(\xi_{n,M}^{(b)} \geq  \xi_{n,M}^{\pm}\Big)\Big\} \leq \alpha\Big]\\
  & = \ind\Big[ \sqrt{nM} \xi_{n,M}^{\pm} > \sqrt{nM} \xi_{n,M}^{[1+B-\lfloor \alpha(1+B) \rfloor]} \Big],
\end{align*}
where $\lfloor \cdot\rfloor$ is the floor function.

Let $\Phi_{2/5}(\cdot),\Phi_{2/5}^{-1}(\cdot)$ be the cumulative distribution function and quantile function of $N(0,2/5)$. Then for any $y \in \R$, from Theorem~\ref{thm:CLT},
\[
  B^{-1} \sum_{b=1}^B \ind(\sqrt{nM}\xi_{n,M}^{(b)} \le y) \stackrel{\sf p}{\longrightarrow} \Phi_{2/5}(y).
\]
Then from Theorem 3.1 in \citet{MR57521}, we have
\[
  \sqrt{nM} \xi_{n,M}^{[1+B-\lfloor \alpha(1+B) \rfloor]} \stackrel{\sf p}{\longrightarrow} \Phi_{2/5}^{-1}(1-\alpha).
\]
Then
\begin{align*}
  &\limsup_{n\to\infty} \P_{H_{1,n}}\Big(\sT_{\alpha,B}^{\xi_{n,M}^{\pm}}=1\Big) \\
  =& \limsup_{n\to\infty} \P_{H_{1,n}}\Big( \sqrt{nM} \xi_{n,M}^{\pm} > \Phi_{2/5}^{-1}(1-\alpha) \Big)\\
  \le& \limsup_{n\to\infty} \frac{nM \E_{H_{1,n}}[(\xi_{n,M}^{\pm})^2]}{[\Phi_{2/5}^{-1}(1-\alpha)]^2}\\
  \le& \frac{2}{[\Phi_{2/5}^{-1}(1-\alpha)]^2} \limsup_{n\to\infty} nM \Big( \E_{H_{1,n}}[(\xi_{n,M})^2] + \E_{H_{1,n}}[(\xi_{n,M}^{-})^2] \Big)\\
  = & \frac{2}{[\Phi_{2/5}^{-1}(1-\alpha)]^2} \limsup_{n\to\infty} nM \Big( \Var_{H_{1,n}}[\xi_{n,M}] + (\E_{H_{1,n}}[\xi_{n,M}])^2 + \Var_{H_{1,n}}[\xi_{n,M}^{-}] + (\E_{H_{1,n}}[\xi_{n,M}^{-}])^2 \Big).
\end{align*}
If $|\rho_n| \prec \zeta_{n,M}$ and $M \prec n^{1/4}$, from Theorem~\ref{thm:alter,mean,var}, we have
\[
  \limsup_{n\to\infty} nM \Var_{H_{1,n}}[\xi_{n,M}] = O(1)~~{\rm and}~~ \limsup_{n\to\infty} nM (\E_{H_{1,n}}[\xi_{n,M}])^2 = O(1).
\]
It is easy to check that the above bounds also hold for $\xi_{n,M}^{-}$. Then we obtain
\[
  \limsup_{n\to\infty} \P_{H_{1,n}}\Big(\sT_{\alpha,B}^{\xi_{n,M}^{\pm}}=1\Big) \le \frac{C}{[\Phi_{2/5}^{-1}(1-\alpha)]^2},
\]
for a universal constant $C>0$.

Then for any sufficiently small $\alpha$ such that $C[\Phi_{2/5}^{-1}(1-\alpha)]^{-2} < 1$, we take $\beta_\alpha = C[\Phi_{2/5}^{-1}(1-\alpha)]^{-2}$ and complete the proof. 
\end{proof}

\subsection{Proofs of results in Section \ref{sec:main-proof}}

\subsubsection{Proof of Lemma~\ref{lemma:perm}}

\begin{proof}[Proof of Lemma~\ref{lemma:perm}]
The first claim is a well known property of ranks. For the rest, 

(1) $\E [R_1] = \frac{1}{n} \sum_{k=1}^{n} k = \frac{n+1}{2}$.

(2) For $\E [\min\{R_1,R_2\}]$, we consider the set of pairs $\{(a,b):a,b \in \zahl{n},a \neq b\}$. The cardinality of this set is $n(n-1)$ and then we have $\P (R_1 = a, R_2 = b) = 1/[n(n-1)]$ for any $a,b \in \zahl{n},a \neq b$. The number of pairs $(a,b)$ with $a \neq b$ such that $\min\{a,b\} = k$ is $2(n-k)$ for any $k \in \zahl{n}$. Then
\[
  \E [\min\{R_1,R_2\}] = \frac{1}{n(n-1)} \sum_{k=1}^{n-1} 2(n-k)k = \frac{n+1}{3}.
\]

(3) For $\Var[R_1]$, we have
\[
  \Var [R_1] = \E [R_1^2] - [\E [R_1]]^2 = \frac{1}{n} \sum_{k=1}^{n} k^2 - \Big(\frac{n+1}{2} \Big)^2 = \frac{(n+1)(n-1)}{12}.
\]

(4) For $\Cov[R_1,R_2]$, we have
\[
  \Cov[R_1,R_2] = \E [R_1 R_2] - \E [R_1] \E [R_2] = \frac{1}{n(n-1)} \sum_{\substack{k,l=1\\k \neq l}}^n kl - \Big(\frac{n+1}{2} \Big)^2 = -\frac{n+1}{12},
\]
since
\[
  \sum_{\substack{k,l=1\\k \neq l}}^n kl = \frac{(n-1)n(n+1)(3n+2)}{12}.
\]

(5) For $\Cov[R_1,\min\{R_2,R_3\}]$, we consider the set of triples $\{(a,b,c):a,b,c \in \zahl{n},a \neq b \neq c\}$. The cardinality of this set is $n(n-1)(n-2)$ and then we have $\P (R_1 = a, R_2 = b, R_3 = c) = 1/[n(n-1)(n-2)]$ for any $a,b,c \in \zahl{n},a \neq b \neq c$. For any $k,l \in \zahl{n},k<l$, the number of triples $(a,b,c)$ with $a \neq b \neq c$ such that 
\[
a = k, \min\{b,c\} = l 
\]
is $2(n-l)$, and the number of triples such that 
\[
a = l, \min\{b,c\} = k 
\]
is $2(n-k-1)$. Then the total number of triples is $4n-2(k+l)-2$. Then
\[
  \E [R_1 \min\{R_2,R_3\}] = \frac{1}{n(n-1)(n-2)} \sum_{\substack{k,l=1\\k < l}}^n kl [4n-2(k+l)-2] = \frac{(n+1)(2n+1)}{12},
\]
since
\[
  \sum_{\substack{k,l=1\\k \neq l}}^n kl (k+l) = \frac{(n-1)n^2(n+1)^2}{3}.
\]
We accordingly derive
\[
  \Cov[R_1,\min\{R_2,R_3\}] = \E [R_1 \min\{R_2,R_3\}] - \E [R_1] \E [\min\{R_1,R_2\}] = -\frac{n+1}{12}.
\]

(6) For $\Cov[R_1,\min\{R_1,R_2\}]$, for any $k \in \zahl{n}$, the number of pairs $(a,b)$ with $a \neq b$ such that 
\[
a=k,\min\{a,b\}=k 
\]
is $n-k$. For any $k,l \in \zahl{n},k<l$, the number of pairs $(a,b)$ with $a \neq b$ such that 
\[
a=k,\min\{a,b\}=l
\]
is 0, and such that 
\[
a=l,\min\{a,b\}=k 
\]
is 1. We then conclude
\[
  \E [R_1 \min\{R_1,R_2\}] = \frac{1}{n(n-1)} \Big[ \sum_{\substack{k,l=1\\k < l}}^n kl + \sum_{k=1}^n (n-k)k^2 \Big] = \frac{(n+1)(5n+2)}{24},
\]
which implies
\[
  \Cov[R_1,\min\{R_1,R_2\}] = \E [R_1 \min\{R_1,R_2\}] - \E [R_1] \E [\min\{R_1,R_2\}] = \frac{(n-2)(n+1)}{24}.
\]

(7) For $\Cov[\min\{R_1,R_2\},\min\{R_3,R_4\}]$, we consider the set of $\{(a,b,c,d):a,b,c,d \in \zahl{n},a \neq b \neq c \neq d\}$. The cardinality of this set is $n(n-1)(n-2)(n-3)$ and then we have $\P (R_1 = a, R_2 = b, R_3 = c, R_4=d) = 1/[n(n-1)(n-2)(n-3)]$ for any $a,b,c,d \in \zahl{n},a \neq b \neq c \neq d$. For any $k,l \in \zahl{n-1},k<l$, the number of pairs $(a,b,c,d)$ with $a \neq b \neq c \neq d$ such that 
\[
\min\{a,b\}=k,\min\{c,d\} =l
\]
is $4(n-l)(n-k-2)$, and such that 
\[
\min\{a,b\}=l,\min\{c,d\} =k 
\]
is also $4(n-l)(n-k-2)$. Then
\begin{align*}
  \E [\min\{R_1,R_2\}\min\{R_3,R_4\}] & = \frac{1}{n(n-1)(n-2)(n-3)} \sum_{\substack{k,l=1\\k < l}}^{n-1} 8(n-l)(n-k-2)kl \\
  & = \frac{(n+1)(5n+1)}{45},
\end{align*}
since
\[
  \sum_{\substack{k,l=1\\k \neq l}}^n k^2l^2 = \frac{(n-1)n(n+1)(2n-1)(2n+1)(5n+6)}{180}.
\]
We accordingly obtain
\begin{align*}
  \Cov[\min\{R_1,R_2\},\min\{R_3,R_4\}] &= \E [\min\{R_1,R_2\}\min\{R_3,R_4\}] - [\E [\min\{R_1,R_2\}]]^2 \\
  & = -\frac{4(n+1)}{45}.
\end{align*}

(8) For $\Cov[\min\{R_1,R_2\},\min\{R_1,R_3\}]$, for any $k \in \zahl{n-1}$, the number of triples $(a,b,c)$ with $a \neq b \neq c$ such that 
\[
\min\{a,b\}=k,\min\{a,c\}=k
\]
is $(n-k)(n-k-1)$. For any $k,l \in \zahl{n-1},k<l$, the number of triples $(a,b,c)$ with $a \neq b \neq c$ such that 
\[
\min\{a,b\}=k,\min\{a,c\} =l 
\]
is $2(n-l)$, and such that 
\[
\min\{a,b\}=l,\min\{a,c\} =k 
\]
is also $2(n-l)$. Then
\begin{align*}
  \E [\min\{R_1,R_2\}\min\{R_1,R_3\}] &= \frac{1}{n(n-1)(n-2)} \Big[ \sum_{\substack{k,l=1\\k < l}}^{n-1} 4(n-l)kl + \sum_{k=1}^{n-1} (n-k)(n-k-1)k^2 \Big] \\
  & = \frac{(n+1)(8n+1)}{60},
\end{align*}
which yields
\begin{align*}
  \Cov[\min\{R_1,R_2\},\min\{R_1,R_3\}] &= \E [\min\{R_1,R_2\}\min\{R_1,R_3\}] - [\E [\min\{R_1,R_2\}]]^2 \\
  & = \frac{(n+1)(4n-17)}{180}.
\end{align*}

(9) For $\Var[\min\{R_1,R_2\}]$, we have
\[
  \E [\min\{R_1,R_2\}^2] = \frac{1}{n(n-1)} \sum_{k=1}^{n-1} 2(n-k)k^2 = \frac{n(n+1)}{6},
\]
implying
\[
  \Var[\min\{R_1,R_2\}] = \E [\min\{R_1,R_2\}^2] - [\E [\min\{R_1,R_2\}]]^2 = \frac{(n-2)(n+1)}{18}.
\]
The whole proof is thus complete.
\end{proof}

\subsubsection{Proof of Lemma~\ref{lemma:local,T3}}

\begin{proof}[Proof of Lemma~\ref{lemma:local,T3}]

Under the assumptions of the theorem,
\[
Y|X = x \sim N(\mu_x,\sigma^2),
\]
where $\mu_x = \rho_n x$ and $\sigma^2 = 1- \rho_n^2$. Then for any $x \in \R$, $\mu_x$ is of order $\rho_n$ as $\rho_n \to 0$. Accordingly,
\[
\lim_{\rho_n \to 0}  \rho_n^{-2} (1/\sigma - 1) = 1/2.
\yestag\label{eq:local,rho}
\]

Denote the cumulative distribution function of the standard normal by $\Phi(\cdot)$. Employing Taylor's expansion at $y$,
\begin{align*}
  F_{Y|X=X_1}(y) - F_Y(y) &= \Phi\Big(\frac{y-\mu_{X_1}}{\sigma}\Big) - \Phi(y) \\
  &= \Phi\Big(\frac{y-\rho_nX_1}{\sigma}\Big) - \Phi(y) \\
  &=  f_Y(y) \Big(\frac{y-\rho_nX_1}{\sigma} -y \Big) + \frac{1}{2} f'_Y(y_x) \Big(\frac{y-\rho_nX_1}{\sigma} -y \Big)^2,
\end{align*}
where $y_x$ is between $y$ and $(y-\rho_n X_1)/\sigma$.

Then
\begin{align*}
  &\Big[F_Y(y) -F_{Y|X=X_1}(y)\Big]^2 \\
  = &f_Y^2(y) \Big(\frac{y-\rho_nX_1}{\sigma} -y \Big)^2 + f_Y(y) f'_Y(y_x) \Big(\frac{y-\rho_nX_1}{\sigma} -y \Big)^3 + \frac{1}{4}[f'_Y(y_x)]^2 \Big(\frac{y-\rho_nX_1}{\sigma} -y \Big)^4.
\end{align*}

(1) We first handle the last two terms. Notice that from \eqref{eq:local,rho},
\[
  \Big\lvert \frac{y-\rho_nX_1}{\sigma} -y \Big\rvert \le \Big( \frac{1}{\sigma} - 1\Big) \lvert y \rvert + \frac{1}{\sigma} \rho_n \lvert X_1 \rvert = \Big[\frac{1}{2}\lvert y \rvert \rho_n^2 + \lvert X
  _1 \rvert \rho_n\Big](1+o(1)).
  \yestag\label{eq:local,diff}
\]

Since $f_Y'$ is uniformly bounded for the normal distribution, we obtain
\begin{align*}
  & \Big\lvert f_Y(y) f'_Y(y_x) \Big(\frac{y-\rho_nX_1}{\sigma} -y \Big)^3 + \frac{1}{4}[f'_Y(y_x)]^2 \Big(\frac{y-\rho_nX_1}{\sigma} -y \Big)^4 \Big\rvert\\
  \le & 4 f_Y(y) \lVert f'_Y \rVert_{\infty} \Big[\frac{1}{8}\lvert y \rvert^3 \rho_n^6 + \lvert X
  _1 \rvert^3 \rho_n^3\Big](1+o(1)) + 2 \lVert f'_Y \rVert_{\infty}^2 \Big[\frac{1}{16} y^4 \rho_n^8 + X_1^4 \rho_n^4\Big](1+o(1)),
\end{align*}
where we use \eqref{eq:local,diff} and the inequality $(a+b)^n \le 2^{n-1}(|a|^n + |b|^n)$ for any $a,b \in \R$ and $n \in \bN$.

Since the fourth moments of $X$ and $Y$ are bounded for the normal distribution, then
\[
  \E \Big[ \int \Big\lvert f_Y(y) f'_Y(y_x) \Big(\frac{y-\rho_nX_1}{\sigma} -y \Big)^3 + \frac{1}{4}[f'_Y(y_x)]^2 \Big(\frac{y-\rho_nX_1}{\sigma} -y \Big)^4 \Big\rvert f_Y(y) \d y\Big] = o(\rho_n^2).
\]

(2) We then turn to the first term. For it, we have
\begin{align*}
  & \Big\lvert \Big(\frac{y-\rho_nX_1}{\sigma} -y \Big)^2 - \frac{1}{\sigma^2} X_1^2 \rho_n^2 \Big\rvert\\
  \le & \Big( \frac{1}{\sigma} - 1\Big)^2 y^2 + 2\frac{1}{\sigma}\Big( \frac{1}{\sigma} - 1\Big)\lvert y \rvert \lvert X_1 \rvert \rho_n\\
  = & \Big[ \frac{1}{4}y^2 \rho_n^4 + \lvert y \rvert \lvert X_1 \rvert \rho_n^3 \Big](1+o(1)).
\end{align*}

(3) Combining the above two steps yields
\begin{align*}
  \E [T_3] &= \E \Big[ \int \Big[F_Y(y) -F_{Y|X=X_1}(y)\Big]^2 f_Y(y) \d y \Big]\\
  &= \E \Big[ \int \Big(\frac{y-\rho_nX_1}{\sigma} -y \Big)^2 f_Y^3(y) \d y \Big] + o(\rho_n^2)\\
  &= \frac{1}{\sigma^2} \Big[ \int f_Y^3(y) \d y \Big] \E [X_1^2] \rho_n^2 + o(\rho_n^2)\\
  &= \frac{1}{\sigma^2} \Big[ \int f_Y^3(y) \d y \Big] \rho_n^2 + o(\rho_n^2)\\
  &= \Big[ \int f_Y^3(y) \d y \Big] \rho_n^2 + o(\rho_n^2),
  \yestag\label{eq:local,T3}
\end{align*}
and we thus complete the proof.
\end{proof}

\subsubsection{Proof of Lemma~\ref{lemma:local,T4}}

\begin{proof}[Proof of Lemma~\ref{lemma:local,T4}]

Before proving Lemma~\ref{lemma:local,T4} (as well as Lemma \ref{lemma:local,T5} ahead), we first establish the following two lemmas about $k$-nearest neighbors.

The first lemma establishes the convergence rate of $X_{j_m(1)}$ to $X_{1}$ for any $m \in \zahl{M}$, where $M$ is allowed to increase with $n$. 
%\fbox{in the following two lemmas I deleted the notation $X_n$ as there is a notation conflict}

%\fbox{As a matter of fact, I propose to change $X_{n,1},\ldots,X_{n,n}$ to be $W_{n,1},\ldots,W_{n,n}$ to highlight its generality}

%\fbox{but then we need to reintroduce $j_m(1)$ for $W_{n,i}$'s}

\begin{lemma}\label{lemma:dist}
  Let $W_{n,1},\ldots,W_{n,n}$ be $n$ independent copies from a probability measure $\P_n$ that is over $\R$ and is allowed to change with $n$. Assume $\P_n$ is supported on $[-D_n,D_n]$ for some positive constant $D_n$ also allowed to change with $n$. Then for any $m \in \zahl{M}$,
  \[
    \E [W_{n,j_m(1)} - W_{n,1}] \le 2 \frac{m}{n}D_n,
  \]
  where $j_m(1)$ is the index of the $m$-th right NN of $W_{n,1}$ and is $1$ if $m$-th right NN does not exist.
\end{lemma}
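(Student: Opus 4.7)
The plan is to reduce the expected right-neighbor gap to a sum of gaps between consecutive-ish order statistics via exchangeability, and then exploit a telescoping identity combined with the boundedness assumption $W_{n,i}\in[-D_n,D_n]$. Let $W_{n,(1)}\le\cdots\le W_{n,(n)}$ denote the order statistics and let $R_1\in\zahl{n}$ be the rank of $W_{n,1}$ in the sample (assuming $\P_n$ is continuous so ranks are unambiguous; otherwise one breaks ties by an independent vanishing perturbation). By the definition of the right nearest neighbor, $j_m(1)\neq 1$ exactly when $R_1\le n-m$, in which case $W_{n,j_m(1)} - W_{n,1} = W_{n,(R_1+m)} - W_{n,(R_1)}$, and the difference is zero otherwise.

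Next I would invoke exchangeability: since the $W_{n,i}$ are i.i.d., the rank $R_1$ is uniformly distributed on $\zahl{n}$ and is independent of the unordered multiset $\{W_{n,i}\}_{i=1}^{n}$, hence of the order statistics. Conditioning on the order statistics and averaging over $R_1$ then gives
\[
\E[W_{n,j_m(1)}-W_{n,1}]=\frac{1}{n}\,\E\Big[\sum_{r=1}^{n-m}\bigl(W_{n,(r+m)}-W_{n,(r)}\bigr)\Big].
\]
The crux is the telescoping identity: every $W_{n,(k)}$ with $k\in\{m+1,\ldots,n-m\}$ appears once with a $+$ sign and once with a $-$ sign in the inner sum and therefore cancels, leaving
\[
\sum_{r=1}^{n-m}\bigl(W_{n,(r+m)}-W_{n,(r)}\bigr)=\sum_{k=n-m+1}^{n}W_{n,(k)}-\sum_{k=1}^{m}W_{n,(k)}.
\]
Since every $W_{n,i}$ lies in $[-D_n,D_n]$, each of the two sums on the right is bounded in absolute value by $mD_n$, so their difference is at most $2mD_n$. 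Plugging back and dividing by $n$ gives the claimed bound.

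There is no genuine obstacle here; the argument is elementary once the telescoping structure is spotted. The two points worth being careful about are the independence of $R_1$ from the order statistics (a standard consequence of exchangeability) and the bookkeeping for the edge case $R_1>n-m$, which is handled cleanly by the convention $j_m(1)=1$ producing a zero contribution.
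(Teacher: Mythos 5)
Your proof is correct and takes a genuinely different route from the paper's. The paper never passes to order statistics: it telescopes the $m$-step right-NN gap $W_{n,j_m(1)}-W_{n,1}$ into a sum of consecutive one-step gaps $\ell_j := W_{n,j_1(j)}-W_{n,j}$, observes via a counting argument that in $\sum_i (W_{n,j_m(i)}-W_{n,i})$ each first-NN gap $\ell_j$ is charged at most $m$ times, and then bounds $\sum_j \ell_j \le 2D_n$ by noting that the half-open intervals $[W_{n,j}, W_{n,j_1(j)})$ are pairwise disjoint and all sit inside $[-D_n,D_n]$; an exchangeability step $\E[L_1]=n^{-1}\E[\sum_i L_i]$ finishes. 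You instead exploit the uniformity of the rank $R_1$ and its independence from the order statistics, reduce the quantity to $n^{-1}\E[\sum_{r=1}^{n-m}(W_{(r+m)}-W_{(r)})]$, and collapse the inner sum by a direct telescoping to the boundary terms $\sum_{k>n-m}W_{(k)}-\sum_{k\le m}W_{(k)}$, each bounded in magnitude by $mD_n$. Both proofs are of comparable length and both deploy the i.i.d.\ assumption at precisely the step that converts a deterministic bound on the full sum into a bound on the single expectation. Your version is arguably the more elementary and transparent one-dimensional argument; the paper's version is organized around the ``disjoint first-NN intervals cover a bounded set'' observation, which is the piece with more obvious higher-dimensional (volume) analogues. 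Your remark about tie-breaking for non-continuous $\P_n$ is sensible; the paper's argument carries the same implicit requirement, and in the application $\P_n$ is a truncated Gaussian, so it is moot.
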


We then cite a result on the lower bound of the $k$-nearest neighbor distance without proof (although we consider the triangular array setting, the proof is the same).

\begin{lemma}\label{lemma:dist,lower}\citep[Theorem 4.1]{MR3445317}
Let $W_{n,1},\ldots,W_{n,n} \in \R$ be $n$ independent copies from a probability measure with density $f_n$. Assume that $\{\lVert f_i \rVert_{\infty}\}_{i=1}^\infty$ are uniformly bounded by a universal constant $C_0$. If $k/\log n \to \infty$ as $n \to \infty$, then with probability one, for all $n$ large enough,
  \[
    \inf_{x \in \R} \Big\lvert W_{(k)}(x) - x \Big\rvert \ge C \frac{k}{n},
  \]
where $C>0$ is a constant only depending on $C_0$, and $W_{(k)}(x)$ is the k-nearest neighbor of $x$ among $\{W_{n,i}\}_{i=1}^n$.
\end{lemma}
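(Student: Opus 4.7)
The plan is to rephrase the geometric lower bound on the $k$-nearest-neighbor distance as the nonexistence of a narrow window containing too many samples, then combine a union bound over $k$-subsets with an elementary order-statistics tail, and finally pass from ``with high probability'' to ``eventually almost surely'' via Borel--Cantelli.

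First, I would observe that $\lvert W_{(k)}(x) - x \rvert < Ck/n$ is equivalent to the interval $[x - Ck/n, x + Ck/n]$ containing at least $k$ of the samples $\{W_{n,i}\}_{i=1}^n$. Hence the bad event
\[
\mathcal{E}_n := \Big\{ \inf_{x \in \R} \lvert W_{(k)}(x) - x \rvert < Ck/n \Big\}
\]
coincides with the existence of some $k$-subset $S \subseteq \zahl{n}$ satisfying $\max_{i \in S} W_{n,i} - \min_{i \in S} W_{n,i} \le 2Ck/n$. For a fixed such $S$, a standard order-statistics computation (conditioning on the index attaining the minimum and using the density bound $\lVert f_n \rVert_\infty \le C_0$) yields
\[
\P\Big(\max_{i \in S} W_{n,i} - \min_{i \in S} W_{n,i} \le 2Ck/n\Big) \le k \int f_n(x) \Big(\int_x^{x+2Ck/n} f_n(t)\,\d t \Big)^{k-1} \d x \le k\,(2 C_0 C k/n)^{k-1}.
\]
A union bound over the $\binom{n}{k} \le (ne/k)^k$ choices of $S$ then produces
\[
\P(\mathcal{E}_n) \le \binom{n}{k} k (2 C_0 C k/n)^{k-1} \le en\,(2 C_0 C e)^{k-1}.
\]

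Second, I would pin the constant $C$ (e.g., $C := 1/(2C_0 e^2)$) so that $2C_0 C e \le e^{-1}$, giving $\P(\mathcal{E}_n) \lesssim n e^{-k}$. Under the hypothesis $k/\log n \to \infty$, for every fixed $\delta > 0$ eventually $k \ge \delta \log n$; choosing any $\delta > 2$ gives $\P(\mathcal{E}_n) \lesssim n^{1-\delta}$, which is summable in $n$. Borel--Cantelli then forces $\mathcal{E}_n$ to fail for all sufficiently large $n$ almost surely, i.e., $\inf_{x \in \R} \lvert W_{(k)}(x) - x \rvert \ge Ck/n$ eventually, as required.

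The main obstacle I expect is the balancing of the two competing exponential factors: the union-bound contribution $\binom{n}{k}$ grows like $(en/k)^k$, while the probability of $k$ samples fitting in a window of width $2Ck/n$ decays like $(C_0 C k/n)^{k-1}$, and their product collapses to $n (2 C_0 C e)^{k-1}$. So $C$ must be fixed at a value depending only on $C_0$ (not on $n$ or $k$) to make the geometric factor strictly smaller than $e^{-1}$, and it is precisely at this juncture that the assumption $k/\log n \to \infty$ is doing the essential work: it is exactly what converts the resulting $n e^{-k}$ bound into a summable sequence, yielding almost-sure control rather than only convergence in probability.
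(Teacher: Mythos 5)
The paper itself does not prove this lemma; it cites it (Theorem 4.1 of the reference) and explicitly remarks that, although the setting here is triangular-array, ``the proof is the same.'' Your proposal therefore cannot be compared against a proof in the paper, but it is a correct, self-contained reconstruction. The reduction from $\inf_{x}\lvert W_{(k)}(x)-x\rvert<Ck/n$ to the existence of a $k$-point subset of diameter at most $2Ck/n$ is exactly the right discretization of the continuous infimum; the order-statistics bound $\P(\max_S-\min_S\le d)\le k\int f_n(x)\big(\int_x^{x+d}f_n\big)^{k-1}\d x\le k(C_0 d)^{k-1}$ is standard and uses only $\lVert f_n\rVert_\infty\le C_0$; and the algebra $\binom{n}{k}\,k(2C_0Ck/n)^{k-1}\le en(2C_0Ce)^{k-1}$ is verified. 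Fixing $C$ so that $2C_0Ce\le e^{-1}$ (hence $C$ depends only on $C_0$, as the lemma requires) gives $\P(\mathcal{E}_n)\lesssim ne^{-k}$, and the hypothesis $k/\log n\to\infty$ then renders the bound eventually dominated by $n^{1-\delta}$ for any $\delta>2$, so Borel--Cantelli yields the eventual almost-sure conclusion. Because the density bound is uniform in $n$, the argument genuinely covers the triangular-array extension that the paper only asserts by remark, so your proof slightly strengthens the exposition at no cost.
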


{\bf Upper bound.} To prove Lemma~\ref{lemma:local,T4}, we first consider the upper bound. Since $T_4$ depends on the discrepancy between $X_1$ and $X_{j_U(1)}$, and $X$ has unbounded support, we first truncate it to two parts and obtain
\begin{align}\label{eq:han-T4}
  \E [T_4] = \E \Big[ T_4 \ind\Big(\max_{i \in \zahl{n}} \lvert X_i \rvert \le D_n\Big) \Big] + \E \Big[ T_4 \ind\Big(\max_{i \in \zahl{n}} \lvert X_i \rvert > D_n\Big) \Big],
\end{align}
where we take $D_n = \sqrt{2\log(n^3/M)}$. 
%\fbox{<-notice my change}
%$D_n \to \infty$ as $n \to \infty$ is a positive constant to be specified later.

(1) For the second term on the righthand side of \eqref{eq:han-T4}, noticing $|T_4|$ is bounded by 2, by Gaussian tail probability, 
\begin{align*}
  \E \Big[ \Big\lvert T_4 \ind\Big(\max_{i \in \zahl{n}} \lvert X_i \rvert > D_n\Big) \Big\rvert \Big] \le 2\P\Big(\max_{i \in \zahl{n}} \lvert X_i \rvert > D_n\Big) \le 4n\exp(-D_n^2/2),
\end{align*}

(2) For the first term on the righthand side of \eqref{eq:han-T4}, we perform an analysis similar to that of Lemma \ref{lemma:local,T3}. In detail, invoking Taylor expansion at $y$ gives
\begin{align*}
  F_{Y|X=X_1}(y) = & F_Y(y) + f_Y(y) \Big(\frac{y-\rho_nX_1}{\sigma} -y \Big) + \frac{1}{2} f'_Y(y) \Big(\frac{y-\rho_nX_1}{\sigma} -y \Big)^2 + \\
  & +\frac{1}{6} f''_Y(y_x) \Big(\frac{y-\rho_nX_1}{\sigma} -y \Big)^3,\\
  F_{Y|X=X_{j_U(1)}}(y) = & F_Y(y) + f_Y(y) \Big(\frac{y-\rho_nX_{j_U(1)}}{\sigma} -y \Big) + \frac{1}{2} f'_Y(y) \Big(\frac{y-\rho_nX_{j_U(1)}}{\sigma} -y \Big)^2 + \\
  & + \frac{1}{6} f''_Y(y'_x) \Big(\frac{y-\rho_nX_{j_U(1)}}{\sigma} -y \Big)^3,
\end{align*}
where $y_x$ is between $y$ and $(y-\rho_n X_1)/\sigma$, and $y'_x$ is between $y$ and $(y-\rho_n X_{j_U(1)})/\sigma$.

Then
\begin{align*}
  &F_{Y|X=X_1}(y) - F_{Y|X=X_{j_U(1)}}(y) \\
  = & \frac{1}{\sigma} f_Y(y) \Big( X_{j_U(1)} - X_1 \Big) \rho_n  + \frac{1}{2\sigma} f'_Y(y) \Big( X_{j_U(1)} - X_1 \Big) \Big[ 2 \Big( \frac{1}{\sigma} -1 \Big) y - \frac{1}{\sigma} \Big( X_1 + X_{j_U(1)} \Big) \rho_n \Big] \rho_n\\
  & +  \frac{1}{6} f''_Y(y_x) \Big(\frac{y-\rho_nX_1}{\sigma} -y \Big)^3 - \frac{1}{6} f''_Y(y'_x) \Big(\frac{y-\rho_nX_{j_U(1)}}{\sigma} -y \Big)^3.
\end{align*}

(2.1) We first consider the first term. Conditional on the event $\{\max_{i \in \zahl{n}} \lvert X_i \rvert \le D_n\}$, $[X_i]_{i=1}^n$ are still i.i.d, but with the probability measure $X \given \{\lvert X \rvert \le D_n \}$, which is supported on $[-D_n,D_n]$. Then from Lemma~\ref{lemma:dist}, for any $y \in \R$,
\begin{align*}
  & \E \Big[ \frac{1}{\sigma} f_Y(y) \Big( X_{j_U(1)} - X_1 \Big) \rho_n\ind\Big(\max_{i \in \zahl{n}} \lvert X_i \rvert \le D_n\Big) \Big]\\
  \le & \E \Big[ \frac{1}{\sigma} f_Y(y) \Big( X_{j_U(1)} - X_1 \Big) \rho_n \Biggiven \max_{i \in \zahl{n}} \lvert X_i \rvert \le D_n \Big]\\
  \le & 2 f_Y(y)\Big[\frac{M}{n}D_n \rho_n \Big]\frac{1}{\sigma} \\
  = & 2 f_Y(y)\Big[ \frac{M}{n}D_n \rho_n \Big] \Big[ 1 + \frac{1}{2} \rho_n^2 + o(\rho_n^2)\Big].
\end{align*}

(2.2) For the second term, for any $y \in \R$,
\begin{align*}
  &  \E \Big[ \Big\lvert \frac{1}{2\sigma} f'_Y(y) \Big( X_{j_U(1)} - X_1 \Big) \Big[ 2 \Big( \frac{1}{\sigma} -1 \Big) y - \frac{1}{\sigma} \Big( X_1 + X_{j_U(1)} \Big) \rho_n \Big] \rho_n \ind\Big(\max_{i \in \zahl{n}} \lvert X_i \rvert \le D_n\Big) \Big\rvert \Big]\\
  = & \frac{1}{2} f'_Y(y) \E \Big[\Big\lvert \Big( X_{j_U(1)} - X_1 \Big) \Big[ y \rho_n^2 - \Big( X_1 + X_{j_U(1)} \Big) \rho_n \Big] \rho_n \ind\Big(\max_{i \in \zahl{n}} \lvert X_i \rvert \le D_n\Big) \Big\rvert \Big] (1+o(1)) \\
  \le & \Big[ \frac{1}{2} \lVert f_Y' \rVert_{\infty} \lvert y \rvert \E \Big[ \Big( X_{j_U(1)} - X_1 \Big) \ind\Big(\max_{i \in \zahl{n}} \lvert X_i \rvert \le D_n\Big) \Big] \rho_n^3 \\
  & + \frac{1}{2} \lVert f_Y' \rVert_{\infty} \E \Big[ \Big( X_{j_U(1)} - X_1 \Big) \Big\lvert X_1 + X_{j_U(1)} \Big\rvert \ind\Big(\max_{i \in \zahl{n}} \lvert X_i \rvert \le D_n\Big) \Big] \rho_n^2 \Big] (1+o(1))\\
  \le &  \Big[ \lVert f_Y' \rVert_{\infty} \lvert y \rvert \frac{M}{n} D_n \rho_n^3 + 2 \lVert f_Y' \rVert_{\infty} \frac{M}{n} D_n^2 \rho_n^2 \Big] (1+o(1)),
\end{align*}
where the last step is due to that $\lvert X_1 + X_{j_U(1)} \rvert \le 2D_n$ given the event $\{\max_{i \in \zahl{n}} \lvert X_i \rvert \le D_n\}$ and Lemma~\ref{lemma:dist}.

(2.3) For the third term, for any $y \in \R$, from \eqref{eq:local,diff},
\begin{align*}
  &\E \Big[\Big\lvert\Big\{ \frac{1}{6} f''_Y(y_x) \Big(\frac{y-\rho_nX_1}{\sigma} -y \Big)^3 - \frac{1}{6} f''_Y(y'_x) \Big(\frac{y-\rho_nX_{j_U(1)}}{\sigma} -y \Big)^3\Big\} \ind\Big(\max_{i \in \zahl{n}} \lvert X_i \rvert \le D_n\Big) \Big\rvert \Big]\\
  \le & \frac{1}{6} \lVert f''_Y \rVert_{\infty} \E \Big[ \Big(\lvert y \rvert^3 \rho_n^6 + 4 \lvert X_1 \rvert^3 \rho_n^3 + 4 \lvert X_{j_U(1)} \rvert^3 \rho_n^3 \Big) \ind\Big(\max_{i \in \zahl{n}} \lvert X_i \rvert \le D_n\Big) \Big] (1+o(1))\\
  \le & \frac{1}{6} \lVert f''_Y \rVert_{\infty} \E \Big[ \Big(\lvert y \rvert^3 \rho_n^6 + 20 \lvert X_1 \rvert^3 \rho_n^3 + 16 \lvert X_{j_U(1)} - X_1 \rvert^3 \rho_n^3 \Big) \ind\Big(\max_{i \in \zahl{n}} \lvert X_i \rvert \le D_n\Big) \Big] (1+o(1))\\
  \le & \frac{1}{6} \lVert f''_Y \rVert_{\infty} \Big[ \lvert y \rvert^3 \rho_n^6 + 20 \E [\lvert X_1 \rvert^3] \rho_n^3 + 128 \frac{M}{n} D_n^3 \rho_n^3 \Big] (1+o(1)),
\end{align*}
where the last step is due to Lemma~\ref{lemma:dist} and the fact that $\lvert X_{j_U(1)} - X_1 \rvert \le 2D_n$ given the event $\{\max_{i \in \zahl{n}} \lvert X_i \rvert \le D_n\}$. %\fbox{I am confused; i thought you used Lemma A.1 in the above derivation.}

(2.4) Summarizing the above three steps, we obtain
\begin{align*}
  & \E \Big[ T_4 \ind\Big(\max_{i \in \zahl{n}} \lvert X_i \rvert \le D_n\Big) \Big]\\
  =& \E \Big[ \int \Big[1 - F_Y(y)\Big] \Big[F_{Y|X=X_1}(y) - F_{Y|X=X_{j_U(1)}}(y) \Big] f_Y(y) \d y \ind\Big(\max_{i \in \zahl{n}} \lvert X_i \rvert \le D_n\Big) \Big]\\
  =& \int \Big[1 - F_Y(y)\Big] \E \Big[\Big(F_{Y|X=X_1}(y) - F_{Y|X=X_{j_U(1)}}(y) \Big) \ind\Big(\max_{i \in \zahl{n}} \lvert X_i \rvert \le D_n\Big) \Big] f_Y(y) \d y\\
  \le & 2 \Big[ \int \Big[1 - F_Y(y)\Big] f_Y^2(y) \d y \Big] \frac{M}{n}D_n \rho_n + o(\rho_n^2),
  \yestag\label{eq:local,T4,upper}
\end{align*}
if $MD_n^3/n \to 0$ as $n \to \infty$; recall we pick $D_n = \sqrt{2\log(n^3/M)}$ so that this is true as long as $M (\log n)^{3/2}/n \to 0$.

{\bf Lower bound.} We then consider the lower bound. %Conditional on the event $\{\max_{i \in \zahl{n}} \lvert X_i \rvert \le D_n\}$, $[X_i]_{i=1}^n$ are still i.i.d, but with probability measure $X \given \{\lvert X \rvert \le D_n \}$, which is supported on $[-D_n,D_n]$. 
Let $f_X^{(n)}$ be the density of $X \given \{\lvert X \rvert \le D_n \}$. Since $D_n \to \infty$ as $n \to \infty$ and $f_X$ is bounded, it is true that $\{f_X^{(n)}\}_{i=1}^n$ is uniformly bounded by a universal constant.

Notice that conditional on the event $\{\max_{i \in \zahl{n}} \lvert X_i \rvert \le D_n\}$, $[X_i]_{i=1}^n$ are $n$ independent copies from $f_X^{(n)}$. We then consider the probability measure of $X_1$ conditional on $[X_i]_{i=2}^n$, the event $\{\max_{i \in \zahl{n}} \lvert X_i \rvert \le D_n\}$ and the event $\{j_m(1) \neq 1\}$. This probability measure depends on $[X_i]_{i=2}^n$, $D_n$, and $m$. For any $m \in \zahl{M}$ and $X_1 \in \R$, conditional on $\{j_m(1) \neq 1\}$, we always have
\[
  X_{j_m(1)} - X_1 \ge \inf_{x \in \R} \lvert X_{(m)}(x) - x \rvert,
\]
where $X_{(m)}(x)$ is the $m$-nearest neighbor of $x$ among $\{X_i\}_{i=2}^n$. We can then take expectation with respect to this probability measure and obtain
\[
  \E_{X_1} \Big[  \Big( X_{j_m(1)} - X_1 \Big)  \Biggiven [X_i]_{i=2}^n, j_m(1) \neq 1, \max_{i \in \zahl{n}} \lvert X_i \rvert \le D_n \Big] \ge \inf_{x \in \R} \lvert X_{(m)}(x) - x \rvert.
  \yestag\label{eq:local,dist,lower}
\]

Then from Lemma~\ref{lemma:dist,lower}, for any $y \in \R$,
\begin{align*}
  & \E \Big[ \frac{1}{\sigma} f_Y(y) \Big( X_{j_U(1)} - X_1 \Big) \rho_n\ind\Big(\max_{i \in \zahl{n}} \lvert X_i \rvert \le D_n\Big) \Big]\\
  = & \frac{1}{\sigma} f_Y(y) \rho_n \E \Big[  \Big( X_{j_U(1)} - X_1 \Big)  \Biggiven \max_{i \in \zahl{n}} \lvert X_i \rvert \le D_n \Big] \P \Big( \max_{i \in \zahl{n}} \lvert X_i \rvert \le D_n \Big)  \\
  = & \frac{1}{\sigma} f_Y(y) \rho_n  \P \Big( \max_{i \in \zahl{n}} \lvert X_i \rvert \le D_n \Big) \Big\{ \frac{1}{M} \sum_{m=1}^M \E \Big[  \Big( X_{j_m(1)} - X_1 \Big)  \Biggiven \max_{i \in \zahl{n}} \lvert X_i \rvert \le D_n \Big] \Big\} \\
  = & \frac{1}{\sigma} f_Y(y) \rho_n  \P \Big( \max_{i \in \zahl{n}} \lvert X_i \rvert \le D_n \Big) \\
  & \cdot \Big\{ \frac{1}{M} \sum_{m=1}^M \P \Big( j_m(1) \neq 1 \Biggiven \max_{i \in \zahl{n}} \lvert X_i \rvert \le D_n \Big) \E \Big[  \Big( X_{j_m(1)} - X_1 \Big)  \Biggiven j_m(1) \neq 1, \max_{i \in \zahl{n}} \lvert X_i \rvert \le D_n \Big] \Big\} \\
  \ge & \frac{1}{\sigma} f_Y(y) \rho_n  \P \Big( \max_{i \in \zahl{n}} \lvert X_i \rvert \le D_n \Big) \P \Big( j_M(1) \neq 1 \Biggiven \max_{i \in \zahl{n}} \lvert X_i \rvert \le D_n \Big)  \\
  &\quad\quad \cdot\Big\{ \frac{1}{M} \sum_{m=1}^M \E \Big[  \Big( X_{j_m(1)} - X_1 \Big)  \Biggiven j_m(1) \neq 1, \max_{i \in \zahl{n}} \lvert X_i \rvert \le D_n \Big] \Big\}.
\end{align*}

Let $\mX_{\setminus 1} = (X_2,\ldots,X_n)$. Notice that from \eqref{eq:local,dist,lower} and that the distance of $m$-NN is increasing with respect to $m$ for any point,
\begin{align*}
  & \frac{1}{M} \sum_{m=1}^M \E \Big[  \Big( X_{j_m(1)} - X_1 \Big)  \Biggiven j_m(1) \neq 1, \max_{i \in \zahl{n}} \lvert X_i \rvert \le D_n \Big] \\
  = & \frac{1}{M} \sum_{m=1}^M \E_{\mX_{\setminus 1}} \Big\{ \E_{X_1} \Big[  \Big( X_{j_m(1)} - X_1 \Big)  \Biggiven [X_i]_{i=2}^n, j_m(1) \neq 1, \max_{i \in \zahl{n}} \lvert X_i \rvert \le D_n \Big]  \Biggiven \max_{i \in \zahl{n}\setminus\{1\}} \lvert X_i \rvert \le D_n \Big\}\\
  \ge & \frac{1}{M} \sum_{m=1}^M \E_{\mX_{\setminus 1}} \Big[ \inf_{x \in \R} \big\lvert X_{(m)}(x) - x \big\rvert \Biggiven \max_{i \in \zahl{n}\setminus\{1\}} \lvert X_i \rvert \le D_n \Big]\\
  \ge & \frac{1}{M} \sum_{m=\lfloor \frac{M}{2} \rfloor}^M \E_{\mX_{\setminus 1}} \Big[ \inf_{x \in \R} \big\lvert X_{(m)}(x) - x \big\rvert \Biggiven \max_{i \in \zahl{n}\setminus\{1\}} \lvert X_i \rvert \le D_n \Big]\\
  \ge & \frac{1}{M} \sum_{m=\lfloor \frac{M}{2} \rfloor}^M \E_{\mX_{\setminus 1}} \Big[ \inf_{x \in \R} \big\lvert X_{(\lfloor \frac{M}{2} \rfloor)}(x) - x \big\rvert \Biggiven \max_{i \in \zahl{n}\setminus\{1\}} \lvert X_i \rvert \le D_n \Big]\\
  \ge & \frac{1}{2} \E_{\mX_{\setminus 1}} \Big[ \inf_{x \in \R} \big\lvert X_{(\lfloor \frac{M}{2} \rfloor)}(x) - x \big\rvert \Biggiven \max_{i \in \zahl{n}\setminus\{1\}} \lvert X_i \rvert \le D_n \Big]\\ 
  \gtrsim & \frac{M}{n},
\end{align*}
where $\lfloor \cdot\rfloor$ is the floor function and we apply Lemma~\ref{lemma:dist,lower} by taking $k = \lfloor M/2 \rfloor$ since $M/\log n \to \infty$.

Notice that conditional on the event $\{\max_{i \in \zahl{n}} \lvert X_i \rvert \le D_n\}$, $[X_i]_{i=1}^n$ are still i.i.d. Then
\[
  \P \Big( j_M(1) \neq 1 \Biggiven \max_{i \in \zahl{n}} \lvert X_i \rvert \le D_n \Big) = 1 - \frac{M}{n}.
\]

Then we obtain
\begin{align*}
  & \E \Big[ \frac{1}{\sigma} f_Y(y) \Big( X_{j_U(1)} - X_1 \Big) \rho_n\ind\Big(\max_{i \in \zahl{n}} \lvert X_i \rvert \le D_n\Big) \Big]\\
  \gtrsim & \frac{1}{\sigma} f_Y(y) \frac{M}{n} \rho_n  \P \Big( \max_{i \in \zahl{n}} \lvert X_i \rvert \le D_n \Big) \P \Big( j_M(1) \neq 1 \Biggiven \max_{i \in \zahl{n}} \lvert X_i \rvert \le D_n \Big)\\
  \gtrsim & f_Y(y) \frac{M}{n} \rho_n \Big(1 + \frac{1}{2}\rho_n^2 + o(\rho_n^2) \Big),
  \yestag\label{eq:local,T4,lower}
\end{align*}
since $\P \Big( \max_{i \in \zahl{n}} \lvert X_i \rvert \le D_n \Big)$ and $\P \Big( j_M(1) \neq 1 \Biggiven \max_{i \in \zahl{n}} \lvert X_i \rvert \le D_n \Big)$ both converge to 1 as $n \to \infty$.

Combining \eqref{eq:local,T4,upper} and \eqref{eq:local,T4,lower} implies
\[
  \frac{M}{n}\rho_n + o(\rho_n^2) \lesssim E[T_4] \lesssim \frac{M}{n}D_n \rho_n + n \exp(-D_n^2/2) + o(\rho_n^2).
\]
Plugging $D_n = \sqrt{2\log(n^3/M)}$ in, we have
\[
  \frac{M}{n}\rho_n + o(\rho_n^2) \lesssim E[T_4] \lesssim \frac{M}{n}\sqrt{\log\Big(\frac{n^3}{M}\Big)}\rho_n + \frac{M}{n^2} + o(\rho_n^2) \lesssim \frac{M}{n} \sqrt{\log n} \rho_n + \frac{M}{n^2} + o(\rho_n^2)
\]
and thus complete the proof.
\end{proof}

\subsubsection{Proof of Lemma~\ref{lemma:local,T5}}

\begin{proof}[Proof of Lemma~\ref{lemma:local,T5}]
For $T_5$, we have:
\[
  \E [\lvert T_5 \rvert] = \E \Big[ \lvert T_5 \rvert \ind\Big(\max_{i \in \zahl{n}} \lvert X_i \rvert \le D_n\Big) \Big] + \E \Big[ \lvert T_5 \rvert \ind\Big(\max_{i \in \zahl{n}} \lvert X_i \rvert > D_n\Big) \Big],
\]
where we take $D_n = \sqrt{2\log(n^3/M)}$.

Noticing $|T_5|$ is bounded by 4 since the cumulative distribution function is bounded by 1, Gaussian tail probability yields
\[
  \E \Big[ \Big\lvert T_5 \ind\Big(\max_{i \in \zahl{n}} \lvert X_i \rvert > D_n\Big) \Big\rvert \Big] \le 4\P\Big(\max_{i \in \zahl{n}} \lvert X_i \rvert > D_n\Big) \le 8n\exp(-D_n^2/2).
\]
Taylor expansion at $y$ then gives
\begin{align*}
  F_{Y|X=X_1}(y) = & F_Y(y) + f_Y(y_x) \Big(\frac{y-\rho_nX_1}{\sigma} -y \Big),\\
  F_{Y|X=X_{j_U(1)}}(y) = & F_Y(y) + f_Y(y'_x) \Big(\frac{y-\rho_nX_{j_U(1)}}{\sigma} -y \Big),
\end{align*}
where $y_x$ is between $y$ and $(y-\rho_n X_1)/\sigma$, and $y'_x$ is between $y$ and $(y-\rho_n X_{j_U(1)})/\sigma$.

Then for any $y \in \R$,
\begin{align*}
  & \Big\lvert \Big[F_{Y|X=X_1}(y) - F_Y(y)\Big] \Big[F_{Y|X=X_1}(y) - F_{Y|X=X_{j_U(1)}}(y) \Big] \Big\rvert\\
  = & \Big\lvert f_Y(y_x) \Big(\frac{y-\rho_nX_1}{\sigma} -y \Big) f_Y(y'_x) \Big(\frac{\rho_n}{\sigma} (X_{j_U(1)} - X_1) \Big) \Big\rvert\\
  \le & \frac{1}{\sigma} \lVert f_Y \rVert_{\infty}^2 \Big[ \lvert y \rvert \Big( \frac{1}{\sigma} - 1 \Big) (X_{j_U(1)} - X_1) \rho_n + \frac{1}{\sigma} \lvert X_1 \rvert (X_{j_U(1)} - X_1) \rho_n^2 \Big]\\
  = & \lVert f_Y \rVert_{\infty}^2 \Big[ \frac{1}{2} \lvert y \rvert  (X_{j_U(1)} - X_1) \rho_n^3 + \lvert X_1 \rvert (X_{j_U(1)} - X_1) \rho_n^2 \Big] (1+o(1)).
\end{align*}
Accordingly, we obtain
\begin{align*}
  & \E \Big[ \Big\lvert \Big[F_{Y|X=X_1}(y) - F_Y(y)\Big] \Big[F_{Y|X=X_1}(y) - F_{Y|X=X_{j_U(1)}}(y) \Big] \Big\rvert \ind\Big(\max_{i \in \zahl{n}} \lvert X_i \rvert \le D_n\Big) \Big] \\
  \le & \lVert f_Y \rVert_{\infty}^2 \Big[ \lvert y \rvert  \frac{M}{n} D_n \rho_n^3 + 2 \frac{M}{n} D_n^2 \rho_n^2 \Big] (1+o(1)),
\end{align*}
where the last step is due to Lemma~\ref{lemma:dist} and $\lvert X_1 \rvert \le D_n$ given the event $\{\max_{i \in \zahl{n}} \lvert X_i \rvert \le D_n\}$.

Then
\begin{align*}
  & \E \Big[ \lvert T_5 \rvert \ind\Big(\max_{i \in \zahl{n}} \lvert X_i \rvert \le D_n\Big) \Big]
  \le  \lVert f_Y \rVert_{\infty}^2 \Big[ \E[\lvert Y \rvert] \frac{M}{n} D_n \rho_n^3 + 2 \frac{M}{n} D_n^2 \rho_n^2 \Big] (1+o(1)) = o(\rho_n^2),
\end{align*}
if $MD_n^2/n \to 0$ as $n \to \infty$; given $D_n = \sqrt{2\log(n^3/M)}$, this is true as long as $M\log n/n\to 0$.

Since we take $D_n = \sqrt{2\log(n^3/M)}$, then
\[
  \E \Big[ \Big\lvert T_5 \ind\Big(\max_{i \in \zahl{n}} \lvert X_i \rvert > D_n\Big) \Big\rvert \Big] \le 8n\exp(-D_n^2/2) \lesssim \frac{M}{n^2}.
\]
Combining the above two inequalities, we have proved
\[
  \E [\lvert T_5 \rvert] \lesssim \frac{M}{n^2} + o(\rho_n^2)
\]
and thus complete the proof.
\end{proof}

\subsubsection{Proof of Lemma~\ref{lemma:local,T7}}

\begin{proof}[Proof of Lemma~\ref{lemma:local,T7}]
We have
\begin{align*}
  \E [T_7] =& \E \Big[\int \Big[F_Y(y) - F_{Y|X=X_1}(y)\Big] \Big[F_Y(y) + F_{Y|X=X_1}(y) - 1\Big] f_Y(y) \d y \ind(j_U(1) = 1)\Big]\\
  =& - \E \Big[\int \Big[F_Y(y) - F_{Y|X=X_1}(y)\Big]^2 f_Y(y) \d y \ind(j_U(1) = 1)\Big]\\
  & + \E \Big[\int \Big[F_Y(y) - F_{Y|X=X_1}(y)\Big] \Big[2 F_Y(y) - 1\Big] f_Y(y) \d y \ind(j_U(1) = 1)\Big].
\end{align*}

For the first term,
\[
  F_{Y|X=X_1}(y) - F_Y(y) = f_Y(y_x) \Big(\frac{y-\rho_nX_1}{\sigma} -y \Big),
\]
where $y_x$ is between $y$ and $(y-\rho_n X_1)/\sigma$. Together with \eqref{eq:local,diff},
\begin{align*}
  & \E \Big[\int \Big[F_Y(y) - F_{Y|X=X_1}(y)\Big]^2 f_Y(y) \d y \ind(j_U(1) = 1)\Big]\\
  \le & \E \Big[\int 2 \lVert f_Y \rVert_{\infty}^2 \Big[\frac{1}{4} y^2 \rho_n^4 + X_1^2 \rho_n^2 \Big] f_Y(y) \d y \ind(j_U(1) = 1)\Big] (1+o(1))\\
  = & 2 \lVert f_Y \rVert_{\infty}^2 \rho_n^2 \E \Big[ X_1^2 \ind(j_U(1) = 1) \Big](1+o(1)) + o(\rho_n^2)\\
  \le & 2 \lVert f_Y \rVert_{\infty}^2 \rho_n^2 \sqrt{\E [X_1^4] \P(j_U(1) = 1)} (1+o(1)) + o(\rho_n^2) = o(\rho_n^2),
\end{align*}
where the last step is from H\"older's inequality and $\P(j_U(1) = 1) = o(1)$.

For the second term,
\[
  F_{Y|X=X_1}(y) - F_Y(y) =  f_Y(y) \Big(\frac{y-\rho_nX_1}{\sigma} -y \Big) + \frac{1}{2} f'_Y(y_x) \Big(\frac{y-\rho_nX_1}{\sigma} -y \Big)^2,
\]
where $y_x$ is between $y$ and $(y-\rho_n X_1)/\sigma$.

Notice that
\[
  \int \Big[2 F_Y(y) - 1\Big] f_Y^2(y) \d y = 0.
\]

Then together with \eqref{eq:local,diff},
\begin{align*}
  & \Big\lvert \E \Big[\int \Big[F_Y(y) - F_{Y|X=X_1}(y)\Big] \Big[2 F_Y(y) - 1\Big] f_Y(y) \d y \ind(j_U(1) = 1)\Big] \Big\rvert\\
  = & \Big\lvert \E \Big[\int \Big[f_Y(y) \Big(\frac{y-\rho_nX_1}{\sigma} -y \Big) + \frac{1}{2} f'_Y(y_x) \Big(\frac{y-\rho_nX_1}{\sigma} -y \Big)^2\Big] \Big[2 F_Y(y) - 1\Big] f_Y(y) \d y \ind(j_U(1) = 1)\Big]\Big\rvert\\
  = & \Big\lvert \E \Big[\int \Big[\Big(\frac{1}{\sigma} -1 \Big) y f_Y(y) + \frac{1}{2} f'_Y(y_x) \Big(\frac{y-\rho_nX_1}{\sigma} -y \Big)^2\Big] \Big[2 F_Y(y) - 1\Big] f_Y(y) \d y \ind(j_U(1) = 1)\Big]\Big\rvert\\
  \le & \Big\lvert \int y \Big[2 F_Y(y) - 1\Big] f_Y^2(y) \d y \Big\rvert \Big(\frac{1}{\sigma} -1 \Big) \P(j_U(1) = 1) \\
  & + \E \Big[  \lVert f'_Y \rVert_{\infty} \int \Big[\frac{1}{4} y^2 \rho_n^4 + X_1^2 \rho_n^2 \Big] \Big\lvert 2F_Y(y) - 1 \Big\rvert f_Y(y) \d y \ind(j_U(1) = 1) \Big] (1+o(1)) = o(\rho_n^2),
\end{align*}
where the last step is also from H\"older's inequality and $\P(j_U(1) = 1) = o(1)$. This shows $\Big\lvert E[T_7] \Big\rvert = o(\rho_n^2)$.
\end{proof}

\subsubsection{Proof of Lemma~\ref{lemma:alter,var,decompose}}

\begin{proof}[Proof of Lemma~\ref{lemma:alter,var,decompose}]

~\\
{\bf The first term.} Conditional on $\mX$, if $j_m(i) \neq i$,
\[
  \min\{R_i,R_{j_m(i)}\} = 1 + \sum_{k \neq i, k \neq j_m(i)} \ind(Y_k \le \min\{Y_i,Y_{j_m(i)}\}).
\]

If $j_m(i) = i$, then
\[
  \min\{R_i,R_{j_m(i)}\} = 1 + \sum_{k \neq i} \ind(Y_k \le Y_i).
\]

Then
\begin{align*}
  & \E \Big[ \Var \Big[ \min\{R_i,R_{j_m(i)}\} \Biggiven \mX \Big] \Big]\\
  = & \E \Big[ \Var \Big[ \min\{R_i,R_{j_m(i)}\} \Biggiven \mX \Big] \ind(j_m(i) \neq i) \Big] + \E \Big[ \Var \Big[ \min\{R_i,R_{j_m(i)}\} \Biggiven \mX \Big] \ind(j_m(i) = i) \Big]\\
  = & \E \Big[ \Var \Big[ \sum_{k \neq i, k \neq j_m(i)} \ind(Y_k \le \min\{Y_i,Y_{j_m(i)}\}) \Biggiven \mX \Big] \ind(j_m(i) \neq i) \Big] \\
  & + \E \Big[ \Var \Big[ \sum_{k \neq i} \ind(Y_k \le Y_i) \Biggiven \mX \Big] \ind(j_m(i) = i) \Big]\\
  = & (n-2) \E \Big[ \Var \Big[ \ind(Y \le \min\{Y_i,Y_{j_m(i)}\}) \Biggiven \mX \Big] \ind(j_m(i) \neq i) \Big] \\
  & + (n-2)(n-3) \E\Big[\Cov \Big[\ind(Y \le \min\{Y_i,Y_{j_m(i)}\}),\ind(\tY \le \min\{Y_i,Y_{j_m(i)}\}) \Biggiven \mX \Big] \ind(j_m(i) \neq i) \Big]\\
  & + (n-1) \E \Big[ \Var \Big[ \ind(Y \le \min\{Y_i,Y_{j_m(i)}\}) \Biggiven \mX \Big] \ind(j_m(i) = i) \Big] \\
  & + (n-1)(n-2) \E\Big[\Cov \Big[\ind(Y \le \min\{Y_i,Y_{j_m(i)}\}),\ind(\tY \le \min\{Y_i,Y_{j_m(i)}\}) \Biggiven \mX \Big] \ind(j_m(i) = i) \Big]\\
  = & (n-2) \E \Big[ \Var \Big[ \ind(Y \le \min\{Y_i,Y_{j_m(i)}\}) \Biggiven \mX \Big] \Big] \\
  & + (n-2)(n-3) \E\Big[\Cov \Big[\ind(Y \le \min\{Y_i,Y_{j_m(i)}\}),\ind(\tY \le \min\{Y_i,Y_{j_m(i)}\}) \Biggiven \mX \Big] \Big]\\
  & + \E \Big[ \Var \Big[ \ind(Y \le \min\{Y_i,Y_{j_m(i)}\}) \Biggiven \mX \Big] \ind(j_m(i) = i) \Big]\\
  & + 2(n-2) \E\Big[\Cov \Big[\ind(Y \le \min\{Y_i,Y_{j_m(i)}\}),\ind(\tY \le \min\{Y_i,Y_{j_m(i)}\}) \Biggiven \mX \Big] \ind(j_m(i) = i) \Big],
  \yestag\label{eq:local,var,decompose1}
\end{align*}
where $Y,\tY$ are two independent random variables from $F_Y$ and are both independent with $\mX$.

Notice that the sum of the third and the fourth term in \eqref{eq:local,var,decompose1} is dominated by $(2n-3)\P (j_m(i) = i)$ since the variance and the covariance of indicator functions are bounded by 1. Then the sum is $O(M/n)=o(n)$ since $M/n \to 0$ as $n \to \infty$. Since the first term is of order $n$ and the second term is of order $n^2$, then it suffices to consider the second term.

We have
\begin{align*}
  & \E\Big[\Cov \Big[\ind(Y \le \min\{Y_i,Y_{j_m(i)}\}),\ind(\tY \le \min\{Y_i,Y_{j_m(i)}\}) \Biggiven \mX \Big] \Big] \\
  = & \E\Big[\E \Big[\ind(Y \le \min\{Y_i,Y_{j_m(i)}\}) \ind(\tY \le \min\{Y_i,Y_{j_m(i)}\}) \Biggiven \mX \Big] \Big] \\
  & - \E\Big[\E \Big[\ind(Y \le \min\{Y_i,Y_{j_m(i)}\}) \Biggiven \mX \Big] \E \Big[ \ind(\tY \le \min\{Y_i,Y_{j_m(i)}\}) \Biggiven \mX \Big] \Big] \\
  = & \E\Big[\E \Big[\ind(\max\{Y,\tY\} \le \min\{Y_i,Y_{j_m(i)}\}) \Biggiven \mX \Big] \Big] \\
  & - \E\Big[\E \Big[\ind(Y \le \min\{Y_i,Y_{j_m(i)}\}) \Biggiven \mX \Big] \E \Big[ \ind(\tY \le \min\{Y_i,Y_{j_m(i)}\}) \Biggiven \mX \Big] \Big].
\end{align*}

Proceeding in the same way as \eqref{eq:local,taylor}, where we take $U=m$, and analogous to \eqref{eq:local,rate}, we obtain
\[
  \Big\lvert \E\Big[\E \Big[\ind(\max\{Y,\tY\} \le \min\{Y_i,Y_{j_m(i)}\}) \Biggiven \mX \Big] \Big] - \frac{1}{6} \Big\rvert \lesssim r_n,
  \yestag\label{eq:local,var,rate1}
\]
where 1/6 is from the fact that
\[
  \P \Big(\max\{Y_1,Y_2\} \le \min\{Y_3,Y_4\} \Big) = \frac{1}{6},
\]
for four independent copies $Y_1,Y_2,Y_3,Y_4$ from $F_Y$.

Similar to \eqref{eq:local,taylor}, we have
\[
  \E \Big[ \ind(Y \le \min\{Y_1,Y_{j_m(1)}\}) \Biggiven \mX \Big] = \sum_{s=1}^7 T_{m,s},
  \yestag\label{eq:local,Tms}
\]
where $T_{m,s}$ corresponds to $T_s$ in $\eqref{eq:local,taylor}$ for any $s \in \zahl{7}$, while replacing $U$ by $m$.

It is easy to check that $[T_{m,s}]_{s=1}^7$ also satisfies \eqref{eq:local,T1}, \eqref{eq:local,T2}, Lemma~\ref{lemma:local,T3}, Lemma~\ref{lemma:local,T4}, Lemma~\ref{lemma:local,T5}, \eqref{eq:local,T6}, Lemma~\ref{lemma:local,T7} for any $m \in \zahl{M}$.

We then establish the following lemma.
\begin{lemma}\label{lemma:local,T,quad} We have
  \[
    \E \Big[ \Big(\sum_{s=2}^7 T_{m,s})^2 \Big] \lesssim \frac{M}{n} + \frac{M}{n}\sqrt{\log n}\rho_n + \rho_n^2.
  \]
%  where the constant on the right hand side does not depend on $m$.
\end{lemma}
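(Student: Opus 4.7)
The plan is to reduce $\E[(\sum_{s=2}^7 T_{m,s})^2]$ to a sum of individual second moments via $(\sum_{s=2}^7 a_s)^2\le 6\sum_{s=2}^7 a_s^2$, and then to leverage the first-moment bounds already established in Lemmas~\ref{lemma:local,T3}--\ref{lemma:local,T7}, which (as noted in the excerpt) hold with the same rates for each fixed $m\in\zahl{M}$, not only for the random index $U$. The driving observation is the elementary inequality that for a random variable $T$ with $|T|\le 1$ one has $T^2\le |T|$, so $\E[T^2]\le \E[|T|]$; this turns an $L^2$ bound into the $L^1$ bounds we already possess.

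For $s=3,4,5,7$, I would first verify that $|T_{m,s}|\le 1$: in each case every factor of the integrand lies in $[0,1]$ in absolute value and $\int f_Y\,dy=1$, and $T_{m,3}$ and $T_{m,4}$ are moreover non-negative (the latter by the stochastic monotonicity of $Y\mid X$ in the Gaussian model together with $X_{j_m(1)}\ge X_1$, as already recorded in Lemma~\ref{lemma:local,T4}). Then $\E[T_{m,3}^2]\le \E[T_{m,3}]\lesssim\rho_n^2$ by Lemma~\ref{lemma:local,T3}; $\E[T_{m,4}^2]\le \E[T_{m,4}]\lesssim (M/n)\sqrt{\log n}\,\rho_n+M/n^2+\rho_n^2$ by Lemma~\ref{lemma:local,T4}; $\E[T_{m,5}^2]\le \E[|T_{m,5}|]\lesssim M/n^2+\rho_n^2$ by Lemma~\ref{lemma:local,T5}; and, using the sharper pointwise bound $|T_{m,7}|\le \ind(j_m(1)=1)$, one gets $\E[T_{m,7}^2]\le \P(j_m(1)=1)=m/n\le M/n$. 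The contribution from $T_{m,6}=\tfrac{1}{6}\ind(j_m(1)=1)$ is exact: $\E[T_{m,6}^2]=m/(36n)\le M/n$.

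The term $T_{m,2}$ needs a separate touch because its first moment vanishes (see \eqref{eq:local,T2}), so the $T^2\le |T|$ reduction combined with an $L^1$ bound is not useful. Here the plan is to apply Cauchy--Schwarz under the probability measure $f_Y(y)\,dy$:
\[
T_{m,2}^2\le \Big[\int (1-F_Y(y))^2\,f_Y(y)\,dy\Big]\cdot\Big[\int \big(F_Y(y)-F_{Y|X=X_1}(y)\big)^2\,f_Y(y)\,dy\Big]\le T_{m,3},
\]
where the last inequality uses $(1-F_Y)^2\le 1$ and $\int f_Y\,dy=1$. Taking expectations and invoking Lemma~\ref{lemma:local,T3} gives $\E[T_{m,2}^2]\le \E[T_{m,3}]\lesssim\rho_n^2$.

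Summing the six contributions yields the asserted bound, with the $M/n$ piece coming from $T_{m,6}$ and $T_{m,7}$, the $(M/n)\sqrt{\log n}\,\rho_n$ piece from $T_{m,4}$, and the $\rho_n^2$ piece from $T_{m,2},T_{m,3},T_{m,4},T_{m,5}$. The argument is essentially bookkeeping once the $L^2\le L^1$ reduction is recognized, so I do not anticipate a serious technical obstacle; the only mildly subtle step is the Cauchy--Schwarz reduction $T_{m,2}^2\le T_{m,3}$ in the $y$-integral, which circumvents the absence of a useful first-moment bound for the mean-zero term $T_{m,2}$.
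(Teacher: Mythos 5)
Your proposal is correct and, for five of the six terms, follows exactly the paper's argument: the reduction $\bigl(\sum_{s=2}^7 T_{m,s}\bigr)^2\le 6\sum_{s=2}^7 T_{m,s}^2$, the observation $|T_{m,s}|\le 1$ (and $T_{m,s}\ge 0$ for $s=3,4$) so that $\E[T_{m,s}^2]\le\E[|T_{m,s}|]$, and then Lemmas~\ref{lemma:local,T3}--\ref{lemma:local,T7} for $s=3,4,5,7$, with the direct computation $\E[T_{m,6}^2]=\tfrac{1}{36}\P(j_m(1)=1)\lesssim M/n$. The one place where your route genuinely differs is the mean-zero term $T_{m,2}$: the paper re-runs the Taylor expansion of $F_{Y|X=X_1}-F_Y$ and bounds the resulting integral directly to get $\E[T_{m,2}^2]\lesssim\rho_n^2$, whereas you apply Cauchy--Schwarz under the measure $f_Y(y)\,\mathrm{d}y$ to obtain the pointwise inequality $T_{m,2}^2\le\bigl[\int(1-F_Y)^2 f_Y\bigr]\cdot T_{m,3}\le T_{m,3}$, and then simply reuse Lemma~\ref{lemma:local,T3}. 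This is a valid and arguably cleaner shortcut --- it avoids duplicating the Taylor-expansion bookkeeping --- at the mild cost of introducing the factor $\int(1-F_Y)^2 f_Y=\tfrac13$, which of course is harmless. Both approaches deliver the same $\rho_n^2$ rate, and the remaining summation that assembles $M/n$, $(M/n)\sqrt{\log n}\,\rho_n$, and $\rho_n^2$ is identical.
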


From \eqref{eq:local,Tms} and the independence of $Y$ and $\tY$,
\[
  \E\Big[\E \Big[\ind(Y \le \min\{Y_i,Y_{j_m(i)}\}) \Biggiven \mX \Big] \E \Big[ \ind(\tY \le \min\{Y_i,Y_{j_m(i)}\}) \Biggiven \mX \Big] \Big] = \E \Big[ \Big( \sum_{s=1}^7 T_{m,s} \Big)^2 \Big].
\]

Since $T_{m,1} = 1/3$ for any $\mX$, then
\begin{align*}
  & \E\Big[\E \Big[\ind(Y \le \min\{Y_i,Y_{j_m(i)}\}) \Biggiven \mX \Big] \E \Big[ \ind(\tY \le \min\{Y_i,Y_{j_m(i)}\}) \Biggiven \mX \Big] \Big] \\
  = & \frac{1}{9} + \frac{2}{3} \E \Big[ \sum_{s=2}^7 T_{m,s} \Big] + \E \Big[ \Big( \sum_{s=2}^7 T_{m,s} \Big)^2 \Big].
\end{align*}

From \eqref{eq:local,rate} and Lemma~\ref{lemma:local,T,quad},
\[
  \Big\lvert \E\Big[\E \Big[\ind(Y \le \min\{Y_i,Y_{j_m(i)}\}) \Biggiven \mX \Big] \E \Big[ \ind(\tY \le \min\{Y_i,Y_{j_m(i)}\}) \Biggiven \mX \Big] \Big] - \frac{1}{9} \Big\rvert \lesssim r_n.
  \yestag\label{eq:local,var,rate2}
\]

Combine \eqref{eq:local,var,rate1} and \eqref{eq:local,var,rate2},
\[
  \Big\lvert \E\Big[\Cov \Big[\ind(Y \le \min\{Y_i,Y_{j_m(i)}\}),\ind(\tY \le \min\{Y_i,Y_{j_m(i)}\}) \Biggiven \mX \Big] \Big] - \frac{1}{18} \Big\rvert \lesssim r_n.
\]

Then from \eqref{eq:local,var,decompose1},
\[
  \Big\lvert \frac{1}{n^2} \E \Big[ \Var \Big[ \min\{R_i,R_{j_m(i)}\} \Biggiven \mX \Big] \Big] - \frac{1}{18} \Big\rvert \lesssim r_n.
\]

~\\
{\bf The second term.} Analogous to \eqref{eq:local,var,decompose1}, for $m \neq m'$,
\begin{align*}
  & \E \Big[ \Cov \Big[ \min\{R_i,R_{j_m(i)}\}, \min\{R_i,R_{j_{m'}(i)}\} \Biggiven \mX \Big] \Big]\\
  = & \E \Big[ \Cov \Big[ \min\{R_i,R_{j_m(i)}\}, \min\{R_i,R_{j_{m'}(i)}\} \Biggiven \mX \Big] \ind(j_m(i) \neq i) \ind(j_{m'}(i) \neq i) \Big] \\
  & + \E \Big[ \Cov \Big[ \min\{R_i,R_{j_m(i)}\}, \min\{R_i,R_{j_{m'}(i)}\} \Biggiven \mX \Big] \ind(j_m(i) = i) \ind(j_{m'}(i) \neq i)  \Big]\\
  & + \E \Big[ \Cov \Big[ \min\{R_i,R_{j_m(i)}\}, \min\{R_i,R_{j_{m'}(i)}\} \Biggiven \mX \Big] \ind(j_m(i) \neq i) \ind(j_{m'}(i) = i) \Big] \\
  & + \E \Big[ \Cov \Big[ \min\{R_i,R_{j_m(i)}\}, \min\{R_i,R_{j_{m'}(i)}\} \Biggiven \mX \Big] \ind(j_m(i) = i) \ind(j_{m'}(i) = i) \Big]\\
  = & \E \Big[ \Cov \Big[ \sum_{k \neq i, k \neq j_m(i)} \ind(Y_k \le \min\{Y_i,Y_{j_m(i)}\}), \sum_{k \neq i, k \neq j_{m'}(i)} \ind(Y_k \le \min\{Y_i,Y_{j_{m'}(i)}\}) \Biggiven \mX \Big] \\
  & \ind(j_m(i) \neq i) \ind(j_{m'}(i) \neq i) \Big] \\
  & + \E \Big[ \Cov \Big[ \sum_{k \neq i} \ind(Y_k \le \min\{Y_i,Y_{j_m(i)}\}), \sum_{k \neq i, k \neq j_{m'}(i)} \ind(Y_k \le \min\{Y_i,Y_{j_{m'}(i)}\}) \Biggiven \mX \Big]\\
  & \ind(j_m(i) = i) \ind(j_{m'}(i) \neq i)  \Big]\\
  & + \E \Big[ \Cov \Big[ \sum_{k \neq i, k \neq j_m(i)} \ind(Y_k \le \min\{Y_i,Y_{j_m(i)}\}), \sum_{k \neq i} \ind(Y_k \le \min\{Y_i,Y_{j_{m'}(i)}\}) \Biggiven \mX \Big]\\
  & \ind(j_m(i) \neq i) \ind(j_{m'}(i) = i) \Big] \\
  & + \E \Big[ \Cov \Big[ \sum_{k \neq i} \ind(Y_k \le \min\{Y_i,Y_{j_m(i)}\}), \sum_{k \neq i} \ind(Y_k \le \min\{Y_i,Y_{j_{m'}(i)}\}) \Biggiven \mX \Big]\\
  & \ind(j_m(i) = i) \ind(j_{m'}(i) = i) \Big]\\
  = & (n-3)(n-4) \E\Big[\Cov \Big[\ind(Y \le \min\{Y_i,Y_{j_m(i)}\}),\ind(\tY \le \min\{Y_i,Y_{j_{m'}(i)}\}) \Biggiven \mX \Big] \Big] + O(n),
\end{align*}
where $(n-3)(n-4)$ is from the fact that the number of pairs $(k,k')$ such that $k,k' \in \zahl{n}, k \neq k'$ and $k,k' \neq i, j_m(i), j_{m'}(i)$ given events $\{j_m(i) \neq i\}$ and $\{j_{m'}(i) \neq i\}$ is $(n-3)(n-4)$. $O(n)$ corresponds to the first term and the last two terms in \eqref{eq:local,var,decompose1} since the coefficients are of order at most $n$.

Analogous to \eqref{eq:local,var,rate1},
\[
  \Big\lvert \E\Big[\E \Big[\ind(Y \le \min\{Y_i,Y_{j_m(i)}\})\ind(\tY \le \min\{Y_i,Y_{j_{m'}(i)}\}) \Biggiven \mX \Big] \Big] - \frac{2}{15} \Big\rvert \lesssim r_n,
\]
where 2/15 is from the fact that
\[
  \P \Big(Y_1 \le \min\{Y_3,Y_4\}, Y_2 \le \min\{Y_3,Y_5\} \Big) = \frac{2}{15},
\]
for five independent copies $Y_1,Y_2,Y_3,Y_4,Y_5$ from $F_Y$.

Analogous to \eqref{eq:local,var,rate2},
\[
  \Big\lvert \E\Big[\E \Big[\ind(Y \le \min\{Y_i,Y_{j_m(i)}\}) \Biggiven \mX \Big] \E \Big[ \ind(\tY \le \min\{Y_i,Y_{j_{m'}(i)}\}) \Biggiven \mX \Big] \Big] - \frac{1}{9} \Big\rvert \lesssim r_n.
\]

Then
\[
  \Big\lvert \E\Big[\Cov \Big[\ind(Y \le \min\{Y_i,Y_{j_m(i)}\}),\ind(\tY \le \min\{Y_i,Y_{j_{m'}(i)}\}) \Biggiven \mX \Big] \Big] - \frac{1}{45} \Big\rvert \lesssim r_n.
\]

Then for any $m \neq m' \in \zahl{M}$,
\[
  \Big\lvert \frac{1}{n^2} \E \Big[ \Cov \Big[ \min\{R_i,R_{j_m(i)}\}, \min\{R_i,R_{j_{m'}(i)}\} \Biggiven \mX \Big] \Big] - \frac{1}{45} \Big\rvert \lesssim r_n.
\]

~\\
{\bf The third term.} Analogous to \eqref{eq:local,var,decompose1}, for any $i \neq l \in \zahl{n}$ and $m,m' \in \zahl{M}$,
\begin{align*}
  & \E \Big[ \Cov \Big[ \min\{R_i,R_{j_m(i)}\}, \min\{R_l,R_{j_{m'}(l)}\} \Biggiven \mX \Big] \ind\Big(j_m(i)=l \Big) \Big]\\
  = & \E \Big[ \Cov \Big[ \sum_{k \neq i, k \neq j_m(i)} \ind(Y_k \le \min\{Y_i,Y_{j_m(i)}\}), \sum_{k \neq l, k \neq j_{m'}(l)} \ind(Y_k \le \min\{Y_l,Y_{j_{m'}(l)}\}) \Biggiven \mX \Big] \\
  & ~~~\ind\Big(j_m(i)=l \Big)  \ind(j_{m'}(l) \neq l) \Big] \\
  & + \E \Big[ \Cov \Big[ \sum_{k \neq i, k \neq j_m(i)} \ind(Y_k \le \min\{Y_i,Y_{j_m(i)}\}), \sum_{k \neq l} \ind(Y_k \le \min\{Y_l,Y_{j_{m'}(l)}\}) \Biggiven \mX \Big]\\
  & ~~~\ind\Big(j_m(i)=l \Big) \ind(j_{m'}(l) = l) \Big] \\
  = & (n-3)(n-4) \E\Big[\Cov \Big[\ind(Y \le \min\{Y_i,Y_{j_m(i)}\}),\ind(\tY \le \min\{Y_l,Y_{j_{m'}(l)}\}) \Biggiven \mX \Big] \\
  & ~~~\ind\Big(j_m(i)=l \Big)  \ind(j_{m'}(l) \neq l) \Big]\\
  & + (n-2)(n-3) \E\Big[\Cov \Big[\ind(Y \le \min\{Y_i,Y_{j_m(i)}\}),\ind(\tY \le \min\{Y_l,Y_{j_{m'}(l)}\}) \Biggiven \mX \Big] \\
  & ~~~\ind\Big(j_m(i)=l \Big)  \ind(j_{m'}(l) = l) \Big]\\
  & + (n-3)\E\Big[\Cov \Big[\ind(Y \le \min\{Y_i,Y_{j_m(i)}\}),\ind(Y \le \min\{Y_l,Y_{j_{m'}(l)}\}) \Biggiven \mX \Big] \\
  & ~~~\ind\Big(j_m(i)=l \Big)  \ind(j_{m'}(l) \neq l) \Big]\\
  & + (n-3)\E\Big[\Cov \Big[\ind(Y \le \min\{Y_i,Y_{j_m(i)}\}),\ind(Y_i \le \min\{Y_l,Y_{j_{m'}(l)}\}) \Biggiven \mX \Big] \\
  & ~~~\ind\Big(j_m(i)=l \Big)  \ind(j_{m'}(l) \neq l) \Big]\\
  & + (n-3)\E\Big[\Cov \Big[\ind(Y_{j_{m'}(l)} \le \min\{Y_i,Y_{j_m(i)}\}),\ind(Y \le \min\{Y_l,Y_{j_{m'}(l)}\}) \Biggiven \mX \Big] \\
  & ~~~\ind\Big(j_m(i)=l \Big)  \ind(j_{m'}(l) \neq l) \Big]\\
  & + (n-2) \E\Big[\Cov \Big[\ind(Y \le \min\{Y_i,Y_{j_m(i)}\}),\ind(Y \le \min\{Y_l,Y_{j_{m'}(l)}\}) \Biggiven \mX \Big] \\
  & ~~~\ind\Big(j_m(i)=l \Big)  \ind(j_{m'}(l) = l) \Big]\\
  & + (n-2) \E\Big[\Cov \Big[\ind(Y \le \min\{Y_i,Y_{j_m(i)}\}),\ind(Y_i \le \min\{Y_l,Y_{j_{m'}(l)}\}) \Biggiven \mX \Big] \\
  & ~~~\ind\Big(j_m(i)=l \Big)  \ind(j_{m'}(l) = l) \Big] + O(1)\\
  = & (n-3)(n-4) \E\Big[\Cov \Big[\ind(Y \le \min\{Y_i,Y_{j_m(i)}\}),\ind(\tY \le \min\{Y_l,Y_{j_{m'}(l)}\}) \Biggiven \mX \Big] \\
  & ~~~\ind\Big(j_m(i)=l \Big)  \ind(j_{m'}(l) \neq l) \Big]\\
  & + (n-2)(n-3) \E\Big[\Cov \Big[\ind(Y \le \min\{Y_i,Y_{j_m(i)}\}),\ind(\tY \le \min\{Y_l,Y_{j_{m'}(l)}\}) \Biggiven \mX \Big] \\
  & ~~~\ind\Big(j_m(i)=l \Big)  \ind(j_{m'}(l) = l) \Big] + O(1).\\
  = & (n-3)(n-4) \E\Big[\Cov \Big[\ind(Y \le \min\{Y_i,Y_{j_m(i)}\}),\ind(\tY \le \min\{Y_l,Y_{j_{m'}(l)}\}) \Biggiven \mX \Big] \ind\Big(j_m(i)=l \Big) \Big] + O(1).
\end{align*}
In the above derivation the term $(n-3)(n-4)$ is due to that the number of pairs $(k,k')$ such that $k,k' \in \zahl{n}, k \neq k'$ and $k,k' \neq i, j_m(i), l, j_{m'}(l)$ given events $\{j_m(i)=l\} \cup \{j_{m'}(l) \neq l\}$ is $(n-3)(n-4)$. The term $(n-2)(n-3)$ is calculated in the same way. The last two steps are from the fact that $\P(j_m(i)=l)$ is of order $n^{-1}$.

Since $l$ is an arbitrary index such that $l \neq i$, then
\begin{align*}
  & \E\Big[\Cov \Big[\ind(Y \le \min\{Y_i,Y_{j_m(i)}\}),\ind(\tY \le \min\{Y_l,Y_{j_{m'}(l)}\}) \Biggiven \mX \Big] \ind\Big(j_m(i)=l \Big) \Big]\\
  = & \E\Big[\Cov \Big[\ind(Y \le \min\{Y_i,Y_{j_m(i)}\}),\ind(\tY \le \min\{Y_{j_m(i)},Y_{j_{m'}(j_m(i))}\}) \Biggiven \mX \Big] \ind\Big(j_m(i)=l \Big) \Big]\\
  = & \frac{1}{n-1} \E\Big[\Cov \Big[\ind(Y \le \min\{Y_i,Y_{j_m(i)}\}),\ind(\tY \le \min\{Y_{j_m(i)},Y_{j_{m'}(j_m(i))}\}) \Biggiven \mX \Big] \ind\Big(j_m(i) \neq i \Big) \Big]\\
  = & \frac{1}{n-1} \E\Big[\Cov \Big[\ind(Y \le \min\{Y_i,Y_{j_m(i)}\}),\ind(\tY \le \min\{Y_{j_m(i)},Y_{j_{m'}(j_m(i))}\}) \Biggiven \mX \Big]\Big] + O\Big(\frac{M}{n^2}\Big),
\end{align*}
since $\P(j_m(i)=i)$ is of order $M/n$.

~\\
{\bf The fourth term.} This is the same as the third term and accordingly omitted.

~\\
{\bf The fifth term.} Analogous to \eqref{eq:local,var,rate1},
\[
  \Big\lvert \E\Big[\E \Big[\ind(Y \le \min\{Y_i,Y_{j_m(i)}\})\ind(\tY \le \min\{Y_{j_m(i)},Y_{j_{m'}(j_m(i))}\}) \Biggiven \mX \Big] \Big] - \frac{2}{15} \Big\rvert \lesssim r_n,
\]
where 2/15 is from the fact that
\[
  \P \Big(Y_1 \le \min\{Y_3,Y_4\}, Y_2 \le \min\{Y_3,Y_5\} \Big) = \frac{2}{15},
\]
for five independent copies $Y_1,Y_2,Y_3,Y_4,Y_5$ from $F_Y$.

Analogous to \eqref{eq:local,var,rate2},
\[
  \Big\lvert \E\Big[\E \Big[\ind(Y \le \min\{Y_i,Y_{j_m(i)}\}) \Biggiven \mX \Big] \E \Big[ \ind(\tY \le \min\{Y_{j_m(i)},Y_{j_{m'}(j_m(i))}\}) \Biggiven \mX \Big] \Big] - \frac{1}{9} \Big\rvert \lesssim r_n.
\]

Then
\[
  \Big\lvert \E\Big[\Cov \Big[\ind(Y \le \min\{Y_i,Y_{j_m(i)}\}),\ind(\tY \le \min\{Y_{j_m(i)},Y_{j_{m'}(j_m(i))}\}) \Biggiven \mX \Big]\Big] - \frac{1}{45} \Big\rvert \lesssim r_n.
\]

Then for any $i \neq l \in \zahl{n}$,
\[
  \Big\lvert \frac{1}{n} \E \Big[ \Cov \Big[ \min\{R_i,R_{j_m(i)}\}, \min\{R_l,R_{j_{m'}(l)}\} \Biggiven \mX \Big] \ind\Big(j_m(i)=l \Big) \Big] - \frac{1}{45} \Big\rvert \lesssim r_n.
\]

We can establish in the same way that
\[
  \Big\lvert \frac{1}{n} \E \Big[ \Cov \Big[ \min\{R_i,R_{j_m(i)}\}, \min\{R_l,R_{j_{m'}(l)}\} \Biggiven \mX \Big] \ind\Big(j_{m'}(l)=i \Big) \Big] - \frac{1}{45} \Big\rvert \lesssim r_n,
\]
and
\[
  \Big\lvert \frac{1}{n} \E \Big[ \Cov \Big[ \min\{R_i,R_{j_m(i)}\}, \min\{R_l,R_{j_{m'}(l)}\} \Biggiven \mX \Big] \ind\Big(j_m(i)=j_{m'}(l) \Big) \Big] - \frac{1}{45} \Big\rvert \lesssim r_n.
\]

~\\
{\bf The sixth term.} Analogous to \eqref{eq:local,var,decompose1},
\begin{align*}
  & \E \Big[ \Cov \Big[ \min\{R_i,R_{j_m(i)}\}, \min\{R_l,R_{j_{m'}(l)}\} \Biggiven \mX \Big] \ind\Big(j_m(i) \neq l, j_{m'}(l) \neq i, j_m(i) \neq j_{m'}(l) \Big) \Big]\\
  = & \E \Big[ \Cov \Big[ \sum_{k \neq i, k \neq j_m(i)} \ind(Y_k \le \min\{Y_i,Y_{j_m(i)}\}), \sum_{k \neq l, k \neq j_{m'}(l)} \ind(Y_k \le \min\{Y_l,Y_{j_{m'}(l)}\}) \Biggiven \mX \Big] \\
  & \ind\Big(j_m(i) \neq l, j_{m'}(l) \neq i, j_m(i) \neq j_{m'}(l) \Big) \ind(j_m(i) \neq i) \ind(j_{m'}(l) \neq l) \Big] \\
  & + \E \Big[ \Cov \Big[ \sum_{k \neq i} \ind(Y_k \le \min\{Y_i,Y_{j_m(i)}\}), \sum_{k \neq l, k \neq j_{m'}(l)} \ind(Y_k \le \min\{Y_l,Y_{j_{m'}(l)}\}) \Biggiven \mX \Big]\\
  & \ind\Big(j_m(i) \neq l, j_{m'}(l) \neq i, j_m(i) \neq j_{m'}(l) \Big) \ind(j_m(i) = i) \ind(j_{m'}(l) \neq l)  \Big]\\
  & + \E \Big[ \Cov \Big[ \sum_{k \neq i, k \neq j_m(i)} \ind(Y_k \le \min\{Y_i,Y_{j_m(i)}\}), \sum_{k \neq l} \ind(Y_k \le \min\{Y_l,Y_{j_{m'}(l)}\}) \Biggiven \mX \Big]\\
  & \ind\Big(j_m(i) \neq l, j_{m'}(l) \neq i, j_m(i) \neq j_{m'}(l) \Big) \ind(j_m(i) \neq i) \ind(j_{m'}(l) = l) \Big] \\
  & + \E \Big[ \Cov \Big[ \sum_{k \neq i} \ind(Y_k \le \min\{Y_i,Y_{j_m(i)}\}), \sum_{k \neq l} \ind(Y_k \le \min\{Y_l,Y_{j_{m'}(l)}\}) \Biggiven \mX \Big]\\
  & \ind\Big(j_m(i) \neq l, j_{m'}(l) \neq i, j_m(i) \neq j_{m'}(l) \Big) \ind(j_m(i) = i) \ind(j_{m'}(l) = l) \Big]\\
  = & (n-4) \E \Big[ \Cov \Big[ \ind(Y \le \min\{Y_i,Y_{j_m(i)}\}), \ind(Y \le \min\{Y_l,Y_{j_{m'}(l)}\}) \Biggiven \mX \Big]\\
  & \ind\Big(j_m(i) \neq l, j_{m'}(l) \neq i, j_m(i) \neq j_{m'}(l) \Big) \Big] \\
  & + (n-4) \E \Big[ \Cov \Big[ \ind(Y \le \min\{Y_i,Y_{j_m(i)}\}), \ind(Y_i \le \min\{Y_l,Y_{j_{m'}(l)}\}) \Biggiven \mX \Big]\\
  & \ind\Big(j_m(i) \neq l, j_{m'}(l) \neq i, j_m(i) \neq j_{m'}(l) \Big) \Big] \\
  & + (n-4) \E \Big[ \Cov \Big[ \ind(Y \le \min\{Y_i,Y_{j_m(i)}\}), \ind(Y_{j_m(i)} \le \min\{Y_l,Y_{j_{m'}(l)}\}) \Biggiven \mX \Big]\\
  & \ind\Big(j_m(i) \neq l, j_{m'}(l) \neq i, j_m(i) \neq j_{m'}(l) \Big) \Big] \\
  & + (n-4) \E \Big[ \Cov \Big[ \ind(Y_l \le \min\{Y_i,Y_{j_m(i)}\}), \ind(Y \le \min\{Y_l,Y_{j_{m'}(l)}\}) \Biggiven \mX \Big]\\
  & \ind\Big(j_m(i) \neq l, j_{m'}(l) \neq i, j_m(i) \neq j_{m'}(l) \Big) \Big] \\
  & + (n-4) \E \Big[ \Cov \Big[ \ind(Y_{j_{m'}(l)} \le \min\{Y_i,Y_{j_m(i)}\}), \ind(Y \le \min\{Y_l,Y_{j_{m'}(l)}\}) \Biggiven \mX \Big]\\
  & \ind\Big(j_m(i) \neq l, j_{m'}(l) \neq i, j_m(i) \neq j_{m'}(l) \Big) \Big] \\
  & + (n-4)(n-5) \E\Big[\Cov \Big[\ind(Y \le \min\{Y_i,Y_{j_m(i)}\}),\ind(\tY \le \min\{Y_l,Y_{j_{m'}(l)}\}) \Biggiven \mX \Big] \\
  & \ind\Big(j_m(i) \neq l, j_{m'}(l) \neq i, j_m(i) \neq j_{m'}(l) \Big)\Big]\\
  & + n(1+o(1)) \E\Big[\Cov \Big[\ind(Y \le \min\{Y_i,Y_{j_m(i)}\}),\ind(\tY \le \min\{Y_l,Y_{j_{m'}(l)}\}) \Biggiven \mX \Big] \\
  & \ind\Big(j_m(i) \neq l, j_{m'}(l) \neq i, j_m(i) \neq j_{m'}(l) \Big)\ind(j_m(i) = i) \ind(j_{m'}(l) \neq l)\Big]\\
  & + n(1+o(1)) \E\Big[\Cov \Big[\ind(Y \le \min\{Y_i,Y_{j_m(i)}\}),\ind(\tY \le \min\{Y_l,Y_{j_{m'}(l)}\}) \Biggiven \mX \Big] \\
  & \ind\Big(j_m(i) \neq l, j_{m'}(l) \neq i, j_m(i) \neq j_{m'}(l) \Big)\ind(j_m(i) \neq i) \ind(j_{m'}(l) = l)\Big]\\
  & + n(1+o(1)) \E\Big[\Cov \Big[\ind(Y \le \min\{Y_i,Y_{j_m(i)}\}),\ind(\tY \le \min\{Y_l,Y_{j_{m'}(l)}\}) \Biggiven \mX \Big] \\
  & \ind\Big(j_m(i) \neq l, j_{m'}(l) \neq i, j_m(i) \neq j_{m'}(l) \Big)\ind(j_m(i) = i) \ind(j_{m'}(l) = l)\Big] + O(1)\\
  = & (n-4) \E \Big[ \Cov \Big[ \ind(Y \le \min\{Y_i,Y_{j_m(i)}\}), \ind(Y \le \min\{Y_l,Y_{j_{m'}(l)}\}) \Biggiven \mX \Big]\\
  & \ind\Big(j_m(i) \neq l, j_{m'}(l) \neq i, j_m(i) \neq j_{m'}(l) \Big) \Big] \\
  & + (n-4) \E \Big[ \Cov \Big[ \ind(Y \le \min\{Y_i,Y_{j_m(i)}\}), \ind(Y_i \le \min\{Y_l,Y_{j_{m'}(l)}\}) \Biggiven \mX \Big]\\
  & \ind\Big(j_m(i) \neq l, j_{m'}(l) \neq i, j_m(i) \neq j_{m'}(l) \Big) \Big] \\
  & + (n-4) \E \Big[ \Cov \Big[ \ind(Y \le \min\{Y_i,Y_{j_m(i)}\}), \ind(Y_{j_m(i)} \le \min\{Y_l,Y_{j_{m'}(l)}\}) \Biggiven \mX \Big]\\
  & \ind\Big(j_m(i) \neq l, j_{m'}(l) \neq i, j_m(i) \neq j_{m'}(l) \Big) \Big] \\
  & + (n-4) \E \Big[ \Cov \Big[ \ind(Y_l \le \min\{Y_i,Y_{j_m(i)}\}), \ind(Y \le \min\{Y_l,Y_{j_{m'}(l)}\}) \Biggiven \mX \Big]\\
  & \ind\Big(j_m(i) \neq l, j_{m'}(l) \neq i, j_m(i) \neq j_{m'}(l) \Big) \Big] \\
  & + (n-4) \E \Big[ \Cov \Big[ \ind(Y_{j_{m'}(l)} \le \min\{Y_i,Y_{j_m(i)}\}), \ind(Y \le \min\{Y_l,Y_{j_{m'}(l)}\}) \Biggiven \mX \Big]\\
  & \ind\Big(j_m(i) \neq l, j_{m'}(l) \neq i, j_m(i) \neq j_{m'}(l) \Big) \Big] + O(1),
  \yestag\label{eq:local,var,decompose4}
\end{align*}
where the first $n-4$ is from the fact that the number of $k \in \zahl{n}$ such that $k \neq i, j_m(i), l, j_{m'}(l)$ given events $\{j_m(i) \neq l, j_{m'}(l) \neq i, j_m(i) \neq j_{m'}(l)\}$ is $n-4$. Other four are in the same way. The last equation is from the fact that $Y,Y_i,Y_{j_m(i)},\tY,Y_l,Y_{j_{m'}(l)}$ are mutually independent given events $\{j_m(i) \neq l, j_{m'}(l) \neq i, j_m(i) \neq j_{m'}(l)\}$.

We first consider the first term in \eqref{eq:local,var,decompose4}. Since $\P(\{j_m(i) = l\} \cup \{j_{m'}(l) = i\} \cup \{j_m(i) = j_{m'}(l)\})$ is of order $n^{-1}$, then
\begin{align*}
  & \E \Big[ \Cov \Big[ \ind(Y \le \min\{Y_i,Y_{j_m(i)}\}), \ind(Y \le \min\{Y_l,Y_{j_{m'}(l)}\}) \Biggiven \mX \Big] \ind\Big(j_m(i) \neq l, j_{m'}(l) \neq i, j_m(i) \neq j_{m'}(l) \Big) \Big]\\
  = & \E \Big[ \Cov \Big[ \ind(Y \le \min\{Y_i,Y_{j_m(i)}\}), \ind(Y \le \min\{Y_l,Y_{j_{m'}(l)}\}) \Biggiven \mX \Big] \Big] + O\Big( \frac{1}{n} \Big).
\end{align*}

Analogous to \eqref{eq:local,var,rate1},
\[
  \Big\lvert \E\Big[\E \Big[\ind(Y \le \min\{Y_i,Y_{j_m(i)}\})\ind(Y \le \min\{Y_l,Y_{j_{m'}(l)}\}) \Biggiven \mX \Big] \Big] - \frac{1}{5} \Big\rvert \lesssim r_n,
\]
where 1/5 is from the fact that
\[
  \P \Big(Y_1 \le \min\{Y_2,Y_3\}, Y_1 \le \min\{Y_4,Y_5\} \Big) = \frac{1}{5},
\]
for five independent copies $Y_1,Y_2,Y_3,Y_4,Y_5$ from $F_Y$.

Analogous to \eqref{eq:local,var,rate2},
\[
  \Big\lvert \E\Big[\E \Big[\ind(Y \le \min\{Y_i,Y_{j_m(i)}\}) \Biggiven \mX \Big] \E \Big[ \ind(Y \le \min\{Y_l,Y_{j_{m'}(l)}\}) \Biggiven \mX \Big] \Big] - \frac{1}{9} \Big\rvert \lesssim r_n.
\]

Then
\[
  \Big\lvert \E \Big[ \Cov \Big[ \ind(Y \le \min\{Y_i,Y_{j_m(i)}\}), \ind(Y \le \min\{Y_l,Y_{j_{m'}(l)}\}) \Biggiven \mX \Big] \Big]- \frac{4}{45} \Big\rvert \lesssim r_n.
\]

For the second term in \eqref{eq:local,var,decompose4}, we also have
\begin{align*}
  & \E \Big[ \Cov \Big[ \ind(Y \le \min\{Y_i,Y_{j_m(i)}\}), \ind(Y_i \le \min\{Y_l,Y_{j_{m'}(l)}\}) \Biggiven \mX \Big]\\
  & \ind\Big(j_m(i) \neq l, j_{m'}(l) \neq i, j_m(i) \neq j_{m'}(l) \Big) \Big]\\
  = & \E \Big[ \Cov \Big[ \ind(Y \le \min\{Y_i,Y_{j_m(i)}\}), \ind(Y_i \le \min\{Y_l,Y_{j_{m'}(l)}\}) \Biggiven \mX \Big] \Big] + O\Big( \frac{1}{n} \Big).
\end{align*}

Analogous to \eqref{eq:local,var,rate1},
\[
  \Big\lvert \E\Big[\E \Big[\ind(Y \le \min\{Y_i,Y_{j_m(i)}\})\ind(Y_i \le \min\{Y_l,Y_{j_{m'}(l)}\}) \Biggiven \mX \Big] \Big] - \frac{1}{15} \Big\rvert \lesssim r_n,
\]
where 1/15 is from the fact that
\[
  \P \Big(Y_1 \le \min\{Y_2,Y_3\}, Y_2 \le \min\{Y_4,Y_5\} \Big) = \frac{1}{15},
\]
for five independent copies $Y_1,Y_2,Y_3,Y_4,Y_5$ from $F_Y$.

Analogous to \eqref{eq:local,var,rate2},
\[
  \Big\lvert \E\Big[\E \Big[\ind(Y \le \min\{Y_i,Y_{j_m(i)}\}) \Biggiven \mX \Big] \E \Big[ \ind(Y_i \le \min\{Y_l,Y_{j_{m'}(l)}\}) \Biggiven \mX \Big] \Big] - \frac{1}{9} \Big\rvert \lesssim r_n.
\]

Then
\[
  \Big\lvert \E \Big[ \Cov \Big[ \ind(Y \le \min\{Y_i,Y_{j_m(i)}\}), \ind(Y_i \le \min\{Y_l,Y_{j_{m'}(l)}\}) \Biggiven \mX \Big] \Big] + \frac{2}{45} \Big\rvert \lesssim r_n.
\]

We can establish in the same way that
\begin{align*}
  & \Big\lvert \E \Big[ \Cov \Big[ \ind(Y \le \min\{Y_i,Y_{j_m(i)}\}), \ind(Y_i \le \min\{Y_l,Y_{j_{m'}(l)}\}) \Biggiven \mX \Big] \Big] + \frac{2}{45} \Big\rvert \lesssim r_n\\
  & \Big\lvert \E \Big[ \Cov \Big[ \ind(Y \le \min\{Y_i,Y_{j_m(i)}\}), \ind(Y_i \le \min\{Y_l,Y_{j_{m'}(l)}\}) \Biggiven \mX \Big] \Big] + \frac{2}{45} \Big\rvert \lesssim r_n\\
  & \Big\lvert \E \Big[ \Cov \Big[ \ind(Y \le \min\{Y_i,Y_{j_m(i)}\}), \ind(Y_i \le \min\{Y_l,Y_{j_{m'}(l)}\}) \Biggiven \mX \Big] \Big] + \frac{2}{45} \Big\rvert \lesssim r_n.
\end{align*}

Then from \eqref{eq:local,var,decompose4}, for any $i \neq l \in \zahl{n}$,
\begin{align*}
  & \Big\lvert \frac{1}{n} \E \Big[ \Cov \Big[ \min\{R_i,R_{j_m(i)}\}, \min\{R_l,R_{j_{m'}(l)}\} \Biggiven \mX \Big] \ind\Big(j_m(i) \neq l, j_{m'}(l) \neq i, j_m(i) \neq j_{m'}(l) \Big) \Big]\\
  & + \frac{4}{45} \Big\rvert \lesssim r_n.
\end{align*}
This completes the proof.
\end{proof}

\subsubsection{Proof of Lemma~\ref{lemma:alter,var,decompose2}}

\begin{proof}[Proof of Lemma~\ref{lemma:alter,var,decompose2}]
From \eqref{eq:xin},
\begin{align*}
  \Var \Big[\E\Big[\xi_{n,M} \Biggiven \mX\Big] \Big] &= \Var \Big[ \E \Big[-2 + \frac{6 \sum_{i=1}^{n} \sum_{m=1}^M \min\{R_{j_m(i)},R_i\} }{(n+1)[nM+M(M+1)/4]} \Biggiven \mX\Big] \Big]\\
  &= \frac{36}{(n+1)^2[nM+M(M+1)/4]^2} \Var \Big[ \E\Big[ \sum_{i=1}^{n} \sum_{m=1}^M \min\{R_i,R_{j_m(i)}\} \Biggiven \mX\Big] \Big].
\end{align*}

Since for any $i \in \zahl{n},m \in \zahl{M}$,
\[
  \min\{R_i,R_{j_m(i)}\} = \sum_{k=1}^n \ind(Y_k \le \min\{Y_i,Y_{j_m(i)}\}),
\]
we obtain
\begin{align*}
  & \Var \Big[ \E\Big[ \sum_{i=1}^{n} \sum_{m=1}^M \min\{R_i,R_{j_m(i)}\} \Biggiven \mX\Big] \Big]
  =  \Var \Big[ \E\Big[ \sum_{i=1}^{n} \sum_{m=1}^M \sum_{k=1}^n \ind(Y_k \le \min\{Y_i,Y_{j_m(i)}\}) \Biggiven \mX\Big] \Big].
\end{align*}

To apply Efron-Stein inequality (Theorem 3.1 in \cite{boucheron2013concentration}), we consider $\mX = (X_1,\ldots,X_n)$ and for any $\ell \in \zahl{n}$,
\[
  \mX_\ell = (X_1,\ldots,X_{\ell-1},\tX_\ell,X_{\ell+1},\ldots,X_n),
\]
where $\{\tX_\ell\}_{\ell=1}^n$ are independent copies of $\{X_\ell\}_{\ell=1}^n$.

We fix $\ell \in \zahl{n}$. For any $i \in \zahl{n}$ and $m \in \zahl{M}$, denote the $m$th right nearest neighbor of $i$ in samples $\mX$ by $j_m(i)$ and in samples $\mX_\ell$ by $\tilde{j}_m(i)$. Define $A_M(i) := \{j:j_m(j) = i{~\rm for~some~}m \in \zahl{M}\}$ in samples $\mX$ and $\tilde{A}_M(i) := \{j:\tilde{j}_m(j) = i{~\rm for~some~}m \in \zahl{M}\}$ in samples $\mX_\ell$.

For any $i \in \zahl{n}$, we consider
\[
  \E\Big[ \sum_{m=1}^M \sum_{k=1}^n \ind(Y_k \le \min\{Y_i,Y_{j_m(i)}\}) \Biggiven \mX\Big] - \E\Big[ \sum_{m=1}^M \sum_{k=1}^n \ind(Y_k \le \min\{Y_i,Y_{\tilde{j}_m(i)}\}) \Biggiven \mX_\ell \Big].
\]

If $i=\ell$, then
\begin{align*}
  & \E\Big[ \sum_{m=1}^M \sum_{k=1}^n \ind(Y_k \le \min\{Y_i,Y_{j_m(i)}\}) \Biggiven \mX\Big] - \E\Big[ \sum_{m=1}^M \sum_{k=1}^n \ind(Y_k \le \min\{Y_i,Y_{\tilde{j}_m(i)}\}) \Biggiven \mX_\ell \Big]\\
  = & \E\Big[ \sum_{m=1}^M \sum_{k=1}^n \ind(Y_k \le \min\{Y_\ell,Y_{j_m(\ell)}\}) \Biggiven \mX\Big] - \E\Big[ \sum_{m=1}^M \sum_{k=1}^n \ind(Y_k \le \min\{Y_\ell,Y_{\tilde{j}_m(\ell)}\}) \Biggiven \mX_\ell \Big].\\
\end{align*}

If $i \in A_M(\ell) \setminus \tilde{A}_M(\ell)$ and $i \neq l$, then $\ell$ is one of the $M$ right nearest neighbors of $i$ in samples $\mX$, and is not one of the $M$ right nearest neighbors of $i$ when we replace $X_\ell$ by $\tX_\ell$. Since the values of other data points except for the $\ell$-th remain unchanged, the change in the $M$ right nearest neighbors of $i$ is simply removing $\ell$ and introducing new $\tilde{j}_M(\ell)$. Also notice that the conditional distribution of $Y_\ell$ conditional on $\mX$ and that conditional on $\mX_\ell$ is different. Then
\begin{align*}
  & \E\Big[ \sum_{m=1}^M \sum_{k=1}^n \ind(Y_k \le \min\{Y_i,Y_{j_m(i)}\}) \Biggiven \mX\Big] - \E\Big[ \sum_{m=1}^M \sum_{k=1}^n \ind(Y_k \le \min\{Y_i,Y_{\tilde{j}_m(i)}\}) \Biggiven \mX_\ell \Big]\\
  = & \E\Big[ \sum_{k=1,k \neq \ell}^n \ind(Y_k \le \min\{Y_i,Y_\ell\}) \Biggiven \mX\Big] - \E\Big[ \sum_{k=1,k \neq \ell}^n \ind(Y_k \le \min\{Y_i,Y_{\tilde{j}_M(\ell)}\}) \Biggiven \mX_\ell \Big]\\
  & + \E\Big[ \sum_{m=1}^M \ind(Y_\ell \le \min\{Y_i,Y_{j_m(i)}\}) \Biggiven \mX\Big] - \E\Big[ \sum_{m=1}^M \ind(Y_\ell \le \min\{Y_i,Y_{\tilde{j}_m(i)}\}) \Biggiven \mX_\ell \Big].
\end{align*}

If $i \in \tilde{A}_M(\ell) \setminus A_M(\ell)$ and $i \neq l$, then in the same way,
\begin{align*}
  & \E\Big[ \sum_{m=1}^M \sum_{k=1}^n \ind(Y_k \le \min\{Y_i,Y_{j_m(i)}\}) \Biggiven \mX\Big] - \E\Big[ \sum_{m=1}^M \sum_{k=1}^n \ind(Y_k \le \min\{Y_i,Y_{\tilde{j}_m(i)}\}) \Biggiven \mX_\ell \Big]\\
  = & \E\Big[ \sum_{k=1,k \neq \ell}^n \ind(Y_k \le \min\{Y_i,Y_{j_M(i)}\}) \Biggiven \mX\Big] - \E\Big[ \sum_{k=1,k \neq \ell}^n \ind(Y_k \le \min\{Y_i,Y_\ell\}) \Biggiven \mX_\ell \Big]\\
  & + \E\Big[ \sum_{m=1}^M \ind(Y_\ell \le \min\{Y_i,Y_{j_m(i)}\}) \Biggiven \mX\Big] - \E\Big[ \sum_{m=1}^M \ind(Y_\ell \le \min\{Y_i,Y_{\tilde{j}_m(i)}\}) \Biggiven \mX_\ell \Big].
\end{align*}

If $i \in \tilde{A}_M(\ell) \cap A_M(\ell)$ and $i \neq \ell$, then the $M$ right nearest neighbors of $i$ remain unchanged but the conditional distribution of $Y_\ell$ changes. Then
\begin{align*}
  & \E\Big[ \sum_{m=1}^M \sum_{k=1}^n \ind(Y_k \le \min\{Y_i,Y_{j_m(i)}\}) \Biggiven \mX\Big] - \E\Big[ \sum_{m=1}^M \sum_{k=1}^n \ind(Y_k \le \min\{Y_i,Y_{\tilde{j}_m(i)}\}) \Biggiven \mX_\ell \Big]\\
  = & \E\Big[ \sum_{k=1,k \neq \ell}^n \ind(Y_k \le \min\{Y_i,Y_\ell\}) \Biggiven \mX\Big] - \E\Big[ \sum_{k=1,k \neq \ell}^n \ind(Y_k \le \min\{Y_i,Y_\ell\}) \Biggiven \mX_\ell \Big]\\
  & + \E\Big[ \sum_{m=1}^M \ind(Y_\ell \le \min\{Y_i,Y_{j_m(i)}\}) \Biggiven \mX\Big] - \E\Big[ \sum_{m=1}^M \ind(Y_\ell \le \min\{Y_i,Y_{\tilde{j}_m(i)}\}) \Biggiven \mX_\ell \Big].
\end{align*}

If $i \notin \tilde{A}_M(\ell) \cup A_M(\ell)$ and $i \neq \ell$, then the $M$ right nearest neighbors of $i$ remain unchanged and do not contain $\ell$. Then
\begin{align*}
  & \E\Big[ \sum_{m=1}^M \sum_{k=1}^n \ind(Y_k \le \min\{Y_i,Y_{j_m(i)}\}) \Biggiven \mX\Big] - \E\Big[ \sum_{m=1}^M \sum_{k=1}^n \ind(Y_k \le \min\{Y_i,Y_{\tilde{j}_m(i)}\}) \Biggiven \mX_\ell \Big]\\
  = & \E\Big[ \sum_{m=1}^M \ind(Y_\ell \le \min\{Y_i,Y_{j_m(i)}\}) \Biggiven \mX\Big] - \E\Big[ \sum_{m=1}^M \ind(Y_\ell \le \min\{Y_i,Y_{\tilde{j}_m(i)}\}) \Biggiven \mX_\ell \Big].
\end{align*}

Let
\begin{align*}
  &T = \E\Big[ \sum_{i=1}^{n} \sum_{m=1}^M \sum_{k=1}^n \ind(Y_k \le \min\{Y_i,Y_{j_m(i)}\}) \Biggiven \mX\Big]\\
~~~{\rm and}~~&  T^\ell = \E\Big[ \sum_{i=1}^{n} \sum_{m=1}^M \sum_{k=1}^n \ind(Y_k \le \min\{Y_i,Y_{\tilde{j}_m(i)}\}) \Biggiven \mX_\ell \Big].
\end{align*}
Then
\begin{align*}
  T - T^\ell =& \E\Big[ \sum_{m=1}^M \sum_{k=1}^n \ind(Y_k \le \min\{Y_\ell,Y_{j_m(\ell)}\}) \Biggiven \mX\Big] - \E\Big[ \sum_{m=1}^M \sum_{k=1}^n \ind(Y_k \le \min\{Y_\ell,Y_{\tilde{j}_m(\ell)}\}) \Biggiven \mX_\ell \Big]\\
  & + \E\Big[ \sum_{\substack{i=1,i \neq l \\ i \in A_M(\ell) \setminus \tilde{A}_M(\ell)}}^n \sum_{k=1,k \neq \ell}^n \ind(Y_k \le \min\{Y_i,Y_\ell\}) \Biggiven \mX\Big] \\
  & - \E\Big[ \sum_{\substack{i=1,i \neq l \\ i \in A_M(\ell) \setminus \tilde{A}_M(\ell)}}^n \sum_{k=1,k \neq \ell}^n \ind(Y_k \le \min\{Y_i,Y_{\tilde{j}_M(\ell)}\}) \Biggiven \mX_\ell \Big]\\
  & + \E\Big[ \sum_{\substack{i=1,i \neq l \\ i \in \tilde{A}_M(\ell) \setminus A_M(\ell)}}^n \sum_{k=1,k \neq \ell}^n \ind(Y_k \le \min\{Y_i,Y_{j_M(i)}\}) \Biggiven \mX\Big] \\
  & - \E\Big[ \sum_{\substack{i=1,i \neq l \\ i \in \tilde{A}_M(\ell) \setminus A_M(\ell)}}^n \sum_{k=1,k \neq \ell}^n \ind(Y_k \le \min\{Y_i,Y_\ell\}) \Biggiven \mX_\ell \Big]\\
  & + \E\Big[ \sum_{\substack{i=1,i \neq l \\ i \in \tilde{A}_M(\ell) \cap A_M(\ell)}}^n \sum_{k=1,k \neq \ell}^n \ind(Y_k \le \min\{Y_i,Y_\ell\}) \Biggiven \mX\Big] \\
  & - \E\Big[ \sum_{\substack{i=1,i \neq l \\ i \in \tilde{A}_M(\ell) \cap A_M(\ell)}}^n \sum_{k=1,k \neq \ell}^n \ind(Y_k \le \min\{Y_i,Y_\ell\}) \Biggiven \mX_\ell \Big]\\
  & + \E\Big[ \sum_{i=1,i \neq l}^n \sum_{m=1}^M \ind(Y_\ell \le \min\{Y_i,Y_{j_m(i)}\}) \Biggiven \mX\Big] \\
  & - \E\Big[ \sum_{i=1,i \neq l}^n \sum_{m=1}^M \ind(Y_\ell \le \min\{Y_i,Y_{\tilde{j}_m(i)}\}) \Biggiven \mX_\ell \Big].
\end{align*}

From the definition of $A_M(\ell)$ and $\tilde{A}_M(\ell)$, for any given $\mX$ and $\mX_\ell$, we have $\lvert A_M(\ell) \setminus \{\ell\} \rvert \le M$ and $\lvert \tilde{A}_M(\ell) \setminus \{\ell\} \rvert \le M$ since $\ell$ can be the $M$ right nearest neighbors of at most $M$ units. Then for each term above, the number of indicator functions in the sum is of order $nM$. Then after subtracting 1/3 for each indicator function, we apply Lemma~\ref{lemma:local,T,quad} and Cauchy–Schwarz inequality, and then
\[
  \E [(T - T^\ell)^2] \lesssim n^2M^2 \Big( \frac{M}{n} + \frac{M}{n}\sqrt{\log n}\rho_n + \rho_n^2 \Big),
\]
where the constant on the righthand side does not depend on $\ell$.

Then from Efron-Stein inequality,
\begin{align*}
  \Var \Big[ \E\Big[ \sum_{i=1}^{n} \sum_{m=1}^M \min\{R_i,R_{j_m(i)}\} \Biggiven \mX\Big] \Big] & \lesssim \sum_{\ell=1}^n \E [(T - T^\ell)^2] \\
  & \lesssim n^3M^2 \Big( \frac{M}{n} + \frac{M}{n}\sqrt{\log n}\rho_n + \rho_n^2 \Big).
\end{align*}

Then we obtain
\[
  \Var \Big[\E\Big[\xi_{n,M} \Biggiven \mX\Big] \Big] \lesssim \frac{1}{n} \Big( \frac{M}{n} + \frac{M}{n}\sqrt{\log n}\rho_n + \rho_n^2 \Big)
\]
and thus finish the proof.
\end{proof}

\subsection{Proofs of results in the appendix}

\subsubsection{Proof of Lemma~\ref{lemma:perm1}}

\begin{proof}[Proof of Lemma~\ref{lemma:perm1}]

In the following proof we remove the superscript $U$ for notation simplicity. Notice that for any $i,j \in \zahl{n}$ and $i \neq j$,
\[
  R_i = 1 + \sum_{k \neq i} \ind(U_k \le U_i),~ \min\{R_i,R_j\} = 1 + \sum_{k \neq i, k \neq j} \ind(U_k \le \min\{U_i,U_j\}).
\]

From simple calculation,
\begin{align*}
  & \Cov [\ind(U_2 \le U_1),U_1] = \frac{1}{12},~ \Cov [\ind(U_2 \le U_1),U_2] = -\frac{1}{12},~ \Cov [\ind(U_2 \le U_1),\min\{U_1,U_2\}] = 0,\\
  & \Cov[\ind(U_2 \le U_1),\min\{U_2,U_3\}] = -\frac{1}{24},~ \Cov[\ind(U_1 \le U_2),\min\{U_2,U_3\}] = \frac{1}{24},\\
  & \Cov[\ind(U_3 \le \min\{U_1,U_2\}),\min\{U_3,U_4\}] = -\frac{2}{45},~ \Cov[\ind(U_3 \le \min\{U_1,U_2\}),\min\{U_1,U_2\}] = \frac{1}{18},\\
  & \Cov[\ind(U_3 \le \min\{U_1,U_2\}),\min\{U_1,U_3\}] = -\frac{1}{36},~ \Cov[\ind(U_3 \le \min\{U_1,U_2\}),\min\{U_1,U_4\}] = \frac{1}{45}.
\end{align*}

(1) For $\Cov [R_1,U_1]$, we have
\[
  \Cov [R_1,U_1] = (n-1)\Cov[\ind(U_2 \le U_1),U_1] = \frac{n-1}{12}.
\]

(2) For $\Cov[R_1,U_2]$, we have
\[
  \Cov[R_1,U_2] = \Cov[\ind(U_2 \le U_1),U_2] + (n-2)\Cov[\ind(U_3 \le U_1),U_2] = -\frac{1}{12}.
\]

(3) For $\Cov[R_1,\min\{U_2,U_3\}]$, we have
\begin{align*}
  & \Cov[R_1,\min\{U_2,U_3\}] \\
  = & 2 \Cov[\ind(U_2 \le U_1),\min\{U_2,U_3\}] + (n-3)\Cov[\ind(U_4 \le U_1),\min\{U_2,U_3\}] = -\frac{1}{12}.
\end{align*}

(4) For $\Cov[R_1,\min\{U_1,U_2\}]$, we have
\begin{align*}
  & \Cov[R_1,\min\{U_1,U_2\}] \\
  = & \Cov[\ind(U_2 \le U_1),\min\{U_1,U_2\}] + (n-2)\Cov[\ind(U_3 \le U_1),\min\{U_1,U_2\}] = \frac{n-2}{24}.
\end{align*}

(5) For $\Cov[\min\{R_1,R_2\},\min\{U_3,U_4\}]$, we have
\begin{align*}
  & \Cov[\min\{R_1,R_2\},\min\{U_3,U_4\}] \\
  = & 2 \Cov[\ind(U_3 \le \min\{U_1,U_2\}),\min\{U_3,U_4\}] + (n-4)\Cov[\ind(U_5 \le \min\{U_1,U_2\}),\min\{U_3,U_4\}]\\
  = & -\frac{4}{45}.
\end{align*}

(6) For $\Cov[\min\{R_1,R_2\},\min\{U_1,U_3\}]$, we have
\begin{align*}
  & \Cov[\min\{R_1,R_2\},\min\{U_1,U_3\}] \\
  = & \Cov[\ind(U_3 \le \min\{U_1,U_2\}),\min\{U_1,U_3\}] + (n-3)\Cov[\ind(U_3 \le \min\{U_1,U_2\}),\min\{U_1,U_4\}]\\
  = & \frac{4n-17}{180}.
\end{align*}

(7) For $\Cov[\min\{R_1,R_2\},\min\{U_1,U_2\}]$, we have
\[
  \Cov[\min\{R_1,R_2\},\min\{U_1,U_2\}] = (n-2) \Cov[\ind(U_3 \le \min\{U_1,U_2\}),\min\{U_1,U_2\}] = \frac{n-2}{18}.
\]

The whole proof is thus complete.
\end{proof}

\subsubsection{Proof of Lemma~\ref{lemma:dist}}

\begin{proof}[Proof of Lemma~\ref{lemma:dist}]

We first define $L_i :=W_{n,j_m(i)} - W_{n,i}$. Let $\ell_i := W_{n,j_1(i)} - W_{n,i}$, then
\begin{align*}
  L_i &=W_{n,j_m(i)} - W_{n,i}\\
  &= \Big[ W_{n,j_m(i)} - W_{n,i} \Big] \ind(j_m(i) \neq i)\\
  &= \Big[ \sum_{k=0}^{m-1} \Big( W_{n,j_{k+1}(i)} - W_{n,j_k(i)} \Big) \Big] \ind(j_m(i) \neq i)\\
  &= \Big[ \sum_{k=0}^{m-1} \ell_{j_k(i)} \Big] \ind(j_m(i) \neq i)\\
  &= \sum_{k=0}^{m-1} \ell_{j_k(i)} \ind(j_m(i) \neq i),
\end{align*}
where we take $j_0(i)=i$ for any $i \in \zahl{n}$. We then have
\[
  \sum_{i=1}^n L_i = \sum_{i=1}^n \sum_{k=0}^{m-1} \ell_{j_k(i)} \ind(j_m(i) \neq i).
\]
For any $j \in \zahl{n}$, the number of $i \in \zahl{n}$ such that $j_m(i) \neq i$ and $j=j_k(i)$ for some $k \in \zahl{m-1}$ can be at most $m-1$. Then $\ell_i$ can appear at most $m$ times in $\sum_{l=1}^n L_l$ for any $i \in \zahl{n}$, implying
\[
  \sum_{i=1}^n L_i \le m \sum_{i=1}^n \ell_i.
\]
Let $I_i := [W_{n,i},W_{n,j_1(i)})$ and $I_i := \emptyset$ if $j_1(i) = i$. Then $[I_i]_{i=1}^n$ are disjoint and $I_i$ has length $\ell_i$. Since by assumptioin $[W_{n,i}]_{i=1}^n \subseteq [-D_n,D_n]$, $\bigcup_{i=1}^n I_i \subseteq [-D_n,D_n]$. Then
\[
  \sum_{i=1}^n L_i \le m \sum_{i=1}^n \ell_i \le 2D_n m.
\]

To finish the proof, noticing that the fact $[W_{n,i}]_{i=1}^n$ are i.i.d. yields that $[L_i]_{i=1}^n$ are i.i.d.,
\[
  \E [W_{n,j_m(1)} - W_{n,1}] = \E [L_1] \le \frac{1}{n} \E \Big[ \sum_{i=1}^n L_i \Big] \le 2\frac{m}{n}D_n,
\]
which concludes the proof.
\end{proof}

\subsubsection{Proof of Lemma~\ref{lemma:local,T,quad}}

\begin{proof}[Proof of Lemma~\ref{lemma:local,T,quad}]
Since
\[
  \E \Big[ \Big(\sum_{s=2}^7 T_{m,s})^2 \Big] \le 6 \sum_{s=2}^7 \E [T_{m,s}^2],
\]
then it suffices to consider the upper bound of each term seperately.

(1) For $T_{m,2}$, from Taylor's expansion,
\begin{align*}
  \E [T_{m,2}^2] = & \E \Big[ \int \Big[1 - F_Y(y)\Big] \Big[F_Y(y) -F_{Y|X=X_1}(y)\Big] f_Y(y) \d y \Big]^2\\
  = & \E \Big[ \int \Big[1 - F_Y(y)\Big] f_Y(y_x) \Big(\frac{y-\rho_nX_1}{\sigma} -y \Big) f_Y(y) \d y \Big]^2\\
  \le & \E \Big[ \lVert f_Y \rVert_{\infty} \int \Big[1 - F_Y(y)\Big] f_Y(y) \d y \Big(\frac{1}{2}\lvert y \rvert \rho_n^2 + \lvert X_1 \rvert \rho_n \Big) (1+o(1))\Big]^2\\
  \le & \Big[ \frac{1}{2} \Big[ \lVert f_Y \rVert_{\infty} \int y^2 \Big[1 - F_Y(y)\Big] f_Y(y) \d y \Big]^2 \rho_n^4 \\
  & + 2 \Big[ \lVert f_Y \rVert_{\infty} \int \Big[1 - F_Y(y)\Big] f_Y(y) \d y \Big]^2 \E [X_1^2] \rho_n^2 \Big] (1+o(1))\\
  \lesssim & \rho_n^2.
\end{align*}

(2) For $T_{m,3}$,
\[
  T_{m,3} = \int \Big[F_Y(y) -F_{Y|X=X_1}(y)\Big]^2 f_Y(y) \d y.
\]

Then $0 \le T_{m,3} \le 1$. From Lemma~\ref{lemma:local,T3},
\[
  \E [T_{m,3}^2] \le \E [T_{m,3}] \lesssim \rho_n^2.
\]

(3) For $T_{m,4}$,
\[
  T_{m,4} = \int \Big[1 - F_Y(y)\Big] \Big[F_{Y|X=X_1}(y) - F_{Y|X=X_{j_m(1)}}(y) \Big] f_Y(y) \d y.
\]

Then $0 \le T_{m,4} \le 1$. From Lemma~\ref{lemma:local,T4},
\[
  \E [T_{m,4}^2] \le \E [T_{m,4}] \lesssim \frac{M}{n} \sqrt{\log n} \rho_n + \frac{M}{n^2} + o(\rho_n^2).
\]

(4) For $T_{m,5}$,
\[
  T_{m,5} = \int \Big[F_Y(y) -F_{Y|X=X_1}(y)\Big] \Big[F_{Y|X=X_1}(y) - F_{Y|X=X_{j_U(1)}}(y) \Big] f_Y(y) \d y.
\]

Then $\lvert T_{m,5} \rvert \le 1$. From Lemma~\ref{lemma:local,T5},
\[
  \E [T_{m,5}^2] \le \E [\lvert T_{m,5} \rvert ] = \frac{M}{n^2} + o(\rho_n^2).
\]

(5) For $T_{m,6}$,
\[
  T_{m,6} = \int \Big[1-F_Y(y)\Big] F_Y(y) f_Y(y) \d y \ind(j_m(1) = 1)
\]

Then
\[
  \E [T_{m,6}^2] = \Big[ \int \Big[1-F_Y(y)\Big] F_Y(y) f_Y(y) \d y \Big]^2 \P(j_m(1) = 1) \lesssim \frac{M}{n}.
\]

(6) For $T_{m,7}$,
\[
  T_{m,7} = \int \Big[F_Y(y) - F_{Y|X=X_1}(y)\Big] \Big[F_Y(y) + F_{Y|X=X_1}(y) - 1\Big] f_Y(y) \d y \ind(j_U(1) = 1).
\]

Then
\[
  \E [T_{m,7}^2] \le \P(j_m(1) = 1) \lesssim \frac{M}{n}.
\]
Putting them together completes the proof.
\end{proof}

{%\small
\bibliographystyle{apalike}
\bibliography{AMS}
}

\end{document}